\DeclareMathOperator{\N}{\mathsf{N}}
\DeclareMathOperator{\Hom}{\mathsf{Hom}}
\DeclareMathOperator{\ev}{\mathrm{ev}}
\DeclareMathOperator{\reg}{reg}
\DeclareMathOperator{\rs}{rs}
\newcommand{\ani}{{\mathrm{ani}}}
\DeclareMathOperator{\ac}{ac}
\def\llp{\mathopen{(\!(}}
\def\llb{\mathopen{[\![}}
\def\rrp{\mathopen{)\!)}}
\def\rrb{\mathopen{]\!]}}
\DeclareMathOperator{\inv}{\mathsf{inv}}
\DeclareMathOperator{\A}{\mathsf{A}}
\DeclareMathOperator{\alg}{\text{alg}}
\DeclareMathOperator{\Oc}{\mathcal{O}}
\renewcommand{\lim}{\mathsf{lim}}
\DeclareMathOperator{\id}{id}
\DeclareMathOperator{\Gal}{Gal}
\DeclareMathOperator{\Aut}{\mathsf{Aut}}
\DeclareMathOperator{\GL}{GL}
\DeclareMathOperator{\Sch}{\mathsf{Sch}}
\DeclareMathOperator{\Tr}{\mathsf{Tr}}
\DeclareMathOperator{\vol}{\mathsf{vol}}
\DeclareMathOperator{\Spec}{\mathsf{Spec}}
\DeclareMathOperator{\End}{\mathsf{End}}
\DeclareMathOperator{\ord}{ord}
\DeclareMathOperator{\ad}{\mathsf{ad}}
\newcommand{\BA}{{\mathbb{A}}}
\newcommand{\BB}{{\mathbb{B}}}
\newcommand{\BF}{{\mathbb{F}}}
\newcommand{\BG}{{\mathbb{G}}}
\newcommand{\BH}{{\mathbb{H}}}
\newcommand{\BL}{{\mathbb{L}}}
\newcommand{\BM}{{\mathbb{M}}}
\newcommand{\BN}{{\mathbb{N}}}
\newcommand{\BP}{{\mathbb{P}}}
\newcommand{\BQ}{{\mathbb{Q}}}
\newcommand{\BT}{{\mathbb{T}}}
\newcommand{\BX}{{\mathbb{X}}}
\newcommand{\BZ}{{\mathbb{Z}}}
\newcommand{\FA}{{\mathcal A}}
\newcommand{\FC}{{\mathcal C}}
\newcommand{\FE}{{\mathcal E}}
\newcommand{\FM}{{\mathcal M}}
\newcommand{\FN}{{\mathcal N}}
\newcommand{\FO}{{\mathcal O}} 
\newcommand{\FP}{{\mathcal P}}
\newcommand{\FT}{{\mathcal T}}
\DeclareMathOperator{\Out}{ \mathrm{Out}}
\DeclareMathOperator{\DA}{DA^{\acute{e}t}}
\DeclareMathOperator{\DAC}{DA^{\acute{e}t}_{ct}}
\newcommand{\QQ}{\mathbb{Q}}
\newcommand{\Var}{\mathrm{Var}}
\newcommand{\cM}{\mathcal{M}}
\newcommand{\cC}{\mathcal{C}}
\newcommand{\dpl}{\mathcal{L}_{\mathrm{DP}}}
\newcommand{\dplk}{\mathcal{L}_{\mathrm{DP},k}}
\newcommand{\Def}{\mathrm{Def}}
\newcommand{\GDef}{\mathrm{GDef}}
\newcommand{\RGDef}{\mathrm{GDef}}
\newcommand{\RDef}{\mathrm{RDef}}
\newcommand{\acf}{\mathrm{acf}}
\newcommand{\mot}{\mathrm{mot}}
\newcommand{\wFM}{\widetilde{\FM} }
\newcommand{\wA}{\widetilde{\A} }
\newcommand{\wnu}{\widetilde{\nu} }
\newcommand{\wmu}{\widetilde{\mu} }
\newcommand{\wBM}{\widetilde{\BM} }
\newcommand{\Stab}{\mathrm{stab}}
\newcommand{\I}{\mathrm{I}}
\newcommand{\X}{\mathrm{X}}
\newcommand{\g}{\mathfrak{g}}
\renewcommand{\t}{\mathfrak{t}}
\newcommand{\bfg}{\mathrm{g}}
\newcommand{\bft}{\mathrm{t}}
\newcommand{\bfc}{\mathrm{c}}
\newcommand{\mfh}{\mathfrak{h}}
\newcommand{\mft}{\mathfrak{t}}
\newcommand{\mfc}{\mathfrak{c}}
\newcommand{\wtA}{\widetilde{\A}}
\newcommand{\wtBM}{\widetilde{\BM}}
\newcommand{\wtP}{\widetilde{\FP}}
\newcommand{\hmu}{\hat{\mu}}
\newcommand{\K}[1]{\mathrm{\bf K}(#1)}
\newcommand{\psf}{\mathrm{Psf}}
\newcommand{\zfc}{\mathrm{ZFC}}
\newcommand{\Con}{\mathrm{Con}}
\newcommand{\mcL}{\mathcal{L}}
\newcommand{\mcP}{\mathcal{P}}
\newcommand{\mcC}{\mathcal{C}}
\newcommand{\mcO}{\mathcal{O}}
\newcommand{\mcM}{\mathcal{M}}
\newcommand{\DP}{\mathrm{DP}}
\newcommand{\Ar}{\mathrm{Ar}}
\newcommand{\set}[1]{\left\{ #1 \right\}}
\newcommand{\ZZ}{\mathbb{Z}}
\newcommand{\Ff}{\mathbb{F}}
\newcommand{\NN}{\mathbb{N}}
\newcommand{\abs}[1]{\left\lvert#1\right\rvert}
\newcommand{\eL}{\mathbb{L}}
\newcommand{\dpar}[1]{ \llp #1 \rrp}
\newcommand{\iHom}{\underline{\mathrm{Hom}}}
\newcommand{\un}{\mathds{1}}
\newcommand{\orb}{\mathrm{orb}}
\newcommand{\Imu}{I_{\hat \mu}}
\newcommand{\rmH}{\mathrm{H}}
\newcommand{\rel}{\mathrm{rel}}
\newcommand{\Qlb}{\bar{\mathbb{Q}}_\ell}
\newcommand{\Frob}{\mathrm{Frob}}
\newcommand{\wX}{{\mathbf{X}}}
\newcommand{\wY}{{\mathbf{Y}}}
\newcommand{\wG}{{\mathbf{G}}}
\newcommand{\wP}{{\mathbf{P}}}
\newcommand{\wU}{\mathbf{U}}
\newcommand{\wM}{{\mathbf{M}}}
\newcommand{\wH}{{\mathbf{H}}}
\newcommand{\wwA}{{\mathbf{A}}}
\newcommand{\wT}{{\mathbf{T}}}
\newcommand{\wt}{{\mathbf{t}}}
\newcommand{\wg}{{\mathbf{g}}}
\newcommand{\wc}{{\mathbf{c}}}
\newcommand{\wh}{{\mathbf{h}}}
\newcommand{\wJ}{{\mathbf{J}}}
\newcommand{\wC}{{\mathbf{C}}}
\newcommand{\wV}{{\mathbf{V}}}
\newcommand{\wLam}{{\mathbf{\Lambda}}}
\newtheorem{theorem}[subsubsection]{Theorem}
\newtheorem*{theorem*}{Theorem}
\newtheorem*{FL*}{Fundamental Lemma}
\newtheorem{FUL}[subsubsection]{Fundamental Lemma}
\newtheorem*{GS*}{Geometric Stabilization}
\newtheorem{GS}[subsubsection]{Geometric Stabilization}
\newtheorem{proposition}[subsubsection]{Proposition}
\newtheorem{propdef}[subsubsection]{Proposition-Definition}
\newtheorem{corollary}[subsubsection]{Corollary}
\newtheorem{lemma}[subsubsection]{Lemma}
\theoremstyle{definition}
\newtheorem{definition}[subsubsection]{Definition}
\newtheorem{rmk}[subsubsection]{Remark}
\newtheorem{example}[subsubsection]{Example}
\numberwithin{equation}{subsection}
\begin{document}

\author{Arthur Forey}
\address{Univ. Lille, CNRS, UMR 8524-Laboratoire Paul Painlevé, F-59000 Lille, France} 
  \email{arthur.forey@univ-lille.fr}
\urladdr{https://pro.univ-lille.fr/arthur-forey/}

\author{Fran\c {c}ois Loeser}
\address{Institut universitaire de France, Sorbonne Universit\'e, Institut de Math\'ematiques de Jussieu-Paris
Rive Gauche, CNRS, Campus Pierre et Marie Curie, case 247, 4 place Jussieu, 75252 Paris cedex 5, France.
}
\email{francois.loeser@imj-prg.fr}
\urladdr{https://webusers.imj-prg.fr/$\sim$francois.loeser/}

\author{Dimitri Wyss}
\address{EPFL/SB/ARG, Station 8, CH-1015 Lausanne, Switzerland
}
\email{dimitri.wyss@epfl.ch}
\urladdr{https://people.epfl.ch/dimitri.wyss}

\title{A motivic Fundamental Lemma}

\maketitle 
\begin{abstract} 
In this paper we prove motivic versions of the Langlands-Shelstad
Fundamental Lemma and Ngô's Geometric Stabilization. 
To achieve this, we follow the strategy from the recent proof by 
Groechenig, Wyss and
Ziegler which avoided the use of perverse sheaves using instead  $p$-adic integration and Tate duality.
We  make a key use of a construction of Denef and Loeser which assigns a virtual motive to any definable set 
in the theory of pseudo-finite fields.
\end{abstract}

\tableofcontents

\section{Introduction}
\subsection{}First introduced in 1979 as a technical  device of combinatorial nature, the Langlands-Shelstad Fundamental Lemma was a conjecture  which has long resisted the efforts of mathematicians until its full proof by Ng\^o in 2008 \cite{ngo:fl}. 
We shall start with a loose presentation for non-specialists of some of the motivations behind it.
A powerful tool in the Langlands program is 
the Arthur-Selberg trace formula which is a far-reaching extension of the classical Poisson summation formula for an arbitrary reductive group $G$ over a global field.
It is an equality between a spectral side involving traces of automorphic representations and a so-called geometric side which is a sum of adelic orbital integrals.
These adelic orbital integrals are products of orbital integrals over local fields, that is integrals over $G(F)$-conjugacy classes, with $F$ a local field.
Langlands's principle of functoriality predicts that   automorphic representations for a given reductive group $G$ should transfer to 
automorphic representations for any another reductive group $G'$, given a morphism between their Langlands duals.
An approach to prove it in some cases consists in comparing the geometric side of the trace formula for the two groups.
If $F$ is, say, a local field, an obstacle that immediately arises before going further  is that in general there is  a difference between stable conjugacy classes, 
that is $G (\overline{F})$-conjugacy classes with $\overline{F}$ the algebraic closure of $F$, and $G (F)$-conjugacy classes.
Indeed, while it is  often possible to associate stable conjugacy classes for $G'$ to stable conjugacy classes for $G$
there is no natural way to do this for conjugacy classes over $F$.
The Fundamental Lemma is a way to overcome this difficulty by expressing orbital integrals for a reductive group $G$ over  a local field in terms
of stable orbital integrals (integrals over stable conjugacy classes) for different groups over the same local field.

\subsection{}Let us now state the Fundamental Lemma  in the form  proved by Ng\^o in \cite{ngo:fl}.
We fix a non-archimedean local field $F$ and a  reductive group $G$ over $F$.
We assume $G$ is unramified, that is, that $G$ is the generic fiber of a reductive group scheme over the valuation ring of $F$.
Let $\gamma_G$ be a  regular semi-simple $F$-point of the Lie algebra $\mathfrak{g}$ of $G$ with centralizer
$I_{\gamma_G}$. 
The orbital integral associated to these data is 
\[O_{\gamma_G} = \int_{I_{\gamma_G}(F) \backslash G (F)} \mathbf{1}_{\mathfrak{g}(\mathcal{O}_F)} (g^{-1} \gamma_G g) dg,\] with $dg$ a suitably normalized Haar measure.
The set of $G(F)$-conjugacy classes  
in the stable conjugacy class of $\gamma_G$
can be 
identified with the
set of elements of  $H^1 (F, I_{\gamma_G})$ whose image in $H^1 (F, G)$ is trivial. 
Under this identification the class of $\gamma_G$ corresponds to the identity element.
By Fourier transformation on the finite abelian group
$H^1 (F, I_{\gamma_G})$, the orbital integral
$O_{\gamma_G}$ can be expressed as a linear combination of
$\kappa$-orbital integrals $O_{\gamma_G}^{\kappa}$
where,
for  a character $\kappa : H^1 (F, I_{\gamma_G}) \to \mathbb{C}^\times$, 
$O_{\gamma_G}^{\kappa}$
is defined as
\[O_{\gamma_G}^{\kappa} = \sum_{[\gamma']} \kappa ([\gamma']) \, O_{\gamma'},\] 
 with $[\gamma']$ running over
the set of $G(F)$-conjugacy classes  within the stable conjugacy class
of $\gamma_G$. When $\kappa$ is the trivial character one gets the stable orbital integral 
$O^{\mathrm{stab}}_{\gamma_G}$. Thus, the problem of stabilization of orbital integrals can be  reduced to expressing a
$\kappa$-orbital integral for $G$ in terms of a stable orbital integral on some different unramified reductive group $H$ over $F$.

The group $H$ is constructed via the theory of endoscopy out of the data  $(G, I_{\gamma_G}, \kappa)$. In particular, the set of roots  of the endoscopic group $H$ is a  subset of the set of all roots of $G$. Furthermore,
to each $\gamma_G$ as before corresponds a    $F$-point $\gamma_H$ of the Lie algebra $\mathfrak{h}$ of $H$ 
which is well defined up to  stable conjugacy and which will be assumed to be regular semi-simple.
In particular, the stable orbital integral 
$O^{\mathrm{stab}}_{\gamma_H}$ depends only on $\gamma_G$ and $H$.
On the other hand, the $\kappa$-orbital integral $O_{\gamma_G}^{\kappa}$ depends on the conjugacy class of $\gamma_G$, another choice 
$\gamma'_G$ would multiply it by a factor $\kappa ([\gamma'_G])$.
When the characteristic of $F$ is greater than twice the Coxeter number of $G$, one way to solve this indeterminacy is to use
the so-called  Kostant section which provides
a canonical conjugacy class within
each stable conjugacy class of regular semisimple elements in $\mathfrak{g} (F)$.

The Fundamental Lemma, as proved by Ng\^o in \cite{ngo:fl}, relates $\kappa$-orbital integrals on the group $G$
to stable orbital integrals on the endoscopic
group $H$  in the following way:
\begin{FUL}\label{FUL}With the above notation,
assume the characteristic of $F$ is greater than twice the Coxeter number of $G$ and that the conjugacy class of $\gamma_G$ is the one provided by the Kostant section.
Then
we have the equality
\begin{equation}\tag{$\ast$}\label{FLequ}
O_{\gamma_G}^{\kappa}
= \Delta (\gamma_H, \gamma_G) \, O^{\mathrm{stab}}_{\gamma_H},
\end{equation}
with $\Delta (\gamma_H, \gamma_G)$ of the form $q_F^r$, with $q_F$ the cardinality of the residue field  of $F$ and $r$ an explicit integer.
\end{FUL}

\subsection{}
Heuristically, 
all relations between integrals should come, in a certain sense, from geometry as mere shadows of geometric relations.
A suggestive instance of this principle
is provided by the
 Kontsevich-Zagier conjecture on algebraic relations between periods \cite{KZ}. In our case an example of such a phenomenon  already showed up in the proof of the Fundamental Lemma for $\mathrm{Sp}(4)$ by  Hales \cite{hales_pams}.
Indeed, in this case, each side of the Fundamental Lemma can be expressed as the number points of an elliptic curve over a finite field, and
the crucial  fact used in the proof is that these two elliptic curves are isogenous, hence have the same number of points, cf.  \cite{hales_hyperelliptic,hales_duke}.
As isogenous elliptic curves have the same rational Chow motive, this leads to ask whether (\ref{FLequ}) could already hold at the level of rational Chow motives.
The main result of the present paper is  that  it is indeed possible to lift the identity
(\ref{FLequ}) to  an identity between two objects  in a Grothendieck ring of  rational Chow motives.
More precisely we prove an equality in the Grothendieck ring of Chow motives that specializes to the Fundamental Lemma for $p$ large enough.
Note that a statement of this type was conjectured by Hales in \cite{hales_computed}.
It would be interesting to establish a non-virtual version of our result, similarly to what Hoskins and Pepin Lehalleur prove in \cite{HPL}, a non-virtual analogue of the result of Loeser and Wyss \cite{LW19} on topological Mirror Symmetry.

\subsection{}One of the key novelty in Ngô's proof 
is to  move from local geometry over a local non-archimedean field to
the global geometry over a curve, using the Hitchin fibration which  is the global analogue of affine Springer fibers. 
Ngô's proof is of geometric nature,
and uses the decomposition theorem 
to 
control the
support of the perverse cohomology sheaves of the Hitchin fibration.
Recently, Groechenig, Wyss and Ziegler 
gave a new proof of the Fundamental Lemma using $p$-adic integration along the Hitchin fibration \cite{GWZ18}, inspired by their proof of the topological  Mirror Symmetry conjecture of Hausel-Thaddeus \cite{ht_inv}.
More precisely, they provide a new proof of Ngô's
 Geometric Stabilization  which is known to imply
 the Fundamental Lemma by Ngô's work \cite{ngo:fl}.
 An important feature of their proof is that it circumvents the use of perverse sheaves and the decomposition theorem, which seemed insuperable 
obstacles to a motivic version. Instead a transition from $p$-adic to motivic arguments is often possible, for example in the case of topological Mirror Symmetry \cite{LW19}.

\subsection{}We use the  general theory of motivic integration over fields of characteristic zero developed by Cluckers and Loeser in \cite{CL-2008}
which is based on the notion of definability in model theory. Indeed it was already proved by Cluckers, Hales and Loeser in \cite{chl},  that all the terms occurring in 
the Fundamental Lemma
can be defined purely within a first order language for valued fields, namely
the  Denef-Pas
language. Thus using the ``Transfer Principle'' from \cite{cl_annals}, a generalization of the Ax-Kochen-Er\v{s}ov theorem, 
 they deduce the Fundamental Lemma in mixed characteristic from the equicharacteristic case, recovering conceptually an earlier result of 
Waldspurger \cite{W06}.

\subsection{}In the theory of motivic integration  developed in \cite{CL-2008}, volumes are elements of the ring $\cC (*_k)$ which is 
a localization of the Grothendieck ring of definable sets over the residue field $k$ (assumed to be of characteristic zero). 
Such 
definable sets correspond to equivalence 
classes of
formulas in the first order ring language with coefficients in $k$, two formulas being equivalent if they are  equivalent when interpreted in every field extension of $k$.
Since in this paper we are only concerned with local non-archimedean fields, which have finite residue fields, it is natural to consider only fields extensions
of $k$ that satisfy all first order properties satisfied by finite fields. 
Such fields were introduced by Ax  in \cite{ax} under the name of pseudo-finite fields.
Recall that a field $L$ is pseudo-finite 
if and only if it 
is perfect with a unique extension in every positive degree and every geometrically irreducible variety over $L$ has an $L$-rational point. 
We will consider a specialization of the theory 
of \cite{CL-2008} for which  volumes no longer take place in the ring $\cC (*_k)$, but in a ring called
$\cC_\psf (*_k)$ which is defined exactly as $\cC (*_k)$, except that two formulas will be considered to be equivalent if and only if they are  equivalent when interpreted in every pseudo-finite field extension of $k$.

\subsection{} In order to identify for example classes of dual abelian varieties we need to further specialize  $\cC _\psf(*_k)$. By a result of Denef and Loeser \cite{DL-PSF} (see also \cite{dl_icm,nicaise})
one may assign to any element $\alpha$ of $\cC_\psf (*_k)$  a virtual Chow motive $\chi_\psf (\alpha)$ 
such that 
the number of points of $\alpha$ interpreted in a finite field of sufficiently large characteristic can
be recovered from $\chi_\psf (\alpha)$ 
by taking the trace of the Frobenius on its étale realization.
Combining the  construction from \cite{DL-PSF} and the general theory of 
 \cite{CL-2008} one can work out a theory of motivic integration with values in localized Grothendieck rings of Chow motives, which specializes well to $p$-adic integration for $p$ large enough.
 This  is the framework which is used in this paper.  Note that the integration theory which was used 
 in
 \cite{LW19}, for which  two formulas were considered to be equivalent 
if they are equivalent when interpreted algebraically  closed field extensions of $k$, would not
suffice here, since it  does not have the property that the volume of a definable set specializes to its $p$-adic volume.

 \subsection{}Before saying more on the structure of the paper,
 let us recall the statement of
Geometric Stabilization.
 Let $X$ be a smooth 
 projective geometrically connected curve over a finite field $k$.
 Fix a line bundle $D$ on $X$ and 
 a quasi-split reductive group $G$ over $X$.
 A $G$-Higgs bundle with coefficients in $D$ is a pair $(E, \theta)$ with $E$ a $G$-torsor on $X$ and $\theta$ a global section of $\ad(E) \otimes D$.
 We denote by 
 $\FM_G = \FM_G(\X,D)$ the moduli stack of $G$-Higgs bundles with coefficients in $D$
 on $X$ and by $\chi: \FM_G \rightarrow \A$
 the Hitchin fibration.

The Hitchin fibration is endowed with an action of a Picard stack  $\FP_G \to \A$.
The anisotropic locus $\A^{\ani}$ in $ \A$ 
 is   the locus of points  $a$ such that
 every Higgs bundle $(E, \theta) \in \chi^{-1} (a)$
 is
generically anisotropic in the sense that for any trivialization of $E$ and $D$ over the generic point of $X$, the Higgs field $\theta$ is anisotropic as an element of $\mathfrak{g}(k(X))$. 
When $a$ belongs to  
 $\A^{\ani}$, the group
 $\pi_0 (\FP_{G,a})$ is  finite.
 Fix $a \in  \A^{\ani}$, $H$ an  endoscopic group of $G$, 
 and $\kappa: \pi_0 (\FP_{G,a}) \to \overline{\mathbb{Q}_{\ell}}^{\times}$
the corresponding character.
For technical reasons one needs to work on some  \'etale open covering $\widetilde{\A}^{\ani} \to  \A^{\ani}$,
and we denote by $\widetilde \FM_G$, etc, the corresponding pullback.
The  locus $\widetilde{\A}_H^{\ani}$ relative to $H$ is contained in $\widetilde{\A}^{\ani}$.

\begin{GS}\label{GS}
For every $a \in \widetilde{\A}_H^{\ani}(k)$
we have
\[\#^{\kappa} \,
\widetilde{\FM}_{G,a} (k)
=
q^{r_G^H (D)}
\#^{\mathrm{\Stab}} \,  \widetilde{\FM}_{H,a} (k).
\]
\end{GS}

Here $\#^{\kappa}$
denotes
the trace of Frobenius on the $\kappa$-isotypical component of the cohomology with compact supports,
$\#^{\mathrm{stab}}$ denotes  $\#^{\kappa}$ when $\kappa$ is the trivial character,
$q$ is the cardinality of $k$ and 
$r_G^H (D)$ denotes $1/2 (\mathrm{dim} (\widetilde \FM_G)  - \mathrm{dim} (\widetilde \FM_H))$.

\subsection{}The structure of the paper is the following.
In Section \ref{sec1} we develop a version
of motivic
integration which is suited for the purpose of this paper, using
the general framework  from \cite{CL-2008}.
This can be viewed  as a relative version of 
the integrals constructed  in \cite{DL-PSF},
providing volumes in Grothendieck groups of \'etale motives
to  definable sets in the Denef-Pas language with pseudo-finite residue fields.
We show that the basic properties we need (change of variable, Fubini theorem, specialization to $p$-adic integration, formula for the orbifold volume)
still hold in this context.
In the short Section \ref{sec:geo-setup}, which is mostly borrowed from
Section 4 of \cite{ngo:fl}, we recall the preliminary material required in order  to formulate the Geometric Stabilization theorem. 
In Section \ref{sec:def:geo:stab}, we show how the various objects appearing in the statement of the Geometric Stabilization theorem can actually be encoded
using the Denef-Pas language as definable
objects  in the theory of pseudo-finite fields.
In particular Galois cohomology is encoded by cocycles so that  classical   isomorphisms between
Galois cohomology groups of finite fields 
lift to definable isomorphisms over pseudo-finite fields.
We also 
use  twists of definable sets by Galois cocycles to construct isotypical components of  motives associated to definable sets.
It is in this framework that we  formulate 
our  motivic version of Geometric Stabilization
as an equality between motivic integrals, Theorem \ref{th:geo:stab}.

Section \ref{sec:proofmt} is devoted to the proof of motivic 
Geometric Stabilization, Theorem \ref{th:geo:stab:a}.
Following \cite{GWZ18} we use the formalism of orbifold measures to reduce it to an identity between motivic integrals on Hitchin fibers for Langlands dual groups,
Theorem \ref{mainmint}. By a Fubini theorem one may further reduce to an identity involving only generically smooth Hitchin fibers, which then follows from Tate duality.

Finally, 
we explain in Section \ref{sec:fl} how the strategy introduced by  Ng\^o 
to deduce the Fundamental Lemma from Geometric Stabilization can be adapted to the motivic setting, allowing us to prove our motivic version of the 
Fundamental Lemma, Theorem \ref{th:fl}. In particular, it follows directly from 
Theorem \ref{th:fl} that the Fundamental Lemma \ref{FUL} holds for quasi-split reductive groups of large enough residue characteristic
(Corollary \ref{corfl}).

\section{Motivic integration with pseudo-finite residue field}\label{sec1}
In this section we recall what we need about motivic integration, specialized to the case of pseudo-finite residue field. We moreover extend the construction for motivic integrals to take values in the Grothendieck groups of \'etale motives, and show that many compatibility properties continue to hold in this context, such as the Fubini theorem. 

\subsection{Pseudo-finite fields}
A pseudo-finite field is an infinite field 
satisfying all first order sentences in the ring language that are true in every finite field. 
More concretely, by Ax's theorem \cite{ax} a field $K$ is pseudo-finite if and only if it is perfect, admits a unique extension of degree $n$ for each $n\in \NN$, and is pseudo-algebraically closed, which means that every geometrically irreducible $K$-variety admits a $K$-point. One can show that such properties are axiomatizable by first order formulas in the ring language and we denote $T_\psf$ the corresponding theory. For a field $k$, we also denote by $T_{\psf,k}$ the theory of pseudo-finite fields containing $k$. Denote by $\RDef_k$ the category of definable sets in the theory of pseudo-finite fields containing $k$, and let $\K{\RDef_k}$ be the corresponding Grothendieck ring\footnote{The terminology $\RDef_k$ will make sense in Section \ref{sec-DPlang}, where we consider valued fields with pseudo-finite residue fields.}. More generally, when $S$ in $\RDef_k$ is a definable set, we write $\RDef_S$ for the category of definable sets with parameters in $S$, and $\K{\RDef_S}$ its corresponding Grothendieck ring. 

An especially useful fact in connection with motivic integration is that this theory admits quantifier elimination in the language of Galois formulas, 
and by work of Denef and Loeser in \cite{DL-PSF}, one can associate a
canonical virtual motive to every quantifier free Galois formula.
Let us briefly recall the notion of quantifier-free Galois formulas, referring to 
 \cite{FJ:FA} and  \cite[Section 2]{DL-PSF} for more details.

A Galois stratification of a $k$-scheme $X$ is the data $\FA=(X, C_i/A_i,\Con(A_i))_{i\in I}$ where $(A_i)$ is a finite stratification of $X$ into locally closed normal and integral subschemes, $C_i\to A_i$ is a Galois cover, with group $G(C_i/A_i)$ and $\Con(A_i)$ is a family of subgroups of $G(C_i/A_i)$ which is stable under conjugation. Let $K$ be a field extension of $k$ and $x\in X(K)$ belonging to $A_i$. Let $\Ar(x)$ be the Artin symbol at $x$, that is the conjugacy class of the decomposition subgroup of $G(C_i/A_i)$ at $x$. We write $\Ar(x)\subseteq \Con(\FA)$ if $\Ar(x)\subseteq \Con(A_i)$. We now define
\[
\FA(K)=\set{x\in X(K)\mid \Ar(x)\subseteq \Con(\FA)}.
\]
If $X$ is affine over $k$, there is a ring formula $\varphi$ with parameters in $k$ such that $\varphi(K)=\FA(K)$ for every pseudo-finite field $K$ extending $k$. We call such a formula a quantifier-free Galois formula, see \cite[Remark 30.6.4]{FJ:FA} for details.  Reciprocally, every ring formula is equivalent to a Galois formula with quantifiers by \cite[Remark 30.5.1]{FJ:FA}.

Let $\mcL_{\psf,k}$ be the language consisting of constants of $k$ and an $n$-ary relation symbol for every Galois stratification of the affine space $\mathbb{A}^n_k$. Every field extension $K$ of $k$ is naturally an $\mcL_{\psf,k}$-structure, where we interpret the relation corresponding to a Galois stratification $\FA$ as $\FA(K)$. The theory $T_{\psf,k}$ admits quantifier elimination in the language $\mcL_{\psf,k}$, by \cite[Proposition 31.1.3]{FJ:FA} (note that pseudo-finite fields are Frobenius fields in the terminology of \cite{FJ:FA}).

\subsection{\'Etale motives}
Let $k$ be a field of characteristic zero, $\Lambda$ a field of characteristic zero containing all roots of unity and $S$ a quasi-projective $k$-scheme. We write $\DA(S,\Lambda)$  for the triangulated category of \'etale motives over $S$ with coefficients in $\Lambda$ introduced by Ayoub \cite{Ayoub_ENS}. He also endowed $\DA(S,\Lambda)$ with a six-functor formalism and the full subcategory $\DAC(S,\Lambda)$ of constructible objects is stable under these six functors. We refer the reader to \cite{ayoub_icm} for a nice introduction.

Let $\K{\Var_S}$ be the Grothendieck group of varieties over $S$ and $\cM_S$ its localization by the  class $\mathbb{L}$ of the affine line over $S$. Ivorra and Sebag prove in \cite[Lemma 2.1]{ivo_seb}
the existence of a unique ring morphism
\[
\chi_{S} :
\cM_S \longrightarrow \K{\DAC (S, \Lambda)}
 \]
which assigns to a quasi-projective $S$-scheme $p : X \to S$ the element $\chi_{S} ([X]) := [p_! (\mathds{1}_X)]$ in the Grothendieck ring
$\K{ \DAC (S, \Lambda)}$.

Recall the following lemma from \cite{LW19}.
\begin{lemma}[{\cite[Lemma 2.1.1]{LW19}}]\label{evmot} Let $S$ be a quasi-projective $k$-scheme.
Let $\alpha \in \K{\DAC (S, \Lambda)}$ be such  that, for every schematic point $i_x : \set{x} \hookrightarrow S$ we have
$i_x^{*} (\alpha) = 0$. Then $\alpha = 0$.
\end{lemma}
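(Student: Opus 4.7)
The plan is to argue by noetherian induction on the dimension of a closed subscheme of $S$ containing the ``support'' of $\alpha$, where the six-functor formalism and constructibility of objects in $\DAC(-,\Lambda)$ are the main tools. Concretely, let $Z \subseteq S$ be a closed subscheme of minimal dimension such that $\alpha$ lies in the image of the pushforward $i_*\colon K_0(\DAC(Z,\Lambda)) \to K_0(\DAC(S,\Lambda))$ along $i\colon Z \hookrightarrow S$; such a $Z$ exists because $\alpha$ is itself a finite $\mathbb{Z}$-linear combination of classes of constructible motives, each of which has a well-defined support. If $Z$ is empty we are done, so it suffices to show that minimality forces $Z = \emptyset$.

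Assume for contradiction that $Z$ is nonempty, and pick an irreducible component of $Z$ with generic point $\eta$. Since the hypothesis $i_x^*\alpha = 0$ also holds for $x = \eta$, and restriction to $\eta$ factors as $K_0(\DAC(S,\Lambda)) \to K_0(\DAC(Z,\Lambda)) \to K_0(\DAC(\eta,\Lambda))$ by proper base change applied to $i_*$ (noting that $i$ is a closed immersion, hence proper, so $i_* = i_!$), we may replace $S$ by $Z$ and $\alpha$ by the class it comes from on $Z$, and assume that $Z = S$ is irreducible with generic point $\eta$ and that $i_\eta^*\alpha = 0$ in $K_0(\DAC(\eta,\Lambda))$.

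The next step is a spreading-out argument: represent $\alpha$ as $\sum_j \epsilon_j [M_j]$ with $M_j \in \DAC(S,\Lambda)$ and $\epsilon_j \in \{\pm 1\}$; the relation $\sum_j \epsilon_j [i_\eta^*M_j] = 0$ in $K_0(\DAC(\eta,\Lambda))$ is witnessed by a finite collection of distinguished triangles in $\DAC(\eta,\Lambda)$. Using that every constructible motive over $\eta$ is the pullback of a constructible motive on some dense open $U \subseteq S$, and that any morphism (in particular any distinguished triangle) between such spreads itself over a smaller dense open, we find a dense open $j\colon U \hookrightarrow S$ over which the entire chain of relations is already realized, so that $j^*\alpha = 0$ in $K_0(\DAC(U,\Lambda))$. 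Applying the localization triangle
\[
j_!j^* \longrightarrow \id \longrightarrow i'_* i'^* \longrightarrow
\]
for the complementary closed immersion $i'\colon S \setminus U \hookrightarrow S$, we obtain $\alpha = [i'_* i'^*\alpha]$ in $K_0(\DAC(S,\Lambda))$, exhibiting $\alpha$ as coming from a closed subscheme of strictly smaller dimension than $S$. This contradicts the minimality of $Z = S$, completing the induction.

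The main obstacle is the spreading-out step, since a vanishing in the Grothendieck group is a priori a much weaker statement than a vanishing in the underlying triangulated category; the argument only works because the relations in $K_0$ arise from finitely many distinguished triangles in $\DAC(\eta,\Lambda)$, and constructibility plus Ayoub's six-functor formalism guarantee that any such finite diagram descends from $\eta$ to some dense open $U$. Once this is granted, the noetherian induction is essentially formal and the localization triangle does all the work.
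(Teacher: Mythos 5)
The paper does not reprove this lemma; it cites \cite[Lemma 2.1.1]{LW19}, whose argument is indeed the combination you use: noetherian induction, continuity of $\DAC$ (constructible motives over a cofiltered limit of opens are a 2-colimit), and the localization triangle $j_!j^* \to \id \to i'_*i'^*$. Your use of proper base change to show $i_x^*\,i_*\beta = j_x^*\beta$ for $x \in Z$ is also correct, as is the observation that a relation in $K_0$ is witnessed by finitely many distinguished triangles and therefore descends to a dense open.

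There is, however, a genuine gap in the step ``we may replace $S$ by $Z$ \dots\ and assume that $Z = S$ is irreducible.'' Irreducibility of the minimal $Z$ is not free, and the argument as written breaks down without it: if $Z$ has several components of the same (maximal) dimension and you spread out only around the generic point $\eta$ of one of them, the open $U$ you obtain is \emph{not} dense in $Z$, and $Z \setminus U$ still contains the other top-dimensional components, so $\dim(Z\setminus U) = \dim Z$ and no contradiction with minimality of the dimension is reached. The fix is routine but needs to be said: spread out simultaneously around \emph{all} generic points $\eta_1,\dots,\eta_r$ of $Z$ to get opens $V_1,\dots,V_r$ with $\alpha|_{V_j}=0$; using localization on pairwise intersections one checks that $\alpha$ vanishes on $U=\bigcup_j V_j$, which is now dense in $Z$; then $Z\setminus U$ has strictly smaller dimension and the localization triangle gives the contradiction. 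Equivalently, one can run noetherian induction on closed subsets (take $Z$ minimal among closed subschemes supporting $\alpha$, not minimal-dimensional), which avoids any dimension count and the irreducibility issue altogether. With either repair the proof matches the reference.
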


Using Galois stratifications, we will extend $\chi_S$ to a ring morphism 
\[
\chi_{\psf,S} \colon \K{\RDef_S}\longrightarrow \K{\DAC(S,\Lambda)}\otimes \QQ
\]
in Section \ref{sec:psrel}.

Let $G$ be a finite group. Let $G^*$ be the set of characters of irreducible $\Lambda$-representations of $G$ and $R(G)$ be the group of virtual characters of $G$ with $\QQ$-coefficients.  

Let $M\in \DAC(S,\Lambda)$ be endowed with a $G$-action, \emph{i.e.} endowed with a group morphism $\rho\colon G\to \Aut_{\DAC(S,\Lambda)}(M)$. Let $\alpha\in G^*$ be a character of an irreducible $\Lambda$-representation of $G$ of dimension $n_\alpha$. We consider the projector
\[p_\alpha=\frac{n_\alpha}{\abs{G}}\sum_{g\in G}\alpha(g^{-1}) \rho(g)\in \End(M).\]
Since $\DAC(S,\Lambda)$ is pseudo-abelian by \cite[Proposition 9.2]{Ayoub_ENS}, the image of $M$ by $p_\alpha$ is well-defined. We denote it by $M^\alpha$ and call it the $\alpha$-isotypical component of $M$. When $\alpha$ is the trivial character, we also write $M^\alpha=M^G$, the $G$-invariant component of $M$. Since $\bigoplus_{\alpha\in G^*} p_\alpha=\id\in \End(M)$, we have that $M\simeq \bigoplus_{\alpha\in G^*} M^\alpha$.

For $S$ a quasi-projective $k$-scheme we consider $\Sch_S^G$, the category of $S$-schemes endowed with a fiberwise good $G$-action, by which we mean schemes $p \colon X\to S$ with a $G$-action such that for every $x\in X$, the $G$-orbit of $x$ is contained in an affine subset of $p^{-1}(p(x))$. Notice that for $X \to S$ quasi-projective, any $G$-action on $S$ is good.

By functoriality, for an object $p\colon X\to S$ of $\Sch_S^G$ its motive $p_! (\mathds{1}_X)$ is endowed with a $G$-action. Hence for $\alpha\in G^*$, its $\alpha$-isotypical component $p_! (\mathds{1}_X)^\alpha$ is defined above. 

The following proposition is a relative version of \cite[3.1.1]{DL-PSF} or \cite[6.1]{BN-equiv-mot}.

\begin{proposition}
\label{prop-chialpha} Let $S$ be a quasi-projective $k$-scheme, $G$ a finite group and $\alpha\in R(G)$ a virtual character of $G$. There is a unique  group morphism\footnote{This is merely a group morphism, not a ring morphism, contrary to what is claimed in \cite[3.1.1]{DL-PSF}.}
\[
\chi_S(-,\alpha) : \K{\Sch_S^G} \longrightarrow \K{\DAC(S,\Lambda)}\otimes \QQ
\]
such that:
\begin{enumerate}
\item If $p\colon X\to S \in \Sch_S^G$ and $\alpha\in G^*$ is the character of an irreducible representation of dimension $n_\alpha$ of $G$, then 
\[
n_\alpha\chi_S([X],\alpha)=[p_! (\mathds{1}_X)^\alpha].
\]
\item For every $X\in \Sch_S^G$, 
\[
\chi_S(X)=\sum_{\alpha\in G^*} n_\alpha \chi_S ([X],\alpha),
\]
where the sum runs over characters $\alpha$ of irreducible representations of $G$ and $n_\alpha$ denotes their dimension.
\item For every $X\in \Sch_S^G$, the map 
\[
\alpha\in R(G)\longmapsto \chi_S(X,\alpha)\in\K{\DAC(S,\Lambda)}
\otimes \QQ\]
 is a group morphism. 
\end{enumerate}
\end{proposition}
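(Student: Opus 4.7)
The plan is to define $\chi_S(-,\alpha)$ first on generators for $\alpha$ an irreducible character, then extend $\QQ$-linearly in $\alpha$, and finally verify that the resulting assignment descends to the Grothendieck group. For $\alpha \in \widehat{G}$ of dimension $n_\alpha$ and $p\colon X \to S$ in $\Sch_S^G$, I would set
\[
\chi_S([X],\alpha) \defeq \tfrac{1}{n_\alpha}\bigl[p_!(\mathds{1}_X)^\alpha\bigr] \in \K{\DAC(S,\Lambda)}\otimes \QQ,
\]
the formula being well-defined because $\DAC(S,\Lambda)$ is pseudo-abelian, so the image of the projector $p_\alpha$ cuts out a genuine direct summand. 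For a general virtual character $\alpha = \sum_i c_i \alpha_i \in R(G)$ with $\alpha_i$ irreducible, extend by $\chi_S([X],\alpha) \defeq \sum_i c_i \chi_S([X],\alpha_i)$, so that property (3) holds by construction.

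Next I would verify the scissor relation $\chi_S([X],\alpha) = \chi_S([Z],\alpha) + \chi_S([U],\alpha)$ for any $G$-invariant closed immersion $i\colon Z \hookrightarrow X$ with open complement $j\colon U \hookrightarrow X$. The key input is that the localization triangle
\[
(p \circ j)_! \mathds{1}_U \longrightarrow p_! \mathds{1}_X \longrightarrow (p \circ i)_! \mathds{1}_Z \longrightarrow
\]
in $\DAC(S,\Lambda)$ is $G$-equivariant because the immersions $i$ and $j$ are. Applying the projector $p_\alpha$ term-by-term produces a corresponding triangle on $\alpha$-isotypical components, which upon passing to classes in $\K{\DAC(S,\Lambda)}\otimes\QQ$ yields the desired additivity. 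This shows the assignment factors through $\K{\Sch_S^G}$.

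Properties (1) and (3) hold by construction, and property (2) follows from the canonical decomposition $p_!(\mathds{1}_X) \simeq \bigoplus_{\alpha \in \widehat G} p_!(\mathds{1}_X)^\alpha$ valid in a $\QQ$-linear pseudo-abelian category. Uniqueness is immediate: (1) determines $\chi_S$ on pairs of a generator and an irreducible character, (3) then forces the value on all of $R(G)$, and finally additivity in the first argument forces the value on all of $\K{\Sch_S^G}$. The main subtlety I anticipate is producing the $G$-action on the localization triangle in a sufficiently functorial way: the units and counits of the $(j_!, j^*)$ and $(i_!, i^*)$ adjunctions must be shown to be natural with respect to automorphisms of $X$, and the fiberwise-good action hypothesis is what guarantees that the whole six-functor apparatus stays compatible with the $G$-action and that the projector $p_\alpha$ can be applied at the level of triangles.
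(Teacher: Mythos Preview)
Your proposal is correct and follows essentially the same approach as the paper. The only cosmetic difference is that the paper defines $\chi_S([X],\alpha) = [\iHom(V_\alpha, p_!(\mathds{1}_X))^G]$, viewing $V_\alpha$ as $\mathds{1}_S^{\oplus n_\alpha}$ with its $G$-action, and then derives property~(1) from the standard isomorphism $\iHom(V_\alpha,M)^G \otimes V_\alpha \simeq M^\alpha$; this is of course the same as your direct definition $\tfrac{1}{n_\alpha}[p_!(\mathds{1}_X)^\alpha]$. The paper simply invokes ``additivity of $p_!$'' for the scissor relation, whereas you spell out the localization triangle and flag the $G$-equivariance issue --- a concern that is ultimately harmless since $\Lambda[G]$ is semisimple (so $G$-equivariant triangles decompose into a product over $\widehat G$ of ordinary distinguished triangles).
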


\begin{proof}
Let $\alpha\in G^*$ be the character of an irreducible $\Lambda$-representation $V_\alpha$ of $G$  of dimension $n_\alpha$. View $V_\alpha$ as an object of $\DAC(S,\Lambda)$ endowed with a $G$-action, by identifying it with $\bigoplus_{1\leq i\leq n_\alpha}\mathds{1}_S$. Let $p\colon X\to S\in \Sch_S^G$. Define 
\[\chi_S([X],\alpha)=[\iHom(V_\alpha, p_!(\mathds{1}_X))^G],
\] 
where $\iHom$ is the internal Hom.

By additivity of $p_!$, as in the proof of \cite[Lemma 2.1]{ivo_seb}, this extends uniquely to a group morphism 
\[
\chi_S(-,\alpha) : \K{\Sch_S^G}\longrightarrow \K{\DAC(S,\Lambda)}.
\]
Property $(1)$ now follows from the standard fact that $\iHom(V_\alpha, M)^G\otimes V_\alpha$ is isomorphic to $M^\alpha$, 
see \cite[6.1]{BN-equiv-mot} for details. Property $(2)$ follows from the fact that $[M]=\sum_{\alpha\in G^*} [M^\alpha]$. We finally define $\chi_S(-,\alpha)$ for a virtual character $\alpha$ in the unique way such that $(3)$ holds. 
\end{proof}

The following proposition collects a few useful functorialities.
\begin{proposition}
\label{prop-chialpha-change-gps} 
 Let $G$ and $H$ be finite groups, $X\in \Sch^G_S$, $Y\in \Sch^H_S$,  $\alpha$ be a character of $G$, $\beta$ a character of $H$.
\begin{enumerate}
\item View $\alpha\cdot\beta$ as a character of $G\times H$. Consider $X \times_S Y$ in $\Sch^{G\times H}_S$. Then
\[
\chi_S([X \times_S Y],\alpha\cdot\beta)=\chi_S([X],\alpha)\chi_S([Y],\beta).
\]
\item Assume that $H$ is a normal subgroup of $G$, with quotient morphism $\rho\colon G\to G/H$, and let $\gamma$ be a character of $G/H$. Then
\[
\chi_S([X/H]),\gamma)=\chi_S([X],\gamma\circ\rho).
\]
\item Assume that $H$ is a subgroup of $G$. Then
\[
\chi_S([X],\mathrm{Ind}_H^G \beta)=\chi_S([X],\beta),
\]
where in the second term $X$ is viewed in $\Sch_S^H$. 
\item Assume that $H$ is a subgroup of $G$ and that $X$ is isomorphic as a $G$-variety to $\sqcup_{s\in G/H}sY$.  Then
\[
\chi_S([X],\alpha)=\chi_S([Y],\mathrm{Res}_H^G\alpha). 
\]
\end{enumerate}
\end{proposition}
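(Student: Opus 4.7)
The strategy is to reduce, via the linearity in $\alpha$ built into the definition of $\chi_S(-,\alpha)$, to the case where every character in sight is the character of an irreducible $\Lambda$-representation, and then translate each statement into a $G$-equivariant identification of motives in $\DAC(S,\Lambda)$. The key tool, exactly as in the proof of Proposition \ref{prop-chialpha}, is the description
\[
n_\alpha\cdot\chi_S([X],\alpha) = [\iHom(V_\alpha, p_!(\mathds{1}_X))^G]
\]
together with the identity $\iHom(V_\alpha,M)^G\otimes V_\alpha \simeq M^\alpha$ and the fact that $p_!$ is a symmetric monoidal functor, so that all the usual functorialities (Künneth, base change, descent) extend naturally to the $G$-equivariant setting in the pseudo-abelian $\Lambda$-linear category $\DAC(S,\Lambda)$.

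For (1), I would invoke the Künneth-type isomorphism $(p_X\times_S p_Y)_!(\mathds{1}_{X\times_S Y})\simeq p_{X,!}(\mathds{1}_X)\otimes p_{Y,!}(\mathds{1}_Y)$ coming from the monoidal structure of Ayoub's six functors, upgraded to a $G\times H$-equivariant isomorphism. Every irreducible $G\times H$-representation splits as an external product $V_\alpha\boxtimes V_\beta$ with $\alpha\in\widehat{G}$, $\beta\in\widehat{H}$; combined with the standard identity $\iHom(V_\alpha\boxtimes V_\beta,M\otimes N)^{G\times H}\simeq \iHom(V_\alpha,M)^G\otimes \iHom(V_\beta,N)^H$, this yields the multiplicativity on products. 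For (2), I would identify $q_!(\mathds{1}_{X/H})\simeq (p_!(\mathds{1}_X))^H$ as $G/H$-equivariant objects of $\DAC(S,\Lambda)$, where $q\colon X/H\to S$; this is descent for $p_!$ along $X\to X/H$, which is valid because the $G$-action on $X$ is fiberwise good so $X/H$ exists in $\Sch^{G/H}_S$. For an irreducible $\gamma\in \widehat{G/H}$ with inflation $\gamma\circ\rho$ through $\rho\colon G\to G/H$, the $H$-action on the corresponding representation is trivial, so taking the $\gamma$-isotypic component of the LHS is the same operation as taking the $(\gamma\circ\rho)$-isotypic component of $p_!(\mathds{1}_X)$.

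For (3), the point is a motivic Frobenius reciprocity: for any $G$-equivariant object $M$ of $\DAC(S,\Lambda)$ and any irreducible $\beta\in\widehat{H}$, one has
\[
\iHom(\mathrm{Ind}_H^G V_\beta, M)^G \simeq \iHom(V_\beta, \mathrm{Res}^G_H M)^H,
\]
which follows from the adjunction between restriction and induction of equivariant objects and the standard compatibility of $\iHom$ with these adjoints. Applying this to $M=p_!(\mathds{1}_X)$ gives the claim. For (4), the hypothesis $X\simeq \sqcup_{s\in G/H}sY$ produces a canonical $G$-equivariant isomorphism $p_!(\mathds{1}_X)\simeq \mathrm{Ind}_H^G p_{Y,!}(\mathds{1}_Y)$ (the $G$-orbit structure on $X$ is exactly the one carried by an induced object); applying $\iHom(V_\alpha,-)^G$ and the same Frobenius reciprocity as in (3), but in the other direction, yields $\chi_S([X],\alpha)=\chi_S([Y],\mathrm{Res}^G_H\alpha)$.

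The main obstacle is not the representation-theoretic content, which is standard once everything is translated into projectors on Hom-spaces, but rather the careful bookkeeping of the $G$-equivariant refinements of the underlying motivic isomorphisms (Künneth, quotient descent, and the induced-scheme decomposition) in Ayoub's framework. Once these are in place, the four identities follow at the level of irreducible characters and extend by $\QQ$-linearity to $R(G)$.
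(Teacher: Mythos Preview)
Your proposal is correct and follows exactly the approach the paper intends: the paper's proof consists of the single sentence ``The proof is similar to that of Proposition 3.1.2 in \cite{DL-PSF}'', and your argument supplies precisely the expected relative extension of that argument, using K\"unneth for (1), the identification $q_!(\mathds{1}_{X/H})\simeq (p_!(\mathds{1}_X))^H$ for (2), and Frobenius reciprocity in both directions for (3) and (4). Note incidentally that the right-hand side of (1) in the paper is a typo for $\chi_S([X],\alpha)\chi_S([Y],\beta)$, which is the identity you actually prove and the one used later (e.g.\ in \eqref{eqn-chi-iso-tordu}).
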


\begin{proof}
The proof is similar to that of  Proposition 3.1.2  in \cite{DL-PSF}.
\end{proof}

\subsection{Pseudo-finite realization}
\label{sec:psrel} Let $A$ be an integral normal locally closed subscheme of an $S$-scheme $X$, $C\to A$ a Galois cover with Galois group $G$ and $\Con$ a family of subgroups of $G$ stable under conjugation. 
Let $\alpha_\Con$ be the central function on $G$ defined by
\[
\alpha_\Con(x)=\left\{ 
\begin{array}{ll}
1&\text{if }\langle x \rangle \: \in \Con,\\
0 &\text{otherwise},
\end{array}
\right.
\]
where $\langle x\rangle$ is the subgroup of $G$ generated by $x$. The function $\alpha_\Con$ can be written as a $\QQ$-linear combination of irreducible characters of $G$. One defines $\chi_S(C/A, \Con)=\chi_S(C, \alpha_\Con)\in\K{\DAC(S,\Lambda)}\otimes \QQ$, and  for a Galois stratification $\FA=(X, C_i/A_i,\Con(A_i))_{i\in I}$ one sets
\[
\chi_S(\FA)=\sum_i \chi_S (C_i,\alpha_{\Con_i}).
\]
If two Galois stratifications $\FA$ and $\FA'$ of $X$ are equivalent, in the sense that they define the same definable subset of $X$ in every pseudo-finite field, then by \cite[3.2.1]{DL-PSF}, $\chi_S(\FA)=\chi_S(\FA')$. Moreover, if two Galois stratifications $\FA$ and $\FA'$ of $X$ and $X'$ are such that there is a Galois formula defining a bijection between the interpretation of $\FA$ and $\FA'$ in every pseudo-finite field extending $k$, then by \cite[3.3.5]{DL-PSF}, $\chi_S(\FA)=\chi_S(\FA')$. Since $T_{\psf}$ admits quantifier elimination in the language of Galois formulas, one can define a group morphism
\[
\chi_{\psf,S} \colon \K{\RDef_{\psf,S}}\longrightarrow \K{\DAC(S,\Lambda)}\otimes \QQ,
\]
by sending the class of a definable set represented by a Galois stratification $\FA$ to $\chi_S(\FA)$. It follows from Proposition \ref{prop-chialpha-change-gps} that $\chi_{\psf,S}$ is a ring morphism. 

By construction, if $X$ is definable over $S$ in the ring language without quantifiers, for example if it is an algebraic variety over $S$, then $\chi_{\psf,S}([X])=\chi_S([X)]$, where the first class is in $\K{\Sch_S}$ and the second in $\K{\RDef_{\psf,S}}$.

\begin{lemma}
\label{lem-comp-chipsf-proj}
Let $p\colon S\to S'$ be a morphism between quasi-projective $k$-schemes. The following diagrams commute:
\begin{equation}
\xymatrixcolsep{1pc}
\xymatrix{
\K{\RDef_S} \ar[d]_{p_!} \ar[rr]^-{\chi_{\psf,S}} &&\K{\DAC(S,\Lambda)}\otimes \QQ \ar[d]^{p_!}  \\
\K{\RDef_{S'}} \ar[rr]^-{\chi_{\psf,S'}} &&\K{\DAC(S',\Lambda)}\otimes \QQ,
}
\end{equation}
\begin{equation}
\xymatrixcolsep{1pc}
\xymatrix{
\K{\RDef_{S'}} \ar[d]_{p^*} \ar[rr]^-{\chi_{\psf,S'}} &&\K{\DAC(S',\Lambda)}\otimes \QQ \ar[d]^{p^*}  \\
\K{\RDef_{S}} \ar[rr]^-{\chi_{\psf,S}} &&\K{\DAC(S,\Lambda)}\otimes \QQ.
}
\end{equation}
\end{lemma}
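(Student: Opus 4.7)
The plan is to reduce both diagrams to a statement about the building blocks $\chi_S([Y], \alpha)$ and then invoke the six-functor formalism in $\DAC$. Since every class in $\K{\RDef_S}$ is represented by a Galois stratification $\mathfrak{A} = (X, C_i/A_i, \Con(A_i))_i$ and $\chi_{\psf,S}(\mathfrak{A}) = \sum_i \chi_S([C_i], \alpha_{\Con_i})$, additivity together with linearity in the character (property $(3)$ of Proposition \ref{prop-chialpha}) reduces the problem to classes of the form $\chi_S([Y], \alpha)$ with $Y \in \Sch_S^G$ and $\alpha \in \widehat G$ the character of an irreducible $\Lambda$-representation $V_\alpha$ of dimension $n_\alpha$. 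By construction one has $n_\alpha \chi_S([Y], \alpha) = [q_!(\mathds{1}_Y)^\alpha]$, where $q\colon Y \to S$ and the $\alpha$-isotypical component is cut out by the $\Lambda$-linear idempotent $p_\alpha = \frac{n_\alpha}{|G|}\sum_{g \in G} \alpha(g^{-1}) \rho(g) \in \End(q_!\mathds{1}_Y)$.

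For the first diagram, on the definable side $p_!$ simply views an $S$-scheme as an $S'$-scheme by composition with $p$, so the Galois stratification is unchanged and I must verify
\[
p_! [q_!(\mathds{1}_Y)^\alpha] = [(p\circ q)_!(\mathds{1}_Y)^\alpha].
\]
The functor $p_!$ on $\DAC$ is $\Lambda$-linear and, by functoriality, transports the $G$-action on $q_!\mathds{1}_Y$ to the $G$-action on $p_! q_!\mathds{1}_Y = (p \circ q)_! \mathds{1}_Y$; hence it commutes with the idempotent $p_\alpha$, giving the claim.

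For the second diagram, on the definable side $p^*$ is base change: a Galois stratification $(X, C_i/A_i, \Con_i)$ of an $S'$-scheme $X$ pulls back (after refining the strata into integral ones, which is transparent in $K_0$) to a Galois stratification with the same Galois groups and conjugacy data. Writing $Y_S = Y \times_{S'} S$ with projections $q_S\colon Y_S \to S$ and $p' \colon Y_S \to Y$, Ayoub's proper base change \cite{Ayoub_ENS} yields $p^* q_! \mathds{1}_Y = q_{S,!}\, p'^* \mathds{1}_Y = q_{S,!}\mathds{1}_{Y_S}$. Since $p^*$ is $\Lambda$-linear and equivariant under the natural $G$-action coming from the equivariant structure on $Y$, it commutes with $p_\alpha$, yielding $p^*[q_!(\mathds{1}_Y)^\alpha] = [q_{S,!}(\mathds{1}_{Y_S})^\alpha]$, as required.

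The substantive technical input is thus Ayoub's proper base change; everything else is bookkeeping about $G$-equivariant idempotents, which is straightforward because the projectors cutting out isotypical components are $\Lambda$-linear combinations of elements of $G$ and are therefore preserved by every $\Lambda$-linear functor that respects the $G$-action. The most delicate point I expect to check carefully is the verification that the operation $p^*$ on $\K{\RDef_{S'}}$ defined via pullback of formulas does agree, term by term, with the pullback of Galois stratifications; this ultimately rests on the fact that the base change of a Galois cover is again a Galois cover with the same group and that the Artin symbol is compatible with base change on field-valued points.
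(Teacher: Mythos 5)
Your proposal matches the paper's proof in both structure and content: the paper likewise reduces to Galois stratifications, observes that $p_!$ on the definable side is just re-viewing the scheme over $S'$ while $p^*$ is base change of Galois stratifications, and then appeals to Ayoub's six-functor formalism for the corresponding statement in $\DAC$. Your write-up simply unpacks that last appeal explicitly — noting that $p_!$ and $p^*$ are $\Lambda$-linear and $G$-equivariant, hence commute with the idempotent $p_\alpha$, and invoking the $(\text{-})^*\!/(\text{-})_!$ base change isomorphism — which is exactly what "it follows from Ayoub's six operations formalism" means here.

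One minor remark: the isomorphism $p^*q_! \simeq q_{S,!}\,p'^*$ you use is the general $*{/}!$ base-change compatibility built into the six-functor package, and does not actually require any properness hypothesis, so calling it "proper base change" is a slight misnomer (though it causes no harm). You are also right to flag the refinement of strata under pullback as the delicate point; the paper treats it with the same brevity, relying implicitly on the fact that base change of a Galois cover remains Galois and on the invariance of $\chi_S(C/A,\Con)$ under passing to connected components recorded in Proposition \ref{prop-chialpha-change-gps}.
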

\begin{proof}
For the first diagram, fix $W\in\RDef_S$ and $\FA=(X, C_i/A_i,\Con(A_i))_{i\in I}$ a Galois stratification corresponding to $W$, where $X$ is an $S$-scheme. Let $\FA'$ be the same Galois stratification, but viewing $X$ as an $S'$-scheme via $p$. Then $\FA'$ is a Galois stratification of $W$ viewed in $\psf_{S'}$ via $p$, hence $\chi_{\psf,S'}(p_![W])=\chi_{S'}(\FA')$. It follows finally from Ayoub's six operations formalism that $\chi_{S'}(\FA')=p_!\chi_S(\FA)$. 

The proof of the commutativity of the second diagram is similar, using the fact that the base change along $p$ of a Galois stratification of an $S'$-scheme $X$ is a Galois stratification of $X\times_S S'$, and that the associated definable sets correspond. 
\end{proof}

\subsection{Motives with definable parameters}

We shall need to consider also definable sets with parameters, and to associate motives to those. Let $S\in \RDef_k$, say with $n$ variables, and let $\RDef_S$ be the category of definable sets with parameters in $S$, \emph{i.e.} definable sets $X\subseteq \BA^m\times S$ together with the projection to $S$. Morphisms are definable bijections preserving the projection to $S$.

Fix $X\in \RDef_S$. Since $S\subseteq \BA^n$, there is a canonical map $X\to \BA^n$ and we can consider the \emph{motive of $X$ with parameters} $\chi_{\psf,\BA^n}([X])\in\K{\DAC(\BA^n,\Lambda)}\otimes \QQ$.

Since the precise set of parameters is not important, and that we shall need to change it regularly, we adopt the following convention.

\begin{definition}
\label{def-mot-par}Let $S\subseteq \BA^n$  be a 
definable set and $M$ be an element of $\K{\DAC(\BA^n,\Lambda)}$ such that $M=M\cdot\chi_{\psf,\BA^n}(S)$.
We call such an $M$
a \emph{motive with parameters in $S$}, or a \emph{relative motive}. 

We define the ring of relative motives $\K{\DAC(\rel,\Lambda)}$ as the colimit of the groups  $\K{\DAC(\BA^n,\Lambda)}$ under the closed immersions
$\BA^n \hookrightarrow  \BA^{n+1}$ given by $x \mapsto (x, 0)$.
\end{definition}

We adopt the same definition for the various tensor products of $\K{\DAC(\rel,\Lambda)}$ that we shall consider, such as $\K{\DAC(\rel,\Lambda)}\otimes \QQ$. We also extend the construction to motives over some base scheme $A$, which gives $\K{\DAC(A,\rel,\Lambda)}$, with elements motives in some $\K{\DAC(A\times \BA^n,\Lambda)}$ for some $n$. 
Similarly, given a definable set $X\in \RDef_S$ with parameters $S\subseteq \BA^n$, we write $\chi_{\psf,\rel}([X])\in \K{\DAC(\rel,\Lambda)}\otimes \QQ$ for $\chi_{\psf,\BA^n}([X])$.

\subsection{Denef-Pas language and definable subassignments}
\label{sec-DPlang}
We use in this paper the framework for motivic integration developed by Cluckers and Loeser in \cite{CL-2008}. See also \cite{chl} for an introduction. 

Fix a field $k$ of characteristic zero. The theory takes place in the Denef-Pas 3-sorted language $\dplk$ in the sense of first order logic. It has one sort for the valued field with the ring language, one sort for the residue field with ring language, and one sort for the value group, with Presburger arithmetic language (0,1,+,$\leq$, $(\equiv_n)_{n\in \N})$. It also has two unary function symbols, $\ord$ from the valued field sort to the value group, interpreted as the valuation, and $\ac$ from the valued field sort to the residue field, interpreted as an angular component. It also has constant symbols for elements of $k \llp t \rrp$.

Given an extension $K$ of $k$, we view naturally the field $K\llp t \rrp$ as an $\dplk$-structure, as follows. The underlying sets are $(K\llp t \rrp, K,\ZZ)$. The function symbol $\ord$ is interpreted as the valuation, $\ac$ as the angular component, which is the first non-zero coefficient of $x\in K\llp t \rrp$ if $x\neq 0$ and 0 otherwise. The relation  $\equiv_n$ is interpreted as congruence modulo $n$.

One considers the category $\Def_k$ of definable objects in such structures, defined as follows. Let $\varphi$ be an $\dplk$-formula, with respectively $m$, $r$ and $s$ free variables in the three sorts. For every extension $K$ of $k$, we consider the subset $h_\varphi(K)\subseteq (K\llp t \rrp)^m\times K^r\times \ZZ^s$ consisting of points satisfying $\varphi$. An object $S\in \Def_k$, called a definable subassignment (or definable set), is a map from the set of extensions $K$ of $k$ to sets such that there is some $\dplk$-formula $\varphi$ such that $S(K)=h_\varphi(K)$ for every $K$. One also considers general definable subassignments $\GDef_k$, which objects are definable subassignments of algebraic varieties over $k$, defined similarly using affine charts. 

In this paper, we specialize this situation to the case of pseudo-finite residue fields. Let $T_{\psf,k}$ the theory of pseudo-finite fields containing $k$. We work in the category denoted by $\Def_k(\dpl,T_\psf)$ in \cite[Section 16]{CL-2008}, that we shall denote by $\Def_{\psf,k}$. Let $\mcL_{\DP,\psf,k}$ be the extension of the Denef-Pas language $\dplk$ where we add symbols for Galois formulas in the residue field. An object $S\in \Def_{\psf,k}$ is a map from the set of pseudo-finite fields $K$ extending $k$ to sets that are of the form $S(K)=h_\varphi(K)$ for some $\mcL_{\DP,\psf,k}$-formula $\varphi$. Combining Denef-Pas elimination of valued fields quantifiers with Fried-Jarden quantifier elimination for pseudo-finite fields, we see that in the definition of $S\in \Def_{\psf,k}$, we can choose the $\mcL_{\DP,\psf,k}$-formula $\varphi$ to be quantifier free. The category $\GDef_{\psf,k}$ is defined similarly.

We also write $\cC_{\psf, k}(S)$ or $\cC_{\psf}(S)$ for $\cC(S,(\mcL_\DP,T_{\psf,k}))$, the ring of motivic constructible functions on $S\in \GDef_{\psf,k}$, when there is no risk of confusion.

Let $*_k$ the final object in $\GDef_{\psf,k}$, whose value at every pseudo-finite extension of $k$ is the one point set.
By definition
\[
\cC_\psf(*_k)=\K{\RDef_{\psf, k}}\otimes_{\ZZ[\eL]} \mathbb{A},
\]
where $\mathbb{A}=\ZZ \left[  \eL,\eL^{-1}, \left( \frac{1}{1-\eL^{-i}}\right)_{i>0}\right]$. 

\subsection{Evaluation of functions}
We fix a Grothendieck universe $\mathcal{U}$ containing $k$. This is merely a matter of convenience,
it follows from  standard arguments relying on the reflection principle, cf. \cite{feferman},
that our main results are in fact valid within $\zfc$.
Let $S\in \GDef_{\psf,k}$. We define the set of points $\abs{S}$ as the set of pairs $(x,K)$ with $K\in \mathcal{U}$ a pseudo-finite field containing $k$ and $x\in S(K)$. We sometimes write abusively $x\in \abs{S}$. 

For such an $x\in S(K)$, let $k(x)$ be the definable closure of $x$ intersected with the residue field $K$ and consider the language $\mcL_{\DP,\psf,k,x}$, where we add constants for $k(x)$. Note that $\mcL_{\DP,\psf,k,x}$ is smaller than $\mcL_{\DP,\psf,k(x)}$, since we do not add constants symbols for $k(x)\llp t\rrp$ in $\mcL_{\DP,\psf,k,x}$. 

Consider the category of pseudo-finite fields $K'$ containing $k(x)$, and the category $\Def_{\psf,k,x}$ such that its objects $X\in \Def_{\psf,k,x}$ consists of maps associating to $K'$ as above $h_\varphi(K')$ for $\varphi$ an $\mcL_{\DP,\psf,k,x}$-formula. This new theory falls into the framework of \cite[Section 2.7]{CL-2008}, hence we define similarly $\GDef_{\psf,k,x}$, $\RDef_{\psf,k,x}$,  $\cC_{\psf,k,x}(X)$, \emph{etc}. Note that for the one point definable subassignment $*$, we have 
\[\cC_{\psf,k,x}(*)=\cC_{\psf,k(x)}(*_{k(x)})=\K{\RDef_{\psf, k(x)}}\otimes_{\ZZ[\eL]} \mathbb{A}.
\]

A constructible motivic function $\psi\in \cC_{\psf,k}(S)$ can be evaluated at $(x,K)\in \abs{S}$ as follows. 

Since $\mcL_{\DP,\psf,k,x}$ is an expansion of $\mcL_{\DP,\psf,k}$, by \cite[Proposition 16.1.1]{CL-2008},  every $\psi\in \cC_{\psf,k}(S)$ induces a unique function in $\cC_{\psf,k,x}(S)$, that we still denote abusively $\psi$. By cell decomposition, or more precisely \cite[Theorem 7.2.1]{CL-2008}, there exists an $\mcL_{\DP,\psf,k,x}$-definable subassignment $V_x\subseteq S$, such that $x\in V_x(K)$, a definable function $g\colon V_x\to *$ and $\psi_x\in \cC_{\psf,k,x}(*)$ such that $g^*(\psi_x)=\psi$. We define the evaluation of $\psi$ at $(x,K)$ to be $\psi(x)=\psi_x\in \cC_{\psf,k,x}(*)=\cC_{\psf,k(x)}(*_{k(x)})$ .

\begin{rmk}
Our notion of evaluation at a point differs from the one used by Cluckers and Halupczok in \cite{eval}. They instead define the evaluation at a point $(x,K)$ by working in the complete diagram with  parameters a tuple of constants for the tuple of coordinates of $x$.  More precisely, let $\mcL_{\DP,\psf,k}(x)$ be the expansion of $\mcL_{\DP,\psf,k}$ by a constant symbol for each coordinate of $x$. Then they consider fields $K'$ such that $(K'\llp t \rrp, K', \ZZ)$ is elementary equivalent to $(K\llp t \rrp, K, \ZZ)$ in the language $\mcL_{\DP,\psf,k}(x)$. Since this theory is an expansion of ours in the $\mcL_{\DP,\psf,k,x}$-language, our notion of evaluation will determine the notion of evaluation in \cite{eval}.
\end{rmk}

Hence the following proposition follows readily from \cite[Theorem 1]{eval}:

\begin{proposition}
\label{prop-eval}
A constructible motivic function is determined by its evaluation at points.  More precisely, let $S\in \GDef_{\psf,k}$ and $\varphi,\psi\in \cC_{\psf,k}(S)$. Then $\varphi=\psi$ if and only if, for every $(x,K)\in \abs{S}$, 
$\varphi(x)=\psi(x)$ in $\cC_{\psf,k(x)}(*_{k(x)})$.
\end{proposition}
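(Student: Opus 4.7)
The ``only if'' direction is immediate, since evaluation at $(x,K)$ is defined as pullback $i_x^{*}$ along a morphism of $\GDef_{\psf,k(x)}^{x\in S}$ and is therefore a ring homomorphism. For the converse, setting $\eta\defeq\varphi-\psi$, by linearity it suffices to prove: if $\eta\in\cC_\psf(S)$ satisfies $\eta(x)=0$ in $\cC_\psf^{x\in S}(*_{k(x)})$ for every $(x,K)\in\abs{S}$, then $\eta=0$.

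The plan is to invoke the Denef--Pas cell decomposition theorem of \cite{CL-2008}, specialized to the pseudo-finite residue field setting (available because quantifier elimination in the extended language $\mcL_{\DP,\psf,k}$ leaves the valued-field-sort argument intact). This decomposes $S$ into a finite disjoint union of cells $C_i$, each equipped with a definable bijection onto a product $R_i\times\Lambda_i$ with $R_i$ a residue-field definable subassignment and $\Lambda_i\subset\Zb^{n_i}$ Presburger, and puts $\eta|_{C_i}$ in the normal form
\[
\eta|_{C_i}=[Y_i]\cdot\eL^{a_i(\lambda)}\prod_{j}\frac{1}{1-\eL^{-b_{i,j}(\lambda)}},
\]
with $[Y_i]\in\K{\RDef_{\psf,R_i}}$ and $a_i,b_{i,j}$ Presburger-constructible on $\Lambda_i$. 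By additivity I treat one cell at a time. Evaluation of $\eta|_{C_i}$ at a point with residue-field coordinate $\rho$ and integer coordinate $\lambda$ then factorizes as the product of the fiber class $[Y_{i,\rho}]$ with the value of the rational Presburger factor at $\lambda$. Varying $\lambda$ and $\rho$ independently and using that a Presburger-constructible function on $\Zb^{n_i}$ vanishing at every integer point is zero, I conclude that on each cell either the Presburger factor vanishes identically as a Presburger function, or $[Y_i]$ has zero fiber over every pseudo-finite-valued point of $R_i$.

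The residue-field statement---that a class in $\K{\RDef_{\psf,R}}\otimes_{\Zb[\eL,\eL^{-1}]}\mathbb{A}$ whose fiber at every pseudo-finite-valued point of $R$ vanishes is itself zero---is what I expect to be the main obstacle, since a nonzero class can a priori be written as a difference $[X_1]-[X_2]$ whose fiberwise Grothendieck classes coincide without there being a globally definable bijection $X_1\cong X_2$. My approach is to represent such a class via a Galois stratification $(R,C_j/A_j,\Con(A_j))$ as in \cite{DL-PSF}, and to argue stratum by stratum that the central class function $\alpha_{\Con_j}$---which by construction is detected at each point by the Artin symbol of the evaluation---must vanish identically, forcing the class to be zero in the Grothendieck ring. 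As an alternative, one can push the problem through the realization $\chi_{\psf,R}$ of Lemma \ref{lem-comp-chipsf-proj} and invoke the pointwise detection result Lemma \ref{evmot} for constructible \'etale motives, then lift the conclusion back to $\K{\RDef_{\psf,R}}$ using the compatibility of $\chi_\psf$ with base change.
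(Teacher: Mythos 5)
The paper's own proof is a one-line citation of Cluckers--Halupczok \cite[Theorem 1]{eval}: since the notion of evaluation used there is coarser than the one used here (they work in the complete diagram over $k(x)$, so over a smaller collection of models), agreement of $\varphi(x)$ and $\psi(x)$ in the paper's sense forces agreement in theirs, and their theorem then gives $\varphi=\psi$. You instead try to reprove the Cluckers--Halupczok theorem from scratch via cell decomposition, which is a much harder road, and your sketch has several gaps.

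First, the ``normal form'' you write on a cell is an over-simplification: an element of $\cC_\psf(C_i)$ is in general a finite \emph{sum} of tensors $[Y_{i,j}]\otimes P_{i,j}(\lambda)$ in $\mathcal{Q}(C_i)\otimes_{\mathcal{P}^0(C_i)}\mathcal{P}(C_i)$, not a single product of a residue-field class and one Presburger rational expression, and cell refinement does not collapse such a sum to one term. Disentangling pointwise vanishing of a sum into statements about the individual factors requires a genuine linear-independence argument in the Presburger direction, combined with care about zero divisors in the tensor product --- this is precisely the substance of the Cluckers--Halupczok proof and cannot be skipped.

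Second and more seriously, the residue-field statement you correctly identify as the ``main obstacle'' --- that $\alpha\in\K{\RDef_{\psf,R}}\otimes_{\ZZ[\eL,\eL^{-1}]}\mathbb{A}$ with vanishing fiber at every pseudo-finite point is zero --- is left unproved, and neither of your two suggested routes closes it. The Galois-stratification route is only an assertion: fiberwise vanishing of $\alpha_\rho$ means $[X_{1,\rho}]=[X_{2,\rho}]$ in the Grothendieck ring for each $\rho$, i.e.\ a chain of scissor relations that may vary with $\rho$; you give no argument that these can be made uniform, and ``$\alpha_{\Con_j}$ must vanish'' does not follow from the stated hypothesis. The $\chi_{\psf,R}$-realization route fails outright: Lemma~\ref{evmot} lets you conclude $\chi_{\psf,R}(\alpha)=0$ in $\K{\DAC(R,\Lambda)}\otimes\QQ$, but $\chi_{\psf,R}$ is not injective (the paper never claims it, and $\cC_\mot$ is introduced precisely as the image, i.e.\ as a proper quotient of $\cC_\psf$), so there is no way to ``lift the conclusion back'' to $\K{\RDef_{\psf,R}}$. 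As written, your argument establishes at best the statement in $\cC_\mot$, which is strictly weaker than the stated proposition in $\cC_\psf$. You should simply invoke \cite[Theorem 1]{eval} as the paper does.
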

\begin{proof}
Let $\varphi,\psi\in \cC_{\psf,k}(S)$ such that for every $(x,K)\in \abs{S}$, $\varphi(x)=\psi(x)\in \cC_{\psf,k(x)}(*_{k(x)})$. The evaluation of a constructible motivic function $\psi$ at a point $(x,K)\in \abs{S}$ in the sense of \cite{eval} is defined as $i_x^*(\psi)$, where $i_x\colon \set{x}\to S$ is the inclusion, which is definable in $\mcL_{\DP,\psf,k}(x)$. Using the notations $V_x, g$ in the definition of evaluation, since $x\in V_x(K)$, we have $i_x^*(\psi)=i_x^*(\psi_{\mid V_x})=i_x^*g^*\psi_x$. Hence the evaluations (in the sense of \cite{eval}) of $\varphi$ and $\psi$ at every point $(x,K)\in \abs{S}$ are equal, therefore by \cite[Theorem 1]{eval}, $\varphi=\psi$ in $\cC_{\psf,k}(S)$.
\end{proof}

\subsection{Definable varieties}
We shall need the following notions.
\begin{definition}
\label{def:defvar}
A \emph{definable algebraic variety} $\wX$ over $k$ is a definable set $\wX\subseteq \BP^n\times \BA^m$ in $\RGDef_{\psf,k}$ such that for every pseudo-finite field $K$ and $a\in \pi(\wX(K))\subseteq K^m$, where $\pi$ is the coordinate projection, the fiber $\wX_a$ is the set of $K$-points of a quasi-projective variety with a bounded number of equations of bounded degrees and parameters in $a$.

We shall usually consider such $\wX$ as a definable in $\BP^n$ with parameters $\pi(\wX)$. In particular, by ``$\wX$ is of pure dimension $d$" we mean that the fibers $\wX_a$ are points of varieties of pure dimension $d$.

Define similarly a \emph{definable algebraic group}, requiring in addition that the group law and inverse are the graphs of definable functions.

Similarly, a \emph{definable algebraic variety $\wX$ over $k\llp t\rrp$} is a definable $\wX\subseteq \BP^n\times \BA^m$ in $\GDef_{\psf,k}$ such that for every pseudo-finite field $K$ and $a\in \pi(\wX(K))\subseteq K\llp t\rrp^m$, where $\pi$ is the coordinate projection, the fiber $\wX_a$ is the set of $K\llp t\rrp$-points of a quasi-projective variety with a bounded number of equations of bounded degrees and parameters in $a$. 

Define also a  \emph{definable algebraic variety $\wX$ over $k\llb t\rrb$} in a similar way, but asking the fibers $\wX_a$ to be the set of $K\llb t\rrb$-points of a quasi-projective variety over $K\llb t\rrb$. We can associate to it definable algebraic variety $\wX_{k\llp t\rrp}$ over $k\llp t\rrp$ by simply taking the $K\llp t\rrp$-points of the fibers, and a definable inclusion $\wX\to \wX_{k\llp t\rrp}$ and a definable algebraic variety $\wX_{k}$ over $k$ by taking the residue map, together with a surjection $\wX\to \wX_k$.
\end{definition}

\begin{example}
For example, there is a definable algebraic group $\wU_1$ representing the unitary group:  \[\wU_1=\set{(x_1,x_2,a)\in \BA^2\times \BA\mid x_1^2-ax_2^2=1, a \text{  not a square }}.\] 
This definable set is not the same as the definable set associated to the unitary group $U_1$ over $k$:
\[
U_{1}=\set{(x_1,x_2)\in \BA^2\mid x_1^2-a_0x_2^2=1},
\]
where $a_0$ is not a square in $k$.
We have  $U_1(K)=K^\times$ whenever $K$ is an extension of $k$ where $a_0$ is a square, whereas $\wU_{1,a}(K)\neq K^\times$.
\end{example}

\subsection{Fubini theorem}
We say that $S\in \GDef_{\psf,k}$ is of dimension $d$ if $\mathrm{Kdim}(S)=d$, in the sense of \cite[Section 3]{CL-2008}. Roughly speaking, this means that the valued field part of $S$ is of dimension $d$. Fix $S\in \Def_{\psf,k}$ of dimension $d$ and let $\varphi\in \cC_{\psf}(S)$. Let $\abs{\omega_0}_S$ be the canonical volume form, in the sense of \cite[15.1]{CL-2008}. Let $C_\psf^d(S)$ the quotient of $\cC_\psf(S)$ by the ideal of constructible functions with support included in a definable of dimension $d-1$. If the class $[\varphi]\in C_\psf^d(S)$ of $\varphi$ is integrable, then $\int_S [\varphi] \abs{\omega_0}_S$ is defined in \cite[15.1]{CL-2008} and belongs to $\cC_{\psf}(*_k)$. Say that $\varphi\in \cC_\psf(S)$ is integrable if $[\varphi]$ is integrable, and set
\[
\int_S \varphi \abs{\omega_0}_S=\int_S [\varphi] \abs{\omega_0}_S.
\]

Let $\wX$ be a smooth  definable algebraic variety over $k\dpar{t}$ of pure dimension $d$. Denote by $\abs{\omega_\wX}$ the definable volume form on $\wX$ determined by the algebraic differential form of degree $d$ on $\wX_a(K\dpar{t})$ for every pseudo-finite field. Let $\varphi\in \cC_\psf(\wX)$.  We  say that $\varphi\abs{\omega_\wX}$ is integrable if $[\varphi]\abs{\omega_\wX}$ is integrable and set 
\[
\int_{\wX} \varphi \abs{\omega_\wX}=\int_{\wX} [\varphi] \abs{\omega_\wX}.
\]

We will use the following versions of the Fubini theorem, which follows from Theorem 10.1.1, Theorem 15.2.1 and Proposition 15.4.1 in \cite{CL-2008}.

\begin{proposition}
\label{prop-CL-fubini}The following properties hold:
\begin{enumerate}
\item Let $\wX$ be a smooth definable algebraic variety over $k\dpar{t}$ of pure dimension and $\wY$ a definable algebraic variety over $k$. Let $f \colon \wX \to \wY$ be a morphism in $\GDef_{\psf,k}$. Let $\abs{\omega_\wX}$ be the volume form associated to a top degree form on $\wX$. Let $\varphi\in \cC_\psf(\wX)$ such that $\varphi\abs{\omega_\wX}$ is integrable.  For every point $y\in \abs{\wY}$, denote by $\wX_y$ the fiber of $f$ over $y$. Then for every $y\in \abs{\wY}$,  $\varphi_{\vert \wX_y}\abs{\omega_\wX}_{\vert \wX_y}$ is integrable on $\wX_y$ and there exists a constructible function $\psi\in \cC_\psf(\wY)$ such that for every $y\in \abs{\wY}$, 
\[
\psi(y)= \int_{\wX_y} \varphi_{\vert \wX_y}\abs{\omega_X}_{\vert \wX_y}
\]
and
\[
\int_{\wX} \varphi \abs{\omega_\wX}=\int_{\wY} \psi.
\]
\item Let $f \colon \wX\to \wY$ be a smooth definable morphism between smooth definable $k\dpar{t}$-varieties of pure dimension. Let $\abs{\omega_\wX}$ and $\abs{\omega_\wY}$ be the volume forms on $\wX$ and $\wY$ Let $\varphi\in \cC_\psf(\wX)$ such that $\varphi \abs{\omega_\wX}$ is integrable. Then for every $y\in \abs{\wY}$, $\varphi_{\vert \wX_y}\abs{\omega_\wX/f^*(\omega_\wY)}_{\vert \wX_y}$ is integrable on $\wX_y$ and there exists a constructible function $\psi\in \cC_\psf(\wY)$ such that for every $y\in \abs{\wY}$, 
\[
\psi(y)=\int_{\wX_y}\varphi_{\vert \wX_y}\abs{\omega_\wX/f^*(\omega_\wY)}_{\vert \wX_y}
\]
and
\[
\int_{\wX} \varphi \abs{\omega_\wX}=\int_{\wY} \psi\abs{\omega_\wY}.
\]
\end{enumerate}
\end{proposition}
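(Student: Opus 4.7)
The plan is essentially to reduce both statements to direct applications of the general Fubini-type theorems of Cluckers--Loeser, namely \cite[Theorems 10.1.1, 15.2.1, and Proposition 15.4.1]{CL-2008}, and then verify that the specialization from $\Def_k$ to $\Def_{\psf,k}$ (i.e.\ fixing the residue-field theory to $T_\psf$) does not break any of the integration formalism. This last point is harmless because the integration theory of \cite{CL-2008} is formulated in \cite[Section 16]{CL-2008} uniformly over an auxiliary residue-field theory, and specializing to $T_{\psf,k}$ only refines the equivalence relation on formulas; the statements of the Fubini, change-of-variables, and push-forward theorems are preserved in $\cC_\psf$.

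The first reduction is to pass from smooth definable algebraic varieties to affine subassignments. By the definition of a definable algebraic variety $\wX$, the defining equations have bounded degree with parameters in $\pi(\wX)$; hence $\wX$ admits a finite definable cover by affine open pieces on which, after possibly partitioning $\pi(\wX)$ further, the top-degree differential form $\omega_\wX$ is trivialized by a definable set of \'etale coordinates. On each such piece the canonical volume form $\abs{\omega_\wX}$ agrees with the gauge form on a $\dpar{t}$-definable subassignment of $\BA^{d+m}$ in the sense of \cite[Chapters 8 and 15]{CL-2008}. By additivity of the integral, it suffices to prove both statements after restricting to such pieces, which places us exactly in the setting where the cited theorems apply.

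For (1), the target $\wY$ lies in the residue field sort, so the projection factors through the residue-field part of $\wX$, and \cite[Proposition 15.4.1]{CL-2008}, which handles integration of constructible motivic functions relative to a residue-field parameter space, produces the constructible function $\psi\in \cC_\psf(\wY)$ together with the iterated-integral formula. For (2), the morphism $f\colon \wX\to \wY$ is smooth between smooth $k\dpar{t}$-definable algebraic varieties, so the relative differential $\omega_\wX/f^*(\omega_\wY)$ defines a volume form on each fiber $\wX_y$; this is precisely the setting of \cite[Theorem 15.2.1]{CL-2008}, which yields both fiberwise integrability and the existence of the push-forward $\psi$. Finally, \cite[Theorem 10.1.1]{CL-2008} provides the integrability criterion on the base that ensures the equality of iterated integrals.

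The main obstacle, and the only point requiring care, is confirming that a constructible function produced by the Cluckers--Loeser push-forward really lies in $\cC_\psf$ rather than only in the larger $\cC$ of \cite{CL-2008}. This follows because the construction of the push-forward in \cite{CL-2008} proceeds by cell decomposition and by performing operations (finite sums, $\eL$-multiplications, and residue-field integrals) which preserve the class of formulas used and therefore commute with the specialization map $\cC \to \cC_\psf$ induced by equating formulas that agree on all pseudo-finite extensions of $k$. Hence $\psi$ is represented by a Galois-quantifier-free formula over the base, and all Fubini identities descend to $\cC_\psf$.
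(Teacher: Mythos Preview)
Your proposal is correct and follows the same route as the paper: the paper does not give a standalone proof but simply records that the statement ``follows from Theorem 10.1.1, Theorem 15.2.1 and Proposition 15.4.1 in \cite{CL-2008}'', exactly the three results you invoke. Your write-up is in fact more explicit than the paper's, spelling out the affine reduction and the reason the push-forward stays in $\cC_\psf$ via \cite[Section 16]{CL-2008}; none of this is needed beyond the citation, but it is all accurate.
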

Note that the functions $\psi$ in the proposition are uniquely determined, since a constructible function is determined by its evaluation at points by Proposition \ref{prop-eval}.

\subsection{Constructible functions with values in motives}
\label{sec-const-mot}
Fix $S\in \GDef_{\psf,k}$ and $x\in \abs{S}$. 
Consider the morphism 
\[
\chi_{\psf,k(x)}\colon \K{\RDef_{\psf,k(x)}}\longrightarrow \K{\DAC(\Spec k(x),\Lambda)}\otimes \QQ.
\]

Recall that 
\[\cC_\psf(*_{k(x)})=\K{\RDef_{\psf,k(x)}}\otimes_{\ZZ[\eL]} \mathbb{A}.
\]
The morphism $\chi_{\psf,k(x)}$ then induces a morphism
\[
\chi_{\psf,k(x)}\colon \cC_\psf^{x\in S}(*_{k(x)})\longrightarrow \K{\DAC(\Spec k(x),\Lambda)}\otimes_{\ZZ[\eL]} \mathbb{A}\otimes \QQ.
\]

We now have a natural morphism 
\[
\vartheta_S\colon \cC_\psf(S)\longrightarrow \prod_{x\in \abs{S}} \K{\DAC(\Spec k(x),\Lambda)}\otimes_{\ZZ[\eL]} \mathbb{A}\otimes \QQ,
\]
which sends $\psi\in \cC_{\psf}(S)$ to $ \left(\chi_{\psf,k(x)}(\psi(x))\right)_{x\in \abs{S}}$. Set $\cC_{\mot}(S)=\vartheta_S(\cC_\psf(S))$ and still denote by $\vartheta_S$ the induced morphism.

Let $X$ be a quasi-projective $k$-scheme, we still write $X$ for the associated definable subassignment.
\begin{rmk}Note
that working with the theory of pseudo-finite fields is  crucial in this paper and is quite different from working with the theory
of algebraically closed fields as in
\cite{LW19}. Indeed, given $n$ a positive integer,  if $Z_n$ is the set defined by the formula
$x \not=0 \wedge \exists y (x = y^n)$ in $\mathbb{A}^1_k$,
we have $\chi_{\psf} (Z_n) = \frac{\mathbb{L} - 1}{n}$, while, with the notation from loc. cit.,
$\chi_{\acf} (Z_n) = \mathbb{L} - 1$. This reflects the fact that in a finite field of characteristic prime to $n$, non-zero
$n$th powers have density $1/n$.
As a consequence, motivic functions considered here are quite different from the ones used in loc. cit.
\end{rmk}

Let $X$ be a quasi-projective $k$-scheme. By definition the ring $\cC_\psf(X)$ is equal to $\K{\RDef_{X}}\otimes_{\ZZ[\eL]}\mathbb{A}$.  
\begin{lemma}\label{lem-cCmoti}
Let $X$ be a quasi-projective $k$-scheme, and $X$ the associated definable subassignment. The map sending $\vartheta_{X}(\varphi)\in \cC_\mot(X)$ to $\chi_{\psf,X}(\varphi)$ is well-defined and provides an isomorphism between 
$\cC_\mot(X)$ and the image of
\begin{equation}
\label{eqn-cCmot-quasiproj}
\chi_{\psf,X}\colon \K{\RDef_{\psf,X}}\otimes_{\ZZ[\eL]}\mathbb{A}\longrightarrow \K{\DAC(X,\Lambda)}\otimes_{\ZZ[\eL]}\mathbb{A}\otimes \QQ.
\end{equation}
\end{lemma}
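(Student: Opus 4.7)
Let $f \colon \cC_\mot(\underline{X}) \to \K{\DAC(X,\Lambda)} \otimes_{\ZZ[\eL]} \mathbb{A} \otimes \QQ$ denote the proposed assignment $\vartheta_{\underline{X}}(\varphi) \mapsto \chi_{\psf,X}(\varphi)$. Surjectivity onto the image of the morphism in \eqref{eqn-cCmot-quasiproj} is immediate from the definition of $\cC_\mot(\underline{X})$, while well-definedness and injectivity of $f$ are both equivalent, by linearity, to the equality of kernels $\ker(\vartheta_{\underline{X}}) = \ker(\chi_{\psf,X})$ inside $\cC_\psf(\underline{X})$. I will therefore prove this equality, treating each inclusion separately.

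For $\ker(\vartheta_{\underline{X}}) \subseteq \ker(\chi_{\psf,X})$ (well-definedness), I would argue via Lemma \ref{evmot}. Assume $\vartheta_{\underline{X}}(\varphi) = 0$. For each schematic point $i_x \colon \Spec k(x) \hookrightarrow X$, the base-change compatibility of Lemma \ref{lem-comp-chipsf-proj} gives $i_x^*(\chi_{\psf,X}(\varphi)) = \chi_{\psf,k(x)}(i_x^*\varphi)$. Since the schematic point corresponds to a tautological $k(x)$-rational point of $\underline{X}$, the relation ``$x \in \underline{X}$'' holds automatically in every pseudo-finite extension of $k(x)$, so the ideal $([\varphi]-1)$ used in Section~\ref{sec-const-mot} to define $\K{\RDef_{k(x)}^{x \in \underline{X}}}$ is trivial and $\chi_{\psf,k(x)}(i_x^*\varphi)$ coincides with the evaluation $\chi_{\psf,k(x)}^{x \in \underline{X}}(\varphi(x)) = \vartheta_{\underline{X}}(\varphi)(x)$, which vanishes by hypothesis. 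Lemma \ref{evmot} then forces $\chi_{\psf,X}(\varphi) = 0$.

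For the reverse inclusion $\ker(\chi_{\psf,X}) \subseteq \ker(\vartheta_{\underline{X}})$ (injectivity), the argument is formal and does not invoke Lemma \ref{evmot}: any point $(x, K) \in |\underline{X}|$ factors through its associated schematic point, whose residue field is $k(x)$, so pulling back the assumed identity $\chi_{\psf,X}(\varphi) = 0$ along $i_x$ and applying Lemma \ref{lem-comp-chipsf-proj} yields $\chi_{\psf,k(x)}(i_x^*\varphi) = 0$, which equals $\vartheta_{\underline{X}}(\varphi)(x)$ by the same identification as above. The main obstacle lies in the first inclusion: Lemma \ref{evmot} is stated in $\K{\DAC(X,\Lambda)}$, while $\chi_{\psf,X}$ lands in the localization by $\mathbb{A} \otimes \QQ$. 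Writing $\chi_{\psf,X}(\varphi) = \beta \otimes d^{-1} \otimes r$ with $\beta \in \K{\DAC(X,\Lambda)}$, $d$ a product of $(1-\eL^{-i})$, and $r \in \QQ$, one must check that the vanishing of each $i_x^*(\beta)$ modulo $\mathbb{A}$-torsion indeed forces $\beta$ itself to be $\mathbb{A}\otimes \QQ$-torsion in a way that can be handled uniformly in $x$; this is the technical heart of the argument, and is where one might need to invoke a refined version of Lemma \ref{evmot} adapted to the localized coefficients.
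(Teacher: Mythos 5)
Your decomposition and route match the paper's: identify $\K{\RDef_{k(x)}^{x\in\underline{X}}}$ with $\K{\RDef_{k(x)}}$ (using that $[x\in\underline{X}]=1$ for $X$ quasi-projective), compare pullbacks via Lemma \ref{lem-comp-chipsf-proj}, and appeal to Lemma \ref{evmot} for well-definedness. The injectivity direction is, as you say, formal from the same two ingredients; the paper leaves it implicit and you spell it out, which is fine.

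The concern you raise in your final paragraph is real, and you have not closed it. Lemma \ref{evmot} is stated for classes in $\K{\DAC(S,\Lambda)}$, whereas $\chi_{\psf,X}(\varphi)$ lives in $\K{\DAC(X,\Lambda)}\otimes_{\ZZ[\eL]}\mathbb{A}\otimes\QQ$. Localization and $\otimes\QQ$ do not commute with the infinite product over schematic points, so vanishing of each $i_x^*(\chi_{\psf,X}(\varphi))$ only yields, for each $x$, an $x$-dependent element $s_x$ of the multiplicative system (a product of integers and $\eL^i-1$'s) annihilating an integral lift of the pullback; a uniform annihilator is not visible. The paper's own proof invokes Lemma \ref{evmot} at precisely this point without comment, so you have inherited the gap rather than introduced it; but labeling the step ``the technical heart'' and deferring to an unstated ``refined version of Lemma \ref{evmot}'' does not complete the argument. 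To finish one would need either a non-zero-divisor statement for $n\prod(\eL^{i}-1)$ acting on $\K{\DAC(S,\Lambda)}$, or an argument extracting a uniform $s$ from the finitely many denominators appearing in a fixed $\varphi$, or else a direct proof of Lemma \ref{evmot} with coefficients in $\mathbb{A}\otimes\QQ$. As written, the proposal reproduces the paper's argument and correctly diagnoses its weak point, but does not supply the missing step.
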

\begin{proof}
Fix $\varphi, \psi\in \cC_\psf(X)$ such that $\vartheta_{X}(\varphi)=\vartheta_{X}(\psi)$. By Lemma \ref{lem-comp-chipsf-proj}, we have that the pullbacks of $\chi_{\psf,X}(\varphi)$ and $\chi_{\psf,X}(\psi)$ to every point $x\in \abs{X}$ are equal. Since points in pseudo-finite fields cover all scheme points, by  Lemma \ref{evmot} we have 
\[
\chi_{\psf,X}(\varphi)=\chi_{\psf,X}(\psi).\qedhere
\]
\end{proof}

Let $p\colon X\to Y$ be a morphism between quasi-projective $k$-schemes.  Using Lemma \ref{lem-cCmoti}, Lemma \ref{lem-comp-chipsf-proj} can be restated  in terms of  commutative diagrams
\begin{equation}
\label{eqn:diagpf}
\xymatrixcolsep{1pc}
\xymatrix{
\cC_\psf(X) \ar[d]_{p_!} \ar[rr]^-{\vartheta_{X}} &&\cC_\mot(X)  \ar[d]^{p_!} \\
\cC_\psf(Y) \ar[rr]_{\vartheta_{Y}} &&\cC_\mot(Y)
}
\end{equation}
and
\begin{equation}
\xymatrixcolsep{1pc}
\xymatrix{
\cC_\psf(Y) \ar[d]_{p^*} \ar[rr]^-{\vartheta_{Y}} &&\cC_\mot(Y)  \ar[d]^{p^*} \\
\cC_\psf(X) \ar[rr]_{\vartheta_{X}} &&\cC_\mot(X).
}
\end{equation}

\begin{propdef}
\label{propdef-intmot}
Let $S\in \Def_{\psf,k}$ and $\varphi,\varphi'\in \cC_{\psf}(S)$. Assume that $\varphi$ and $\varphi'$ are integrable and that $\vartheta_S(\varphi)=\vartheta_S(\varphi')$. Then 
\[
\vartheta_{*_k}\left( \int_S\varphi \abs{\omega_0}_S\right)=\vartheta_{*_k}\left( \int_S\varphi' \abs{\omega_0}_S\right).
\]
In this case, we say that $\vartheta_S(\varphi)$ is integrable and set
\[
\int_S^\mot\vartheta_S(\varphi)\abs{\omega_0}_S=\vartheta_{*_k}\left( \int_S\varphi \abs{\omega_0}_S\right).
\]
\end{propdef}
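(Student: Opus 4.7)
Setting $\psi \defeq \varphi - \varphi'$, by linearity it suffices to show that if $\psi \in \cC_\psf(S)$ is integrable and $\vartheta_S(\psi) = 0$, then $\vartheta_{*_k}\bigl(\int_S \psi\,|\omega_0|_S\bigr) = 0$. Unwinding the definition of $\vartheta_S$, the hypothesis says precisely that for every $(x,K) \in |S|$ the evaluated class $\chi_{\psf,k(x)}^{x \in S}(\psi(x))$ vanishes in $\K{\DAC(k(x),\Lambda)} \otimes_{\ZZ[\eL,\eL^{-1}]} \mathbb{A} \otimes \QQ$. Once the implication is established, the definition of $\int_S^\mot$ is unambiguous, since any two representatives $\varphi,\varphi'$ of the same class in $\cC_\mot(S)$ differ by such a $\psi$.

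The plan is to push the hypothesis through the construction of the motivic integral in \cite{CL-2008}. Using a cell decomposition adapted to $\psi$ together with the Fubini theorem (Proposition~\ref{prop-CL-fubini}), one first integrates out the valued-field variables on each cell. This produces an intermediate constructible function on a residue-field / value-group base $T$; by Proposition~\ref{prop-eval} this intermediate function is determined by its pointwise values, which are fiber integrals of restrictions of $\psi$, so the vanishing of $\vartheta_S(\psi)$ at points of $S$ transfers to the vanishing of the analogous $\vartheta_T$ on the intermediate function. The remaining outer summation over $T$ (including a geometric series in $\eL^{-1}$ coming from the value-group variables) is a pushforward of residue-field definable sets, weighted by $\mathbb{A}$-valued scalars. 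Since $\chi_\psf$ commutes with such pushforwards by Lemma~\ref{lem-comp-chipsf-proj} and is $\mathbb{A}$-linear by construction, the motive of the final integral is an $\mathbb{A}$-linear combination of pushforwards of the classes $\chi_{\psf,k(x)}^{x \in S}(\psi(x))$, all of which are zero.

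The main obstacle I foresee is tracking the pseudo-finite realization $\chi_\psf$ through each elementary step of the Cluckers--Loeser construction. The subtle points are that a cell decomposition must be chosen compatibly with the motive class of $\psi$ (not merely with $\psi$ itself, as the hypothesis is on the motive), and that the geometric-series manipulations producing elements of $\mathbb{A}$ must be matched with the $\mathbb{A}$-module structure on $\K{\DAC(k,\Lambda)} \otimes_{\ZZ[\eL,\eL^{-1}]}\mathbb{A} \otimes \QQ$. Once the compatibility of the elementary operations (projection pushforward, change of variables on cells, summation of geometric series, and ring operations on $\K{\RDef_{\psf,k}} \otimes \mathbb{A}$) with $\chi_\psf$ is checked---each of which reduces to the ring-morphism and pushforward-compatibility properties already verified for $\chi_\psf$---the vanishing of $\vartheta_{*_k}\bigl(\int_S \psi\,|\omega_0|_S\bigr)$ follows formally from the pointwise hypothesis, concluding the proof.
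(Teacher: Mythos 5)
Your proposal follows essentially the same route as the paper: reduce to showing that $\vartheta_S(\psi)=0$ implies $\vartheta_{*_k}(\int_S\psi)=0$, then trace $\vartheta$ through the construction of the Cluckers--Loeser integral via cell decomposition and compatibility of $\chi_{\psf}$ with pushforward. The paper's execution is a little more structured than your sketch: it observes that the integral is built as $f_!$ for $f\colon S\to *_k$, that $f_!$ is functorial, and that via cell decomposition one reduces to the three elementary projections (residue-field, value-group, and one valued-field variable) together with bijections; for each it checks directly that $f_!$ descends to $\cC_\mot$, invoking the Cluckers--Raibaut base-change theorem (``integration commutes with base change'') to reduce to fibers over a single point of the base, Lemma~\ref{lem-comp-chipsf-proj} for the residue-field case, and the cell-normal-form $f_!\varphi=\psi\,\eL^{-\alpha}$ for the valued-field case.

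One small comment on the ``obstacle'' you flag: there is no need for the cell decomposition to be compatible with the \emph{motive class} of $\psi$. You pick a cell decomposition adapted to the specific constructible function $\psi$ (which always exists), compute $\int_S\psi$ by the CL formalism (whose answer is independent of the choice by their theory), and then show separately that each elementary step has the property that the motive of the output is determined by the motives of the inputs. Since the hypothesis $\vartheta_S(\psi)=0$ says the input motive vanishes pointwise, and the base-change result reduces each elementary $f_!$ to a fiberwise computation, the vanishing propagates. So the concern dissolves once you frame the argument, as the paper does, as a descent statement about each elementary $f_!$ rather than as a global compatibility of cell decompositions with motive classes.
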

\begin{proof}
Let $S\in \GDef_{\psf,k}$ be of dimension $d$. Let $\cC^{\leq k}_\psf(S)$ be the subring of $\cC_\psf(S)$ consisting of constructible functions with support contained in a definable subassignment of dimension at most $k$, and $C^{\leq k}_\psf(S)=\cC^{\leq k}_\psf(S)/\cC^{\leq {k-1}}_\psf(S)$. Let $C_\psf(S)=\bigoplus_{k\geq 0} C^k_\psf(S)$. 
Recall from \cite[Section 10]{CL-2008} that the integral is constructed by defining for each morphism $f \colon X\to Y$ in $\Def_{\psf,k}$ a group homomorphism $f_!\colon IC_\psf(X)\to C_\psf(Y)$, where $IC_\psf(X)$ is the subgroup of $C_\psf(X)$ consisting of integrable functions. The integral of $[\varphi]\in C_\psf(X)$ is then defined as $f_![\varphi]$, where $f \colon X\to *_k$ is the map to the final object. To prove Proposition \ref{propdef-intmot}, it is thus enough to prove that for every definable map $f \colon X\to Y$, $f_!\colon IC_\psf(X)\to C_\psf(Y)$ induces a map $IC_\mot(X)\to C_\mot(Y)$, where $IC_\mot(X)$ is the image of $IC_\psf(X)$ by $\vartheta_X$. 

If $f$ is a bijection or an injection, this is clear. Hence using the Denef-Pas cell decomposition theorem, it suffices to treat the cases where $f$ is either a projection of residue field variables, value group variables, or one valued field variable. Hence we can assume $f\colon Y\times W\to Y$, where $W$ is either the residue field, the value group, or the valued field.  

By the main theorem of \cite{CR-cpb}, integration commutes with base change, in the sense that for every $x\in \abs{Y}$ and $\varphi\in C_\psf(Y\times W)$, we have $f_!(\varphi)(x)=f_{\vert \set{x}\times W !}(\varphi_{\vert \set{x}\times W})$. Hence up to replacing $k$ by $k(x)$, we can assume $Y=*_{k}$. When $W$ is the value group, $f_!$ is defined by counting, hence the result is clear. When $W$ is the residue field, $f_!$ is induced the the natural map $f_! \colon\K{\RDef_{\psf,W}}\to \K{\RDef_{\psf,k}}$ obtained by composition. Hence using Lemma \ref{lem-comp-chipsf-proj} and \ref{eqn-cCmot-quasiproj}, the result follows from the commutative diagram \ref{eqn:diagpf}.

To treat the last case, using again cell decomposition we can assume that $W$ is a cell adapted to $\varphi$, which means that $W$ is a ball and there exists a function $\psi\in C_\psf(*_{k})$ such that $f^*\psi=\varphi$. In this case, $f_!\varphi$ is defined as $\psi \eL^{-\alpha}$, where $\alpha$ is the valuative radius of the ball $W$. In this situation, it is again clear that $f_!$ induces a map $f_!\colon IC_\mot(W)\to C_\mot(*_k)$.
\end{proof}

The proof of Proposition \ref{propdef-intmot} implies that for every morphism $f\colon S\to S'$ in $\GDef_{\psf,k}$, the following diagrams commute:
\begin{equation}
\label{diag-fshriek-psf-mot}
\xymatrixcolsep{1pc}
\xymatrix{
IC_\psf(S) \ar[d]_{f_!} \ar[rr]^-{\vartheta_{S}} &&IC_\mot(S)  \ar[d]^{f_!} \\
\cC_\psf(S') \ar[rr]_{\vartheta_{S'}} &&\cC_\mot(S')
}
\end{equation}
and
\begin{equation}
\label{diag-fpull-psf-mot}
\xymatrixcolsep{1pc}
\xymatrix{
\cC_\psf(S') \ar[d]_{f^*} \ar[rr]^-{\vartheta_{S'}} &&\cC_\mot(S')  \ar[d]^{p^*} \\
\cC_\psf(S) \ar[rr]_{\vartheta_{S}} &&\cC_\mot(S).
}
\end{equation}

Let $\wX$ be a smooth definable algebraic $k\dpar{t}$-variety of pure dimension and $\abs{\omega_\wX}$ the associated volume form on $\wX$. Fix $\varphi\in \cC_\psf(\wX)$ such that $\varphi\abs{\omega_\wX}$ is integrable. Using Proposition \ref{propdef-intmot} on affine charts, we see that $\vartheta_{*_k}(\int_{\wX} \varphi \abs{\omega_\wX})$ depends only on $\vartheta_{\wX}(\varphi)$. In this case, we say that $\vartheta_{\wX}(\varphi)\abs{\omega_\wX}$ is integrable and we set
\[
\int_{\wX}^\mot\vartheta_{\wX}(\varphi)\abs{\omega_\wX} =\vartheta_{*_k} \Bigl(\int_{\wX} \varphi \abs{\omega_\wX}\Bigr).
\]

Proposition \ref{prop-CL-fubini} implies the following version of Fubini:
\begin{proposition}
\label{prop-fubini-mot}The following properties hold:
\begin{enumerate}
\item Let $\wX$ be a smooth definable algebraic variety over $k\dpar{t}$ of pure dimension and $\wY$ a definable algebraic variety over $k$. Let $f \colon \wX\to \wY$ be a morphism in $\GDef_{\psf,k}$. Let $\abs{\omega_\wX}$ be the associated volume form on $\wX$. Let $\varphi\in \cC_\mot(\wX)$ such that $\varphi\abs{\omega_\wX}$ is integrable.  For every point $y\in \abs{\wY}$, denote by $\wX_y$ the fiber of $f$ at $y$. Then for every $y\in \abs{\wY}$,  $\varphi_{\vert \wX_y}\abs{\omega_\wX}_{\vert \wX_y}$ is integrable on $\wX_y$ and there exists a constructible function $\psi\in \cC_\mot(Y)$ such that for every $y\in \abs{\wY}$, 
\[
\psi(y)= \int_{\wX_y}^\mot \varphi_{\vert \wX_y}\abs{\omega_\wX}_{\vert \wX_y}
\]
and
\[
\int_{\wX}^\mot \varphi \abs{\omega_\wX}=\int_{\wY}^\mot \psi.
\]
\item Let $f \colon \wX\to \wY$ be a smooth morphism between smooth definable $k\dpar{t}$-varieties of pure dimension. Let $\abs{\omega_\wX}$ and $\abs{\omega_\wY}$ be volume forms on $\wX$ and $\wY$. Let $f\colon \wX\to \wY$ be the induced morphism in $\GDef_{\psf,k}$. Let $\varphi\in \cC_\mot(\wX)$ such that $\varphi \abs{\omega_\wX}$ is integrable. Then for every $y\in \abs{\wY}$, $\varphi_{\vert \wX_y}\abs{\omega_\wX/f^*(\omega_\wY)}_{\vert \wX_y}$ is integrable on $\wX_y$ and there exists a constructible function $\psi\in \cC_\mot(\wY)$ such that for every $y\in \abs{\wY}$, 
\[
\psi(y)=\int_{\wX_y}^\mot\varphi_{\vert \wX_y}\abs{\omega_\wX/f^*(\omega_\wY)}_{\vert \wX_y}
\]
and
\[
\int_{\wX}^\mot \varphi \abs{\omega_\wX}=\int_{\wY}^\mot \psi\abs{\omega_\wY}.
\]
\end{enumerate}
\end{proposition}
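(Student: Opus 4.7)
The plan is to deduce both parts from the pseudo-finite Fubini Proposition \ref{prop-CL-fubini} by lifting $\varphi$ to $\cC_\psf(\wX)$, applying that Fubini, and pushing the result back via $\vartheta$.

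For part (1), I would first choose, by definition of motivic integrability, a lift $\tilde\varphi \in \cC_\psf(\wX)$ of $\varphi$ with $\tilde\varphi\abs{\omega_\wX}$ integrable. Proposition \ref{prop-CL-fubini}(1) then produces $\tilde\psi \in \cC_\psf(\wY)$ such that for every $y \in \abs{\wY}$ the function $\tilde\varphi_{\vert \wX_y}\abs{\omega_\wX}_{\vert \wX_y}$ is integrable on $\wX_y$ with $\tilde\psi(y) = \int_{\wX_y} \tilde\varphi_{\vert \wX_y}\abs{\omega_\wX}_{\vert \wX_y}$, together with the global equality $\int_\wX \tilde\varphi\abs{\omega_\wX} = \int_\wY \tilde\psi$. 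I then set $\psi := \vartheta_\wY(\tilde\psi) \in \cC_\mot(\wY)$. A fiberwise application of Proposition \ref{propdef-intmot}, after extending scalars to $k(y)$, shows that $\varphi_{\vert \wX_y}\abs{\omega_\wX}_{\vert \wX_y}$ is motivically integrable and that
\[
\psi(y) = \vartheta_{*_{k(y)}}(\tilde\psi(y)) = \int^\mot_{\wX_y} \varphi_{\vert \wX_y}\abs{\omega_\wX}_{\vert \wX_y}.
\]
Applying $\vartheta_{*_k}$ to the global pseudo-finite Fubini identity yields $\int^\mot_\wX \varphi\abs{\omega_\wX} = \int^\mot_\wY \psi$ in $\cC_\mot(*_k)$.

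Part (2) is handled by exactly the same argument, with Proposition \ref{prop-CL-fubini}(2) replacing (1): the relative form $\abs{\omega_\wX/f^*(\omega_\wY)}$ appears in the fiberwise integrals, and the resulting $\tilde\psi$ is integrated against $\abs{\omega_\wY}$ at the final step. No essentially new issues arise in the smooth base case beyond those handled in part (1).

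The main obstacle — and really the only content beyond translating the pseudo-finite statement — is checking that the motivic output $\psi$ is intrinsic to $\varphi$ and does not depend on the choice of lift $\tilde\varphi$. This dependence is eliminated after applying $\vartheta_\wY$: by construction $\psi = \vartheta_\wY(\tilde\psi)$ is determined by the pseudo-finite-point evaluations $\psi(y) = \vartheta_{*_{k(y)}}(\tilde\psi(y))$, and Proposition \ref{propdef-intmot} applied fiber by fiber shows that each such evaluation depends only on $\vartheta_{\wX_y}(\tilde\varphi_{\vert \wX_y})$, which is precisely the restriction of $\varphi$ to $\wX_y$. Hence $\psi$ is well defined in $\cC_\mot(\wY)$, concluding the proof.
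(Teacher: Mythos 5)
Your proof is correct and is essentially the argument the paper intends: the paper gives no explicit proof but simply states that Proposition \ref{prop-CL-fubini} implies the result, and your write-up makes that implication explicit via the expected steps — choose a lift $\tilde\varphi\in\cC_\psf(\wX)$, apply the pseudo-finite Fubini, define $\psi=\vartheta_\wY(\tilde\psi)$, and use Proposition \ref{propdef-intmot} (or rather its extension to definable algebraic varieties via affine charts, and the compatibilities of $\vartheta$ with $f_!$ and $f^*$ in diagrams (\ref{diag-fshriek-psf-mot}) and (\ref{diag-fpull-psf-mot})) both globally and fiberwise over $k(y)$ to obtain independence of the lift.
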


\subsection{Specialization}
We assume here that the base field $k$ is the field of fractions of a normal domain $R$ which is of finite type over $\BZ$. Let $x\in U=\Spec(R)$ be a closed point and $\BF_x$ the residue field at $x$, which is a finite field. 

Let $\varphi$ be a ring formula with parameters in $k$ and $n$ free variables. Since $k$ is the field of fractions of $R$, we can consider that the parameters of $\varphi$ are in $R$, hence consider $\varphi(\BF_x)\subseteq \BF_x^n$, the set of elements in $\BF_x^n$ satisfying $\varphi$. 

\begin{proposition}[\cite{DL-PSF}]
\label{prop:spec-for}
If $\varphi$ and $\varphi'$ are two formulas, we have $\varphi(\BF_x)=\varphi'(\BF_x)$ for every $x$ in a non-empty open subset of $U$ if and only if  $\varphi$ and $\varphi'$ are equivalent in the theory of pseudo-finite fields.
\end{proposition}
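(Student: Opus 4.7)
The plan is to prove this as a relative form of Ax's transfer principle between pseudo-finite fields and finite fields, treating each direction separately.

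For the implication ``equivalent in $T_{\psf,k}$ implies agreement on an open'', I would argue by contraposition via an ultraproduct. Suppose the locus $F$ of closed points $x \in U$ with $\varphi(\BF_x) \neq \varphi'(\BF_x)$ is Zariski dense, i.e.\ not contained in any proper closed subscheme of $U$. Then for each nonzero $r \in R$ the set $F \setminus V(r)$ is nonempty, so the family $\{F \setminus V(r) : 0 \neq r \in R\}$ has the finite intersection property and extends to a non-principal ultrafilter $\mathcal{U}$ on $F$. The induced ring map $R \to K := \prod_{\mathcal{U}} \BF_x$ is then injective, so it extends to an embedding $k = \Frac(R) \hookrightarrow K$. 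By Ax's theorem \cite{ax}, $K$ is pseudo-finite. Picking for each $x \in F$ a witness $a_x \in \varphi(\BF_x) \triangle \varphi'(\BF_x)$, Łoś's theorem places the class $[(a_x)]$ in $\varphi(K) \triangle \varphi'(K)$, contradicting equivalence in $T_{\psf,k}$. Thus $F$ is contained in a proper closed subset of $U$, and $\varphi, \varphi'$ agree on $\BF_x$ for all $x$ in the nonempty open complement.

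For the converse, I would use the quantifier elimination of $T_{\psf,k}$ in the language of Galois formulas \cite[Proposition 31.1.3]{FJ:FA} to assume that $\varphi$ and $\varphi'$ are represented by Galois stratifications $\FA = (X, C_i/A_i, \Con_i)$ and $\FA' = (X, C_i/A_i, \Con'_i)$ of a common $R$-scheme $X$ (after taking a common refinement of the stratifications if needed). The interpretation $\FA(K)$ in any field $K$ is determined by the conjugacy classes of the Artin symbols $\Ar(a)$ at $K$-points in the covers $C_i \to A_i$, which in turn only depend on the image of a topological generator of $\Gal(K^{\mathrm{sep}}/K) \cong \hat\BZ$ -- a structure shared by pseudo-finite fields and finite residue fields. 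A Chebotarev-density argument shows that, for each stratum $A_i$, the Artin symbols of $\BF_x$-points realize every conjugacy class of $G(C_i/A_i)$ as $x$ ranges over a Zariski dense set of closed points, so agreement $\FA(\BF_x) = \FA'(\BF_x)$ on an open forces $\Con_i = \Con'_i$ for every $i$, and hence $\FA(K) = \FA'(K)$ in every pseudo-finite $K \supseteq k$.

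The main obstacle is the converse direction: one must pass from agreement on individual finite residue fields $\BF_x$ to agreement on arbitrary pseudo-finite extensions of $k$. This transfer relies on the common $\hat\BZ$-shape of the absolute Galois groups involved and on the explicit Galois-stratification calculus developed in \cite{DL-PSF}, whereas the first implication only needs the ultraproduct/Łoś toolbox.
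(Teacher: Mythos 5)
The paper offers no proof of its own here --- it simply cites \cite{DL-PSF}, where the result is a mild parametrized strengthening of Ax's transfer theorem between finite and pseudo-finite fields --- and your argument is a correct reconstruction of that standard proof: one direction by {\L}o\'s and Ax's characterization of non-principal ultraproducts of finite fields, the other by reduction to Galois stratifications plus a Chebotarev/Lang--Weil density count. The ultraproduct half is complete as written (including the check that the induced ultrafilter is non-principal and that $R\hookrightarrow \prod_{\mathcal U}\BF_x$ is injective so the parameters transfer correctly). In the converse half, two small points are worth making explicit: first, replacing $\varphi,\varphi'$ by equivalent Galois formulas $\FA,\FA'$ only preserves the hypothesis ``agree on $\BF_x$ for $x$ in a nonempty open'' because of the direction you have just proved, so the argument really is bootstrapping on the easier implication; second, the classes realized by Artin symbols are of course conjugacy classes of \emph{cyclic} subgroups (decomposition groups over a $\hat\BZ$-Galois base are cyclic), which is exactly what the Lang--Weil estimate produces and which suffices since one may always shrink each $\Con_i$ to its cyclic members without changing $\FA(K)$ for pseudo-finite $K$. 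With those clarifications, your proof matches the one in the cited source.
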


From now on, we suppose that the field $\Lambda$ of coefficients for motives is $\Qlb$.
Let $G_k$ be the absolute Galois group of $k$ and $\K{\Qlb,G_k}$ the Grothendieck group of $\Qlb$-vector spaces endowed with a continuous $G_k$-action, for some prime $\ell$. Consider the $\ell$-adic realization map $\mathrm{Et}_\ell \colon\K{\DAC(\Spec k,\Qlb)}\to \K{\Qlb,G_k}$.

For $x\in U$, let $\Frob_x$ the geometric Frobenius at $x$. Taking its trace on the invariants by inertia defines a ring morphism $\K{\Qlb,G_k}\to \Qlb$.  Precomposing with $\mathrm{Et}_\ell$, we get a ring morphism
\[
\Tr \Frob_x\colon \K{\DAC(\Spec k,\Qlb)}\otimes \QQ\longrightarrow \Qlb.
\]
By \cite[Proposition 3.3.1]{DL-PSF}, this morphism is compatible with specialization:
\begin{proposition}\label{prop:spec-mot} Let $k$ be the field of fractions of a normal domain $R$ which is of finite type over $\BZ$, and $U=\Spec(R)$. Let $X$ be a definable set in the theory of pseudo-finite fields with parameters in $k$. There exists a non-empty open subset of $U$ such that, for every closed point $x$ in $U$, 
\[
\Tr \Frob_x \Bigl( \chi_\psf([X]) \Bigr)=\# X(\BF_x).
\]
\end{proposition}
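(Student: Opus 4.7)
The plan is to use quantifier elimination in the language of Galois formulas to reduce to a single Galois cover, and then apply the Grothendieck--Lefschetz trace formula, controlling the uniformity in $x$ via Proposition \ref{prop:spec-for}.

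Since $T_\psf$ admits quantifier elimination in the language of Galois formulas, $X$ is represented, over every pseudo-finite extension of $k$, by some Galois stratification $\FA = (Y, C_i/A_i, \Con(A_i))_{i\in I}$ of a $k$-scheme $Y$. Shrinking $U$, I may assume that $Y$, each stratum $A_i$ and each cover $C_i\to A_i$ spread out to analogous data over $U$, with $A_i$ smooth integral and $C_i\to A_i$ finite \'etale Galois of group $G_i$. By additivity of $\chi_{\psf}$, of $\Tr\Frob_x$ and of point counts over the stratification, the problem reduces to a single datum $(A, C/A, \Con)$. For any closed point $x\in U$ and any $a\in A(\BF_x)$, the Artin symbol $\Ar(a)$ is the conjugacy class of the geometric Frobenius $\Frob_a\in G$ on the fibre $C_a$, so by definition of $\FA(\BF_x)$ one has
\[
|\FA(\BF_x)| \;=\; \sum_{a\in A(\BF_x)} \alpha_\Con(\Frob_a),
\]
where $\alpha_\Con$ is the central function introduced before the statement.

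On the motivic side, decompose $\alpha_\Con = \sum_{\alpha\in\widehat G} c_\alpha\,\alpha$ as a $\QQ$-combination of irreducible characters. Then $\chi_{\psf}([X]) = \chi_k(C,\alpha_\Con) = \sum_\alpha c_\alpha\,\chi_k(C,\alpha)$, and by Proposition \ref{prop-chialpha}(1), $n_\alpha\chi_k(C,\alpha) = [p_!(\un_C)^\alpha]$ for $p\colon C\to\Spec k$. Factoring $p$ as $C\xrightarrow{\pi}A\xrightarrow{q}\Spec k$, the decomposition $\pi_*\un_C \simeq \bigoplus_\alpha \Ff_\alpha^{\oplus n_\alpha}$, where $\Ff_\alpha$ is the lisse $\Qlb$-sheaf on $A$ attached to the irreducible representation $V_\alpha$ of $G$ of dimension $n_\alpha$, yields $[p_!(\un_C)^\alpha] = n_\alpha[q_!\Ff_\alpha]$, so $\chi_k(C,\alpha) = [q_!\Ff_\alpha]$.

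It therefore suffices to establish, for each irreducible character $\alpha$ and for $x$ in a non-empty open of $U$, the identity
\[
\Tr\Frob_x\bigl([q_!\Ff_\alpha]\bigr) \;=\; \sum_{a\in A(\BF_x)} \alpha(\Frob_a),
\]
which is the Grothendieck--Lefschetz trace formula for the lisse sheaf $\Ff_\alpha$ on the smooth $U$-scheme $A$, using $\Tr(\Frob_a\mid (\Ff_\alpha)_a) = \alpha(\Frob_a)$. Summing against the $c_\alpha$ then gives $\Tr\Frob_x(\chi_{\psf}([X])) = \sum_{a\in A(\BF_x)} \alpha_\Con(\Frob_a) = |\FA(\BF_x)|$, which equals $|X(\BF_x)|$ for $x$ in a non-empty open by Proposition \ref{prop:spec-for}. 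The main obstacle is purely bookkeeping: one must spread out the finitely many $A_i$, covers $C_i\to A_i$, and all defining data over $U$ so that \'etaleness, smoothness and the Galois structure persist at every closed point of a single non-empty open; once this is done, the trace formula applies uniformly.
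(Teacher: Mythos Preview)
The paper does not give its own proof of this statement; it simply attributes the result to \cite[Proposition 3.3.1]{DL-PSF}. Your argument---reduce via quantifier elimination to a single Galois datum $(A,C/A,\Con)$, expand $\alpha_{\Con}$ into irreducible characters, identify $\chi_k(C,\alpha)$ with $[q_!\Ff_\alpha]$ via the decomposition of $\pi_*\un_C$, and finish with the Grothendieck--Lefschetz trace formula after spreading out---is precisely the proof given in that reference, so your proposal is correct and matches what the paper invokes.
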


Consider now $X\in \GDef_{\psf,R\llb t\rrb}$, and $\varphi\in \FC_\mot(X)$. For every non-archimedean local field $L$ with a map $R\llb t\rrb\to L$ sending $t$ to a uniformizer of $L$, we can consider the $L$-points of $X$, which do not depend on the precise choice of a formula, if we restrict the residue field of $L$  to be the residue field of a closed point of a non-empty open subscheme of $U$.

Given $\varphi\in \FC_{\mot}(X)$, for every $L$ as above, with residue field $\BF_x$ in an open subscheme of $U$, by applying $\Tr \Frob_x$ to $\varphi$, we get a map $\varphi_L\colon X(L)\to \Qlb$. If $\abs{\omega}$ is a volume form on $X$, similarly we get a volume form $\abs{\omega}_L$ on $X(L)$ for the Haar measure on $L$.

Combining the above proposition with the specialization of integrals of Cluckers-Loeser \cite[Theorem 9.1.4]{cl_annals}, we get a specialization principle for motivic integrals:

\begin{proposition}
\label{prop:spec-int}
Let $k$ be the field of fractions of a normal domain $R$ which is of finite type over $\BZ$, and $U=\Spec(R)$. Let $X\in \GDef_{\psf,R\llb t\rrb}$, and $\varphi\in \FC_\mot(X)$. Assume that $\varphi$ is integrable with respect to a volume form $\abs{\omega}$ on $X$. Then for every $x$ in a non-empty open subset of $U$, and for every local field $L$ with a map $R\llb t\rrb\to L$ sending $t$ to a uniformizer of $L$ and residue field $\BF_x$, 
$\varphi_L$ is integrable with respect to $\abs{\omega}_L$ and 
\[
\Tr\Frob_x \Bigr( \int^\mot_X \varphi\abs{\omega}\Bigr)=\int_{X(L)} \varphi_L\abs{\omega}_L,
\]
where the integral on the right hand side is for the Haar measure on $L$.
\end{proposition}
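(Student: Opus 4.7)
The plan is to combine the two ingredients pointed to in the excerpt: the Cluckers-Loeser specialization theorem (\cite[Theorem 9.1.4]{cl_annals}) for motivic integrals on the pseudo-finite Denef-Pas side, and Proposition \ref{prop:spec-mot}, which identifies point-counting on $\chi_\psf$ with the trace of the geometric Frobenius on its $\ell$-adic realization.

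First, choose a lift $\tilde\varphi\in \cC_\psf(X)$ of $\varphi$ along the surjection $\vartheta_X\colon \cC_\psf(X)\to \cC_\mot(X)$. By Proposition-Definition \ref{propdef-intmot} (applied on affine charts), $\varphi\,\abs{\omega}$ is integrable if and only if $\tilde\varphi\,\abs{\omega}$ is, and
\[
\int_X^\mot \varphi\,\abs{\omega}=\vartheta_{*_k}\Bigl(\int_X \tilde\varphi\,\abs{\omega}\Bigr)\in \cC_\mot(*_k).
\]
Applying \cite[Theorem 9.1.4]{cl_annals} to $\tilde\varphi$ yields a non-empty open $U_0\subset U$ such that for every closed point $x\in U_0$ and every local field $L$ with residue field $\BF_x$ and a map $R\llb t\rrb\to L$ sending $t$ to a uniformizer, the naive specialization $\tilde\varphi^{\#}_L\colon X(L)\to \QQ$, defined pointwise by taking the $\BF_x$-point count of the class $\tilde\varphi(y)\in \cC_\psf^{y\in X}(*_{k(y)})$, is integrable against $\abs{\omega}_L$, and the $\BF_x$-point count of $\int_X\tilde\varphi\,\abs{\omega}\in \cC_\psf(*_k)$ equals $\int_{X(L)}\tilde\varphi^{\#}_L\,\abs{\omega}_L$.

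Next, shrink $U_0$ if needed and apply Proposition \ref{prop:spec-mot} in two ways. Applied to the class $\int_X \tilde\varphi\,\abs{\omega}$, it identifies $\Tr\Frob_x(\int^\mot_X \varphi\,\abs{\omega})$ with the $\BF_x$-point count of $\int_X\tilde\varphi\,\abs{\omega}$, hence with $\int_{X(L)}\tilde\varphi^{\#}_L\,\abs{\omega}_L$ by the previous step. Applied pointwise to the definable family $y\mapsto \tilde\varphi(y)$, and uniformly in $y\in X(L)$, it gives
\[
\tilde\varphi^{\#}_L(y)=\Tr\Frob_x(\chi_\psf(\tilde\varphi(y)))=\Tr\Frob_x(\vartheta_X(\tilde\varphi)(y))=\Tr\Frob_x(\varphi(y))=\varphi_L(y).
\]
Thus $\tilde\varphi^{\#}_L=\varphi_L$, yielding both the claimed integrability of $\varphi_L$ and the identity $\Tr\Frob_x(\int^\mot_X\varphi\,\abs{\omega})=\int_{X(L)}\varphi_L\,\abs{\omega}_L$.

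The main obstacle is the uniformity of the open subset $U_0$ across these applications. Cluckers-Loeser produces a single $U_0$ for the integral. Proposition \ref{prop:spec-mot}, applied to the family $y\mapsto \tilde\varphi(y)$, produces a single open that works simultaneously for all $y\in X(L)$ because $\tilde\varphi$ is encoded by one definable subassignment of $X$; uniformity in the family parameter is built into Proposition \ref{prop:spec-for} and propagates to Proposition \ref{prop:spec-mot}. A secondary technical point is that $\tilde\varphi$ has parameters in $R\llb t\rrb$ rather than $R$, which is handled by treating $t$ as an extra parameter in the family and then evaluating at a uniformizer of $L$, as is already implicit in the formulation of $\GDef_{\psf,R\llb t\rrb}$.
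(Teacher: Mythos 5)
Your proposal is correct and spells out exactly what the paper merely asserts in a single sentence before the proposition statement, namely that the result follows by ``combining [Proposition~\ref{prop:spec-mot}] with the specialization of integrals of Cluckers-Loeser.'' The two-way use of Proposition~\ref{prop:spec-mot} (once on the global integral, once pointwise on the integrand), together with Proposition-Definition~\ref{propdef-intmot} to interpret $\int^\mot$ through a lift $\tilde\varphi$, is precisely the intended combination; the only minor point worth flagging is that the lift $\tilde\varphi$ should be chosen integrable, which is possible by the very definition of integrability in $\cC_\mot(X)$ from Proposition-Definition~\ref{propdef-intmot}.
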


\subsection{Fourier transform}
\label{sec:ft}

In this section we show that given a finite abelian group $G$, one can define at the level of motives a Fourier transform such that given a variety $X$ with an action of $G$, the function of the twisted varieties of $X$ is sent to the function mapping a character $\kappa$ to the $\kappa$-isotypical component of the motive of $X$. This motivates our Definition \ref{def:chi:iso:mcP} of isotypical component. 

Write $G$ as a product of cyclic groups $G=\ZZ/n_1\ZZ\times\dots\times \ZZ/n_r\ZZ$.

Let $T_i\to B_i$ be a $\ZZ/n_i\ZZ$-torsor over a base $B_i$ which is versal in the sense of~\cite[Definition 5.1]{serre-cohinv}. Let $S_i\subseteq B_i$ be the definable subset consisting of the points $x\in B_i$ such that the fiber of $T_i$ over $x$ is a primitive $\ZZ/n_i\ZZ$-torsor, \emph{i.e.} is a torsor of order $n_i$. Since $T_i$ is versal, $S_i(K)$ is non-empty for every pseudo-finite field $K$. 

The product $T=T_1\times \dots \times T_r$ is a $G$-torsor over $B=B_1\times \dots \times B_r$ and over $x\in S=S_1\times \dots \times S_r$, it is a primitive $G$-torsor.

Let $X$ be a variety over $B$ with a $G$-action. Define the twist of $X$ by $T$ as $X^T=X\times_B T/G$, where the quotient is by the anti-diagonal action.

In the special case where $X$ is a $G$-torsor, this corresponds to addition in $\rmH^1(B,G)$. 

For $e=(e_1,\dots,e_r)\in \BN^r$, define by induction $T_i^{e_i}$ by the relation $T_i^{j+1}=({T_i^{j})}^{T_i}$, and $T^{e}=T_1^{e_1}\times\dots \times T_r^{e_r}$.
Since the group $\rmH^1(B_i,\ZZ/n_i\ZZ)$ is torsion of order divisible by $n_i$, $T_i^{e_i}$ depends only on the class of $e_i$ modulo $n_i$. For $e\in G$, we then define $T^e$ as $T^{e'}$ where $e'\in \BN^r$ is a representative of $e$.

Let $G^*$ be the group of characters of $G$ with values in $\Lambda$.
\begin{definition}
Given a map $f\colon G \to \DAC(B,\Lambda)$, we define its Fourier transform as 
\[\widehat f\colon \chi\in  G^*\longmapsto \frac{1}{\abs{G}}\sum_{e\in G} f(e)\: \chi_{B}(T^e,\chi^{-1}).
\]
\end{definition}

Let $X\in \Var_{B}^G$.  
Consider the map $f_X\colon e\in G\mapsto \chi_{B}(X^{T^e})$. We will show that its Fourier transform is equal to isotypical components of the motive of $X$, once multiplied by some idempotent.

Consider the motive  $\chi_{B}(X^T)\in\K{\DAC(B)}$ of $X^{T}$, together with its $G$-action. 
Let $\chi\in G^*$ be a character of $G$, and $\rho\colon (x,y)\in G\times G \mapsto xy^{-1}\in G$. Using Proposition \ref{prop-chialpha-change-gps} (2) and (1), since $G$ is commutative, we have that 
\begin{equation}
\label{eqn-chi-iso-tordu}
\chi_{B}(X^T,\chi)=\chi_{B}(X\times_{B} T,\chi\circ \rho)=\chi_{B}(X,\chi) \cdot \chi_{B}(T,\chi) 
\end{equation}
and
\begin{equation}
\label{eqn-chi-iso-tordu2}
\chi_{B}(T^e,\chi)=\chi_{B}(T,\chi^e),
\end{equation}
where $\chi^e=\chi_1^{e_1}\dots\chi_r^{e_r}$ and $\chi_i$ is a character of $\BZ/n_i\BZ$. 

By Proposition~\ref{prop-chialpha} (2), we have $f_X(e)=\chi_{B}(X^{T^e})=\sum_{\psi\in G^*}\chi_{B}(X^{T^e},\psi)$.
Injecting this relation, we have for $\chi\in G^*$, 
\begin{align*}
\widehat f_X(\chi)&=\frac{1}{\abs{G}}\sum_{e\in G}\sum_{\psi\in G^*}\chi_{B}(X^{T^e},\psi) \: \chi_{B}(T^e,\chi^{-1})\\
&=\frac{1}{\abs{G}}\sum_{\psi\in G^*} \sum_{e\in G}\: \chi_{B}(X,\psi) \cdot \chi_{B}(T^e,\psi) \: \chi_{B}(T^e,\chi^{-1}) \: \ \ (\text{using } (\ref{eqn-chi-iso-tordu}))\\
&=\frac{1}{\abs{G}}\sum_{\psi\in G^*}\: \chi_{B}(X,\psi)\sum_{e\in G} \: \chi_{B}(T^e,\psi\chi^{-1})\\
&=\chi_{B}(X,\chi)\: \chi_{B}(T,1)+\frac{1}{\abs{G}}\sum_{\psi\in G^*, \psi\neq 1}\: \chi_{B}(X,\psi\chi)\sum_{e\in G}\: \chi_{B}(T,\psi^e),
\end{align*}
where the last line used (\ref{eqn-chi-iso-tordu2}). 
The second term is non-zero, but we will show that by restricting to a suitable definable set, it vanishes.

Recall that $S$ is the definable subset of $B$ consisting of points $x\in B$ over which $T$ is a primitive $G$-torsor.

Consider $\chi_{\psf,B}([S])\in \K{\DAC(B)}\otimes \BQ$. For example, if $r=1$ and $n_1$ is prime, we have 
\[
\chi_{\psf,B}([S])=\chi_{B}(T,1)-\frac{1}{\abs{G}}\sum_{\chi\in G^*} \: \chi_{B}(T,\chi).
\]
Since $S\subseteq B$, $S\times_{B} S=S$, the element $\chi_{\psf,B}([S])$ is an idempotent of $\K{\DAC(B)}\otimes \BQ$.

For every strict subgroup $H$ of $G$, since the fiber of $T$ over $x\in S$ is a primitive $G$-torsor, the fiber of $T/H$ over $x$ is not the trivial torsor, hence $S\times_B (T/H)(K)=\emptyset$ for every pseudo-finite field $K$. Thus the product of the classes of $S$ and $T/H$ in $\K{\RDef_{\psf,B}}$ is 0.  Thus we have 
\begin{equation}\label{eqn-AtimesT}
\chi_{\psf,B}([S])\cdot \chi_{B}([T/H])=0
\end{equation} in $\K{\DAC(B)}\otimes \BQ$ or $\cC_\mot(B)$. 

Let $\psi$ be a character of $G$. There exists a (possibly trivial) subgroup $H$ of $G$, and a primitive character $\bar \psi$ of $G/H$ such that $\psi=\bar \psi\circ \rho$, where $\rho\colon G\to G/H$ is the quotient morphism. Using Proposition \ref{prop-chialpha-change-gps}(2), we get that
\[
\chi_{B}(T,1)=\chi_{B}(T/G)=[\un_{B}]=1 
\]
in $\K{\DAC(B,\Lambda)}\otimes \BQ$.

Assume now that $\psi$ is not the trivial character. Using again Proposition \ref{prop-chialpha-change-gps}(2), we also have
\[
\sum_{e\in G}\chi_{B}(T,\psi^e)=\abs{H}\sum_{\bar e\in G/H} \chi_{B}(T/H,\bar \psi^{\bar e}).
\]
Since $\bar \psi$ is a primitive character of $G/H$, $\psi^{\bar e}$ varies over all characters of $G/H$, hence by Proposition \ref{prop-chialpha}(2), 
\[
\sum_{e\in G}\chi_{B}(T,\psi^e)=\abs{H}\chi_{B}(T/H).
\]
Since $H\neq G$, from (\ref{eqn-AtimesT}) we deduce that 
\begin{equation}
\label{eqn:sumchar}
\chi_{\psf,B}([S])\cdot\sum_{e\in G}\chi_{B}(T, \psi^e)=0,
\end{equation}
and finally
\[
\widehat f_X(\chi)\cdot\chi_{\psf,B}([S])=\chi_{B}(X,\chi)\cdot\chi_{\psf,B}([S]). 
\]
We thus have proven:
\begin{proposition}
\label{lem:FT:mu:n}
The Fourier transform of the map
\[
e\in G\longmapsto \chi_{B}(X^{T^e})\cdot\chi_{\psf,B}([S])
\]
is the map
\[
\chi\in  G^* \longmapsto \chi_{B}(X,\chi)\cdot\chi_{\psf,B}([S]).
\]
In particular, we have
\[
\frac{1}{\abs{G}}\sum_{e\in G}\chi_{B}(X^{T^e})\cdot\chi_{\psf,B}([S])=\chi_{B}(X,1)\cdot\chi_{\psf,B}([S]).
\]
\end{proposition}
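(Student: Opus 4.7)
The plan is to verify the claimed identity for $\widehat{f}_X(\chi)$ by a direct unfolding of the Fourier transform, and then to exploit the defining property of the auxiliary definable set $A$ to annihilate every character contribution except the one coming from $\chi$ itself. Once the Fourier identity is in hand, the ``in particular'' statement drops out by specializing to $\chi = 1$.

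Concretely, I would first substitute $f_X(e) = [M^\vee_{\Gm,c}(X^{T^e})]$ into the definition of $\widehat{f}_X(\chi)$, decompose this class into its $G$-isotypical components, and apply the analogue of (\ref{eqn-chi-iso-tordu}) with $T$ replaced by $T^e$, namely $[M^\vee_{\Gm,c}(X^{T^e})^\psi] = [M^\vee_{\Gm,c}(X)^\psi] \cdot [M^\vee_{\Gm,c}(T^e)^\psi]$. After swapping the order of summation over $e$ and $\psi$, the problem reduces to analyzing
\[
\sum_{e=0}^{n-1}[M^\vee_{\Gm,c}(T^e)^{\psi\chi^{-1}}]
\]
for each $\psi \in \widehat G$.

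The contribution $\psi = \chi$ produces exactly the right term: Proposition \ref{prop-chialpha-change-gps}(2) combined with $T^e/G \cong \Gm$ gives $\sum_{e}[M^\vee_{\Gm,c}(T^e)^{1}] = |G|$, leaving $[M^\vee_{\Gm,c}(X)^\chi]$ as the contribution to $\widehat{f}_X(\chi)$. For $\psi \neq \chi$, I would factor $\psi\chi^{-1} = \bar\theta\circ\rho$ with $\rho\colon G \to G/H$ the quotient by the kernel, $H \subsetneq G$, and $\bar\theta$ primitive on $G/H$; Proposition \ref{prop-chialpha-change-gps}(2) then rewrites the inner sum as a multiple of $[M^\vee_{\Gm,c}(T/H)]$. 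The crucial step is then to invoke the vanishing (\ref{eqn-AtimesT}), $\chi_{\psf,\Gm}([A]) \cdot [M^\vee_{\Gm,c}(T/H)] = 0$, which is forced by the very construction of $A$ as the locus where the fibers of $T/H \to \Gm$ are empty for every proper $H \subsetneq G$. The bookkeeping to identify the primitive quotient and correctly apply Proposition \ref{prop-chialpha-change-gps}(2) is the principal obstacle; once that is done, the annihilation is automatic from the construction of $A$.

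Finally, the ``in particular'' statement follows by specializing the Fourier identity to the trivial character $\chi = 1$. By the definition of $\widehat{f}_X$ and the identity $[M^\vee_{\Gm,c}(T^e)^{1}] = [M^\vee_{\Gm,c}(T^e/G)] = 1$ for every $e$, one has $\widehat{f}_X(1) = \frac{1}{|G|}\sum_{e=0}^{n-1}[M^\vee_{\Gm,c}(X^{T^e})]$, and since $[M^\vee_{\Gm,c}(X)^1] = [M^\vee_{\Gm,c}(X)^G]$, multiplying both sides by $\chi_{\psf,\Gm}([A])$ yields the displayed equality.
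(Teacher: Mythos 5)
Your proposal is correct and follows essentially the same route as the paper: unfold the Fourier transform, decompose into $G$-isotypical components via the twisted-motive identity (\ref{eqn-chi-iso-tordu}), isolate the $\psi=\chi$ contribution, reduce the remaining terms to multiples of $[M^\vee_{\Gm,c}(T/H)]$ for proper $H\subsetneq G$ via Proposition \ref{prop-chialpha-change-gps}(2), and kill them with $\chi_{\psf,\Gm}([A])$ using (\ref{eqn-AtimesT}). The only cosmetic difference is that you apply Proposition \ref{prop-chialpha-change-gps}(2) to $[M^\vee_{\Gm,c}(T^e)^{\psi\chi^{-1}}]$ directly, whereas the paper first rewrites $[M^\vee_{\Gm,c}(T^e)^{\psi}]$ as $[M^\vee_{\Gm,c}(T)^{\psi^e}]$ and then descends to $T/H$; both orderings rely on the same iterated twist identity and the primitivity of the induced character on $G/H$.
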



The preceding discussion took place inside $\K{\DAC (B,\Lambda)}\otimes \BQ$, but carries over verbatim to $\cC_\mot(B)$, using $(\ref{eqn-cCmot-quasiproj})$.

\begin{rmk}
Note that the above discussion is not vacuous, since even if $\chi_{B}(T)\cdot\chi_{\psf,B}([S])=0$, $\chi_{B}(T,\psi)\cdot\chi_{\psf,B}([S])$ is not zero. The underlying reason for that being that $\chi_{B}(-,\psi)$ is a group morphism but not a ring morphism. 
\end{rmk}

\subsection{Motivic characters}
\label{sec:motchar}
Let $G$ be a constant commutative group over $k$. Given the above discussion on Fourier transform, we shall consider a motivic version of the characters of $G$, as functions in $\cC_\mot(G,\rel)$, as follows. For a character $\alpha\in G^*$ and $g\in G$, we then define $\alpha_\mot(g)=\chi_{\rel}({T^g},\alpha)\cdot \chi_{\psf,\rel}([S])$, where $T$ and $S$ are as in the previous section. A general motivic character is a product of $e$ copies of $\alpha_\mot$, for some integer $e$. Up to the choice of a primitive character of $G$, we get a bijection between the set of characters of $G$, and the set of motivic characters of $G$.

We recover the classical result about the sum of values of a non-trivial character.
\begin{lemma}
\label{sumchar0}
Let $\alpha \in G^*$ be a non-trivial character $G$. Then
\[
\sum_{x\in G} \alpha_\mot(g)=0.
\]
\end{lemma}
\begin{proof}
This is Equation (\ref{eqn:sumchar}).
\end{proof}

\subsection{Orbifold volume}
\label{sec:orbifold}
We need to consider Deligne-Mumford stacks in a definable way. We shall treat them using atlases, but we need to introduce a notion of definable groupoid beforehand. 

\begin{definition}
A definable groupoid is the data of two definable sets $V$ and $A$, as well as definable maps $s, t\colon A \to V$, $c\colon C\subseteq A\times A\to A$, $e\colon V\to A$, $i\colon A\to A$. Here, $V$ and $A$ are respectively the vertices and arrows, $s$ the source, $t$ the target, $c$ the composition, defined on the set of composable maps, $e$ the identity arrows and $i$ the inverse. Those are required to satisfy the usual compatibilities.

A definable functor between $(V,A)$ and $(V',A')$ is the data of definable bijections $V\simeq V'$ and $A\simeq A'$ respecting the structure of of definable groupoid.
\end{definition}

Given a definable groupoid $(V,A)$, one can form the quotient space $V/A$ of $V$ by the equivalence relation given by isomorphisms. The space $V/A$ is a priori an imaginary set. 

\begin{definition}
\label{def:DMstack}
A \emph{definable smooth Deligne-Mumford stack over $k\llb t\rrb$} is given by the following data: a definable groupoid $(V,A)$ such that $V/A$ is a definable algebraic variety $\wM$ over $k\llb t\rrb$ together with a finite definable cover $\wM=\cup \wM_i$ such that the $\wM_i$ form a Zariski cover of $\wM$ into open pieces when specialized to a pseudo-finite field. We require as well that for all $i$, there is a smooth definable algebraic variety $\wX_i$ and a definable finite algebraic group $\Gamma_i$ acting on $\wX_i$ such that every orbit is contained in an affine subset, and such that the restriction of $(V,A)$ to $\wM_i$ is identified with the quotient stack $[\wX_i/\Gamma_i]\to \wX_i/\Gamma_i\simeq \wM_i$, and there is a dense open subset $\wU_i\subseteq \wM_i$ over which the action is  free.
\end{definition}

Let $\wM$ be a definable smooth Deligne-Mumford stack over $k\llb t\rrb$.
Chose the opens $\wU_i\subseteq \wM_i$ maximal with the property that the action of $\Gamma_i$ is free over $\wU_i$, which exist since $\Gamma_i$ is finite, and set $\wU=\cup \wU_i$.

Consider the definable subassignment $\wM^\natural$, determined by  
\[\wM^\natural(K)=\wM(K\llb t \rrb)\cap \wU(K\llp t \rrp),\]
for any pseudo-finite field $K/k$. One defines similarly $\wM^\natural_i$. 

After shrinking the $\wX_i$ if necessary, we may assume that their canonical bundles are trivial. We fix non-vanishing top degree forms $\omega_i$ on $X_i$, which induce, possibly pluricanonical, forms $\omega_{i,\orb}$ on $\wM^\natural_i$ as in \cite[Lemma 2.8]{GWZ18} and the measures induced by $\omega_{i,\orb}$ glue to define a definable form $\abs{\omega_{\orb}}$ on $\wM^\natural$. We shall compute 
$\int_{\wM^\natural}\abs{\omega_{\orb}}$ in terms of the twisted inertia stack of $\mcM_K$. 

Recall that given a stack $\mcM$, the inertia stack $I\mcM$ parametrizes pairs of objects in $\mcM$ together with an automorphism. When $\mcM$ is a Deligne-Mumford stack, hence has  finite automorphism groups, we have an equivalence
\[I \mcM \cong \mathrm{colim}_n\Hom(B\ZZ/n\ZZ,\mcM).\]
It turns out, that if $k$ does not contain all roots of unity, the correct space in our context is the \emph{twisted inertia stack of $\mcM$} defined as 
\[\Imu \mcM=\mathrm{colim}_n\Hom(B\mu_n,\mcM).\] 
We write abusively $\Imu M$ for the coarse moduli space of $\Imu \mcM$. Notice, that if $\mcM$ is of finite type, then the order of its automorphism groups are bounded and hence the above colimit stabilizes.

Thus given a definable Deligne-Mumford stack $\wM$, we can associate to it a definable Deligne-Mumford stack $\Imu \wM$ that parametrizes the twisted inertia stacks of $\mcM_K$ for pseudo-finite field $K$, \emph{i.e.} such that for every pseudo-finite field $K$, $\Imu \wM(K)=\Imu M_K(K)$. 

We now define the specialization map 
\begin{equation}\label{spmap} e\colon \wM^\natural \longrightarrow \Imu \wM.\end{equation}
Fix a pseudo-finite field $K/k$ and $x\in \wM^\natural(K)=\wM(K\llb t \rrb)\cap \wU(K\llp t \rrp)$. Let $\mcM_x$ be the pullback of $\mcM_K$ along $x$:
\[
\xymatrixcolsep{1pc}
\xymatrix{
\mcM_x \ar[d]_{} \ar[r]^-{} &\mcM \ar[d]_{} \\
\Spec(K\llb t \rrb) \ar[r]^-{x} &M,
}
\]
where $\mcM\to M$ is the map from $\mcM$ to its coarse moduli space.

Let $\widetilde \mcM_x$ be the normalization of $\Spec(K\llb t \rrb)$ in $\mcM_x$. By \cite[Proposition 2.12]{GWZ18}, there is a finite totally ramified extension $L$ of $K\llp t\rrp$ of degree $N$, with valuation ring $\mcO_L$ and residue field $k_L$, such that $[\Spec(\mcO_L)/\mu_N]\simeq \widetilde \mcM_x$ and the induced isomorphism 
\[
[\Spec(K)/\mu_N]=[\Spec(k_L)/\mu_N]\simeq \widetilde \mcM_{x,K}
\]
in the special fiber is independent of $L$ up to isomorphism. Hence we get a map $[\Spec(K)/\mu_N]\to \widetilde \mcM_{x,K}\to \mcM_{x,K}\to \mcM_K$, which gives a point $e(x)\in \Imu M_K(K)=\Imu \wM(K)$.

\begin{proposition} \label{edef}
The map $e$ is definable. 
\end{proposition}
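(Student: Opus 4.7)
\medskip

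\noindent\textbf{Proof plan.} Since $\wM = \bigcup \wM_i$ is a finite definable cover and the $\wM_i^\natural$ cover $\wM^\natural$, it suffices to exhibit $e$ restricted to each $\wM_i^\natural$ as a definable map into $\Imu\wM$ and to verify agreement on the overlaps $\wM_i\cap\wM_j$, which is automatic from the identification clause in Definition~\ref{def:DMstack}. On a fixed chart, the plan is to stratify $\wM_i^\natural$ by the monodromy type of the pulled-back $\Gamma_i$-torsor $P_x \defeq x^*\wX_i$, and to give an explicit definable formula on each stratum.

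Over the generic point $\Spec(K\llp t \rrp)$ the restriction of $P_x$ is étale (since $x$ lies in $\wU$), hence classified by a continuous morphism from the tame fundamental group of the punctured disc to $\Gamma_i$; its image is a cyclic subgroup $C_x \subset \Gamma_i$, well-defined up to conjugation. For each fixed conjugacy class of cyclic subgroup $C \subset \Gamma_i$ of order $N$, the condition ``$C_x \sim C$'' is a definable condition on $x$: it amounts to the existence of a $K\llp t \rrp$-point of $P_x/C$ together with the non-existence of such a point for any proper subgroup of $C$. Both are quantifier-free Denef--Pas conditions in the coefficients of $x$ once the phrase ``possesses a root in a totally ramified degree-$N$ extension of $K\llp t \rrp$'' is encoded as polynomial solvability in an auxiliary uniformizer $\pi$ satisfying $\pi^N = t \cdot u$ for a unit $u$.

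On such a stratum, the plan is to describe $e$ explicitly as follows. The normalization $\widetilde{\mcM}_x$ takes the form $[\Spec(\mcO_L)/C]$ with $L = K\llp t \rrp(\pi)$ and $\pi^N = t \cdot u_x$ for a unit $u_x \in K\llb t \rrb^\times$ encoding the unramified twist. Both $u_x$ and the $C$-equivariant lift $\widetilde{x}\in\wX_i(\mcO_L)$ of $x$ are characterized by polynomial equations in the coefficients of $x$ together with $\pi$; hence their $\pi$-adic coefficients are definable functions of those of $x$. The reduction $\bar{\widetilde{x}} \in \wX_i(K)$ of $\widetilde{x}$ modulo $\pi$, endowed with the canonical $\mu_N$-action inherited from the Galois action $\pi \mapsto \zeta_N \pi$ via the identification $C \simeq \mu_N$, provides the point $e(x) \in \Imu\wM(K)$ by purely algebraic formulas on the coefficients.

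The main obstacle will be the second step: producing the lift $\widetilde{x}$ and the unit $u_x$ uniformly in a definable way, as opposed to merely pointwise. This is where the pseudo-finiteness of the residue field is essential, since it guarantees a unique extension of every degree and the existence of $n$-th roots of unity in sufficiently large such extensions, allowing one to adjoin $\pi$ and identify $C$ with $\mu_N$ through a Galois formula. Once this is granted, the remaining algebraic manipulations are routine, and gluing across the charts follows directly from the compatibility in Definition~\ref{def:DMstack}.
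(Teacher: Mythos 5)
Your plan is close in spirit to the paper's: both proofs stratify $\wU$ by the conjugacy class of the inertia type of the $\Gamma$-torsor $P_x = x^*\wX_i$ and then write a definable condition picking out each stratum. However, there are two genuine gaps in the way you set up the stratification, and one missing ingredient at the end.

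First, you write that the monodromy image $C_x\subset\Gamma_i$ of the torsor $P_x$ over $\Spec(K\llp t\rrp)$ is cyclic, and then test ``$C_x\sim C$'' by asking that $P_x/C$ have a $K\llp t\rrp$-point while $P_x/C'$ does not for proper $C'\subsetneq C$. Both steps conflate the inertia subgroup with the full monodromy. Over a pseudo-finite residue field $K$, the tame fundamental group of $\Spec(K\llp t\rrp)$ is topologically $2$-generated (an inertia generator and a Frobenius lift), so the image of the classifying map is in general \emph{not} cyclic; only the image of inertia is. Moreover, $P_x/C$ admits a $K\llp t\rrp$-point precisely when the \emph{total} monodromy is conjugate into $C$, not just the inertia. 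A simple example: if $\Gamma_i = \ZZ/2\times\ZZ/2$ and $P_x$ corresponds to the unramified quadratic extension on the first factor and the trivial torsor on the second, the inertia of $P_x$ is trivial, yet $P_x$ itself has no $K\llp t\rrp$-point — so your test would misclassify this point. The paper circumvents this by first twisting by the unique unramified $\Gamma$-torsor $P$ that renders $T\times^\Gamma P$ totally ramified, and then testing isomorphism to the model torsor $T_\gamma = \Spec(K\llp t^{1/N}\rrp)\times_{\Spec(K\llp t\rrp)}\Gamma/\mu_N$ \emph{over an unramified extension $K_m$ of $K$}, which is exactly what kills the Frobenius part. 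Encoding that unramified extension (with a chosen Frobenius generator $\tau$) is the device of Section~\ref{sec:galois:coh}; it is not optional.

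Second, once the comparison is carried out over unramified extensions $K_m$ of varying degree $m$, one gets a family of formulas $\varphi_{x,\gamma,m}$ indexed both by the inertia type $\gamma$ and the degree $m$, and it is not obvious that a fixed finite subfamily suffices to cover $\wU$. The paper resolves this by a compactness argument: the formulas cover $\wU$, so finitely many already do, and definability of $e$ follows. Your proposal does not address why the degree $m$ can be uniformly bounded; without the compactness step the argument is incomplete even after the stratification is repaired.
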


Before starting the proof, recall from \cite[Section 2.5]{GWZ18} some facts about $\Gamma$-torsors over $\Spec(K\llp t \rrp)$, with $K$ pseudo-finite. A $\Gamma$-torsor $T=\Spec(\prod_i L_i)$ over $\Spec(K\llp t \rrp)$ is said to be unramified if each $L_i$ is unramified. It is strongly ramified if each $L_i$ is totally ramified. 

In that case, $L_i\simeq L_j$ and 
$\Gamma$ acts transitively on the connected components of $T$, which correspond to the totally ramified extensions $L_i$. Let $I_T$ be the stabilizer of $\Spec(L_1)$. It is a subgroup of $\Gamma$, well-defined up to conjugation, and isomorphic to $\mu_N$, where $N=\mathrm{deg}(L_1/ K\llp t \rrp)$. 

For any $T$,  from the description of $\mathrm{H}^1(K\llp t \rrp,\Gamma)$, there exists a (unique) unramified $\Gamma$-torsor $P$ such that the twist $T\times^\Gamma P$ is strongly ramified, see \cite[Lemmas 2.14, 2.16]{GWZ18} for details. 

Let $T$ a $\Gamma$-torsor over $\Spec(K\llp t \rrp)$ and $\widetilde T$ be the normalization of $\Spec(K\llb t \rrb)$ in $T$. From the valuative criterion of properness, $\widetilde T$ is endowed with a $\Gamma$-action. We can form the stack quotient $[\widetilde T/\Gamma]$. Let $P$ be an unramified torsor such that $T\times^\Gamma P$ is strongly ramified. Since smooth base change commutes with normalization, we have $[\widetilde T\times^\Gamma P/\Gamma_P]\simeq [\widetilde T/\Gamma]$. Since the special fiber of $[\widetilde T\times^\Gamma P/\Gamma_P]$ is $B I_T$ and $I_T \cong \mu_N$, we get a morphism 
\begin{equation}
\label{eqn-strongramtor}
{B \mu_N}_K\longrightarrow [\widetilde T/\Gamma]_K.
\end{equation}

The totally ramified torsor $T\times^\Gamma P$ can be  described more concretely as follows. Let $\gamma\colon \mu_N\to \Gamma$ be the inclusion of the inertia group of $T$ into $\Gamma$. Define a $\Gamma$-torsor by
\begin{equation}
T_\gamma= \Spec(K\llp t^{1/N} \rrp)\times_{\Spec(K\llp t \rrp)} \Gamma/\mu_N.
\end{equation}
This torsor is strongly ramified and thus must be equal to $T\times^\Gamma P$. 

\begin{proof}[Proof of Proposition \ref{edef}]
We can assume that $\wM$ is parametrizing a global quotient $[X/\Gamma]$, where $\wX$ and $\Gamma$ are as in the beginning of the section. Let $\pi\colon \wX\to \wX/\Gamma$ the quotient morphism. Recall the $\wU\subseteq \wX/\Gamma$ is defined as the maximal open subset such that $\pi\colon V=\pi^{-1}(\wU)\to \wU$ is a $\Gamma$-torsor, which amounts to define $\wV$ as the subset of $\wX$ where the $\Gamma$-action in free.  In this situation, $e$ admits the following more explicit description (see \cite[Construction 2.18]{GWZ18}). Let $x\in \wM^\natural(K)=\wM(K\llb t \rrb)\cap \wU(K\llp t \rrp)$. By definition of $\wU$, the point $x$ defines a $\Gamma$-torsor $T$ over $\Spec(K\llp t \rrp)$. Let $\widetilde T$ be the normalization of $\Spec(K\llb t \rrb)$ in $T$. By the valuative criterion for properness, one gets a $\Gamma$-equivariant map $\widetilde x\colon\widetilde T\to X$. We have the following diagram:
\[
\xymatrixcolsep{1pc}
\xymatrix{
T\ar[d]_{} \ar[r]^-{} &\widetilde T \ar[d]_{}\ar[r]^-{\widetilde x} &X \ar[d]_{\pi} \\
\Spec(K\llp t \rrp) \ar[r]_-{} &\Spec(K\llb t \rrb) \ar[r]_-{x} &X/\Gamma.
}
\]
From (\ref{eqn-strongramtor}), we then get a morphism 
\[
{B \mu_N}_K\longrightarrow [\widetilde T/\Gamma]_K\longrightarrow [X/\Gamma]_K, 
\]
hence a point in $\Imu [X/\Gamma]_K$. By \cite[Construction 2.18]{GWZ18}, this point in $\Imu M_K$ is $e(x)$. 

From the description of the inertia stack above, to show that $e$ is definable, it suffices to show that the subset of $x\in \wU$ where the totally ramified torsor $T\times^\Gamma P$ is isomorphic to $T_\gamma$, for $\gamma\colon \mu_N\to \Gamma$ (up to conjugacy), is definable. But those two torsors are isomorphic if over an unramified extension, the fiber $\pi^{-1}(x)$ is isomorphic to $T_\gamma$ as sets with a definable $\Gamma$-action. If no extension is needed, then it is clear that this expressed by a formula $\varphi_{x,\gamma}$. If one needs an extension, say of degree $m$, then one encodes the extension of degree $m$ of $K$, $K_m$, as a definable set with parameters, and use also a parameter $\tau$  for a fixed choice of generator of the Galois group $\Gal(K_m/K)$. See Section \ref{sec:def:geo:stab} below for details. Then working in $K_m$, the condition that $\pi^{-1}(x)$ is isomorphic to $T_\gamma$ as $\Gamma$-set is again definable by a formula $\varphi_{x,\gamma,m}$. Since $U$ is covered by the formulas $\varphi_{x,\gamma,m}$, by compactness it is covered by a finite number of those, which shows that $e$ is definable. 
\end{proof}

Let us recall the definition of weights from \cite[Definition 2.20]{GWZ18}.
Let $k$ be a field and let 
$(\chi_1, \cdots, \chi_r) \in (\mathbb{Q}/\mathbb{Z})^r$. Define $w (\chi_1, \cdots, \chi_r) = \sum_{i= 1}^r c_i$,
with $c_i$
the unique rational representative of $\chi_i$ with $0 < c_i\leq 1$.
For  any $r$-dimensional $k$-vector space $V$ with an algebraic action $a : \hat \mu \to \mu_N \to \Aut (V)$, 
one has a character decomposition
$V_{\bar k} \simeq \oplus_{i = 1}^r V_{\bar k} (\chi_i)$ with $\chi_i \in \mathbb{Q}/\mathbb{Z}$ characters of $\widehat \mu$ and one defines the weight of $a$ as
$w(a) =w (\chi_1, \cdots, \chi_r)$.
Let $\wM$ be a definable  smooth Deligne-Mumford stack  over $k\llb t\rrb$ and $(x, \alpha) \in \Imu \wM(K)$.
We define $w(x, \alpha)$ as the weight of $\widehat \mu$ acting on the tangent space $T_x \mcM_K$.

\begin{theorem}[Orbifold formula]
\label{th:orbi}
Let $\wM$ be a definable smooth Deligne-Mumford stack over $k\llb t\rrb$. Let $(x,\alpha)\in \Imu \wM(K)$. Then the volume of the fiber of $e$ with respect to the orbifold measure on $\wM$ is 
\[
\int_{e^{-1}(x,\alpha)} \abs{\omega_\orb}=\eL^{-w(x,\alpha)}.
\]
\end{theorem}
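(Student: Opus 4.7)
The plan is to carry out, in the motivic integration framework of Section \ref{sec1}, the $p$-adic calculation of the analogous formula in \cite[Prop.~2.21]{GWZ18}. I would proceed in three steps, the main work being to check definability at each stage.

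First, a local reduction. Since $\wM$ is covered by definable charts $\wM_i=\wX_i/\Gamma_i$, the specialization map $e$, the subassignment $\wM^\natural$, and the orbifold form $|\omega_{\orb}|$ all decompose compatibly with this cover. So we may assume $\wM=[\wX/\Gamma]$ is a definable global quotient and that $(x,\alpha)$ is represented by a closed point $\bar x\in\wX(K)$ together with an embedding $\gamma\colon\mu_N\hookrightarrow\Gamma$ such that $\bar x$ is a fixed point of $\gamma(\mu_N)$. Shrinking $\wX$ around $\bar x$ if necessary, we may further assume that $\gamma(\mu_N)$ is the full stabilizer of $\bar x$.

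Second, a coordinate choice and parameterization of the fiber. I would pick $\mu_N$-equivariant formal coordinates $(u_1,\dots,u_n)$ on $\wX$ at $\bar x$ diagonalizing the action, so that $\zeta\cdot u_i=\zeta^{c_i}u_i$ for $0\le c_i<N$, and $w(x,\alpha)$ is the corresponding age $\sum_i c_i/N$. By the proof of Proposition \ref{edef}, a point $x'\in\wM^\natural(K)$ lies in $e^{-1}(x,\alpha)$ precisely when the strongly ramified part of the $\Gamma$-torsor attached to $x'$ is isomorphic to $T_\gamma$ (so that $\widetilde T\simeq\Spec(K\llb s\rrb)$ with $s^N=t$) and the induced lift $\tilde x\colon\widetilde T\to\wX$ satisfies $\tilde x(0)=\bar x$. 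In these coordinates, writing $u_i\circ\tilde x=s^{c_i}v_i(t)$ with $v_i\in K\llb t\rrb$ simultaneously encodes the $\mu_N$-equivariance and the specialization condition, so the fiber $e^{-1}(x,\alpha)$ is definably parameterized by $(v_1,\dots,v_n)\in K\llb t\rrb^n$ modulo the free $\mu_N$-action coming from the reparameterization $s\mapsto\zeta s$.

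Third, a direct calculation of the integral. By the motivic analogue of \cite[Lemma~2.8]{GWZ18}, the orbifold form pulls back through this parameterization to $|\mu_N|^{-1}\prod_i |s|^{c_i}\,|dv_i|$ on the space of lifts, and the quotient by $\mu_N$ cancels the factor $|\mu_N|^{-1}$. Applying motivic Fubini (Proposition \ref{prop-fubini-mot}) together with the standard motivic volume $\eL^{-c_i/N}$ of the ball $s^{c_i}K\llb t\rrb$ (understood in the natural extension of $\mathbb{A}$ by a formal $N$-th root of $\eL$), one obtains
\[
\int_{e^{-1}(x,\alpha)} |\omega_{\orb}|=\prod_{i=1}^n \eL^{-c_i/N}=\eL^{-w(x,\alpha)}.
\]

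The main obstacle is the definability check at each stage: verifying that the trivialization $\widetilde T\simeq\Spec(K\llb s\rrb)$, the choice of $\mu_N$-equivariant coordinates, and the resulting parameterization by $(v_1,\dots,v_n)$ are genuinely definable in the Denef--Pas language with Galois stratifications, uniformly in the parameter $(x,\alpha)$, and that the orbifold/pluricanonical pullback of \cite[Lemma~2.8]{GWZ18} carries over unchanged. Once these points are settled, the final identity reduces to motivic Fubini applied to a product of balls, whose volumes are recalled in Section \ref{sec1}.
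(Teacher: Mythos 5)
The paper's proof is a brief reduction to \cite[Theorem 2.21]{GWZ18}, noting that the $p$-adic argument there (which ultimately rests on \cite{DL2002}) carries over to the motivic setting with \cite[Proposition 3.2.3]{LW19} substituted for \cite[Lemma 4.19]{GWZ18}; your proposal instead attempts to carry out the arc-space computation directly, which is a legitimate alternative route, and you rightly identify the definability checks as the main technical load. However there is a concrete error in the local computation that changes the answer.

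The paper defines the weight $w$ using representatives $c_i \in (0,1]$ of the characters $\chi_i \in \Qb/\Zb$, so that a $\gamma(\mu_N)$-invariant coordinate direction (where $\chi_i=0$) contributes $1$ to $w$; this is used essentially in the proof of Lemma \ref{instre} via $w(1)=\dim\widetilde{\wM}_G$. You instead use the Reid-age convention $c_i/N \in [0,1)$, under which such a direction contributes $0$, and correspondingly your parameterization misses a constraint: writing $u_i\circ\tilde x = s^{c_i}v_i(t)$ with $v_i \in K\llb t\rrb$ encodes the specialization condition $\tilde x(0)=\bar x$ only when $c_i>0$. When $c_i=0$ one has $u_i\circ\tilde x = v_i(t)$ and must additionally impose $v_i(0)=0$, i.e.\ $v_i\in tK\llb t\rrb$, a ball of volume $\eL^{-1}$ rather than $1$. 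Your conclusion $\prod_i\eL^{-c_i/N}$ therefore disagrees with the theorem exactly when $\bar x$ lies on a positive-dimensional component of the $\mu_N$-fixed locus. Once the missed constraint is included, $\prod_{c_i>0}\eL^{-c_i/N}\cdot\prod_{c_i=0}\eL^{-1}$ is precisely $\eL^{-w(x,\alpha)}$ in the paper's convention. A secondary point worth making explicit: the cancellation of your $|\mu_N|^{-1}$ prefactor against the free $\mu_N$-action is exactly where the denominator appearing in \cite[Theorem 2.21]{GWZ18} disappears (the paper attributes this to $(x,\alpha)$ being a coarse-moduli rather than a stacky point), so it should be verified rather than asserted.
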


\begin{proof}This is the exact analogue for  motivic integrals of  \cite[Theorem 2.21]{GWZ18} which deals with $p$-adic integrals.
The proof in loc. cit., which ultimately reduces to computations done in \cite{DL2002}, adapts without difficulty to the motivic setting, using \cite[Proposition 3.2.3]{LW19} instead  of 
\cite[Lemma 4.19]{GWZ18}. Note that the apparent discrepancy  with 
  \cite[Theorem 2.21]{GWZ18} (the presence of a denominator in the formula)
is an artefact coming from the fact that in \cite{GWZ18}
 $(x,\alpha)$ is a point of $\Imu \mcM$ while here it is a point in the coarse moduli space.

The theorem also follows from a more general orbifold formula for Artin stacks, that will appear in the authors' upcoming work in preparation.
\end{proof}

\section{Geometric setup}
\label{sec:geo-setup}
In this section we introduce the relevant constructions for the formulation of the Geometric Stabilization theorem \cite[Théorème 6.4.1]{ngo:fl}. The main reference is Section 4 of \textit{loc. cit.}

Throughout this section let $k$ be a field of characteristic $0$.
Since a first order sentence in the language of rings holds in every pseudo-finite field of characteristic zero if and only it holds for all finite fields of large enough characteristic, see \cite{ax}, we shall use freely that
first order statements proved in \cite{ngo:fl,GWZ18} for  finite fields,  sometimes with a lower bound on the characteristic, will hold for any pseudo-finite field containing $k$.
We fix $X$ a smooth projective geometrically connected curve over $k$ and $D$ a line bundle of even degree $d$ on $X$. By abuse of notation we also write $D$ for the $\BG_m$-torsor associated with $D$. We will also assume the existence of a rational point $\infty \in X(k)$. 

\subsection{Reductive group schemes}
Let $\BG$ be a split reductive group scheme over $k$ and $(\BT,\BB,s)$ a split pinning of $\BG$. We denote by $\BX^*(\BT)$ and $\BX_*(\BT)$ the root and coroot lattice of $\BG$.
 We define a quasi-split form of $\BG$ over $\X$ by fixing a $\Out(\BG)$-torsor $\rho$ over $\X$ and setting 
\[G = \BG \times^{\Out(\BG)} \rho.\]
We further denote by $T = \BT \times^{\Out(\BG)} \rho$ the induced maximal torus and by $\bfg$,$\bft$,$\g$ and $\t$ the Lie algebras of $\BG$, $\BT$, $G$ and $T$ respectively. The superscripts $\reg$ and $\rs$ will denote the open dense subschemes of regular and regular semi-simple elements respectively. 

\subsection{Higgs bundles}
 \label{sec:higgs-bdl}
\begin{definition} A $G$-Higgs bundle (with coefficients in $D$) is a pair $(E, \theta)$ with $E$ a $G$-torsor on $X$ and $\theta$ a global section of $\ad(E) \otimes D$.\\
Equivalently a $G$-Higgs bundle is given by a morphism $X \rightarrow [\g_D/G]$ over $X$, where $\g_D = \g \times^{\BG_m}_X D$ and $G$ acts via the adjoint action. 
\end{definition}

Let $\BM_G = \BM_G(\X,D)$ be the moduli stack parametrizing $G$-Higgs bundles on $\X$. The global sections $Z(\X,G)$ of the center $Z(G)$ over $\X$ act as automorphisms on any $G$-Higgs bundle and we denote by $\FM_G$ the rigidification of $\BM_G$ by $Z(\X,G)$.

\begin{definition} A $G$-Higgs bundle is regular if the morphism $X \rightarrow [\g_D/G]$ factors through $[\g^{\reg}_D/G]$. We denote by $\BM_G^{\reg}$ and $\FM_G^{\reg}$ the corresponding open substacks. 
\end{definition}

In Section \ref{sec:coendo} below we will also need a slight modification of the notion of a Higgs bundle introduced in \cite{GLWZ}. Given a $Z(G)$-gerbe $\beta$ on $X$ we define a $\beta$-twisted $G$-Higgs bundle to be a morphism $X \to [\g_D/G]^\beta$, where $[\g_D/G]^\beta$ is the $\beta$-twist of the $Z(G)$-gerbe $ [\g_D/G] \to  [\g_D/(G/Z(G))]$. We write $\BM_G^{\beta}$ for the corresponding moduli stack.

\subsection{The Hitchin fibration}
 Let $\bfc = \bfg / \BG $ be the Chevalley base of $\bfg$ and $\mfc / \X$ its relative version over $\X$. The scaling action of $\BG_m$ on $\g$ descends to $\mfc$ and we write $\mfc_D = \mfc \times_{\X}^{\BG_m} D$. 

\begin{definition}
\label{def:hfib} The Hitchin base $\A=\A_G=\A_G(X,D)$ is the affine space of global sections $H^0(X,\mfc_D)$. The morphism
\[\chi: \BM_G \longrightarrow \A \]
induced by the quotient $[\g_D/G] \to \mfc_D$ is called the Hitchin fibration. 
\end{definition}

The Kostant section $\bfc \to \bfg$ together with a choice of square root of $D$ gives rise to a section $\A \to \BM^{\reg}_G$ of $\chi$, which we still call the Kostant section. 

\subsection{Symmetries}
Let $\BP_G \to \A$ be the Picard stack of torsors for the regular centralizer $J \to \mfc_D$, also called the abstract Prym. It naturally acts on $\BM_G / \A$ and $\BM_G^{\reg}$ is a $\BP_G$-torsor trivialized by the Kostant section. As before we denote by $\FP_G$ the rigidification of $\BP_G$ by $Z(\X,G)$.

\subsection{$\pi_0(\BP_G)$ and the anisotropic locus}
\label{sec:ani}
Of central importance for Geometric Stabilization is the group scheme of connected components $\pi_0(\BP_G)$.

The anisotropic locus $\A^{\ani} \subseteq \A$ is the open subscheme whose points over an algebraic closure $\bar{k}$ are given by 
\[\A^{\ani}(\bar{k}) = \{ a \in \A^{\heartsuit}(\bar{k}) \ |\ |\pi_0(\BP_{G,a})| < \infty\},\]
with $\A^{\heartsuit}$
the open subset of
$\A$ where the cameral cover is reduced, see \cite[4.5]{ngo:fl}.

Evaluation at our fixed point $\infty \in X(k)$ defines a morphism $\ev_\infty: \A \to \mfc_{D,\infty}$, where $\mfc_{D,\infty}$ denotes the fiber of $\mfc_{D}$ over $\infty$. 
We define \'etale open subschemes $\wtA$ and $\wtA^{\ani}$ of $\A$ and $\A^{\ani}$ by the cartesian squares
\[\xymatrix{ \widetilde{\A}^{\ani} \ar[r] \ar[d] & \wtA \ar[d] \ar[r] & \t^{\reg}_{D,\infty} \ar[d] \\ \A^{\ani} \ar[r] & \A \ar[r]^{\ev_\infty} & \mfc_{D,\infty}.}\]

The pullback of $\BM_G$ to $\wtA$ and $\wtA^{\ani}$ is denoted by $\wtBM_G$ and $\wtBM^{\ani}$ respectively, and similarly for $\FM_G, \BP_G$ and $\FP_G$.

Next we further assume that the fiber of the $\Out(\BG)$-torsor $\rho$ over $\infty$ is trivial and we fix a point $\infty_\rho \in \rho_\infty(k)$. With this choice, the Abel-Jacobi map induces over $\wtA$ a morphism 
\begin{equation}\label{ajmap} \phi: \wtA \times \BX_*(\BT) \longrightarrow \wtP_G. \end{equation}

By \cite[Proposition 4.38]{GWZ18} the composition of $\phi$ with the projection onto $\pi_0(\wtP_G)$ gives for every $a\in \wtA^{\ani}(k)$ a surjection 
\begin{equation}\label{latact} \BX_*(\BT) \longrightarrow \pi_0(\wtP_{G,a}). \end{equation}
In particular $\pi_0(\wtP_{G,a})$ is a finite constant group scheme.

\subsection{Coendoscopic groups and twisted inertia}
\label{sec:coendo} This is a summary of \cite[Section 5]{GWZ18} and \cite[Section 2]{GLWZ}. Roughly speaking a coendoscopic group of $G$ is the Langlands-dual to an endoscopic group of $G$.

 A coendoscopic datum for $G$ over $k$ is a triple $\FE=(\kappa,	\rho_\kappa, \rho_\kappa \to \rho)$, where $\kappa: \widehat{\mu} \to \BT$ is a homomorphism over $k$ and $(\rho_\kappa, \rho_\kappa \to \rho)$ is a certain reduction of the torsor $\rho$. We call $\kappa$ the type of $\FE$. The split coendoscopic group $\BH_\kappa \subseteq \BG$ associated with $\kappa$ is simply the neutral component of the centralizer of the image of $\kappa$. The coendoscopic group $H_{\FE}$ is a quasi-split form of $\BH_\kappa$ over $X$ defined by $(\rho_\kappa, \rho_\kappa \to \rho)$. We write $T_\FE, \mfh_\FE, \mft_\FE...$ for the maximal torus, Lie algebra, Cartan algebra... associated with $H_\FE$.

Coendoscopic data appear naturally in the study of the twisted inertia stack $I_{\hmu} \BM_G$ as follows. Given $\FE$ one obtains as in \cite[Section 2.3]{GLWZ} a $Z(H_\FE)$-gerbe $\beta_\FE$ and we write $\BM_\FE$ for the moduli stack $\BM_{H_{\FE}}^{\beta_\FE}$ of $\beta_\FE$-twisted $H_\FE$-Higgs bundles. As in \cite[Construction 2.21]{GLWZ} we obtain compatible morphisms $\mu_\FE:\BM_\FE \to \BM_G $ and $\nu_{\FE}:\A_\FE = \A_{H_\FE} \to \A_G$. 

Furthermore $\FE$ induces for any $\beta_\FE$-twisted $H_\FE$-Higgs bundle $F$ a morphism $\hmu \to \Aut(F)$, which allows us to lift $\mu_\FE$ to a morphism to $I_{\hmu} \BM_G$. In summary we have a commutative diagram
\begin{equation}\label{square1}
\xymatrix{
\BM_{\FE} \ar[d] \ar[r]^{\mu_\FE} & I_{\hmu} \BM_G \ar[d]   \\
\A_{\FE} \ar[r]^{\nu_\FE} &\A_G,
}
\end{equation}
where the vertical arrows are the Hitchin fibrations. Furthermore $\nu_\FE$ respects the anisotropic locus \emph{i.e.} $\nu^{-1}_\FE(\A^{\ani}_G) = \A^{\ani}_{H_\FE} = \A^{\ani}_{\FE}$.

By \cite[Lemma 2.25]{GLWZ} the fiber of $\rho_\kappa$ over $\infty$ has a $k$-rational point $\infty_{\rho_\kappa}$. We write $\A^{G-\infty}_{\FE} = \nu^{-1}_\FE(\A_G^\infty)$. One can check that $\A^{G-\infty}_{\FE}$ is contained in $\A^\infty_{\FE}$ and we define $\wA^{G-\infty}_{\FE} = \A^{G-\infty}_{\FE} \times_{\A^\infty_{\FE}} \wA_{{\FE}}$. 

The chosen point $\infty_{\rho_\kappa}$ induces an isomorphism $\mft_{\FE,\infty} \cong \mft_\infty$ which allows us to lift $\nu_\FE$ to a morphism $\wnu: \wA^{G-\infty}_{\FE} \to \wA_G$.

We further restrict the above constructions to the anisotropic locus, which we indicate by adding a superscript $\ani$. For the various exponents $*$ appearing above we will also denote by $\wBM_G^* \to \wA^*_G$ and $\wBM^*_{\FE} \to \wA^*_{\FE}$ the pullbacks of  $\BM_G \to \A_G$ and $\BM_{\FE} \to \A_{\FE}$ to $\wA^*$ and $\wA^*_{\FE}$. In particular, from \eqref{square1} we get the commutative square 
\begin{equation*}
\xymatrix{
\wBM_{\FE}^{G-\infty,\ani} \ar[d] \ar[r]^{\wmu_\FE} & I_{\hmu} \wBM_G^{\ani} \ar[d]   \\
\wA_{\FE}^{G-\infty,\ani} \ar[r]^{\wnu_\FE} & \wA_G^{\ani}.
}
\end{equation*}

Conversely, if we assume the existence of a $k$-rational point $\infty_\rho$ in the fiber of $\rho$ at $\infty$, we get for any anisotropic $G$-Higgs bundle $F$ an inclusion $\Aut(F) \to \BT$ \cite[Construction 4.36]{GWZ18}. Thus any $k$-point in $I_{\hmu} \wBM_G^{\ani}$ defines a morphism $\kappa: \hmu \to \BT$ and in fact a coendoscopic datum $\FE= (\kappa, \rho_\kappa, \rho_\kappa \to \rho)$ \cite[Proposition 5.12]{GWZ18}. We say that a coendoscopic datum $\FE$ occurs in $I_{\hmu} \wBM_G^{\ani}(k)$ if it arises from a point in $I_{\hmu} \wBM_G^{\ani}(k)$ via this construction and write $I_{\hmu} \wBM_G^{\ani}(k)_\FE$ for the component with given occurring endoscopy datum $\FE$.

The above construction defines an equivalence \cite[Corollary 2.27]{GLWZ}
\begin{equation}\label{nreq}  \wmu_\FE:  \wBM_{\FE}^{G-\infty,\ani}(k) \xrightarrow{\: \sim\:}  I_{\hmu} \wBM_G^{\ani}(k)_\FE. \end{equation}

For our applications we need a variant of this equivalence for the inertia stack of the rigidification $I_{\hmu} \wFM_G^{\ani}$, which will have fewer components indexed by orbits $[\FE]$ of occuring coendoscopy data under a natural $Z(X,G)$-action \cite[Construction 5.20]{GWZ18}. Since the coendoscopic groups for two coendoscopy data in the same orbit are isomorphic we will ignore this subtlety and write $I_{\hmu} \wFM_G^{\ani}(k)_\FE$ for the corresponding component. If we write $\wFM_{\FE}^G$ for the $Z(X,G)$-rigidification of $\wBM_{\FE}^{G-\infty,\ani}$, we deduce from  \eqref{nreq}  the equivalence \cite[Corollary 2.29]{GLWZ}
\begin{equation}\label{rigineq} \overline{\mu}_{\FE}: \wFM_{\FE}^G(k) \xrightarrow{\: \sim\:} I_{\hmu} \wFM_{G}^{\ani}(k)_\FE. \end{equation}

Notice also that  because of \eqref{square1} the equivalences \eqref{nreq} and \eqref{rigineq} are compatible  with the restriction to individual Hitchin fibers.

\section{Definability of Geometric Stabilization}
\label{sec:def:geo:stab}
In this section we show that the various objects appearing in the statement of the Geometric Stabilization theorem are definable in the theory of pseudo-finite fields and that its statement is expressible as an equality between motivic integrals. Part of this discussion is similar to \cite{chl} (see also \cite{gh}), where it is proven that the statement of the Fundamental Lemma is expressible as an equality between motivic integrals. 

\subsection{Field extensions}
\label{sec:field:ext}
Let $K$ be a pseudo-finite field. We shall need to express conditions related to finite Galois extensions $L$ of $K$. The ring language does not allow to do this directly, however, as long as the degree $\deg(L/K)=r$ is fixed (or at least bounded), we can emulate it within the ring language as follows. 

We describe $L$ by using the minimal polynomial $m$ of an element $\alpha$ generating $L/K$. The polynomial $m_b=x^r+ b_{r- 1} x^{r- 1}+ \dots + b_0$ is described via its tuple of coefficients $b =(b_0,\dots, b_{r-1})\in K^r$. The following conditions can be described with ring formulas with free parameter $b$. We can express the fact that $m_b$ is irreducible in $K[x]$. We then use the explicit basis $\set{1,x,\dots,x^{r-1}}$ of $K[x]/(m_b)=L$ to identify $L$ with $K^r$. Since $K$ is pseudo-finite, $L/K$ is Galois, with Galois group cyclic of order $r$. The Galois group $\Gal(L/K)$ can now be described as well, viewing its elements as linear automorphisms of $K^r$ respecting the field structure. We write the set of elements of $\Gal(L/K)$ as $\Gal(L/K)=\set{\sigma_1,\dots,\sigma_r}$.

Hence we find that there is a ring formula $\varphi(b,\tau)$ such that for every pseudo-finite field $K$, $\varphi(b,\tau)$ is satisfied exactly when $m_b$ is irreducible in $K[x]$ and $\tau$ is a generator of the Galois group of 
$K_{b,r}=K[x]/m_b$ over $K$. 

Given a definable set $X$, there is a definable set $X_{r}$ with parameters $b,\tau$ such that the $K$-points of $X_{r}$ are identified with $X(L)$ for $L$ a Galois extension of degree $r$.

\subsection{Galois cohomology}
\label{sec:galois:coh} 
We can also encode part of Galois cohomology as follows. Let $K$ be a pseudo-finite field and $L$ a finite Galois extension of $K$. Let $A$ be a definable $\Gal(L/K)$-module, by which we mean that $A$ is definable and the $\Gal(L/K)$-action as well. For example $A=X(L)$, where $X$ is definable. We represent the elements of $\mathrm{H}^1(\Gal(L/K),A)$ using 1-cocycles. We view a 1-cocycle as a tuple $(c_1,\dots,c_r)\in A^r$ corresponding to elements $(\sigma_1,\dots,\sigma_r)$ of $\Gal(L/K)$. The cocycle condition is then expressed by the definable condition
\[
\sigma_i \sigma_j=\sigma_k\implies c_i\sigma_i(c_j)=c_k.
\] 
We encode the fact that two cocycles $c,c’$ are cohomologous if there exists a 1-coboundary $d$ such that $c=c’d$. Since this is a definable equivalence relation on 1-cocycles, 
this allows us to consider $\mathrm{H}^1(\Gal(L/K),A)$ as an imaginary set (with parameters $(b,\tau)$). 

If $A\to B$ is a definable map between $\Gal(L/K)$-modules, then we get a definable map between $1$-cocyles for $A$ and $1$-cocycles for $B$.

Basic facts about Galois cohomology are now expressible in a first-order way. 

\begin{proposition}
\label{prop:gal:co1}
Let $G$ be a finite commutative group of order $r$. There is a definable bijection with parameters $(b,\tau)$ such that for every pseudo-finite field $K$,
\begin{equation}
\label{eq:H1G}
\mathrm{H}^1(\Gal(K_{b,r}/K),G)\simeq G.
\end{equation}
determined by the chosen generator $\tau$ of $\Gal(K_{b,r}/K)$. 
\end{proposition}
\begin{proof}
The image of the generator $\tau$ of $\Gal(K_{b,r}/K)$ determines a definable map
$\mathrm{H}^1(\Gal(K_{b,r}/K),G)\to G$.

To show that it is a bijection, it is enough to show that it is bijective for every finite field $\Ff_q$. In this case, the map becomes
\[
\mathrm{H}^1(\Gal(\Ff_{q^r}/\Ff_q),G)\longrightarrow G,
\]
determined by the image of a generator of $\Gal(\Ff_{q^r}/\Ff_q)$. Since $r$ is the order of $G$, it is well-known that it is bijective.
\end{proof}

\begin{proposition}
\label{prop:lang}
Let $\wP$ be a definable commutative algebraic group. Assume that $\wP$ is connected. Then for every pseudo-finite field $K$ and $r\geq 1$,
\[
\mathrm{H}^1(\Gal(K_{b,r}/K),\wP(K_{b,r}))\simeq\set{0}.
\]
\end{proposition}
\begin{proof}
Since for an algebraic group being connected is equivalent to being irreducible, it follows from the following Proposition \ref{irrdef} that
the condition  for $\wP$ to be connected is definable.

As in the previous proposition, to show the result it is enough to show that for every finite field $\Ff_q$,
\[
\mathrm{H}^1(\Gal(\Ff_{q^r}/\Ff_q),\wP(\Ff_{q^r}))\simeq\set{0}.
\]
Since $\wP(\Ff_{q^r})$ is the set of $\Ff_{q^r}$-points of a connected commutative algebraic group over $\Ff_q$, this holds by Lang's theorem.
\end{proof}

\begin{proposition}\label{irrdef}
Let $f : X \to \mathbb{A}_{\mathbb{Z}}^m$ be a quasi-projective morphism. Then the locus of irreducible fibers is definable, namely, there exists a formula
$\varphi (x_1, \cdots, x_m)$ in the (first-order) language of rings such that, for any field $K$ and $b= (b_1, \cdots, b_m)$ in $K^m$,
the fiber $X_b$ is irreducible if and only $\varphi (b_1, \cdots, b_m)$ holds in $K$.
\end{proposition}

\begin{proof}
By working in  affine charts it is enough to consider the case where $f$ is affine (of finite type). In this case the statement follows directly from statements (I) and (II) in the introduction of \cite{vdds}, see  page 79 of \cite{vdds} for an explicit statement.
\end{proof}

\begin{proposition}
\label{prop:gal:co2}
Let $\wP$ be a definable commutative algebraic group with constant group $\pi_0(\wP)$ of connected components  of order $r$. There is a definable bijection with parameters $(b,\tau)$  such that for every pseudo-finite field $K$ 
\begin{equation}
\label{eq:H1PpiO}
\mathrm{H}^1(\Gal(K_{b,r}/K),\wP(K_{b,r}))\simeq \mathrm{H}^1(\Gal(K_{b,r}/K),\pi_0(\wP)).
\end{equation}
\end{proposition}
\begin{proof}
We have a definable map
\[
\mathrm{H}^1(\Gal(K_{b,r}/K),\wP(K_{b,r}))\longrightarrow \mathrm{H}^1(\Gal(K_{b,r}/K),\pi_0(\wP)(K_{b,r})).
\]
Since this map is surjective when $K$ is replaced by a finite field, it is also surjective for pseudo-finite fields. It is injective by Proposition \ref{prop:lang}.
\end{proof}

 Combining Propositions \ref{prop:gal:co1} and \ref{prop:gal:co2} above, we finally get:

\begin{proposition}\label{deflthm} Let $\wP$ be a definable commutative algebraic group, with constant group $\pi_0(\wP)$ of connected components of order $r$. There are definable bijections with parameters $(b,\tau)$ such that for every pseudo-finite field $K$,
\begin{equation}
\label{eq:H1pi0}
\mathrm{H}^1(\Gal(K_{b,r}/K),\wP(K_{b,r}))\simeq \mathrm{H}^1(\Gal(K_{b,r}/K),\pi_0(\wP))\simeq \pi_0(\wP).
\end{equation}
Those bijections are explicitly described at the level of cocycles. For $g\in \pi_0(\wP)$, one builds a 1-cocycle $f_g$ by sending the generator $\tau$ of $\Gal(K_{b,r}/K)$ to $g$.  Then this cocycle can be lifted to $\wP(K_{b,r})$: there exists an element of $x\in\wP(K_{b,r})$ such that sending $\tau$ to $x$ determines a $1$-cocycle that is sent to $f_g$ by the morphism $\wP(K_{b,r})\to \pi_0(\wP)$. 
\end{proposition}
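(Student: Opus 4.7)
The plan is to concatenate the two definable bijections established in the immediately preceding propositions. The second isomorphism $\mathrm{H}^1(\Gal(K_{a,r}/K),\pi_0(\wP))\simeq \pi_0(\wP)$ is precisely (\ref{eq:H1G}) applied to the constant finite commutative group $G = \pi_0(\wP)$. The first isomorphism $\mathrm{H}^1(\Gal(K_{a,r}/K),\wP(K_{a,r}))\simeq \mathrm{H}^1(\Gal(K_{a,r}/K),\pi_0(\wP))$ is (\ref{eq:H1PpiO}), extended from algebraic groups over a single field to an arbitrary definable algebraic group by the compactness argument already invoked in the paragraph preceding (\ref{eq:H1PpiO}).

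For the explicit description at the level of cocycles, I would unpack the two constituent bijections. Under (\ref{eq:H1G}), with trivial Galois action on the constant group $\pi_0(\wP)$, the class of a 1-cocycle $\phi$ is determined by $\phi(\tau) \in \pi_0(\wP)$; hence $g \in \pi_0(\wP)$ corresponds to the cocycle $f_g : \tau \mapsto g$. To lift $f_g$ through the quotient $\wP(K_{a,r}) \twoheadrightarrow \pi_0(\wP)$ to a 1-cocycle valued in $\wP(K_{a,r})$, I would first choose any $x_0 \in \wP(K_{a,r})$ with image $g$, which is possible because $\pi_0(\wP)$ is constant and the quotient is surjective on $K_{a,r}$-points. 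The assignment $\tau \mapsto x_0$ need not satisfy the cocycle condition $x_0 \cdot \tau(x_0) \cdots \tau^{r-1}(x_0) = 1$, but since $f_g$ itself is a cocycle, the failure takes values in $\wP^{\circ}(K_{a,r})$. The pseudo-finite Lang theorem (the vanishing used in the proof of (\ref{eq:H1PpiO})) then allows one to absorb this failure by modifying $x_0$ within its $\wP^{\circ}(K_{a,r})$-coset, yielding the required element $x$.

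The main subtlety is that every step must be carried out definably, using the encoding of 1-cocycles as tuples with parameters $(a,\tau)$ from Section \ref{sec:galois:coh}. Definability of the quotient $\wP \to \pi_0(\wP)$ follows from the constancy of $\pi_0(\wP)$ together with Seidenberg-type constructibility (already cited in the preceding paragraphs); definability of the coboundary adjustment follows from the definable version of Lang's theorem established earlier; and compactness in $T_{\psf,k}$ transfers the resulting existence statement uniformly to all definable algebraic groups $\wP$, with a uniform bound on the complexity of the formula witnessing the lift $x$.
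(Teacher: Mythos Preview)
Your proposal is correct and follows essentially the same approach as the paper, which simply says ``Combining the two propositions above'' (namely \eqref{eq:H1G} and \eqref{eq:H1PpiO}) together with the compactness sentence immediately preceding the statement. You in fact supply more detail than the paper does on the explicit cocycle lift via Lang's theorem on $\wP^\circ$; the only minor slip is that the compactness remark you cite appears in the paragraph \emph{after} \eqref{eq:H1PpiO}, not before it.
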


\subsection{Twisted varieties}
\label{sec:twisted}
Let $\wX$ be a definable variety and $\wP$ a definable commutative group acting definably on $\wX$.

Fix a pseudo-finite field $K$. Suppose we are given a $1$-cocycle
\[f\colon \Gal(K_{b,r}/K)\longrightarrow \wP(K_{b,r}).\] By composing this cocycle with the action of $\wP(K_{b,r})$ on $\wX(K_{b,r})$, we get for each $\sigma\in \Gal(K_{b,r}/K)$ an automorphism of $\wX(K_{b,r})$. Define $\wX^f(K)$ as the set of fixed points of $\wX(K_{b,r})$ under this system of automorphisms.  This gives a formula with parameters $b,\tau,f$ for a definable set $\wX^f$ such that for every pseudo-finite field $K$, ${\wX}^f(K)$ is the set of fixed points of the system of automorphisms.

The system of automorphisms is a Galois descent datum for the variety corresponding to $\wX$ over $K$, hence ${\wX}^f$ is a definable algebraic variety.

\subsection{Isotypical components}
\label{sec:isocomp}
Keep notations as above, with $\wX$ a definable algebraic variety with an action of a definable commutative algebraic group $\wP$. Let $\alpha$ be a character of $H=\mathrm{H}^1(\Gal(K_{b,r}/K),\pi_0(\wP)(K_{b,r}))$, where $r$ is large enough such that $\pi_0(\wP)(K_{b,r})=\pi_0(\wP)(K^\mathrm{alg})$. Consider the motivic version $\alpha_\mot$ of $\alpha$ defined in Section \ref{sec:motchar}.

\begin{definition}
\label{def:chi:iso:mcP}
The $\alpha$-isotypical component of the motive of $\wX$ is
\[
\chi_{\psf,\rel}(\wX,\alpha):=
\frac{1}{\abs{H}}
\sum_{t\in H} \chi_{\psf,\rel}({\wX}^t) \, \alpha_\mot(t)
\]
in $\K{\DAC(\rel,\Lambda)}\otimes \BQ$.
\end{definition}

Thanks to the following remark, it is consistent with the notion introduced in Proposition~\ref{prop-chialpha}, at the cost of multiplying by the motive of a set of parameters. 
Note that the multiplicities $n_\alpha$ appearing in Proposition~\ref{prop-chialpha} are here equal to 1 since $H$ is commutative.

\begin{rmk}
Assume that $H$ is a constant finite commutative group. Let $\alpha$ be a character of $H$, and $X$ be a $k$-variety with action of $H$. Then Proposition \ref{lem:FT:mu:n} states that
\[
\frac{1}{\abs{H}}\sum_{g\in H} \chi_{\psf,\rel}(X^g) \, \alpha_\mot(g) = \chi_{\psf,\rel}(X,\alpha)\, )  \chi_{\psf,\rel}([S])
\]
in  $\K{\DAC(\rel,\Lambda)}\otimes \BQ$, where $S$ is the set of parameters.
\end{rmk}

We extend the definition to the category of definable sets with action of $\wP$ over a base $S$, to get a morphism $\chi_{\psf,S,\rel}(\cdot,\alpha)$. 
When $\alpha$ is the trivial character, we write $\chi_{\psf,S,\rel}(\cdot,\mathrm{stab})=\chi_{\psf,\rel}(\cdot,\alpha)$. 

Finally let us record the following lemma for later use.
\begin{lemma}\label{lem:square} Let $\wX$ a definable algebraic variety with an action of a definable commutative algebraic group $\wP$. Assume further that we have a subgroup $\wP^\square \subseteq \wP$ consisting of a union of connected components of $\wP$, that is $\wP/\wP^\square \cong \pi_0(\wP)/\pi_0(\wP^\square)$, and a subvariety $\wX^\square \subseteq \wX$ such that $\wX \cong \wX^\square \times^{\wP^\square} \wP$ as $\wP$-varieties. Let $r$ be a large enough integer such that $\pi_0(\wP)(K_{b,r})=\pi_0(\wP)(K^{\alg})$.

Then we have for any character $\alpha$ of $H=\mathrm{H}^1(\Gal(K_{b,r}/K),\pi_0(\wP)(K_{b,r}))$ the equality
\[  \chi_{\psf,\rel}(\wX,\alpha) = \chi_{\psf,\rel}(\wX^\square,\alpha_{|H^\square}),\]
where $H^\square = \mathrm{H}^1(\Gal(K_{b,r}/K),\pi_0(\wP^\square)(K_{b,r}))$.
\end{lemma}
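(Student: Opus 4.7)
My plan is to unfold Definition~\ref{def:chi:iso:mcP} as a sum over $t \in H$ and use the induced structure $\wX \cong \wX^\square \times^{\wP^\square} \wP$ to analyze each twist $\wX^t$, realising the statement as the pseudo-finite, Galois-twist analogue of Proposition~\ref{prop-chialpha-change-gps}(4).

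First I would set up the cohomological dictionary. By Proposition~\ref{deflthm}, the chosen generator $\tau$ of $\Gal(K_{a,r}/K)$ identifies $H \simeq \pi_0(\wP)$ and $H^\square \simeq \pi_0(\wP^\square)$; since $\wP^\square$ is a union of connected components of $\wP$, the map $H^\square \to H$ is injective, and the hypothesis $\wP/\wP^\square \simeq \pi_0(\wP)/\pi_0(\wP^\square)$ identifies $H/H^\square$ with the finite constant group $G := \wP/\wP^\square$. The long exact sequence of Galois cohomology associated with $1 \to \wP^\square \to \wP \to G \to 1$, combined with this injectivity, forces the connecting map $G \to H^1(K,\wP^\square)$ to vanish, so $\wP(K) \twoheadrightarrow G$ is surjective and $\wP \cong \wP^\square \times G$ as $K$-varieties.

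For $t \in H^\square$, I choose a representing cocycle $f\colon \Gal(K_{a,r}/K) \to \wP^\square(K_{a,r})$. A direct computation with the twisted action $\sigma\cdot_f [x,g] = [\sigma x, f(\sigma)\sigma g]$ on $\wX^\square \times^{\wP^\square} \wP$ shows that the fixed-point condition forces $p_\sigma := f(\sigma)\sigma g \cdot g^{-1}$ to land in $\wP^\square$ (automatic since $\bar f = 1$ here), and yields $\wX^t \cong (\wX^\square)^t \times^{\wP^\square} \wP \cong (\wX^\square)^t \times G$ as $K$-varieties, using the variety splitting from the previous step. Hence $\chi_{\psf,\rel}(\wX^t) = |G|\cdot \chi_{\psf,\rel}((\wX^\square)^t)$. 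Summing over $t \in H^\square$ weighted by $\alpha_\mot(t)$ and using $|H| = |H^\square|\cdot|G|$ reproduces exactly $\chi_{\psf,\rel}(\wX^\square, \alpha|_{H^\square})$.

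It remains to show that $\sum_{t \in H \setminus H^\square} \chi_{\psf,\rel}(\wX^t)\,\alpha_\mot(t) = 0$. For such $t$, $\bar t \in G$ is nontrivial, and a parallel analysis, using a $K$-rational section of $\wP \to G$ to decompose a representing cocycle as $f(\sigma) = g_{\bar f(\sigma)}\cdot f^\square(\sigma)$, identifies $\wX^t$ as a product involving a twist of $\wX^\square$ and the nontrivial form $G^{\bar t}$. The resulting double sum factors into an $H^\square$-sum and a $G$-sum; the latter, after absorbing the factor $\chi_{\psf,\rel}([A])$ inherent in the motivic characters of Section~\ref{sec:motchar}, is precisely of the shape treated by Proposition~\ref{lem:FT:mu:n}, and vanishes for nontrivial $\bar t$ by the motivic Fourier orthogonality. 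The main obstacle is this Case~2 vanishing: one must carefully reconcile the possibly non-split extension $1 \to H^\square \to H \to G \to 1$ with the product structure of the motivic characters, so as to reduce the $G$-sum to the exact form where Proposition~\ref{lem:FT:mu:n} applies.
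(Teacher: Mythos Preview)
Your Case~1 argument for $t\in H^\square$ is correct and matches the paper's proof almost verbatim: both obtain $\chi_{\psf,\rel}(\wX^t)=|G|\cdot\chi_{\psf,\rel}((\wX^\square)^t)$ with $G=\pi_0(\wP)/\pi_0(\wP^\square)$, and then use $|H|=|H^\square|\cdot|G|$ to cancel.

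For Case~2 you are taking an unnecessary detour. The paper does not use any Fourier orthogonality here; it shows that \emph{each individual term} $\chi_{\psf,\rel}(\wX^t)$ vanishes for $t\notin H^\square$. The argument is exactly the observation you already made but did not exploit: the induced morphism $\wX^t\to(\wP/\wP^\square)^t=G^{\bar t}$ lands in a nontrivial $G$-torsor over the pseudo-finite field $K$, hence $G^{\bar t}(K)=\emptyset$, hence $\wX^t(K)=\emptyset$, hence $\chi_{\psf,\rel}(\wX^t)=0$. So the ``main obstacle'' you flag---reconciling a possibly non-split extension with the structure of motivic characters to make Proposition~\ref{lem:FT:mu:n} apply---simply does not arise. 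In your own language: once you have written $\wX^t$ as a product involving the nontrivial form $G^{\bar t}$, multiplicativity of $\chi_{\psf,\rel}$ and $\chi_{\psf,\rel}(G^{\bar t})=0$ finish the job immediately, term by term, with no sum to analyze.
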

\begin{proof}
 The isomorphism $\wX \cong \wX^\square \times^{\wP^\square} \wP$  implies that we have for any $t \in H$ a definable morphism $\wX^t \to \pi_0(\wP)^t/\pi_0(\wP^\square)$. The $\pi_0(\wP)/\pi_0(\wP^\square)$-torsor $\pi_0(\wP)^t/\pi_0(\wP^\square)$ is trivial if and only if $t \in H^\square$ and thus $\wX^t$ can only admit a $K$-rational point for $t \in H^\square$. This implies that $ \chi_{\psf,\rel}(\wX^t)=0$ for all $t\notin H^\square$ and 
\[ \chi_{\psf,\rel}(\wX^t) = \chi_{\psf,\rel}(\wX^{\square,t} \times \pi_0(\wP)/\pi_0(\wP^\square)) = \chi_{\psf,\rel}(\wX^{\square,t}) |\pi_0(\wP)/\pi_0(\wP^\square)|.\]

Since the absolute Galois group $\Gal(K) = \widehat{\BZ}$, say with topological generator $\sigma$, we have for any finite commutative group scheme $\Gamma$ an exact sequence
\[  0 \longrightarrow \Gamma(K) \longrightarrow \Gamma(K^\mathrm{alg}) \xrightarrow{\:1-\sigma\:} \Gamma(K^\mathrm{alg}) \longrightarrow  \rmH^1(K,\Gamma) \longrightarrow 0,  \]
and thus $|\Gamma(K)| = |\rmH^1(K,\Gamma)|$. Putting all this together we get 
\begin{align*}\chi_{\psf,\rel}(\wX,\alpha) &= \frac{1}{\abs{H}}
\sum_{t\in H} \chi_{\psf,\rel}({\wX}^t)\alpha_\mot(t)\\
& =  \frac{1}{\abs{H}}
\sum_{t\in H^\square} \chi_{\psf,\rel}({\wX}^{\square,t}) |H/H^\square|\alpha_\mot(t)= \chi_{\psf,\rel}(\wX^\square,\alpha_{|H^\square}).\qedhere \end{align*}
\end{proof}

\subsection{Split reductive groups}
Recall that the classification of split connected reductive groups is independent of the base field and that their isomorphism classes are in bijection with root data $\mathcal{D}=(\BX^*,\Phi,\BX_*,\Phi^\vee)$, which are the character group of a split torus, the set of roots, the cocharacter group, and the sets of coroots. Each split reductive group can be realized over $k$. Fix a faithful representation $\delta\colon \BG\to \GL(V)$ for a $k$-vector space $V$, and $\BG$ the group associated to the root datum $\mathcal{D}$. Fix also a basis of $V$. Then there exists a ring formula describing $\delta(\BG)\subseteq \GL(V)$.

\subsection{Quasi-split groups}
\label{sec:qsgroup}
Let $X$ be a smooth projective curve over $k$, viewed as a definable set. For later purposes, suppose that we are also given a rational point $\infty\in X(k)$. Let $D$ be a line bundle of large enough even degree $d$ on $X$, in particular such that $D$ is ample. Hence a power of $D$ defines a closed embedding of $X$ into $\mathbb{P}^n_k$ for some $n$. We identify $X$ with its image in $\mathbb{P}^n_k$. For later purposes, we also assume that $D$ is a square, \emph{i.e.} that there exists $D'$ such that ${D'}^{\otimes 2}\simeq D$. 

Let $\rho$ be an $\Out(\BG)$-torsor over $X$ that is trivial over $\infty\in X$. By \cite[Lemma 4.8]{GWZ18} there is a finite \'etale cover $X'$ of $X$ over which $\rho$ becomes trivial, and we can moreover assume that $X'\to X$ is Galois. We then view $\rho$ as a morphism $\Aut(X'/X)\to \Out(\BG)$, which amounts to choose a finite number of points of $\Out(\BG)$ satisfying some conditions. 
By Section \ref{sec:galois:coh}, if we fix an integer $r$ we get a definable set $\mathrm{H}^1(\Aut(X'/X),\Out(\BG))$ representing the subset of $\Out(\BG)$-torsors on $X$ parametrized by morphisms $\Aut(X'/X)\to \Out(\BG)$ with $X'$ a finite \'etale Galois cover of $X$ of degree $r$. Fix one such parameter $f$.

A quasi-split form of $\BG$ over $X$ is by definition a group of the form $G=\BG \times^{\Out(\BG)} \rho$. The group $G$ can be viewed as a definable set as follows. Let $r$ be the degree of $X'\to X$. Let $K$ be a pseudo-finite field containing $k$. Let $x\in X(K)$. The map $X'\to X$ induces a finite Galois extension $L_x/k(x)=K$ of degree at most $r$. The extension $L_x/K$ can then be described as above, using a parameter $\tau$ for a fixed generator of $\Gal(L_x/K)$.

The torsor $\rho$ induces at $x$ an element of $\mathrm{H}^1(\Gal(L_x/K),\Out(\BG))$, and the choice of a generator $\tau\in \Gal(L_x/K)$ identifies this group with $\Out(\BG)$. Let $\rho_x\in \Out(\BG)$ be the automorphism defined by $\rho$ at $x$. 

The set $G(K)_x$ of $K$-points of $G=\BG \times^{\Out(\BG)} \rho$ above $x$ is identified with the set of fixed points of $\BG(L)$ under the endomorphism $\rho_x\circ \tau$ of $\BG(L)$. Using the fixed representation $\delta$ of $\BG$ and our conventions regarding encoding of $L$ in Section \ref{sec:field:ext}, the set $G(K)_x$ is therefore defined by a formula with parameters. Since this formula is uniform in $x$, we get a definable $\wG$ with parameters $X, b, \tau,\rho$  such that $\wG_x(K)=G(K)_x$ for every pseudo-finite field $K$ and $x\in X(K)$. 

\bigskip

Summarizing the above discussion, we can state:
\begin{proposition}
\label{prop:qsplit:grp}
Fix a smooth projective curve $X$ over $k$, a split reductive group $\BG$ determined by its root datum, a fixed faithful representation of $\BG$, and an integer $r$. Consider an $\Out(\BG)$-torsor over $X$ that is trivialized over a finite étale cover of $X$ of degree $r$, represented by a 1-cocycle $f$. Then there is a definable algebraic group $\wG$ with parameters $X, b, \tau, f$ such that for every pseudo-finite field $K$ extending $k$ and $x\in X(K)$, $\wG_x(K)=G(K)_x$, where $G$ is the quasi-split form of $\BG$ over $X$ determined by $f$.
\end{proposition}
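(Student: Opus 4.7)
The proof is a summary of what has been established paragraph by paragraph in Section \ref{sec:qsgroup}; the task is essentially to check that everything assembles into a single formula uniformly in the parameters.

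My plan is to proceed in four steps. First, I would use the fixed faithful representation $\delta\colon \BG \to \GL(V)$ together with the chosen basis of $V$ to realize $\BG$ itself as a definable algebraic group: the image $\delta(\BG) \subset \GL(V)$ is a Zariski closed subgroup, and since we have fixed a root datum and a basis, there is a single ring formula (with coefficients in $k$, independent of any extension) cutting out $\delta(\BG)$ inside matrices. Next, I would encode the \'etale cover $X' \to X$. Because we have bounded the degree $r$, I can apply the machinery of Section \ref{sec:field:ext}: for a pseudo-finite $K/k$ and $x \in X(K)$, the field $L_x = k(x)\otimes_{\mathcal{O}_X} \mathcal{O}_{X'}$ at the point above $x$ is a Galois extension of bounded degree, and is therefore described by a tuple $a$ of coefficients of a minimal polynomial together with a choice of generator $\tau$ of $\Gal(L_x/K)$. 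The formula cutting this out is uniform in $x$ because $X$ is itself embedded in $\mathbb{P}^n$ and the cover $X'\to X$ is given by finitely many algebraic equations whose coefficients vary algebraically on $X$.

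Third, I would interpret the cocycle $f$. Viewing $\rho$ as a morphism $\Aut(X'/X) \to \Out(\BG)$, the value $\rho_x \in \Out(\BG)$ is determined by evaluating $f$ at the Frobenius-like generator $\tau$ of $\Gal(L_x/K)$, using the bijection of Proposition \ref{deflthm} applied to the (finite constant) group $\Out(\BG)$. Since $\Out(\BG)$ is a finite constant group, $\rho_x$ takes values in a finite fixed set, and the assignment $x \mapsto \rho_x$ is definable with parameter $f$. The action of $\rho_x$ on $\BG$ as an algebraic automorphism is given, once and for all, by finitely many explicit polynomial formulas (one for each element of $\Out(\BG)$), so the composite endomorphism $\rho_x \circ \tau$ of $\BG(L_x)$ is definable uniformly in $(x, a, \tau, f)$.

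Finally, I would set
\[
\wG_x(K) \;=\; \bigl\{\, g \in \BG(L_x) \;\big|\; (\rho_x \circ \tau)(g) = g \,\bigr\},
\]
which by Galois descent is exactly $G_x(K)$. Combining the above, this is a quantifier-free formula in $\mcL_{\DP,\psf,k}$ whose free variables are $x$ and whose parameters are $(X, a, \tau, f)$; the group law is similarly described by a uniform formula coming from that of $\BG$. The only subtle point — which I view as the main (though mild) obstacle — is ensuring genuine \emph{uniformity} in $x$, i.e.\ that the parameters $(a,\tau)$ describing $L_x$ can themselves be chosen as definable functions of $x$ on the pieces of a definable stratification of $X$. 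This is handled by the boundedness of $r$ together with the fact that $\Aut(X'/X)$-cocycle values lie in the fixed finite set $\Out(\BG)$, so compactness produces a finite definable stratification of $X$ over which the formula is uniform; gluing these pieces yields the single definable algebraic group $\wG$ claimed in the statement.
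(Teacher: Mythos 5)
Your proof follows the same route as the paper's Section~\ref{sec:qsgroup}: realize $\BG$ via the fixed faithful representation $\delta$, encode the degree-$\le r$ extension $L_x/K$ through the parameters $(a,\tau)$ as in Section~\ref{sec:field:ext}, read off $\rho_x\in\Out(\BG)$ from the cocycle $f$ using the cyclicity of $\Gal(L_x/K)$, and declare $\wG_x(K)$ to be the fixed points of $\rho_x\circ\tau$ on $\BG(L_x)$. Your closing remark on uniformity via compactness merely makes explicit what the paper summarizes as ``Since this formula is uniform in $x$''.
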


The lattice of characters $\BX^*=\BX^*(\BT)$ of $\BG$ is similarly endowed with a Galois action. Its sublattice of fixed points is the lattice of characters of $T$, the fixed (non-split) maximal torus of $G$. We identify $\BX^*$ with $\BZ^n$ by fixing a basis. Since the Galois action on $\BZ^n$ is definable (in the sense that the matrix defining it in the chosen basis is definable), we get a finite definable set whose points form a basis of the sublattice $\BX^*(T)$.  

The fixed maximal torus is also represented by a definable algebraic group $\wT$. The Lie algebras $\wg$ and $\wt$ are also defined as a set of fixed points using the $\Out(\BG)$ torsor, hence definable as well.

\subsection{Higgs bundles}\label{higgsb} 
In this section, we encode definably the various objects related to the moduli space of Higgs bundles and Hitchin fibration recalled in Section \ref{sec:geo-setup}.

Given a split reductive  group $\BG$ on $k$, and a smooth projective curve $X$ with a $k$-point, we so far constructed a definable group $\wG$. Fix in addition an even integer $d$ larger or equal to $2g-2$, where $g$ is the genus of $X$. 
To define Higgs bundles we need to fix a line bundle on $X$.  Since the space of classes of line bundles of degree $d$ on $X$ is representable by a quasi-projective scheme $\mathrm{Pic}^d_{X/k}$, we add $\mathrm{Pic}^d_{X/k}$ to the set of parameters, and in what follows the definable set will depend on a parameter $D\in \mathrm{Pic}^d_{X/k}$ representing a line bundle on $X$ of degree $d$.

Fix $K$ a pseudo-finite field extending $k$. Recall from  Definition \ref{def:hfib} the notion of Hitchin fibration: a map $\chi: \FM_{G_K} \rightarrow \A_{G_K}$, where $G_K$ is the quasi-split group scheme over $X_K$ which has fiberwise the same $K$-points as $\wG(K)$. We would like to encode $\FM_{G_K}$ as a definable set, but it is not true in this generality, we need to restrict to an open subscheme of $\A_{G_K}$, the anisotropic locus. 

To do so, first recall from Definition \ref{def:hfib} that $\A_{G_K}$ is the affine space of global sections $\rmH^0(X,\bfc_D)$, and by \cite[Lemme 4.13.1]{ngo:fl}, its dimension (under the hypothesis $\deg(D)\geq 2g-2$) depends only on $g$, $\deg(D)$, and the rank and the number of roots of $\BG$. Hence there is a definable set $\wwA$ such that $\wwA(K)=\A_{G_K}(K)$ for every pseudo-finite field $K$. Using affine charts for $X$ provided by the projective embedding given by $D$, we can interpret elements of $\wwA$ as global sections hence for every pseudo-finite field $K$ and $a\in \wwA(K)$, $a$ determines a map $X(K)\to \wc_D(K)$.

Recall that we are given as part of the data a point $\infty\in X(k)$. Let $\mathrm{ev_\infty}\colon \A_G\to c_{D,\infty}$ the evaluation at $\infty$. Let $\mft^{\mathrm{rs}}_{D,\infty}$ be the set of regular semi-simple elements. Define the \'etale open $\widetilde{\A}_G\to \A_G$ by the cartesian diagram
\begin{equation}\label{agtild}
\xymatrixcolsep{1pc}
\xymatrix{
\widetilde{\A}_G \ar[d]_{} \ar[rr]^-{} && \mft^{\mathrm{rs}}_{D,\infty}\ar[d]   \\
\A_G \ar[rr] &&\mfc_{D,\infty}.
}
\end{equation}
Since the arrows in the cartesian diagram are definable, there is a definable variety $\widetilde{\wwA}_G$ such that $\widetilde{\wwA}_G(K)=\A_G(K)$ for every pseudo-finite field $K$ extending $k$.

Let $a\in \A_{G_K}$. The cameral cover $\mcC_a$ is the $W$-equivariant morphism $\mcC_a\to X$, where $W$ is the twisted Weil group as in \cite[1.3.3]{ngo:fl}, fitting into the cartesian diagram of $K$-schemes
\begin{equation}
\xymatrixcolsep{1pc}
\xymatrix{
\mcC_a \ar[d]_{} \ar[rr]^-{} &&\mft_D \ar[d]   \\
X \ar[rr]^-{a} &&\mfc_D.
}
\end{equation}

The universal cameral cover is denoted by $\mcC\to X\times \A$. Since the arrows in the cartesian diagram are definable, there is a definable set $\wC$ and a definable map $\wC\to X\times \wwA$ such that the fiber over $a\in \wwA(K)$ is the $K$-points of the cameral cover  $\mcC_a(K)\to X(K)$.

Recall from Section \ref{sec:ani} that the anisotropic locus $\A_{G_K}^\ani\subseteq \A_{G_K}$ is the locus where the cameral curve is reduced and $\pi_0(\FP_{G_K,a})$ is finite. By \cite[Proposition 4.10.3]{ngo:fl}, the second condition is equivalent to the fact that ${\mft_D}^{W_{\widetilde{a}}}=0$, where $\widetilde{a}=(a,\tilde{\infty})\in \tilde{\A}_G$ is in the preimage of $a$ and $W_{\widetilde{a}}$ is the finite subgroup of $W\rtimes\Out(\BG)$ defined as follows, following \cite[1.3.6, 5.4.1]{ngo:fl}. 

Let $U$ be the open subset of $X$ over which the cameral curve $\mcC_a$ is a $W$-torsor. Let $X'$ the finite cover of $X$ of degree $r$ and $\rho\colon \Aut(X'/X)\to \Out(\BG)$ the torsor that defines the quasi-split form of $\BG$. On the fiber product $C_a\times_U X'$, we have an action of $\Aut(X'/X)$ and $W$, that combine in an action of $W\rtimes\Out(\BG)$. Hence we have a morphism $\Aut(X'_U/U)\to W\rtimes\Out(\BG)$ and we let $W_{\widetilde{a}}$ be its finite image. We thus have the following commutative diagram

\begin{equation}
\xymatrixcolsep{1pc}
\xymatrix{
\Aut(X'_U/U) \ar[d]_{} \ar[rr]^-{} &&W_{\widetilde{a}}\subseteq W\rtimes\Out(\BG) \ar[d]   \\
\Aut(X'/X) \ar[rr] &&\Out(\BG).
}
\end{equation}

 Since the degree $r$ is fixed, the cover $X'$ is definable, and the open subset $U$ as well. Hence the above description of $W_{\widetilde{a}}$ depends definably on $\widetilde{a}\in \widetilde{\wwA}_G$. Hence the subset $\widetilde{\wwA}_G^\ani$ of $\widetilde{a}\in\widetilde{\wwA}_G$ such that $\mcC_a$ is reduced and $\wt^{W_{\widetilde{a}}}=0$ is definable. The anisotropic locus ${\wwA}_G^\ani$ is the image of $\widetilde{\wwA}_G^\ani$ in $\wwA_G$, therefore definable as well. By \cite[6.1]{ngo:fl}, $\A^{\ani}_G$ is a non-empty open subset of $\A_G$.

The Prym $\mcP_{G,a}$ acts on the fiber $\FM_{G,a}$ and using the Kostant section \cite[Section 4.2.4]{ngo:fl}, is identified with an open subset of $\FM_{G,a}$. 
Let $\mcP^{\ani}$ and $\FM_G^{\ani}$ be the restriction of $\mcP_{G_K}$ and $\FM_{G_K}$ to $\A^{\ani}_{G_K}$. By \cite[Proposition 6.1.3]{ngo:fl}, $\mcP^{\ani}$ is a smooth separated Deligne-Mumford stack of finite type over $\A^{\ani}_{G_K}$ and $\FM_{G_K}^{\ani}$ is a smooth separated Deligne-Mumford stack of finite type over $K$. Moreover, the coarse moduli space $M_G^{\ani}$ of $\FM_G^{\ani}$  is projective over $\A^{\ani}_G$. 

We claim that there exists a definable set $\wM_G$ such that $\wM_G^\ani(K)=M_G^{\ani}(K)$. By \cite[Lemme 6.1.2]{ngo:fl}, every $G$-Higgs bundle over $A^{\ani}$ is stable. Hence one can follow the construction by Faltings \cite[Theorem II.5]{falt:GB} of the coarse moduli space of stable $G$-Higgs bundles $M_G^{\mathrm{s}}$. This gives a quasi-projective embedding of $M_G^{\mathrm{s}}$ into some $\BP^N\times X(G)^*$, where $N$ depends only on the dimension of $\A_G$ and the given representation $\delta$ of $\BG$, and $X(G)^*$ is the dual of the character lattice. The fibers over $X(G)^*$ are then definable, therefore the same is true for the restriction of $M_G^{\mathrm{s}}$ to the restriction to the anisotropic locus $\A^{\ani}_G$. From the construction, one also gets that the stack structure is also definable in the sense of Definition \ref{def:DMstack}.

\bigskip

We have thus established:
\begin{proposition}
\label{prop:hitchin:def}
Fix a smooth projective curve $X$ over $k$, a split reductive group $\BG$ determined by its root datum, a fixed faithful representation of $\BG$, and an integer $r$. Consider an $\Out(\BG)$-torsor over $X$ that is trivialized over a finite étale cover of $X$ of degree $r$, represented by a 1-cocycle $f$. Then there is a definable Deligne-Mumford stack $\wM_G^\ani$ over $k$ with parameters $b, \tau, f$ such that for every pseudo-finite field $K$ extending $k$, $\wM_G^\ani(K)=M_{G}^\ani(K)$, where $G$ is the quasi-split form of $\BG$ over $X$ determined by the choice of $b,\tau,f$ as in Proposition \ref{prop:qsplit:grp}. 
\end{proposition}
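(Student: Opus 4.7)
The plan is to package the constructions informally laid out in Section \ref{higgsb} into a uniformly definable family, step by step. First, I would invoke Proposition \ref{prop:qsplit:grp} to realize the quasi-split group $G$ over $X$ as a definable algebraic group $\wG$ with the prescribed parameters. Since $D$ (or a suitable power of it) gives a projective embedding of $X$, affine charts on $X$ are definable, and bounded-degree global sections of $\mathfrak{c}_D$ are parametrized by bounded tuples of polynomials in those charts. Using \cite[Lemme 4.13.1]{ngo:fl}, which pins down $\dim \A_G$ in terms of $g$, $\deg(D)$, the rank of $\BG$ and the number of its roots, I obtain a definable affine space $\wwA_G$ whose $K$-points recover $H^0(X,\mathfrak{c}_D)=\A_{G_K}$ for every pseudo-finite $K\supset k$.

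Next, I would build the universal cameral cover $\wC\to X\times\wwA_G$ from its definable fiber-product description $\mathfrak{t}_D\times_{\mathfrak{c}_D}(X\times\wwA_G)$, and form $\widetilde{\wwA}_G$ as the definable fiber product \eqref{agtild}. The anisotropic condition, by \cite[Proposition 4.10.3]{ngo:fl}, reduces to two first-order clauses on $\widetilde a\in\widetilde{\wwA}_G$: reducedness of $\mathcal{C}_a$, expressible via non-vanishing of a definable discriminant; and $\mathfrak{t}^{W_{\widetilde a}}=0$. For the latter, the étale Galois cover $X'\to X$ of degree $r$ trivializing $\rho$ is definable (Section \ref{sec:field:ext}), the open locus $U\subset X$ where $\mathcal{C}_a$ is a $W$-torsor is cut out by a definable condition, and the image $W_{\widetilde a}$ of $\mathrm{Aut}(X'_U/U)\to W\rtimes\Out(\BG)$ is therefore a definable (finite) subgroup. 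The fixed-point condition on $\mathfrak{t}$ is then a definable linear-algebra statement. This yields $\widetilde{\wwA}_G^\ani$ as a definable subset, and its image $\wwA_G^\ani$ in $\wwA_G$ is definable as well.

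For the moduli space, I would invoke \cite[Lemme 6.1.2]{ngo:fl} (every Higgs bundle over the anisotropic locus is stable) together with Faltings' construction \cite[Theorem II.5]{falt:GB}, which quasi-projectively embeds the coarse moduli of stable $G$-Higgs bundles into $\BP^N\times X(G)^*$ with $N$ depending only on $\dim \A_G$ and the fixed faithful representation $\delta$ of $\BG$. Since these bounds are uniform across $K$, the fiberwise equations defining $M_G^{\mathrm s}$ can be written with parameters in $X(G)^*$ and in the cocycle $f$; restricting to $\wwA_G^\ani$ gives the desired definable $\wM_G^\ani$ with $\wM_G^\ani(K)=M_G^\ani(K)$. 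Finally, to upgrade $\wM_G^\ani$ to a definable smooth Deligne-Mumford stack in the sense of Definition \ref{def:DMstack}, one covers $\wM_G^\ani$ by the definable images of smooth atlases by quotients of the local Luna slices in Faltings' construction; since the automorphism groups of stable Higgs bundles are finite and uniformly bounded, these quotient presentations inherit definability.

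The main technical point, and the place where care is needed, is the uniformity of Faltings' GIT construction: one must check that the stability exponents, linearizations, and degree bounds used to embed $M_G^{\mathrm s}$ depend only on $\BG$, $\delta$, $g$, $d$, and $r$, so that a single formula (with the stated parameters) works across all pseudo-finite $K\supset k$. Granting this uniformity, which is essentially explicit in \cite{falt:GB}, the proposition follows by assembling the definable pieces constructed above.
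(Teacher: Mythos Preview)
Your proposal is correct and follows essentially the same route as the paper: the paper's ``proof'' is exactly the discussion preceding the proposition in Section~\ref{higgsb}, invoking the same references (\cite[Lemme 4.13.1, Proposition 4.10.3, Lemme 6.1.2]{ngo:fl} and \cite[Theorem II.5]{falt:GB}) in the same order to build $\wwA_G$, the cameral cover, the anisotropic locus, and finally the coarse moduli via Faltings' quasi-projective embedding. Your explicit mention of Luna slices for the stack structure and your flagging of the uniformity of the GIT bounds are reasonable elaborations of points the paper handles in a single sentence.
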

 We sometimes use the notation $\wM_G$ instead of $\wM_G^\ani$ since we have the definability only above the anisotropic locus.

One gets also that $\mcP_G^{\ani}$, as an open subset in $M_G^{\ani}$, is the points of a definable set $\wP_G^\ani$ which acts definably on $\wM_G^{\ani}$. 

Let $\widetilde{\wM}_G^{\ani}$ be the base change of $\wM_G^{\ani}$ along $\widetilde{\wwA}_G\to \wwA_G$, which is then also definable. By Proposition \cite[Prop. 4.38]{GWZ18} there is a natural surjective morphism 
\begin{equation}
\label{eqn:surjpiO}
\BX_*(\BT) \times  \widetilde{\A}_G^{\ani} \to \pi_0(\widetilde{\mcP}^{\ani}_{G}),
\end{equation}
hence $\pi_0(\widetilde{\mcP}^{\ani}_{G,a})$ is a constant finite group scheme for every $a\in \widetilde{\A}_G^{\ani}$.

Let $\widehat G$ the Langlands dual group of $G$. One has also the Hitchin fibration for $\widehat G$, $\wM_{\widehat G}^{\ani}\to \A_{\widehat G}^{\ani}$. The bases $\wwA_{G}^{\ani}$ and $\wwA_{\widehat G}^{\ani}$ are canonically isomorphic, so we write simply $\wwA^{\ani}$ and  $\widetilde{\wwA}^{\ani}$, when the group $G$ is understood.

\subsection{Definability of moduli of $\beta$-twisted Higgs bundles}


Recall from Section~\ref{sec:coendo} that given a smooth projective curve $X$, $G$ a quasi-split reductive group on $X$ and $\beta$ a a $Z(G)$-gerbe on $X$, one can consider the moduli stack of $\beta$-twisted Higgs bundles $\FM_\beta$, which is a smooth Deligne-Mumford stack over $A^\ani$. 

Suppose now that we are in the situation of Section~\ref{higgsb}, with a smooth projective curve $X$ over $k$, a definable quasi-split reductive group $\wG$ over $X$, and a line bundle $D$ considered as a point in the parameter space. Fix in addition a $Z(\wG)$-gerbe $\beta$ represented by a 3-cocycle on a finite étale cover of $X$. 

\begin{proposition}
There is a definable Deligne-Mumford stack $\wM_G^{\beta,\ani}$ such that for every pseudo-finite field $K$ extending $k$, $\wM_G^{\beta,\ani}(K)=M_G^{\beta,\ani}(K)$.
\end{proposition}

\begin{proof}

We first recall some constructions from \cite{GLWZ}.

Let $\bar \wG=\wG_\FE/Z(\wG)$. From \cite[Construction 2.11]{GLWZ},  consider the universal Higgs bundle $(E,\theta)$ on $X\times_k \BM_{\bar G}$. Associate to it the $Z(G)$-gerbe $[E/G]$, this defines a global section $\beta^{\mathrm{univ}}\in \rmH^0(\BM_{\bar G}, \rmH^2(X,\pi_0(Z(G))))$. Then let $\BM_{\bar G}^\beta$ the largest open subset of $\BM_{\bar G}$ where $\beta^{\mathrm{univ}}=\beta$. This yields a finite decomposition
\begin{equation}
\label{eqn-MbG}
\BM_{\bar G}=\cup_{\beta\in \rmH^2(X,\pi_0(Z(G)))}\BM_{\bar G}^\beta(K).
\end{equation}
By \cite[Lemma 2.12]{GLWZ}, there is a canonical morphism 
\[
F\colon \BM_G^{\beta,\ani}\to \BM_{\bar G}^{\beta,\ani}\times_{A_{\bar G}}A_G,
\]
making $\BM_G^{\beta,\ani}$ a $\mathrm{Sect}_X(BZ(G))$-torsor. Taking the rigidification of this morphism yields a morphism
\[
F\colon \FM_G^{\beta,\ani}\to \FM_G^{\beta,\ani}\times_{A_{\bar G}}A_G,
\]
which is a $\pi_0(Z(G))$-torsor. 

We now consider the definable case. From Proposition~\ref{prop:hitchin:def}, there is a definable $\wM_{\bar G}^{\ani}$. Taking the rigidification of (\ref{eqn-MbG}), we get definable subsets $\wM_{\bar G}^\beta\subset\wM_{\bar G}$ since the condition $\beta^{\mathrm{univ}}=\beta$ is definable. 

By \cite[Lemma 2.12]{GLWZ}, $\FM_G^{\beta,\ani}$ is a $\pi_0(Z(G))$-torsor over $\FM_G^{\beta,\ani}\times_{A_{\bar G}}A_G$. Each $\pi_0(Z(\wG))$-torsor over $\wM_{\bar G}^{\beta,\ani}\times_{\wwA_{\bar G}}\wwA_G$ is definable. To see that $\wM_G^{\beta,\ani}$ is definable, is suffices then to see that the choice of the torsor is definable, \emph{i.e.} that the assignment $K\mapsto F\in \rmH^1(\FM_G^{\beta,\ani}\times_{A_{\bar G}}A_G, \pi_0(Z(G))$ is definable. But this amounts to check which twist of $\FM_G^{\beta,\ani}$ has a section, which is a definable condition.
\end{proof}

\begin{theorem}
\label{stabcomp}
For every definable gerbe $\beta$ on $X$, we have
\[
\chi_{\psf,\widetilde{\wwA}_G^\ani,\rel}(\widetilde{\wM}_{G}^{\beta,\ani},\mathrm{stab})=\chi_{\psf,\widetilde{\wwA}_G^\ani,\rel}(\widetilde{\wM}_{G}^{\ani},\mathrm{stab}).
\]
\end{theorem}
\begin{proof}
By \cite[Section 3.1]{GLWZ}, there exists a $\FP_G$-torsor $\tau$ such that $\FM_G^{\beta,\mathrm{reg}}$ is the twist of $\FM_G^{\mathrm{reg}}$ by $\tau$, \emph{i.e.} $\FM_G^{\beta,\mathrm{reg}}\simeq \FM_G^{\mathrm{reg},\tau}$, where $\reg$ means restriction to the regular locus. 

As in the proof of the previous proposition, checking which twist is the correct one is a definable condition, hence given $\wG$, and a definable gerbe $\beta$, there is a definable $\wP_G$-torsor $\tau$ and a definable bijection
\[
\wM_G^{\beta,\mathrm{reg}}\simeq \wM_G^{\mathrm{reg},\tau}.
\]

This induces a definable bijection such that for every pseudo-finite field $K$,
\[
\widetilde{\wM}_G^{\beta,\ani,\natural}(K)\simeq \widetilde{\wM}_G^{\ani,\tau,\natural}(K),
\]
where as in Section~\ref{sec:orbifold}, $\wX^\natural(K)=\wX(K\llb t\rrb)\cap \wX^\mathrm{reg}(K\llp t\rrp)$.

We know that $\wM_G^{\beta,\ani}$ is a definable Deligne-Mumford stack. So for every pseudo-finite field $K$, the map to the coarse moduli space
\[\FM_G^{\beta,\ani}(K\llb t\rrb)\to M_G^{\beta,\ani}(K\llb t\rrb)\]
is definable.

We now claim that $x\in M_G^{\beta,\ani,\natural}(K)$ lifts to $\FM_G^{\beta,\ani}(K\llb t\rrb)$ if and only if its image in $M_G^{\ani,\tau,\natural}(K)$ lifts to $\FM_G^{\ani,\tau}(K\llb t\rrb)$. This is a first order property and when $K$ is a finite field is the statement of \cite[Lemma 3.4]{GLWZ}. So the property holds also for pseudo-finite fields. 
So the preimages of $\widetilde{\wM}_G^{\beta} \subset I_{\hat{\mu}}\widetilde{\wM}_G^{\beta} $ and $\widetilde{\wM}_G^{\tau} \subset I_{\hat{\mu}}\widetilde{\wM}_G^{\tau} $ under the specialization map $e$ \eqref{spmap} are identified. 

Restricting to the Hitchin fiber  over $a\in \widetilde{\wwA}_G^\ani$ we then obtain by the orbifold formula \ref{th:orbi}
\[ [\widetilde{\wM}^\beta_{G,a}] = \BL^{\dim \widetilde{\wM}^\beta_{G,a}} \int_{e^{-1}(\widetilde{\wM}^\beta_{G,a})}|\omega| = \BL^{\dim \widetilde{\wM}^\tau_{G,a}} \int_{e^{-1}(\widetilde{\wM}^\tau_{G,a})}|\omega|=[\widetilde{\wM}^\tau_{G,a}].\]
This equality holds in $\cC_\psf(\widetilde{\wwA}_G^\ani\times S)$, where $S$ is the set of parameters. We then take its realization in $\K{\DAC(\widetilde{\wwA}_G^\ani,\rel,\Lambda)}$. 

To obtain the comparison of the stable parts, we  consider, for every $t$ in $H^1(K, \pi_0(\wP_{G,a}))$,  the unramified twists $\left(\widetilde{\wM}^\beta_{G,a}\right)^t$ and $\widetilde{\wM}^{\tau+t}_{G,a}$ and repeat the above argument to obtain $\left[\left(\widetilde{\wM}^\beta_{G,a}\right)^t\right] = [\widetilde{\wM}^{\tau+t}_{G,a}]$. Taking realizations and summing up over all $t$ gives the result. 
\end{proof}

\subsection{Geometric Stabilization}
We can now give the precise statement of the motivic version of Geometric Stabilization. 

We fix a field $k$ of characteristic zero, a smooth projective curve $X$ of genus $g$ over $k$ with a rational point $\infty\in X(k)$, an even integer $d$, larger or equal to $2g-2$ and an integer $r$. We view $X$ embedded in projective space as in Section \ref{higgsb}. We consider an ample line bundle $D$ on $X$ of degree $d$, given by trivializations on open charts. As in Section \ref{sec:qsgroup}, we consider a definable quasi-split reductive group $\wG$ over $X$, with parameters $b,\tau,f$.

We fix a definable coendoscopic datum of $\widehat \wG$, $\FE=(\kappa,\rho_\kappa,\rho_\kappa\to \rho)$ which determines a coendoscopic group  $ \wH_\FE$ of $\widehat \wG$. We assume that $\rho_\kappa$ is trivial over the fixed rational point $\infty\in X$, which implies that $\rho$ is trivial as well over $\infty$.

We consider the Hitchin fibrations $\widetilde{\wM}_G\to {\widetilde{\wwA}}^\ani$ and $\widetilde{\wM}_{H_\FE}\to {\widetilde{\wwA}}_{\FE}^\ani$, as well as the immersion ${\widetilde{\wwA}}_{\FE}^\ani \to {\widetilde{\wwA}}^\ani$, which are definable over the same set of parameters. In the following we identify ${\widetilde{\wwA}}_{\FE}^\ani$ with its image in $\widetilde{\wwA}^\ani$.

The homomorphism $\kappa: \hat \mu \to \widehat \BT$ corresponds to an element in $\BX_*(\widehat \BT) \otimes \BQ/\BZ= \Hom(\BX_*(\BT),\BQ/\BZ)$. We consider $\kappa$ as a character of $\BX_*(\BT)$ by identifying $\BQ/\BZ$ with the roots of unity in $\Lambda$ via  $\BQ/\BZ \xrightarrow{e^{2\pi i (\cdot)}}\Lambda$. If follows from \cite[Sections 6.2.2 \& 6.3]{ngo:fl} that $\kappa$ factors through $\pi_0(\widetilde{\mcP}^{\ani}_{G})$ when restricted to ${\widetilde{\wwA}}_{\FE}^\ani$. Thus we obtain a well-defined $\kappa$-isotypical component $\chi_{\psf,\widetilde{\wwA}^{\ani}_{H_\FE},\rel}(\widetilde{\wM}_{G{\vert \widetilde{\wwA}^{\ani}_{H_\FE}}},{\kappa})$ in the sense of Section \ref{sec:isocomp}.

\begin{theorem}[Geometric Stabilization]
\label{th:geo:stab}
Given the above data, we have the equality 
\[
\chi_{\psf,\widetilde{\wwA}^{\ani}_{H_\FE},\rel}(\widetilde{\wM}_{G{\vert \widetilde{\wwA}^{\ani}_{H_\FE}}},{\kappa})=[\mathds{1}( -r_G^{H_\FE}(D))] \, \chi_{\psf,\widetilde{\wwA}^{\ani}_{H_\FE},\rel}(\widetilde{\wM}_{H_\FE},{\mathrm{stab}})
\]
in the relative Grothendieck groups of motives $\K{\DAC(\widetilde{\wwA}^{\ani}_{H_\FE},\rel,\Lambda)}\otimes \BQ$, where $r_G^H(D)=\frac{1}{2} \dim(\widetilde{M}_G-\widetilde{M}_{H_\FE})$. 
\end{theorem}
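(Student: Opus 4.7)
The plan is to adapt the $p$-adic integration proof of Geometric Stabilization in \cite{GWZ18} to the motivic framework developed in Section \ref{sec1}. The argument proceeds in three steps, with a final specialization/comparison step handled through the machinery of Section \ref{sec:def:geo:stab}.

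First, I would unfold both sides using Definition \ref{def:chi:iso:mcP}: the left-hand side becomes a weighted sum over classes $t \in \mathrm{H}^1(K_{a,r}/K, \pi_0(\widetilde{\wP}_G))$ of the relative motives $\chi_{\psf,\rel}(\widetilde{\wM}_G^t)$ twisted by $\kappa_\mot(t)$, and similarly for the right-hand side with the trivial character. Using the equivalence (\ref{rigineq}) identifying the $\FE$-component of $\Imu \widetilde{\FM}_G^{\ani}$ with the coendoscopic moduli $\widetilde{\FM}_{H_\FE}^G$, together with Lemma \ref{lem:square} to restrict the relevant twists to the correct subgroup of components, both sides may be rewritten as motivic volumes of the Deligne--Mumford stacks made definable in Proposition \ref{prop:hitchin:def}.

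Second, I would apply the motivic orbifold formula (Theorem \ref{th:orbi}) together with the specialization map $e\colon \widetilde{\wM}_G^\natural \to \Imu \widetilde{\wM}_G$ of (\ref{spmap}) to convert each side into a motivic orbifold integral $\int^{\mot}_{\widetilde{\wM}_G^\natural}\,|\omega_{\orb}|$ (respectively $\int^{\mot}_{\widetilde{\wM}_{H_\FE}^\natural}\,|\omega_{\orb}|$), with the $\kappa$-twist absorbed into the inertia data. The weight corrections $\eL^{-w(x,\alpha)}$ appearing on the $G$-side, combined with the coendoscopic equivalence (\ref{rigineq}), match the corresponding orbifold contribution on the $H_\FE$-side. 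This reduces the theorem to an identity of motivic integrals of Hitchin fibers for Langlands dual groups, namely Theorem \ref{mainmint}.

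Third, I would apply the motivic Fubini theorem (Proposition \ref{prop-fubini-mot}) along the Hitchin map to reduce Theorem \ref{mainmint} to a fiberwise comparison on anisotropic Hitchin fibers. Over the open locus of $\widetilde{\wwA}^{\ani}_{H_\FE}$ where the cameral cover is generically smooth, the Pryms $\widetilde{\wP}_{G,a}$ and $\widetilde{\wP}_{H_\FE,a}$ are extensions of finite groups by abelian varieties that are Cartier-dual up to an isogeny and a shift in total dimension by $r_G^{H_\FE}(D)$. A motivic counterpart of Tate local duality, transported from its classical form through the specialization compatibility of Proposition \ref{prop:spec-int} and the definable encoding of Galois cohomology from Section \ref{sec:galois:coh}, then yields the fiberwise identity, with the Tate twist $[\mathds{1}(-r_G^{H_\FE}(D))]$ accounting precisely for this dimension shift. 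The extension from the generically smooth locus to the whole of $\widetilde{\wwA}^{\ani}_{H_\FE}$ is automatic from the Fubini reduction, since the complement has strictly lower dimension and contributes zero in the dimension-graded quotient of motivic constructible functions used to define the integrals.

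The principal obstacle is that the isotypical components of Definition \ref{def:chi:iso:mcP} are built via twists of definable varieties by Galois cocycles rather than by cohomological projectors as in \cite{GWZ18}, so one must verify that this motivic decomposition interacts correctly with orbifold integration and agrees with the cohomological decomposition after $\ell$-adic realization. Proposition \ref{lem:FT:mu:n} provides the Fourier-transform bridge between the two pictures, and the functoriality diagrams (\ref{diag-fshriek-psf-mot}) and (\ref{diag-fpull-psf-mot}) ensure that this compatibility propagates through $f_!$ and $f^*$ along the Hitchin map; the genuinely delicate point will be to carry out this matching uniformly in families over $\widetilde{\wwA}^{\ani}_{H_\FE}$ and to formulate the motivic Tate duality in a way that specializes, via Proposition \ref{prop:spec-int}, to its $p$-adic analog used in \cite{GWZ18}.
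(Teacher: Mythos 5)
Your overall strategy matches the paper's: reduce to a fiberwise statement, pass through the orbifold formula and the specialization map $e$, apply motivic Fubini, and finish with a duality argument over generically smooth Hitchin fibers. But your third step contains a conceptual error that would make the proof fail as written. You claim that the Pryms $\widetilde{\wP}_{G,a}$ and $\widetilde{\wP}_{H_\FE,a}$ are ``Cartier-dual up to an isogeny and a shift by $r_G^{H_\FE}(D)$,'' and that a motivic Tate duality between them, with the Tate twist $[\mathds{1}(-r_G^{H_\FE}(D))]$ accounting for the dimension shift, would yield the fiberwise identity. This is not the duality that is actually available or used. The duality at the heart of the argument (Theorem~\ref{thm:mainintfibers}) is between the Pryms of $G$ and of its Langlands dual $\widehat G$ over the \emph{same} generic point $b$ of the Hitchin base, via Donagi's result that $\widetilde{\FP}^\square_{G,b}$ and $\widetilde{\FP}^\square_{\widehat G,b}$ are dual abelian varieties, together with the non-degeneracy of the Grothendieck pairing on their N\'eron models; there is no Tate twist in that fiberwise identity, and it is a ``self-duality'' statement, not an endoscopic one. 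The endoscopic group $H_\FE$ enters only \emph{afterwards}, in Lemma~\ref{instre}, via the identification \eqref{rigineq} of the $\FE_H$-component of the twisted inertia stack of $\widetilde{\wM}_{\widehat G}$ with $\widetilde{\wM}_{H_\FE}$, and a Fourier inversion summing over pairs of twists $(s,t)$ weighted by $\kappa_a(t)^{-1}$. The twist $[\mathds{1}(-r_G^{H_\FE}(D))]$ does not come from any duality shift but from the weight function in the orbifold formula: the identity $\dim(\widetilde\wM_G)-w(\kappa_a)=r_G^H(D)$ (Lemma~\ref{lem:wconst} and \cite[Lemma 6.10]{GWZ18}) produces the power of $\eL$ when one extracts the non-trivial Fourier coefficient. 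Your step 2 correctly states that one reduces to an identity ``for Langlands dual groups,'' so you were aware of this at the level of Theorem~\ref{mainmint}, but then you silently replace $\widehat G$ by $H_\FE$ when describing the fiberwise duality; that substitution is exactly the gap.

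A secondary imprecision: you frame the compatibility between the twist-based isotypical components of Definition~\ref{def:chi:iso:mcP} and the cohomological projectors as a ``genuinely delicate point'' remaining to be verified. In the paper's proof this is not an open issue: it is handled once and for all by the Fourier-transform identity of Proposition~\ref{lem:FT:mu:n}, and the deduction in Lemma~\ref{instre} works entirely with the twist-based definition, never needing to pass to a cohomological projector mid-argument. What does carry real weight, and is absent from your sketch, is the N\'eron-model analysis: Lemma~\ref{lem:itzero} (detecting triviality of the twisted torsor via the component-group map $i$), Lemma~\ref{lem:chis:neron} (the character $\chi_s\circ e$ factors through the N\'eron component group), and Proposition~\ref{prop:calcint} (the orthogonality computation that makes the integral vanish when $i(s)\neq 0$). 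These are the ingredients that replace the ``motivic Tate local duality'' black box you invoke, and they are precisely where the transfer from the $p$-adic argument of \cite{GWZ18} to the motivic setting requires care (e.g.\ Lemma~\ref{lem:chis:neron} uses divisibility in the neutral component of the N\'eron model plus compatibility with unramified base change, rather than a direct appeal to the specialization principle).
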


We also prove a motivic version of the non-standard Fundamental Lemma:

\begin{theorem}
\label{th:geo:ns}
We have the equality 
\[
\chi_{\psf,\widetilde{\wwA}^{\ani},\rel}(\widetilde{\wM}_{G},{\mathrm{stab}})
=\chi_{\psf,\widetilde{\wwA}^{\ani},\rel}(\widetilde{\wM}_{\widehat G},{\mathrm{stab}})
\]
in the relative Grothendieck groups of motives $\K{\DAC(\widetilde{\wwA}^{\ani},\rel,\Lambda)}\otimes \BQ$.
\end{theorem}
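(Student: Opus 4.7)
The plan is to follow the strategy of \cite{GWZ18} for the non-standard Fundamental Lemma, replacing $p$-adic integration by the motivic integration theory developed in Section \ref{sec1}. This identity is simpler than the full Geometric Stabilization of Theorem \ref{th:geo:stab}, since only the stable (trivial character) components appear on both sides and no endoscopic group intervenes: the proof will reduce, over the generic smooth locus of the Hitchin base, to the Langlands duality statement for the generic Pryms combined with motivic Tate duality.

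First, I would use the orbifold formula (Theorem \ref{th:orbi}) to express both sides as motivic integrals against the orbifold volume forms on the definable subassignments $\widetilde{\wM}_G^\natural$ and $\widetilde{\wM}_{\widehat G}^\natural$ introduced in Section \ref{sec:orbifold}. For the stable component this reduces to integrating the constant function $1$, with the weight contributions tracked by the specialization map $e$ of \eqref{spmap}. Next I would apply the motivic Fubini theorem (Proposition \ref{prop-fubini-mot}) to the Hitchin fibrations $\widetilde{\wM}_G\to \widetilde{\wwA}^{\ani}$ and $\widetilde{\wM}_{\widehat G}\to \widetilde{\wwA}^{\ani}$, which share a common base. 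This reduces the claimed equality in $\K{\DAC(\widetilde{\wwA}^{\ani},\rel,\Lambda)}\otimes \BQ$ to a fiberwise identity of motivic volumes of Hitchin fibers, expressed as a relation among constructible functions on $\widetilde{\wwA}^{\ani}$.

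Next I would restrict to a dense open definable subassignment $\widetilde{\wwA}^{\ani,\sm}\subset \widetilde{\wwA}^{\ani}$ over which both Hitchin fibers $\widetilde{\wM}_{G,a}$ and $\widetilde{\wM}_{\widehat G,a}$ are smooth; the complement has strictly smaller Krull dimension so its contribution to the motivic integral vanishes by construction of the integration theory of \cite{CL-2008}. On this open locus, both Hitchin fibers are torsors under the Prym abelian varieties $\FP_{G,a}$ and $\FP_{\widehat G,a}$, trivialized (up to the twist by $Z(X,G)$) by the Kostant section. The key input is then the theorem of Donagi--Pantev: over the generic locus of the Hitchin base, $\FP_{G,a}$ and $\FP_{\widehat G,a}$ are canonically isogenous to mutually dual abelian varieties. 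Since isogenous abelian varieties have equal classes in the Grothendieck ring of rational Chow motives (their $\ell$-adic realizations are isomorphic Galois modules), this yields the required fiberwise equality of motivic volumes, which via Lemma \ref{lem-cCmoti} and the diagrams \eqref{diag-fshriek-psf-mot}--\eqref{diag-fpull-psf-mot} propagates to the relative statement on $\widetilde{\wwA}^{\ani}$.

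The main obstacle is implementing the last step motivically and \emph{relatively} in a definable manner. In \cite{GWZ18} Tate duality is deployed via $p$-adic Fourier analysis on the Pryms; here one must instead invoke the definable descriptions of the cameral cover, the Prym stack, and its component group set up in Section \ref{sec:def:geo:stab}, and show that the motivic isogeny invariance is compatible with the pseudo-finite realization of Section \ref{sec-const-mot}. A secondary verification is that the weight factors produced by the orbifold formula on the two sides match up to the factor $[\mathds{1}(-r_G^{\widehat G}(D))]$, which in the present stable-to-stable context reduces to a degree computation comparing $\dim \widetilde{\wM}_G$ and $\dim \widetilde{\wM}_{\widehat G}$; under Langlands duality these dimensions coincide, so no such twist actually appears in Theorem \ref{th:geo:ns}.
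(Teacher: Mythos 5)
Your high-level strategy matches the paper's: reduce to a fiberwise statement, pass through the orbifold formula and a motivic Fubini to generically smooth Hitchin fibers, and close with Donagi--Pantev duality of Pryms plus motivic isogeny invariance. However, there are two genuine structural gaps.

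First, you assume that the stable-to-stable case can be handled by ``integrating the constant function $1$'' without any twisting. This misses how the stable isotypical component is actually defined (Definition~\ref{def:chi:iso:mcP}): even for the trivial character, $\chi_{\psf,\rel}(\cdot,\mathrm{stab})$ is an average over twists $\wX^t$ by elements $t$ of the component group of the Prym. Correspondingly, the paper does not prove Theorem~\ref{th:geo:ns:a} directly; it first establishes the more refined duality Theorem~\ref{th:true:LF}, which involves \emph{both} a twist by $t\in\BX^*(\widehat\BT/Z(X,\widehat G))$ on the $G$-side and a constructible character $\chi_s$ (read off via the coendoscopic decomposition of the twisted inertia stack), and then \emph{sums over all $s,t$} in Lemma~\ref{instre}. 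The non-standard Fundamental Lemma emerges only after this double summation collapses the $\kappa_\FE$-weighted pieces to the trivial coendoscopy datum on each side. Your proposal, by bypassing this, would not produce the correct stable sums. Relatedly, you omit the $\square$-truncation $\widetilde{\wP}^{\square}_G\subset \widetilde{\wP}_G$ (and the accompanying Lemma~\ref{lem:square}), which the paper needs to make the component groups finite and constant uniformly, and to have the twisting machinery definable.

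Second, your reduction ``restrict to a dense open $\widetilde{\wwA}^{\ani,\sm}$ whose complement has smaller Krull dimension'' is not what happens. The relative statement is first localized to a single $K$-point $a$ via Lemma~\ref{evmot} and~\eqref{eqn-cCmot-quasiproj}; then the paper integrates over the formal disc $B_a$ (the definable set of $K\llb t\rrb$-points of $\widetilde{\wwA}$ specializing to $a$), and Fubini is applied along $B_a^\flat$, the locus of arcs whose generic fiber lies in the smooth locus. The Krull-dimension argument over the base scheme does not directly apply here, because the Hitchin fiber over the fixed anisotropic $a$ may be everywhere singular; the smoothing happens after deforming $a$ into the formal disc. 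Finally, the last step is more delicate than ``isogenous abelian varieties have equal motives'': Proposition~\ref{prop:calcint} and Lemmas~\ref{lem:itzero}--\ref{lem:chis:neron} require tracking N\'eron models, their component groups $\phi_{\widetilde{\FP}^\square_{G,b}}$, and the non-degeneracy of the Grothendieck pairing in residue characteristic zero before invoking the AHP motivic isogeny invariance.
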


\begin{rmk}
In the original Geometric Stabilization theorem, it is the endoscopic group $\widehat{H}_{\FE}$ that appears on the right hand side. We recover this version as well, by applying the non-standard Fundamental Lemma \ref{th:geo:ns} to the group $H_\FE$.
\end{rmk}

\begin{proposition}
\label{prop:spec:geostab}
Let $\BG$ be a split reductive group and $X$ a smooth projective curve on a normal domain $R$ of finite type over $\BZ$ with a rational point, an even integer $d\geq 2g-2$ and an integer $r$. Then there exists a non-empty open subset $U$ of $\Spec(R)$ such that for each closed point of $U$ with residue field $\BF$, the following holds.

For every quasi-split form of $\BG$ over $X_\BF$, $G\to X_\BF$, that splits over a Galois cover of degree $r$, the specialization of $\widetilde{\wM}_{G}$ to $\BF$ for some choice of parameters is the set of $\BF$-points of $\widetilde{M}_G$, the coarse moduli space of $\widetilde{\FM}^\ani_G$. The same holds for the various other geometric object constructed above, such as $\wP_G$ and $\wwA$. 

Moreover, every definable coendoscopic datum $\FE=(\kappa,\rho_\kappa,\rho_\kappa\to \rho)$ of $\widehat \wG$ specializes to a coendoscopic datum  of $\widehat G$, written $\FE$ as well, such that the function $\Tr \Frob \chi_{\psf,\widetilde{\wwA}^{\ani}_{H_\FE},\rel}(\widetilde{\wM}_{G{\vert \widetilde{\wwA}^{\ani}_{H_\FE}}},{\kappa})$ specializes for some choice of parameters in $\BF$ to the function 
\[a\in \widetilde{\A}^\ani_{H_\FE}(\BF) \longmapsto \#^\kappa \widetilde{\FM}_{G,a}(\BF).\]
\end{proposition}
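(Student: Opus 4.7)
The plan is to combine the specialization principle for definable sets (Proposition~\ref{prop:spec-for}) with the motivic specialization of Proposition~\ref{prop:spec-mot}, applied to the explicit definable encoding of the Hitchin data from Section~\ref{sec:def:geo:stab}. First, I would observe that every definable object constructed — the Hitchin base $\wwA$, its étale open $\widetilde{\wwA}$, the anisotropic locus $\widetilde{\wwA}^{\ani}$, the cameral cover $\wC$, the Prym $\wP_G$, and the definable Deligne--Mumford stack $\widetilde{\wM}_G$ in the sense of Definition~\ref{def:DMstack} — is described by finitely many formulas with parameters in $R$, since each defining condition (Kostant section, regular semisimple fiber over $\infty$, Faltings' GIT construction for stable $G$-Higgs bundles, reducedness of the cameral cover, vanishing of $\mft^{W_{\widetilde a}}$, etc.) involves only a bounded amount of data fixed by the root datum of $\BG$, the integer $r$, the curve $X$ and the degree $d$. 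Applying Proposition~\ref{prop:spec-for} to each such formula and intersecting the resulting non-empty open subsets of $\Spec(R)$ produces a single non-empty open $U$ on which simultaneous specialization holds: for every closed point $x\in U$ with residue field $\BF$ and every choice of parameters specializing to an actual quasi-split $G$ over $X_\BF$ split by a Galois cover of degree $r$, the $\BF$-points of the specialized formula coincide with the $\BF$-points of the corresponding scheme.

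Second, a definable coendoscopic datum $\FE=(\kappa,\rho_\kappa,\rho_\kappa\to\rho)$ is encoded by a definable homomorphism $\widehat\mu\to\wT$ together with Galois cocycles describing the reduction of the outer-automorphism torsor, following Section~\ref{sec:galois:coh}. After possibly shrinking $U$ to apply Proposition~\ref{prop:spec-for} to each such cocycle, the specialized data constitute a genuine coendoscopic datum of the specialization of $\wG$, and the induced objects $\widetilde{\wwA}^{\ani}_{H_\FE}$ and $\widetilde{\wM}_{H_\FE}$, constructed via the definable maps $\wnu_\FE$ and $\wmu_\FE$, specialize correspondingly.

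Finally, to identify the trace of Frobenius of the motivic $\kappa$-isotypical component with $\#^\kappa\widetilde{\FM}_{G,a}(\BF)$, I would apply $\Tr\Frob_x$ to Definition~\ref{def:chi:iso:mcP} termwise. Using Proposition~\ref{prop:spec-mot} together with the fact that the motivic character $\kappa_\mot(t)$ of Section~\ref{sec:motchar} specializes to the ordinary numerical character value $\kappa(t)$ on $H=\rmH^1(\BF,\pi_0(\widetilde{\FP}_{G,a}))$ — a consequence of the definable identification in Proposition~\ref{deflthm} at the level of Frobenius — one obtains, for each $a\in\widetilde{\FA}^{\ani}_{H_\FE}(\BF)$, the equality
\[
\Tr\Frob_x\bigl(\chi_{\psf,\rel}(\widetilde{\wM}_{G,a},\kappa)\bigr) \;=\; \frac{1}{|H|}\sum_{t\in H}\#\widetilde{M}_{G,a}^{\,t}(\BF)\,\kappa(t).
\]
By Lang's theorem applied to the action of $\widetilde{\FP}_{G,a}$ on $\widetilde{\FM}_{G,a}$ (which is transitive up to $\pi_0$ on the anisotropic locus thanks to the Kostant section), the right-hand side is precisely the Fourier-inverted expression for the $\kappa$-isotypical part of the stacky weighted point count, namely $\#^\kappa\widetilde{\FM}_{G,a}(\BF)$. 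The main obstacle will be this last identification: one must verify carefully that, over the anisotropic locus where $\pi_0(\widetilde{\FP}_{G,a})$ is finite, the Fourier inversion comparison between point counts of twisted forms of the coarse moduli and isotypical point counts of the stack remains valid, correctly absorbing stack-theoretic automorphism groups into the $Z(X,G)$-rigidification. Once that is in place, uniformity of the specialization in the parameters $a$ from Proposition~\ref{prop:spec-mot} yields the claim for all $a$ simultaneously.
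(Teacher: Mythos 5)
Your approach matches the paper's: apply Proposition~\ref{prop:spec-for} to the definable constructions for the first part, then apply $\Tr\Frob_x$ termwise to Definition~\ref{def:chi:iso:mcP} via Proposition~\ref{prop:spec-mot} to get the weighted sum of twisted point counts. The final identification you flag as the ``main obstacle'' --- that this Fourier sum over twists equals $\#^\kappa\widetilde{\FM}_{G,a}(\BF)$ --- is precisely \cite[Lemma 6.6]{GWZ18}, which the paper invokes directly rather than re-deriving from Lang's theorem and finite Fourier analysis; your sketch of that step points in the right direction but by itself would still need the Grothendieck--Lefschetz trace formula and the compatibility with $Z(X,G)$-rigidification that Lemma 6.6 packages.
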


\begin{proof}
For the first part, at each step of the construction in this Section, up to some choice of parameters the specialization, of the definable set to a finite field $\BF$ is the set of points of the geometric object we are encoding. Proposition \ref{prop:spec-for} ensures that the specialization is independent of the choice of formula, up to restricting $\BF$ to be the residue field of a closed point in a non-empty open subset of $\Spec(R)$.

For the second part, by using Proposition \ref{prop:spec-mot} and the first part, we find that for some choice of parameters, $\Tr \Frob \chi_{\psf,\widetilde{\wwA}^{\ani}_{H_\FE},\rel}(\widetilde{\wM}_{G{\vert \widetilde{\wwA}^{\ani}_{H_\FE}}},{\kappa})$ is a sum of the $\BF$-points counts of the twisted stacks:
\[\Tr \Frob_x \chi_{\psf,\widetilde{\wwA}^{\ani}_{H_\FE},\rel}(\widetilde{\wM}_{G{\vert \widetilde{\wwA}^{\ani}_{H_\FE}}})(a)=\frac{1}{\abs{\pi_0(\FP_{G,a})}}\sum_{t\in \pi_0(\widetilde{\FP}_{G,a})} \#\widetilde{\FM}^t_{G,a}(\BF)\kappa(t^{-1}),
\]
since this is the way we defined the $\kappa$-isotypical components for motives. By \cite[Lemma 6.6]{GWZ18}, this sum is equal to $\#^\kappa \widetilde{\FM}_{G,a}(\BF)$.
\end{proof}

Recall the Geometric Stabilization theorem of Ngô \cite[Theorem 6.4.2]{ngo:fl}, as well as the geometric version of Waldspurger non-standard Fundamental Lemma \cite[Theorem 8.8.2]{ngo:fl}. See also \cite[Theorems 1.1 and 1.3]{GWZ18}.
\begin{corollary}
The Geometric Stabilization theorem and the geometric non-standard Fundamental Lemma hold in every finite field of large enough characteristic.
\end{corollary}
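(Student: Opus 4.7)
The strategy is to deduce the classical statements from the motivic ones by applying the trace-of-Frobenius specialization machinery of Proposition \ref{prop:spec:geostab} (and ultimately Proposition \ref{prop:spec-mot}) to the identities of Theorems \ref{th:geo:stab} and \ref{th:geo:ns}. First, I would spread out the situation over $\BZ$: since the curve $X$, the line bundle $D$, the split group $\BG$, the $\Out(\BG)$-torsor $\rho$ trivialized over a degree $r$ cover, and the coendoscopic datum $\FE$ are all finitely presented, there is a normal domain $R$ of finite type over $\BZ$, with fraction field $k$ (of characteristic zero), over which all these data are defined. This puts us precisely in the framework of Proposition \ref{prop:spec:geostab}.

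Next, I would shrink $\Spec(R)$ to a non-empty open subset $U$ furnished by Proposition \ref{prop:spec:geostab}, on which the specialization of every definable object ($\widetilde{\wM}_G$, $\widetilde{\wM}_{H_\FE}$, $\widetilde{\wM}_{\widehat G}$, $\widetilde{\wwA}^\ani$, $\wP_G$, etc.) at a closed point $x \in U$ recovers the $\BF_x$-points of the corresponding geometric object. Since only finitely many definable objects are involved, taking a finite intersection of non-empty opens keeps $U$ non-empty.

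Then, for each closed point $x \in U$ with residue field $\BF_x$, applying $\Tr \Frob_x$ to the identity of Theorem \ref{th:geo:stab} in $\K{\DAC(\widetilde{\wwA}^{\ani}_{H_\FE},\rel,\Lambda)} \otimes \BQ$ and evaluating fiberwise at points $a \in \widetilde{\FA}^\ani_{H_\FE}(\BF_x)$, the second part of Proposition \ref{prop:spec:geostab} identifies the left-hand side with $\#^\kappa \widetilde{\FM}_{G,a}(\BF_x)$, and the analogous specialization of the right-hand side (with $\kappa$ trivial) yields $q_x^{r_G^{H_\FE}(D)} \, \#^{\mathrm{stab}} \widetilde{\FM}_{H_\FE,a}(\BF_x)$. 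This is exactly Ng\^o's Geometric Stabilization over $\BF_x$. The same specialization applied to Theorem \ref{th:geo:ns}, where both sides carry the trivial character, yields the geometric non-standard Fundamental Lemma over $\BF_x$.

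The only potentially delicate point is the passage from an equality of relative motives over $\widetilde{\wwA}^\ani_{H_\FE}$ to pointwise equalities of point counts over $\BF_x$; this is handled by combining Lemma \ref{evmot} (to reduce to pointwise statements on the base) with the pointwise specialization of Proposition \ref{prop:spec-mot}. The ``large enough characteristic'' in the conclusion is encoded by the requirement that $x$ lie in the open subset $U$, which automatically excludes a finite set of small primes.
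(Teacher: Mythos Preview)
Your proposal is correct and takes essentially the same approach as the paper: the paper's proof is the one-line statement that it suffices to combine Proposition \ref{prop:spec:geostab} with Theorems \ref{th:geo:stab} and \ref{th:geo:ns}, and you have simply unpacked what that combination entails.
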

\begin{proof}
It suffices to combine Proposition \ref{prop:spec:geostab} with the motivic statements, Theorems \ref{th:geo:stab} and \ref{th:geo:ns}.
\end{proof}

\section{Proof of the main theorems}\label{sec:proofmt}
In this section we prove Theorems \ref{th:geo:stab} and \ref{th:geo:ns} by reducing them gradually to a duality statement about motivic integrals on generic Hitchin fibers, Theorem \ref{thm:mainintfibers}.

\subsection{Reduction to a point}
By Proposition \ref{prop-eval}, Theorems \ref{th:geo:stab} and \ref{th:geo:ns} follow from their following fiberwise versions.

\begin{theorem}
\label{th:geo:stab:a}
With notations as in Theorem  \ref{th:geo:stab}, for every pseudofinite field $K$, for every $a\in \widetilde{\wwA}^{\ani}_{H_\FE}(K)$, we denote by $\kappa_a$ the character of $\pi_0(\widetilde{\wP}_{G,a})$ induced by $\kappa$.  Then the equality 
\[
\chi_{\psf,\rel}(\widetilde{\wM}_{G,a},{\kappa_a})=[\mathds{1}( -r_G^{H_\FE}(D))]\chi_{\psf,\rel}(\widetilde{\wM}_{H_\FE,a},{\mathrm{stab}})
\]
holds in $\K{\DAC(\Spec(K),\rel,\Lambda)}\otimes \BQ$,
where $r_G^{H_\FE}(D)=\frac{1}{2} \dim(M_G-M_{H_\FE})$. 
\end{theorem}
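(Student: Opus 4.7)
The plan is to follow the strategy of Groechenig-Wyss-Ziegler \cite{GWZ18}, transposed into the motivic framework developed in Section \ref{sec1}, and reduce the identity gradually to a duality statement on generically smooth Hitchin fibers for Langlands dual groups.

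First, I would unfold the definitions of the $\kappa$-isotypical and stable components according to Definition \ref{def:chi:iso:mcP}. Both sides become weighted sums over Galois cohomology classes of motives of twisted coarse moduli spaces $\widetilde{\wM}_{G,a}^t$, where the twists are governed by cocycles with values in $\pi_0(\widetilde{\wP}_{G,a})$ (resp.\ $\pi_0(\widetilde{\wP}_{H_\FE,a})$), which by Proposition \ref{deflthm} can be handled purely definably. Using Lemma \ref{lem:square} and the fact that over $\widetilde{\wwA}^{\ani}$ the components $\pi_0$ are constant finite groups, I can replace the sums over cocycles by sums over the constant groups, reducing the problem to a statement that involves motivic volumes of twisted Hitchin fibers.

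Next, I would compute these motivic volumes using the orbifold formula. Each twisted coarse moduli space $\widetilde{\wM}_{G,a}^t$ carries a natural orbifold measure coming from top-degree forms on the associated smooth Deligne-Mumford stack $\widetilde{\FM}_{G,a}^t$ (whose existence is guaranteed by anisotropy and the definability established in Section \ref{higgsb}). Applying Theorem \ref{th:orbi} together with the specialization map \eqref{spmap}, one rewrites
\[
\chi_{\psf,\rel}(\widetilde{\wM}_{G,a},\kappa) = \int^\mot_{\widetilde{\wM}_{G,a}^{\natural}} \kappa\!\left(\inv(e(x))\right)\, \abs{\omega_{\orb}},
\]
where $\inv$ denotes the definable invariant attached to a point of the twisted inertia stack, and similarly for the right-hand side. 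This is the motivic counterpart of the Weil-type integral formula used in \cite[Section 6]{GWZ18}.

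At this point the equivalence \eqref{rigineq} identifies components of the twisted inertia stack $I_{\hmu}\widetilde{\FM}_G^{\ani}$ (restricted over $\widetilde{\wwA}^{\ani}_{H_\FE}$) with moduli of Higgs bundles for the coendoscopic group $H_\FE$. Using the definable version of this equivalence, the integral on the left can be rewritten as a motivic integral on $\widetilde{\wM}_{H_\FE,a}^{\natural}$. The Jacobian of the change of variables produces exactly the factor $[\mathds{1}(-r_G^{H_\FE}(D))]$, as in \cite[Section 6.2]{GWZ18}. This reduces Theorem \ref{th:geo:stab:a} to an identity between motivic integrals on Hitchin fibers for $G$ and its Langlands dual side through $H_\FE$, which I would state as a separate theorem (the motivic analogue of Theorem \ref{mainmint}).

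Finally, I would apply the motivic Fubini theorem (Proposition \ref{prop-fubini-mot}) to pass from the full Hitchin fiber to its smooth open locus, where the Prym and its dual become abelian varieties, and the boundary contribution vanishes thanks to the anisotropy hypothesis and dimension count. On this generic locus, the required equality is a motivic form of Tate duality between a Prym abelian variety and its dual. The main obstacle, and the place where the bulk of the work will sit, is establishing this motivic Tate duality at the level of $\cC_\mot$: it has to be extracted from the corresponding $p$-adic statement of \cite{GWZ18} via the specialization principle (Proposition \ref{prop:spec-int}), together with the fact that dual Pryms have the same motivic class through their Chow motives, as in \cite{LW19}. Once this is in place, patching the generic identity back along Fubini yields the desired equality in $\K{\DAC(\Spec(K),\rel,\Lambda)}\otimes \BQ$.
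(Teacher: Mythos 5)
Your overall strategy --- orbifold measure, coendoscopic decomposition of the inertia stack, Fubini to generically smooth fibers, Tate duality on the Pryms --- is the same one the paper follows in Section~\ref{sec:proofmt}, but the central reduction from the isotypical-component identity to an orbifold integral is misstated, and the Fourier bookkeeping that actually makes it work is missing. The formula
\[
\chi_{\psf,\rel}(\widetilde{\wM}_{G,a},\kappa) = \int^\mot_{\widetilde{\wM}_{G,a}^{\natural}} \kappa\!\left(\inv(e(x))\right)\, \abs{\omega_{\orb}}
\]
is not correct: the domain of the orbifold integral must be the subassignment of $K\llb t\rrb$-points over the formal ball $B_a$ in the Hitchin base (i.e.\ $\widetilde{\wM}_{G,B_a}^{\square,t,\natural}$), not the special fiber $\widetilde{\wM}_{G,a}$; and, more importantly, by Definition~\ref{def:chi:iso:mcP} the $\kappa$-isotypical component is an average over $t$ of motives of the \emph{torsor-twisted} fibers $\widetilde{\wM}^{\square,t}_{G,a}$, not a single integral of a character of the inertia invariant against the untwisted fiber. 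The paper's actual deduction (Lemma~\ref{instre}) starts from the symmetric identity between $G$ and $\widehat G$, Theorem~\ref{th:true:LF}, sums it over both $s\in\BX^*(\BT/Z(X,G))$ and $t\in\BX^*(\widehat\BT/Z(X,\widehat G))$ weighted by $\kappa_a(t)^{-1}$, and then uses orthogonality of characters: on the $G$-side the $s$-sum kills every nontrivial coendoscopic component, while on the $\widehat G$-side the $t$-sum selects exactly the component $\FE_H$, which Proposition~\ref{finteq} identifies with the $H_\FE$-Higgs bundles. The factor $\eL^{-r_G^{H_\FE}(D)}$ is not a Jacobian of the coendoscopic change of variables: it comes out of the orbifold weight, via $\dim\widetilde\wM_G - w(\kappa_a) = r_G^{H}(D)$ (Lemma~6.10 of \cite{GWZ18}).

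On the endgame you are a bit too pessimistic about needing to extract the duality from the $p$-adic statement. The paper does invoke specialization, but only for the intermediate Lemmas~\ref{lem:chis:triv} and \ref{lem:chis:neron} (constancy and multiplicativity of $\chi_s\circ e$, which are not easily checked purely definably); the key equality of orders of component groups for dual abelian Pryms is proven directly from non-degeneracy of the Grothendieck pairing on N\'eron models in residue characteristic $0$ (\cite{GRR67}), and the equality of motives then follows from isogeny invariance (\cite{AHP}, \cite{LW19}). So make sure you separate what genuinely requires specialization from what the characteristic-zero N\'eron theory gives you directly.
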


\begin{theorem}
\label{th:geo:ns:a}
For every $a\in \widetilde{\wwA}^{\ani}(K)$,  the equality \[
\chi_{\psf,\rel}(\widetilde{\wM}_{G,a},{\mathrm{stab}})
=\chi_{\psf,\rel}(\widetilde{\wM}_{\widehat G,a},{\mathrm{stab}})
\]
holds in $\K{\DAC(\Spec(K),\rel,\Lambda)}\otimes \BQ$.
\end{theorem}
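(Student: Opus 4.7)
My plan is to adapt the $p$-adic strategy of Groechenig--Wyss--Ziegler to the motivic setting developed in Section~\ref{sec1}, exploiting the orbifold formula, the Fubini theorem, and duality for the abstract Prym.

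First, I would translate the motivic count on each side into a motivic orbifold integral. By Theorem~\ref{th:orbi} applied to the definable smooth Deligne--Mumford stacks $\widetilde{\wM}_{G,a}$ and $\widetilde{\wM}_{\widehat G,a}$ (definable by Proposition~\ref{prop:hitchin:def}) together with the appropriate top-degree forms, the stable isotypical components $\chi_{\psf,\rel}(\widetilde{\wM}_{G,a},\mathrm{stab})$ and $\chi_{\psf,\rel}(\widetilde{\wM}_{\widehat G,a},\mathrm{stab})$ can be rewritten, up to an explicit power of $\eL$ coming from the weight, as motivic orbifold integrals
\[
\int_{\widetilde{\wM}_{G,a}^\natural}^\mot \abs{\omega_{\orb,G}}\qquad\text{and}\qquad \int_{\widetilde{\wM}_{\widehat G,a}^\natural}^\mot \abs{\omega_{\orb,\widehat G}},
\]
taken with respect to canonical definable forms on the smooth loci. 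The key point is that the Kostant section identifies each regular Hitchin fiber with a torsor under $\widetilde{\wP}_{G,a}$ (resp.\ $\widetilde{\wP}_{\widehat G,a}$), so these integrals only depend on the abstract Prym stacks.

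Next, I would invoke the Fubini theorem (Proposition~\ref{prop-fubini-mot}) relative to the cameral cover to reduce to the generically smooth locus. More precisely, slicing $\widetilde{\wM}_{G,a}^\natural$ over the anisotropic locus of the cameral base and using the smoothness of the Hitchin fibration over the open subset where the cameral cover is smooth (cf.\ Section~\ref{higgsb}), the orbifold integral reduces, again up to a controllable $\eL$-factor, to a motivic integral over the definable coarse moduli of a $\widetilde{\wP}^{\sm}_{G,a}$-torsor. The same reduction applies symmetrically to $\widehat G$, and the resulting normalizations coincide because $G$ and $\widehat G$ share the same root datum up to duality, so the dimensions and weights that appear match.

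Finally, the heart of the argument is Tate duality on the generic locus: for Langlands dual groups, the smooth parts of $\widetilde{\wP}_{G,a}$ and $\widetilde{\wP}_{\widehat G,a}$ are dual semi-abelian schemes over the open subset of the Hitchin base where the cameral cover is smooth, and so have matching motivic volumes. In the pseudo-finite motivic framework this comes down to the fact that dual abelian varieties have the same class in $\K{\DAC(\Spec(K),\Lambda)}\otimes\BQ$ (which is precisely what forced us to pass from the algebraically closed theory used in \cite{LW19} to the pseudo-finite one, see Section~\ref{sec:motchar}), combined with the fact that the isogeny between their identity components extends to a definable isogeny between the Prym stacks whose defect is controlled by matching $\pi_0$'s via Proposition~\ref{deflthm}. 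The main obstacle I expect is to track the various normalization factors so that they cancel cleanly after combining Theorem~\ref{th:orbi}, the Fubini reduction, and the identification of dual Pryms; once this bookkeeping is done, the equality follows by integrating the duality statement back up using Proposition~\ref{prop-fubini-mot}.
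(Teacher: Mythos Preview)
Your high-level strategy matches the paper's: orbifold formula, Fubini, and duality for the Prym. But there is a substantive gap in your first step that the paper spends most of Section~\ref{sec:proofmt} repairing.

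The orbifold formula does \emph{not} produce the stable isotypical component directly. By Theorem~\ref{th:orbi}, the orbifold integral over $\widetilde{\wM}^{\natural}_{G,B_a}$ equals $\int_{\Imu \widetilde{\wM}_{G,a}} \eL^{-w}$, an integral over the \emph{entire} twisted inertia stack. By Proposition~\ref{finteq} this decomposes as a sum over coendoscopic data $\FE$, so it conflates the stable piece with all the other $\kappa$-isotypical components. To disentangle them, the paper introduces the $\square$-subgroups, the twists $\widetilde{\wM}^{\square,t}_{G,a}$, and constructible characters $\chi_s$ on the inertia stack (Definition~\ref{chioncomp}); the key intermediate statement is Theorem~\ref{th:true:LF},
\[
\int^\mot_{\Imu\widetilde{\wM}^{\square,t}_{G,a}} \eL^{-w}\chi_s = \int^\mot_{\Imu\widetilde{\wM}^{\square,s}_{\widehat G,a}} \eL^{-w}\chi_t,
\]
from which one extracts Theorem~\ref{th:geo:ns:a} by summing over all $s,t$: the sum over $s$ kills every inertia component with nontrivial $\kappa_\FE$, leaving only the identity component and hence the stable count (Lemma~\ref{instre}, together with Lemma~\ref{lem:square} to pass back from $\square$ to the full stack). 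Without this twist-and-character-sum mechanism your argument compares full orbifold volumes, not stable counts, and there is no reason for those to agree.

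Two smaller points. First, Theorem~\ref{th:orbi} is stated for DM stacks over $k\llb t\rrb$, so you cannot apply it to the fiber $\widetilde{\wM}_{G,a}$ over $K$; the paper passes to the formal ball $B_a \subset \widetilde{\wwA}(K\llb t\rrb)$ specializing to $a$ to get the needed $K\llb t\rrb$-structure (Section~\ref{sec:redint}). Second, the Fubini reduction is over this ball $B_a$ in the Hitchin base (restricting to the generically smooth locus $B_a^\flat$), not over the cameral cover; the fibers one ends up integrating over are smooth Hitchin fibers at generic $b \in B_a^\flat$, which are torsors under abelian varieties, and it is there that the N\'eron-model computation and duality of Pryms (Proposition~\ref{prop:calcint} and the end of Section~\ref{sec:proofmt}) finish the proof.
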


\subsection{Reduction to inertia stacks}
For now on, we fix a pseudo-finite field $K$ and $a\in \widetilde{A}^{\ani}(K)$. Then $\pi_0(\widetilde{\FP}_{G,a})$ is a finite constant group scheme by \cite[Prop. 4.38]{GWZ18}. 

For $b$ generic, $\widetilde{\FP}_{G,b}$ is a proper commutative group scheme, with group of connected components isomorphic to the center $Z(X,\widehat G)$ of $\widehat G$. 
Recall from Section \ref{sec:qsgroup} that we consider the group of characters $\BX^*(\widehat \BT)$ via a basis of this lattice, which is a definable set. Since $\widehat \BT/Z(X,\widehat G)$ is also a torus, we also have a finite definable set consisting of a basis of $\BX^*(\widehat \BT/Z(X,\widehat G))$. Let $\lambda$ be the composition of the inclusion $\BX^*(\widehat \BT/Z(X,\widehat G))\subseteq\BX^*(\widehat \BT)$ with the natural surjection $ \widetilde{A}^{\ani} \times \BX^*(\widehat \BT)=\BX_*(\BT) \to \pi_0(\widetilde{\mcP}_{G})$ from \cite[Prop. 4.38]{GWZ18} relative over $\widetilde{A}^{\ani}$. Let $\square$ be the image of $\lambda$ in $\pi_0(\widetilde{\mcP}_{G})$. For any $a \in \widetilde{A}^{\ani}(K)$ the fiber $\square_a$ is definable, since it is generated by the image of the basis of $\BX^*(\widehat \BT/Z(X,\widehat G))$ that is definable. 

Hence for generic $b$, $\square_b$ is the trivial group, and for every $a\in \widetilde{A}^{\ani}(K)$, $\square_a$ is a finite constant group scheme. We then define by fiber product $\widetilde{\mcP}^\square_{G}=\square\times_{\pi_0(\widetilde{\mcP}_{G})} \widetilde{\mcP}_{G}$, and similarly $\widetilde{\mcM}^\square_{G}$. The coarse moduli spaces of $\widetilde{\mcM}^\square_{G}$ and $\widetilde{\mcP}^\square_{G}$ can be seen as definable sets denoted by $\widetilde{\wM}^\square_G$ and $\widetilde{\wP}^\square_{G}$ respectively.

Let $r$ be the order of $\pi_0(\widetilde{\FP}^\square_{G,a})=\square_a$. By Proposition \ref{deflthm}, we get identifications of definable sets
\[ \pi_0(\widetilde{\wP}^\square_{G,a}) = H^1(\Gal(K_r/K),\pi_0(\widetilde{\wP}^\square_{G,a})) =   H^1(\Gal(K_r/K),\widetilde{\wP}^\square_{G,a}), \]
where $K_r$ is the extension of degree $r$ of $K$. In particular every $t\in \pi_0(\widetilde{\FP}^\square_{G,a})$ corresponds to a $\widetilde{\FP}^\square_{G,a}$-torsor $T_t$, which we can use to define the twisted Hitchin fiber
\[\widetilde{\mcM}^{\square,t}_{G,a} := \widetilde{\mcM}^{\square}_{G,a} \times^{\widetilde{\FP}^\square_{G,a}} T_t. \]
By Section \ref{sec:twisted} its coarse moduli space gives a definable set $\widetilde{\wM}^{\square,t}_{G,a}$. 

Next we define for $s\in \BX^*(\BT/Z(X,G))$ a natural motivic constructible function $\chi_s$ on $\Imu\widetilde{\wM}^{\square,t}_{G,a}$ using the results of Section \ref{sec:coendo}:

First define the substack $\widetilde{\FM}^\lozenge_{\FE}\subseteq \widetilde{\FM}_{\FE}^G$ as the maximal open substack such that the following diagram commutes:

\begin{equation*}
\xymatrix{
\widetilde{\FM}^\lozenge_{\FE} \ar[d] \ar[r]^{} & \Imu\widetilde{\FM}_G^{\square} \ar[d]   \\
\widetilde{\FM}^{G}_{\FE} \ar[r]^{} & \Imu \widetilde{\FM}_G.
}
\end{equation*}

As in Section \ref{higgsb}, $\widetilde{\FP}_{H_\FE}$ embeds into $\widetilde{\FM}_{H_\FE}$ by means of the Kostant section and we define $\widetilde{\FP}^\lozenge_{H_\FE}$ as the intersection of $\widetilde{\FP}_{H_\FE}$ with $\widetilde{\FM}^\lozenge_{H_\FE}$. As before, we get associated definable sets $\widetilde{\wM}^\lozenge_{\FE}$ and $\widetilde{\wP}^\lozenge_{H_\FE}$.

There is a canonical morphism $\pi_0(\widetilde{\FP}^\square_{G,a})\to  \pi_0(\widetilde{\FP}^\lozenge_{H_\FE,a})$, allowing to define for $t\in \pi_0(\widetilde{\FP}^\square_{G,a})$ twists $\widetilde{\FM}^{\lozenge,t}_{\FE,a}$, with associated definable  $\widetilde{\wM}^{\lozenge,t}_{\FE,a}$, which are compatible with twists of $\Imu\widetilde{\FM}_G^{\square}$ in the following sense.

\begin{proposition}\label{finteq} There is a finite definable partition 
\[
\Imu\widetilde{\wM}_{G,a}^{\square,t}=\bigcup_{[\FE]} \Imu\widetilde{\wM}_{G,a,\FE}^{\square,t}
\]
indexed by orbits of definable coendoscopic data $\FE=(\tau,\rho_\tau, \rho_\tau \to \rho)$ under the natural $Z(X,G)$-action.
 Moreover, the morphism $\overline{\mu}_{\FE}$ from \eqref{rigineq} induces a definable bijection $\widetilde{\wM}^{\lozenge,t}_{\FE,a} \cong \Imu\widetilde{\wM}^{\square,t}_{G,a,\FE}$.
\end{proposition}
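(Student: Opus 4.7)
The plan is to transfer the corresponding statements of \cite[Proposition 5.12, Corollary 5.17, Corollary 5.25]{GWZ18} to the definable category by checking that all constructions involved---automorphism groups of Higgs bundles, centralizers of cyclic subgroups of $\BT$, torsor twists, and the pullback along $\mu_\FE$---are given by uniform first-order formulas whose complexity depends only on $\BG$ and the fixed integer $r$. First I would handle the partition. Following the construction preceding \eqref{nreq}, every $K$-point $(F,\alpha)\in\Imu\widetilde{\wM}^{\ani}_{G,a}(K)$ produces a coendoscopic datum $\FE$ by composing $\alpha:\widehat\mu\to\mathrm{Aut}(F)$ with the anisotropic inclusion $\mathrm{Aut}(F)\hookrightarrow\BT$ of \cite[Construction 4.36]{GWZ18}, and by taking the accompanying reduction of $\rho$. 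This assignment is definable because the automorphism group of a $G$-Higgs bundle is cut out by definable equations in the chosen faithful representation of $\BG$. On the $\square$-locus the order of $\widehat\mu$ is bounded by the order of $\square_a$, which divides $r$, so only finitely many isomorphism classes of $\kappa$ occur; the accompanying reductions $\rho_\kappa$ lie in a finite Galois cohomology set which, by Section \ref{sec:galois:coh}, is itself definable. The $Z(X,G)$-action on coendoscopic data is algebraic and definable, so the orbit set $\{[\FE]\}$ is a finite definable index set, and the resulting decomposition of $\Imu\widetilde{\wM}^{\square,t}_{G,a}$ is a finite definable partition; the twist by $t$ from Section \ref{sec:twisted} commutes with the coendoscopic-datum assignment.

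For the definable bijection I would invoke \eqref{rigineq}, which yields a natural equivalence $\overline{\mu}_\FE:\wFM_{H_\FE}^G(K)\xrightarrow{\sim}\Imu\wFM_G^{\ani}(K)_\FE$, induced by the algebraic map $\mu_\FE:[\mfh_{\FE,D}/H_\FE]\to[\mfg_D/G]$ of \cite[Construction 5.6]{GWZ18}. This map is definable because $H_\FE$ is the neutral component of the centralizer of the image of $\kappa$ in $\BG$---a condition cut out by polynomial equations in the faithful representation---while the torsor data $(\rho_\kappa,\rho_\kappa\to\rho)$ have already been encoded definably by assumption. Restricting to the anisotropic locus, the fiber over $a$, and the $\lozenge$-substack on the endoscopic side, the image automatically lands in the $\square$-part on the $G$-side by the very definition of $\lozenge$. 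Finally, using the $\widetilde{\FP}_{H_\FE}$-equivariance of $\overline{\mu}_\FE$ proved in \cite[\S 5]{GWZ18}, the canonical morphism $\pi_0(\widetilde{\wP}^\square_{G,a})\to\pi_0(\widetilde{\wP}^\lozenge_{H_\FE,a})$ is equivariant for the $\overline{\mu}_\FE$-compatible actions, so twisting both sides by compatible cocycles $t$ (using Section \ref{sec:twisted}) yields the desired definable bijection $\widetilde{\wM}^{\lozenge,t}_{H_\FE,a}\cong\Imu\widetilde{\wM}^{\square,t}_{G,a,\FE}$.

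The main obstacle is verifying that the identification of $H_\FE$ with the centralizer of $\kappa$, and the subsequent pullback of Higgs bundles along $\mu_\FE$, can be carried out uniformly in first-order parameters: one needs to check that both the centralizer and the resulting $H_\FE$-torsor structure on a given $G$-torsor equipped with a $\widehat\mu$-automorphism are described by formulas whose complexity depends only on $\BG$ and $r$, not on the particular pseudo-finite field or the particular point. Once this uniformity is established (which ultimately reduces to the fact that centralizers in $\BG$ of finite cyclic subgroups of a fixed torus are themselves given by bounded-complexity equations), both parts of the proposition follow formally from the pointwise statements of \cite[Corollaries 5.17 and 5.25]{GWZ18}.
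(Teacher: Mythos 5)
Your treatment of the first assertion (the finite definable partition) matches the paper's in substance: both deduce definability of the point-to-datum map $\hat\mu\to\BT$ from the definability of $\Aut(F)\hookrightarrow\BT$, and then conclude from the finiteness of the set of data. You give somewhat more detail on why only finitely many $\kappa$ occur and why the $Z(X,G)$-action is definable, which is fine.

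For the second assertion (the definable bijection), however, your argument has a genuine gap. You construct the definable map correctly by pulling along the algebraic morphism $\mu_\FE$, restricting to the $\lozenge$- and $\square$-loci and twisting by compatible cocycles $t$ via Section~\ref{sec:twisted}. But you then claim bijectivity by appealing to \eqref{rigineq} together with $\widetilde{\FP}_{H_\FE}$-equivariance. The problem is that \eqref{rigineq} (i.e.\ \cite[Corollary 5.25]{GWZ18}) is established over \emph{finite} fields $k$, and nothing you have said lifts it to pseudo-finite fields; moreover the twisted variant (which compares twists by $t$ on both sides via the morphism $\pi_0(\widetilde{\FP}^\square_{G,a})\to\pi_0(\widetilde{\FP}^\lozenge_{H_\FE,a})$) is a nontrivial extra step and not a formal consequence of equivariance of $\overline\mu_\FE$. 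The paper closes precisely this gap with a transfer argument: having observed that the map is definable, it invokes \cite[Lemma 6.8]{GWZ18} to conclude that the map is a bijection on $\Ff_q$-points for every finite field of sufficiently large characteristic, and since bijectivity of a definable map between residue-field-sort definable sets is a first-order sentence (with parameters), by Proposition~\ref{prop:spec-for} it then holds over every pseudo-finite field. You should replace your equivariance paragraph by this transfer step; in particular you need the twisted finite-field statement \cite[Lemma 6.8]{GWZ18}, not the untwisted \cite[Corollary 5.25]{GWZ18}.
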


\begin{proof}
Recall from Section \ref{sec:coendo} the notion of coendoscopic datum $\FE= (\tau, \rho_\tau, \rho_\tau \to \rho)$. We get a notion of definable coendoscopic datum for $\wG$, where $\tau$ is described  by the image a topological generator of $\widehat \mu$, and $\rho_\tau$ and $\rho$ are defined using 1-cocycles as in Section \ref{sec:galois:coh}. To each point of $\Imu\widetilde{\wM}^{\square,t}_{G,a}$ we associate a morphism $\hat \mu \to \BT$, hence an orbit of a definable coendoscopic datum, see the discussion below (\ref{nreq}) for details. We then get a definable partition of $\Imu\widetilde{\wM}^{\square,t}_{G,a}$ into finitely many definable pieces $\Imu\widetilde{\wM}^{\square,t}_{G,a,\FE}$ indexed by orbits of definable coendoscopic data.

Moreover, the construction (\ref{rigineq}) can be twisted by $t$, yielding a definable map 
\[
\widetilde{\wM}^{\lozenge,t}_{\FE,a} \longrightarrow \Imu\widetilde{\wM}^{\square,t}_{G,a,\FE}.
\]
This map is a bijection in every finite field of large enough residue characteristic by an argument similar to \cite[Lemma 6.8]{GWZ18}, hence in every pseudo-finite field.
\end{proof}

\begin{definition}\label{chioncomp} Let $s \in \BX^*(\BT/Z(X,G))$ and $\FE=(\tau,\rho_\tau,\rho_\tau\to \rho)$ a coendoscopic datum of $G$. Define the function $\chi_s$ on $\Imu\widetilde{\wM}^{\square,t}_{G,a,\FE}$ to be identically equal to the motive corresponding to the composition $s_\mot(\tau)$ of $\tau$ and $s_\mot$, as explained in Section \ref{sec:motchar}. 
\end{definition}

Since the decomposition of  $\Imu \widetilde{\wM}^{\square,t}_{G,a}$ is definable, the function $\chi_s$ on $\Imu\widetilde{\wM}^{\square,t}_{G,a}$  is constructible, \emph{i.e.} an element of $\mcC_\mot\left(\Imu\widetilde{\wM}^{\square,t}_{G,a}\right)$.

Given $t\in \BX^*(\widehat \BT/Z(X,\widehat G))$, we have canonical maps
\[
\BX^*(\widehat \BT/Z(X,\widehat G))\longrightarrow \BX^*(\widehat \BT)=\BX_*(\BT)\longrightarrow \BX_*(\BT/Z(X,G)).
\]
We also have the map $\lambda_a\colon\BX_*( \BT/Z(X, G))\to \pi_0(\widetilde{\FP}^\square_{G,a})$. We write abusively also $t$ for the image of $t$ in $\pi_0(\widetilde{\FP}^\square_{G,a})$ through this composition of maps. In particular, given $t\in \BX^*(\widehat \BT/Z(X,\widehat G))$, we write $\widetilde{\FM}^{\square,t}_{G,a}$ for the twist of $\widetilde{\FM}^{\square}_{G,a}$ by the image of $t$ in $\pi_0(\widetilde{\FP}^\square_{G,a})$. 

\medskip

The main theorem we need to prove is the following. 

\begin{theorem}
\label{th:true:LF}
For every pseudo-finite field $K$, $a\in \widetilde{\wwA}^{\ani}(K)$, $t \in \BX^*(\widehat \BT/Z(X,\widehat G))$ and $s \in \BX^*(\BT/Z(X,G))$, we have
\[
\int^\mot_{\Imu\widetilde{\wM}^{\square,t}_{G,a}} \eL^{-w}\chi_s=\int^\mot_{\Imu\widetilde{\wM}^{\square,s}_{\widehat G,a}} \eL^{-w}\chi_t,
\]
where $w$ is the locally constant weight function from Section \ref{sec:orbifold}.
\end{theorem}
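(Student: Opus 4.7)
The plan is to mirror, in the motivic setting, the $p$-adic argument of \cite[Section 7]{GWZ18}. The three ingredients are the orbifold formula of Theorem \ref{th:orbi}, the motivic Fubini theorem of Proposition \ref{prop-fubini-mot}, and a duality statement between the generic Hitchin fibers for $G$ and $\widehat G$ coming from Tate duality on dual Prym varieties.

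First I would apply Theorem \ref{th:orbi} to both sides. Since $\chi_s$ is constant on each component $\Imu\widetilde{\wM}^{\square,t}_{G,a,\FE}$ indexed by a coendoscopic datum, and the specialization map $e$ from~(\ref{spmap}) is definable by Proposition \ref{edef}, the integral on the left can be rewritten as a motivic integral of $e^{*}\chi_s$ with respect to the orbifold measure $\abs{\omega_\orb}$ over the definable subassignment $(\widetilde{\wM}^{\square,t}_{G,a})^\natural$. This removes the weight factor $\eL^{-w}$ and turns the integral into an ordinary motivic integral over a smooth definable variety; the same applies on the right hand side.

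Next I would apply Fubini (Proposition \ref{prop-fubini-mot}) along the restriction of the Hitchin fibration to a dense open subset $\widetilde{\FA}^{\sm}\subset \widetilde{\FA}^{\ani}$ over which the Hitchin fiber $\widetilde{\FM}^{\sm}_{G,b}$ is a torsor under a smooth commutative group scheme $\widetilde{\FP}^{\sm}_{G,b}$ whose neutral component is an abelian variety. The complement of $\widetilde{\FA}^{\sm}$ has strictly smaller dimension, so its contribution vanishes in the motivic integral. Over $\widetilde{\FA}^{\sm}$, isomorphism classes of torsors are definably parametrized by $\rmH^1(K,\widetilde{\FP}^{\sm}_{G,b})$, which identifies with $\pi_0(\widetilde{\FP}^{\square}_{G,b})$ by Proposition \ref{deflthm}. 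Combining Lemma \ref{lem:square}, which lets one pass from $\pi_0(\widetilde{\FP}_{G,b})$ down to $\square$, with the Fourier inversion of Proposition \ref{lem:FT:mu:n} applied to the motivic characters of Section \ref{sec:motchar}, rewrites each integral as a character-sum expression on the dual Pryms.

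The core of the proof is then Tate duality: for $b$ in the smooth locus, the abelian varieties $\widetilde{\FP}^{\sm}_{G,b}$ and $\widetilde{\FP}^{\sm}_{\widehat G,b}$ are Cartier dual (after restricting to their $\square$- respectively $\lozenge$-subgroups), so the Tate pairing
\[
\rmH^1\bigl(K,\widetilde{\FP}^{\square}_{G,b}\bigr) \times \rmH^1\bigl(K,\widetilde{\FP}^{\lozenge}_{\widehat G,b}\bigr) \longrightarrow \BQ/\BZ
\]
is a definable perfect pairing. Under it, the motivic character $s(\kappa)$ appearing on one side corresponds to the twist by $t$ on the other, and the symmetry of the pairing gives the equality of the two character-sum expressions, hence of the two integrals. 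The main obstacle will be upgrading the Tate duality input from a point-counting identity on finite fields to an equality of virtual motives in $\K{\DAC(\Spec(K),\rel,\Lambda)}\otimes \BQ$. This requires that the class $\chi_{\psf,\rel}$ of an abelian variety be determined by its point counts over all finite extensions of the base, a property ensured by the quantifier-free Galois description of definable sets over pseudo-finite fields together with the compatibility between $\chi_{\psf}$ and the motivic Fourier transform established in Section \ref{sec:ft}.
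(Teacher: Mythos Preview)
Your overall strategy matches the paper's: use the orbifold formula (Theorem \ref{th:orbi}) to pass to a motivic integral over $\widetilde{\wM}^{\square,t}_{G,B_a}$ (this is the equivalence with Theorem \ref{mainmint} in Section \ref{sec:redint}), then Fubini along the Hitchin map to reduce to the generically smooth fibers $\widetilde{\wM}^{\square,t}_{G,b}$ for $b\in B_a^\flat$ (Theorem \ref{thm:mainintfibers}), and finally invoke duality of the Pryms for $G$ and $\widehat G$. So the architecture is right.

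The gap is in your final step. First, the torsors and the $\rmH^1$ you need live over $K'\llp t\rrp$, not over $K$; computing the motivic volume of a torsor under an abelian variety over such a local field requires the N\'eron model, which you do not mention. The paper's argument passes through the N\'eron model $\FN(\widetilde{\FP}^\square_{G,b})$: it shows (Lemmas \ref{lem:itzero}--\ref{lem:chis:neron}) that whether the integral vanishes is governed by the images $i(t),i(s)$ in the component group $\phi_{\widetilde{\FP}^\square_{G,b}}$, and that when $i(t)=i(s)=0$ the integral equals $\chi_{\psf,\rel}(\overline{\FN}(\widetilde{\wP}^\square_{G,b}))\eL^{-\ord(\omega_b)}$. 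Second, your proposed mechanism for upgrading Tate duality to an equality of motives---``the class $\chi_{\psf,\rel}$ of an abelian variety is determined by its point counts''---is not something the quantifier-free Galois description or Section \ref{sec:ft} gives you. The paper instead argues geometrically: by \cite{Donagi:fk} the two Pryms are dual, hence isogenous, abelian varieties; the Grothendieck pairing then gives $|\phi_{\widetilde{\FP}^\square_{G,b}}(K')|=|\phi_{\widetilde{\FP}^\square_{\widehat G,b}}(K')|$; and the equality of the motives of the neutral components comes from the theorem that isogenous connected commutative groups have equal motives \cite[Theorem 3.3(4)]{AHP}. This last input is essential and is not recoverable from the Fourier-transform formalism alone.
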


\begin{rmk}
We use integral symbols in the theorem, but since we integrate on a definable subset over the residue field, this is simply a short notation for finite sums over the classes of level sets  of the functions $\eL^{-w}\chi_s$, $\eL^{-w}\chi_t$, which are definable.
\end{rmk}

We will show that Theorems \ref{th:geo:stab:a} and \ref{th:geo:ns:a} follow from Theorem \ref{th:true:LF}, for which we need some lemmas.

\begin{lemma}
\label{lem:wconst}
The weight function $w$ is constant on each piece of the decomposition of  $\Imu \widetilde{\wM}^{\square,t}_{G,a}$, hence can be written as a function $w(\kappa)$.
\end{lemma}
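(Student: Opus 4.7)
The plan is to compute $w$ explicitly at a point of $\Imu\widetilde{\wM}^{\square,t}_{G,a,\FE}$ and show that the answer depends only on the type $\kappa$ of $\FE$, together with the fixed data $(X,D,G)$, and not on the particular Higgs bundle. Recall from Section~\ref{sec:orbifold} that $w(x,\alpha)$ is the weight of the action of $\widehat{\mu}$ on the tangent space $T_x\mcM_K$, where $\alpha:\widehat{\mu}\to\Aut(F)$ is the automorphism data. Under the equivalence of Proposition~\ref{finteq}, a point of the piece $\Imu\widetilde{\wM}^{\square,t}_{G,a,\FE}$ corresponds to (the twist by $t$ of) an $H_\FE$-Higgs bundle $F_H$, together with the automorphism induced by $\kappa:\widehat{\mu}\to\BT\to\Aut(F)$ via the inclusion of the maximal torus into $H_\FE$. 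Thus the $\widehat{\mu}$-action on $T_F\widetilde{\FM}_G$ is the one coming from $\kappa$.

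Next I would decompose the tangent complex along this action. The tangent space to $\widetilde{\FM}_G$ at a Higgs bundle $F=\mu_\FE(F_H)$ is computed by the hypercohomology of the deformation complex $\ad(F)\xrightarrow{\mathrm{ad}(\theta)}\ad(F)\otimes D$. The adjoint bundle decomposes under the $\widehat{\mu}$-action determined by $\kappa$ as
\[
\ad(F) \;=\; \ad(F_H)\;\oplus\;\bigoplus_{\chi\neq 0} \ad(F)_\chi,
\]
where the sum runs over the nonzero $\widehat\mu$-characters appearing on $\mfg$ via $\kappa$, and each summand $\ad(F)_\chi$ is a vector bundle on $X$ of rank equal to the multiplicity of $\chi$ in the root decomposition of $\mfg$ under $\kappa$; this decomposition is $\ad(\theta)$-equivariant because $\kappa$ lands in the regular centralizer. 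Consequently the tangent space decomposes as an orthogonal sum of $\widehat\mu$-isotypic pieces, each of which is the hypercohomology of the corresponding twist of the deformation complex.

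In the third step I would invoke Riemann--Roch (as in the weight computation of \cite[Section 2]{GWZ18}) to evaluate
\[
w(F,\kappa) \;=\; \sum_{\chi\neq 0} c_\chi \cdot \dim \BH^\bullet\bigl(X, \ad(F)_\chi\to \ad(F)_\chi\otimes D\bigr),
\]
where $c_\chi\in(0,1]$ is the rational representative of $\chi\in\QQ/\ZZ$. Since the dimensions of the hypercohomology groups of the deformation complex on the anisotropic locus are computed purely in terms of the rank of $\ad(F)_\chi$, the degree of $D$, and the genus of $X$, they depend only on the numerical data of $\kappa$ (through the multiplicities of the characters in $\mfg$) and on $(X,D)$. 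In particular they are independent of the chosen point $(F,\kappa)$ in the piece. This gives a well-defined function $w(\kappa)$ on the set of $Z(X,G)$-orbits of occurring coendoscopic data.

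The only genuine subtlety is the presence of the twist $\rho$ and the pluricanonical formulation of $\omega_{\orb}$: the adjoint bundle $\ad(F)$ lives on $X$ rather than on a trivializing cover of $\rho$, and one must check that the Galois descent used to define $\ad(F)_\chi$ is compatible with the $\widehat\mu$-action. Since the piece is indexed by an $\Out(\BG)$-compatible reduction $\rho_\kappa\to\rho$, this compatibility is automatic. With this verified, the formula above depends only on $\kappa$, and the lemma follows.
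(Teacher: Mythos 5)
Your argument is correct in outline, but it takes a genuinely different route from the paper. The paper's proof is a two-line transfer argument: it cites \cite[Lemma 6.10]{GWZ18}, which establishes the constancy of $w$ over finite fields, observes that the level sets of $w$ are definable, and invokes Proposition~\ref{prop:spec-for} to conclude that the statement, being equivalent to a first-order sentence whose finite-field specializations hold for all large residue characteristics, already holds in every pseudo-finite field. You instead reprove the geometric content of \cite[Lemma 6.10]{GWZ18} directly over the pseudo-finite field: you decompose the deformation complex $\ad(F)\to\ad(F)\otimes D$ into $\widehat\mu$-isotypic pieces (using that $\kappa$ acts through automorphisms of the Higgs pair, hence commutes with $\ad\theta$), note that each piece contributes $c_\chi\cdot\dim\BH^1$ to the weight, and use Riemann--Roch to show the dimension of each $\BH^1$ is a numerical invariant of $(g,\deg D,\kappa)$. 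This works --- on the anisotropic locus $\BH^0$ vanishes (the rigidified stack is DM) and $\BH^2$ vanishes (smoothness), so $\dim\BH^1$ equals $-\chi_{\mathrm{Euler}}$ of the isotypic complex, and by Riemann--Roch $\chi(X,E)-\chi(X,E\otimes D) = -\rank(E)\deg(D)$, which is indeed independent of the particular bundle $E=\ad(F)_\chi$. What the paper's transfer approach buys is economy and robustness: it doesn't need to re-verify the cohomological vanishings in the pseudo-finite setting, since constancy of $w$ is a definable condition that can be pulled back from finite fields wholesale. What your direct approach buys is self-containedness and an explicit formula for $w(\kappa)$, at the cost of having to carry out (and in your sketch, slightly compress) the cohomological bookkeeping that GWZ18 did over $\BF_q$. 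Both are valid; they are not the same proof.
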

\begin{proof}
By Lemma 6.10 in \cite{GWZ18}, the result holds over finite fields. Since the level sets of $w$ are definable sets, by Proposition \ref{prop:spec-for}, the result holds in pseudo-finite fields as well. 
\end{proof}

\begin{lemma}\label{instre}
Theorems \ref{th:geo:stab:a} and \ref{th:geo:ns:a} follow from Theorem \ref{th:true:LF}.
\end{lemma}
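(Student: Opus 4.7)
The plan is to mirror at the motivic level the Fourier-theoretic derivation of Geometric Stabilization from fiberwise duality used in \cite[Section 6]{GWZ18}, relying on the machinery of Sections \ref{sec1}--\ref{sec:def:geo:stab}.

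First I would reduce both statements to the $\square$-version of the moduli space. For Theorem \ref{th:geo:ns:a}, Lemma \ref{lem:square} applied to the trivial character gives
\[\chi_{\psf,\rel}(\widetilde{\wM}_{G,a},\mathrm{stab}) = \chi_{\psf,\rel}(\widetilde{\wM}^\square_{G,a},\mathrm{stab}),\]
and similarly for $\widehat G$. For Theorem \ref{th:geo:stab:a}, the character $\kappa_a$ coming from the coendoscopic datum $\FE$ factors through $\pi_0(\widetilde{\FP}^\square_{G,a})$ by the very definition of $\square$, so Lemma \ref{lem:square} again reduces the $\kappa$-isotypical component to $\chi_{\psf,\rel}(\widetilde{\wM}^\square_{G,a},\kappa_a|_{H^\square})$. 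Expanding via Definition \ref{def:chi:iso:mcP} rewrites both sides as weighted sums over $t \in H^\square$ of the twisted motives $\chi_{\psf,\rel}(\widetilde{\wM}^{\square,t}_{G,a})$ and $\chi_{\psf,\rel}(\widetilde{\wM}^{\square,t}_{H_\FE,a})$, with coefficients given by the motivic characters of Section \ref{sec:motchar}.

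Next, the motivic orbifold formula (Theorem \ref{th:orbi}) applied to each smooth proper definable Deligne--Mumford stack $\widetilde{\mcM}^{\square,t}_{G,a}$ rewrites the motive of its coarse moduli space, up to a power of $\eL$ accounting for the dimension of the Hitchin fiber, as the integral $\int^\mot_{\Imu \widetilde{\wM}^{\square,t}_{G,a}} \eL^{-w}$. By Proposition \ref{finteq} the twisted inertia admits a finite definable partition indexed by orbits of coendoscopic data $[\FE']$, each piece being identified with $\widetilde{\wM}^{\lozenge,t}_{H_{\FE'},a}$; on such a piece $w$ is constant equal to $w(\kappa_{\FE'})$ by Lemma \ref{lem:wconst}, and Definition \ref{chioncomp} makes $\chi_s$ equal to the constant $s(\kappa_{\FE'})_\mot$. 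Interchanging the summations over $t$ and $[\FE']$ assembles the Fourier sum into integrals of the form $\int^\mot_{\Imu \widetilde{\wM}^{\square,t}_{G,a}} \eL^{-w}\chi_s$ for the cocharacter $s$ determined by $\kappa_a$, which is the precise shape to which Theorem \ref{th:true:LF} applies.

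Theorem \ref{th:geo:ns:a} then drops out by specializing Theorem \ref{th:true:LF} to $s = t = 0$ and running the orbifold formula in reverse on the $\widehat G$-side. For Theorem \ref{th:geo:stab:a}, applying Theorem \ref{th:true:LF} transfers the integral to $\Imu \widetilde{\wM}^{\square,s}_{\widehat G,a}$; on this dual side, orthogonality of motivic characters (a consequence of Proposition \ref{lem:FT:mu:n}) selects the single component of the coendoscopic decomposition corresponding to $\FE$ itself, contributing $\chi_{\psf,\rel}(\widetilde{\wM}^\lozenge_{H_\FE,a},\mathrm{stab})$, all other orbits vanishing. Combining this with Lemma \ref{lem:square} and the identification \eqref{rigineq} rewrites the result as $\chi_{\psf,\rel}(\widetilde{\wM}_{H_\FE,a},\mathrm{stab})$, and the factor $[\mathds{1}(-r_G^{H_\FE}(D))]$ appears as the residual dimension normalization between the two applications of the orbifold formula, since $r_G^{H_\FE}(D) = (\dim \widetilde M_G - \dim \widetilde M_{H_\FE})/2$. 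The main obstacle is the Fourier bookkeeping: one must check that the motivic characters of Section \ref{sec:motchar} satisfy the relevant orthogonality so that only the $\FE$-component survives on the dual side, and that the dimension factors from the two orbifold formulas combine to produce exactly $[\mathds{1}(-r_G^{H_\FE}(D))]$ and nothing else.
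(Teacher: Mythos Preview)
Your overall Fourier-theoretic strategy matches the paper's, but there is a genuine gap in how you connect the motive $\chi_{\psf,\rel}(\widetilde{\wM}^{\square,t}_{G,a})$ to the left-hand side of Theorem \ref{th:true:LF}. You invoke Theorem \ref{th:orbi} to rewrite this motive as $\eL^{\dim}\int^\mot_{\Imu\widetilde{\wM}^{\square,t}_{G,a}}\eL^{-w}$, but Theorem \ref{th:orbi} says no such thing: it computes volumes of fibers of the specialization map $e$ for a stack over $k\llb t\rrb$, not the motive of the coarse space of a stack over $k$. In the paper the passage goes the other way: one \emph{starts} from Theorem \ref{th:true:LF}, sums over all $s$, and uses the orthogonality $\sum_s s(\kappa_\FE)=0$ for $\kappa_\FE\neq 1$ together with the fact that the trivial-$\FE$ piece of the inertia is the moduli space itself (with $w(1)=\dim\widetilde{\FM}_G$) to recover $\chi_{\psf,\rel}(\widetilde{\wM}^{\square,t}_{G,a})$. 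No orbifold formula is used here; that formula only enters later, in the equivalence of Theorems \ref{th:true:LF} and \ref{mainmint}.

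This gap propagates to your derivation of Theorem \ref{th:geo:ns:a}. Specializing Theorem \ref{th:true:LF} to the single instance $s=t=0$ cannot yield the stable isotypical components, which are averages over \emph{all} twists. The paper instead sums the identity of Theorem \ref{th:true:LF} over all pairs $(s,t)$ ranging over lifts of $\pi_0(\widetilde{\wP}^\square_{\widehat G,a})\times\pi_0(\widetilde{\wP}^\square_{G,a})$: the sum over $s$ collapses the $G$-side inertia to $\widetilde{\wM}^{\square,t}_{G,a}$ as above, and then the remaining sum over $t$ assembles the stable part; symmetrically on the $\widehat G$-side. For Theorem \ref{th:geo:stab:a} the paper sums the same identities against the extra weight $\kappa_a(t)^{-1}$: on the $G$-side the sum over $s$ again collapses the inertia and the weighted sum over $t$ produces the $\kappa_a$-isotypical component, while on the $\widehat G$-side the weighted sum over $t$ of $t(\kappa_\FE)\kappa_a(t)^{-1}$ singles out the piece $\FE=\FE_H$ via Proposition \ref{finteq}, and the remaining sum over $s$ yields the stable part of $\widetilde{\wM}^\lozenge_{H_\FE,a}$. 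Your sketch has the right ingredients (inertia decomposition, Lemma \ref{lem:wconst}, orthogonality, Lemma \ref{lem:square}) but the double summation over $(s,t)$ is the mechanism that makes them fit together, and it replaces your appeal to the orbifold formula.
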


\begin{proof}[Proof of Lemma \ref{instre}]
For the non-standard Fundamental Lemma, Theorem \ref{th:geo:ns:a}, choose lifts $s,t$ of elements of $\pi_0(\widetilde{\wP}^\square_{G,a})$ and $\pi_0(\widetilde{\wP}^\square_{\widehat G,a})$, and sum the equality of Theorem \ref{th:true:LF} over those. We get on the left hand side

\begin{multline*}
\sum_{s,t}\int^\mot_{\Imu\widetilde{\wM}^{\square,t}_{G,a}} \eL^{-w}\chi_s = \sum_{\FE}\sum_{s,t}s(\tau_\FE) \eL^{-w(\tau_\FE)} \chi_{\psf,\rel}(\Imu\widetilde{\wM}_{G,a}^{\square,t}(K)_{\FE})\\
= \eL^{-\dim \widetilde{\wM}_G}\sum_{s,t} \chi_{\psf,\rel}(\widetilde{\wM}_{G,a}^{\square,t}),
\end{multline*}
where we use Lemma \ref{lem:wconst} for the first equality. For the second equality, we use that if $\tau_\FE$ is non-trivial, then $s\mapsto s(\tau_\FE)$ is a non-trivial character, hence the sum over its values is zero by Lemma~\ref{sumchar0}, $w(1)=\dim \widetilde{\wM}_G$ by definition, and Proposition \ref{finteq} to get that $\chi_{\psf,\rel}(\Imu\widetilde{\wM}_{G,a}^{\square,t}(K)_{\FE_{\mathrm{triv}}})=\chi_{\psf,\rel}(\widetilde{\wM}_{G,a}^{\lozenge,t})=\chi_{\psf,\rel}(\widetilde{\wM}_{G,a}^{\square,t})$.

By definition of the isotypical component, we now have
\[\sum_{s,t} \chi_{\psf,\rel}(\widetilde{\wM}_{G,a}^{\square,t})=\abs{\pi_0(\widetilde{\wP}^\square_{G,a})}\abs{\pi_0(\widetilde{\wP}^\square_{\widehat G,a})}\chi_{\psf,\rel}(\widetilde{\wM}_{G,a}^{\square},\mathrm{stab}).
\]

Exchanging the role of $s$ and $t$ and working on the right hand side, we finally have that 
\begin{multline*}\abs{\pi_0(\widetilde{\wP}^\square_{G,a})}\abs{\pi_0(\widetilde{\wP}^\square_{\widehat G,a})}\chi_{\psf,\rel}(\widetilde{\wM}_{G,a}^{\square},\mathrm{stab})=\\
\abs{\pi_0(\widetilde{\wP}^\square_{G,a})}\abs{\pi_0(\widetilde{\wP}^\square_{\widehat G,a})}\chi_{\psf,\rel}(\widetilde{\wM}_{\widehat G,a}^{\square},\mathrm{stab}).
\end{multline*}
We now conclude using Lemma \ref{lem:square}.

For Geometric Stabilization, we proceed similarly, by summing over $s,t$ the equality of Theorem \ref{th:true:LF} multiplied by $\kappa_{a,\mot}(t)^{-1}$.

On the left hand side, we get
\begin{multline*}
\sum_{s,t}\int^\mot_{\Imu\widetilde{\wM}^{\square,t}_{G,a}} \eL^{-w}\chi_s\kappa_{a,\mot}(t)^{-1}=\sum_{\FE}\sum_{s,t}s(\tau_\FE) \eL^{-w(\tau_\FE)} \kappa_{a,\mot}(t)^{-1} \chi_{\psf,\rel}(\Imu\widetilde{\wM}_{G,a,\FE}^{\square,t})\\
=\eL^{-\dim \widetilde{\wM}_G}\sum_{s,t} \kappa_{a,\mot}(t)^{-1}\chi_{\psf,\rel}(\widetilde{\wM}_{G,a}^{\square,t}).
\end{multline*}
By definition of the isotypical component, we now have
\[\sum_{s,t} \kappa_{a,\mot}(t)^{-1} \chi_{\psf,\rel}(\widetilde{\wM}_{G,a}^{\square,t})=\abs{\pi_0(\widetilde{\wP}^\square_{G,a})}\abs{\pi_0(\widetilde{\wP}^\square_{\widehat G,a})}\chi_{\psf,\rel}(\widetilde{\wM}_{G,a}^{\square},\kappa_a).
\]

For the right hand side, we have 
\begin{multline*}
\sum_{s,t}\int^\mot_{\Imu\widetilde{\wM}^{\square,t}_{\widehat G,a}} \eL^{-w}\chi_t\kappa_{a,\mot}(t)^{-1}=\sum_{\FE}\sum_{s,t}t(\kappa_\FE) \eL^{-w(\kappa_\FE)} \kappa_{a,\mot}(t)^{-1} \chi_{\psf,\rel}(\Imu\widetilde{\wM}_{G,a,\FE}^{\square,s})
\\ =\eL^{-w(\kappa_a)}\sum_{s,t} \chi_{\psf,\rel}(\Imu\widetilde{\wM}_{\widehat G,a,\FE_H}^{\square,s}),
\end{multline*}
where we now use that $t\mapsto  t(\kappa_\FE)\kappa_{a,\mot}(t)^{-1}$ is a non-trivial character if and only if $\FE$ is the coendoscopic datum $\FE_H$ corresponding to $H$ (recall from the statement of Theorem~\ref{th:geo:stab:a} that $\kappa_a$ is the character  defined by $H$). 

We now have using Proposition \ref{finteq} 
\[ \chi_{\psf,\rel}(\Imu\widetilde{\wM}_{\widehat G,a,\FE_H}^{\square,s})=\chi_{\psf,\rel}(\widetilde{\wM}_{\FE,a}^{\lozenge,s}).
\]
Using again the definition of isotypical component, 
\[\sum_{s,t} \chi_{\psf,\rel}(\widetilde{\wM}_{\FE,a}^{\square,s})=\abs{\pi_0(\widetilde{\wP}^\square_{G,a})}\abs{\pi_0(\widetilde{\FP}^\square_{\widehat G,a})}\chi_{\psf,\rel}(\widetilde{\wM}_{\FE,a}^{\lozenge},\mathrm{stab}).
\]
From Lemma 6.10 in \cite{GWZ18}, $\dim(\widetilde\wM_G)-w(\kappa_a)=r_G^H(D)$, so we now conclude using Lemma \ref{lem:square} and Theorem \ref{stabcomp}.
\end{proof}

\subsection{Reduction to motivic integrals}
\label{sec:redint}
While Theorem \ref{th:true:LF} is a statement about virtual motives over the residue field, its proof passes through a reformulation in terms of integrals over valued fields. For this, consider $B_a$ the definable assignment of points in $\widetilde{\wwA}(K\llb t\rrb)$ that specialize to $a$, which is an open unit ball and represents the $K\llb t\rrb$-points of $U_a$, the spectrum of the Henselization of $\widetilde{\A}$ at $a$. Similarly, write $U_{a,r}$ for the Galois cover of $U_a$ corresponding to the extension $K_r\to K$, where $r$ is the order of $\pi_0(\wP_a)$.

The restrictions of $\FM_G$ and $\FP_G$ to $U_a$ are denoted by $\FM_{G,U_a}$ and $\FP_{G,U_a}$. Essentially by definition of Henselization, see for example \cite[Theorem I.4.2]{MEC}, we have
\[H^1(\Gal(U_{a,r}/U_a),\FP_{G,U_a}) = H^1(\Gal(K_r/K),\FP_{G,a}) \cong \pi_0(\FP_a).\]
 In terms of cocycles, it means that cocycles representing elements of $H^1(K,\FP_{G,a})$ can be lifted to $\FP_{G,U_a}$, hence used to defined twisted definable sets  as in Section \ref{sec:twisted}. 

For $t \in \pi_0(\widetilde{\FP}^\square_{G,a})$ we then get a smooth DM-stack $\widetilde{\FM}^{\square,t}_{G,U_a}$ and a corresponding definable $\widetilde{\wM}^{\square,t}_{G,B_a}$.

Recall from Section \ref{sec:orbifold} that we have a definable evaluation morphism
\begin{equation}\label{spezz}e:\widetilde{\wM}^{\square,t,\natural}_{G,B_a} \longrightarrow \I_{\widehat{\mu}}\widetilde{\wM}^{\square,t}_G,\end{equation}
and Theorem \ref{th:true:LF} is equivalent to 

\begin{theorem}\label{mainmint}Let $K$ be a  pseudo-finite field $K$. For every $a\in \widetilde{\wwA}^{\ani}(K)$, every  $t \in \BX^*(\widehat \BT/Z(X,\widehat G))$ and every $s \in \BX^*(\BT/Z(X,G))$, we have
\[ \int^\mot_{\widetilde{\wM}^{\square,t,\natural}_{G,B_a}} \chi_s \circ e \ \mu_{\orb} = \int^\mot_{\widetilde{\wM}^{\square,s,\natural}_{\widehat G,B_a}} \chi_t \circ e \ \mu_{\orb}. \]
\end{theorem}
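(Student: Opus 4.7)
The strategy is the motivic analogue of the $p$-adic argument in \cite[Section 6]{GWZ18}: apply motivic Fubini along the Hitchin fibration, truncate to the locus where the fibration is smooth, and invoke the forthcoming duality statement on generic Hitchin fibers (Theorem \ref{thm:mainintfibers}) to match the two sides.

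First I would apply the motivic Fubini theorem (Proposition \ref{prop-fubini-mot}(2)) to the Hitchin morphisms $\chi_G \colon \widetilde{\wM}^{\square,t}_{G,B_a} \to B_a$ and $\chi_{\widehat G} \colon \widetilde{\wM}^{\square,s}_{\widehat G,B_a} \to B_a$. This requires expressing the orbifold measure $\mu_{\orb}$, built chart-by-chart through Definition \ref{def:DMstack}, as the wedge of the pullback of a top form on $B_a$ with a relative orbifold form $\mu_{\orb,b}$ along the fibers. Such a decomposition exists compatibly on the étale atlases provided by the Kostant section and the $\widetilde{\wP}^\square_G$-action. Fubini then rewrites
\[ \int^\mot_{\widetilde{\wM}^{\square,t}_{G,B_a}} (\chi_s \circ e)\, \mu_{\orb} = \int^\mot_{B_a} \Bigl(\int^\mot_{\widetilde{\wM}^{\square,t}_{G,b}} (\chi_s \circ e)\, \mu_{\orb,b}\Bigr)\, db, \]
and analogously for the right-hand side.

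Second, I would restrict the outer integral to the open dense $B_a^{\reg} \subset B_a$ on which $\chi$ is smooth; the complement is a proper definable closed subset of strictly smaller $\mathrm{Kdim}$ and hence contributes nothing to the motivic integral. For $b \in B_a^{\reg}$ the fiber has trivial twisted inertia, so by Lemma \ref{lem:wconst} and Proposition \ref{finteq} the evaluation map $e$ and the weight $w$ become trivial, and the inner integrand simplifies to a motivic integral on a torsor for the abelian part of the generic Prym, weighted by the character motive $\chi_s$ from Section \ref{sec:motchar}.

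Finally, the pointwise equality of the two resulting inner integrals for every $b \in B_a^{\reg}(K)$ is precisely what Theorem \ref{thm:mainintfibers} will provide; it realizes motivically the Tate duality between the generic Pryms for $G$ and its Langlands dual $\widehat{G}$. Integrating the fiberwise equality over $B_a$ through Fubini then yields Theorem \ref{mainmint}. The principal obstacle is the compatibility of the orbifold measure with the Hitchin fibration, namely the uniform construction of the relative form $\mu_{\orb,b}$ in a definable way so that Proposition \ref{prop-fubini-mot}(2) applies at the motivic level; once this is in place, the proof is an assembly of ingredients already established in Section \ref{sec1} and the geometric formalism of Section \ref{sec:geo-setup}, and the whole content is concentrated in Theorem \ref{thm:mainintfibers}.
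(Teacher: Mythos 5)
Your overall structure — express $\mu_\orb$ as a global definable form, apply Proposition \ref{prop-fubini-mot}(2) along the Hitchin fibration restricted to the locus where the generic fiber is smooth, and then invoke the fiberwise statement of Theorem \ref{thm:mainintfibers} — is exactly the paper's argument, and the Fubini reduction you describe in the first and last paragraphs matches the text.

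The middle paragraph, however, contains a misconception worth flagging. You claim that for $b$ in the smooth locus ``the evaluation map $e$ and the weight $w$ become trivial,'' and you cite Lemma \ref{lem:wconst} and Proposition \ref{finteq} in support. This is not correct, and those two results are in any case about the stratification of $\Imu\widetilde{\wM}^{\square,t}_{G,a}$, the twisted inertia of the \emph{special} fiber over $a$ — which is the target of $e$, not an object attached to the generic fiber over $b$. Even when $b \in B_a^\flat$, so that the generic Hitchin fiber is a smooth variety with no stacky structure of its own, the map $e$ still records how a $K\llp t\rrp$-point of $\widetilde{\wM}^{\square,t}_{G,b}$ specializes to a point of the inertia of the fiber over $a$; the composite $\chi_s \circ e$ therefore remains a genuinely nonconstant motivic constructible function on each generic fiber. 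This matters: Theorem \ref{thm:mainintfibers} as stated still has $\chi_s\circ e$ and $\chi_t\circ e$ in its integrands, and the substance of its proof (Lemmas \ref{lem:chis:triv} and \ref{lem:chis:neron}) is precisely the analysis showing $\chi_s\circ e$ is a character of the Prym torsor that factors through the component group of its N\'eron model, after which Tate duality and duality of the Pryms close the argument. If $e$ were trivial as you assert, both sides of the fiberwise identity would reduce to the volume of dual abelian torsors and the theorem would carry almost none of the arithmetic content. Since you treat Theorem \ref{thm:mainintfibers} as a black box, the deduction of Theorem \ref{mainmint} still holds, but the intermediate description of what happens on generic fibers should be corrected.
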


\begin{proof}[Proof that Theorems \ref{mainmint} and \ref{th:true:LF} are equivalent]
This is a direct application of the orbifold formula, Theorem \ref{th:orbi}, which implies that 
\[
\int^\mot_{\widetilde{\wM}^{\square,t,\natural}_{G,B_a}} \chi_s \circ e \ \mu_{\orb} = \int^\mot_{\Imu\widetilde{\wM}^{\square,t}_{G,a}} \eL^{-w}\chi_s
\]
and
\[
\int^\mot_{\widetilde{\wM}^{\square,s,\natural}_{\widehat G,B_a}} \chi_t \circ e \ \mu_{\orb}=\int^\mot_{\Imu\widetilde{\wM}^{\square,s}_{\widehat G,a}} \eL^{-w}\chi_t.
\]
Hence Theorems \ref{mainmint} and \ref{th:true:LF} are equivalent.
\end{proof}

\subsection{Integration on generic fibers} 
A key insight from \cite{GWZ18} is that using a Fubini theorem, Theorem \ref{mainmint} can be proved one generically smooth Hitchin fiber at a time.

More precisely we let $B_a^\flat$ denote the definable assignment of points in $\wwA(K\llb t\rrb)$ that specialize to $a$ and whose generic fiber lies in the smooth locus of the Hitchin fibration. 

In the case of the Hitchin fibration, the measure $\mu_\orb$ is given by a global volume form $\abs{\omega_\orb}$, as explained in \cite[Lemma 6.13]{GWZ18}. For each $b\in B_a^\flat$, write $\omega_b$ for the volume forms on the smooth fibers $\widetilde{M}^{\square,t}_{G,b}$ and $\widetilde{M}^{\square,s}_{\widehat{G},b}$ obtained as a quotient of $\omega_\orb$ by a fixed volume form $\eta$ on the Hitchin base $\widetilde{\FA}^{\ani}$. Proposition \ref{prop-fubini-mot}, a version of Fubini's theorem, shows that 
\[
\int_{\widetilde{\wM}^{\square,t}_{G,B_a}} \chi_s \circ e \ \mu_{\orb}=\int_{b\in B_a^\flat} \abs{\eta}\int_{\widetilde{\wM}^{\square,t}_{G,b}} \chi_s \circ e |\omega_{b}|.
\]
We have a similar equality for the dual group, hence Theorem \ref{mainmint} is implied by the following theorem. 
\begin{theorem}
\label{thm:mainintfibers} For any $b \in B_a^\flat$ we have an equality 
\[ \int_{\widetilde{\wM}^{\square,t}_{G,b}} \chi_s\circ e |\omega_{b}| = \int_{\widetilde{\wM}^{\square,s}_{\widehat G,b}}\chi_t\circ e |\omega_{b}|. \]
\end{theorem}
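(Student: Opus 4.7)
The strategy is to replicate the argument of Theorem 6.14 in \cite{GWZ18}, which uses $p$-adic integration and Tate local duality, within the motivic setting developed in Section \ref{sec1}. The starting point is that for $b\in B_a^\flat$, the generic fiber of $\widetilde{\mcM}^{\square,t}_{G,b}$ over $\Spec K\dpar{t}$ is a smooth torsor under the abelian-by-finite commutative group scheme $\widetilde{\mcP}^\square_{G,b}$, whose identity component is an abelian variety, and analogously for $\widehat G$. Crucially, by the Langlands-duality of generic Hitchin fibers, $\widetilde{\mcP}^{\square,0}_{G,b}$ and $\widetilde{\mcP}^{\square,0}_{\widehat G,b}$ are dual abelian varieties over $K\dpar{t}$.

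The first step is to decompose each side according to connected components of the torsor. Since $\widetilde{\mcM}^{\square,t}_{G,b}$ is a torsor under $\widetilde{\mcP}^\square_{G,b}$, its components are parametrized by $\pi_0(\widetilde{\mcP}^\square_{G,b})$, and the volume form $|\omega_b|$ gives each component the same motivic measure. Moreover, the function $\chi_s\circ e$ is locally constant: on each component the specialization map $e$ lands in a fixed stratum of the twisted inertia indexed by a coendoscopic datum $\FE$, and by Definition \ref{chioncomp} the value of $\chi_s$ on that stratum is the motivic character $s(\kappa_\FE)$ defined in Section \ref{sec:motchar}. Thus each integral reduces to a weighted sum over $\pi_0(\widetilde{\mcP}^\square_{G,b})$ of the motivic characters, with weights encoding which coendoscopic datum $\FE$ each component realizes.

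The second step is to identify, component by component, the coendoscopic datum that arises in terms of the torsor data. Using the identifications of $H^1(K\dpar{t},\widetilde{\mcP}^\square_{G,b})$ with $\pi_0(\widetilde{\mcP}^\square_{G,b})$ provided by Proposition \ref{deflthm} (via Lang's theorem and the finiteness of $\pi_0$), one obtains a definable realization of the Tate local pairing
\[
\pi_0(\widetilde{\mcP}^\square_{G,b}) \times \pi_0(\widetilde{\mcP}^\square_{\widehat G,b}) \longrightarrow \mu_\infty
\]
coming from the Weil pairing between $\widetilde{\mcP}^{\square,0}_{G,b}$ and its dual $\widetilde{\mcP}^{\square,0}_{\widehat G,b}$. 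The symmetry of this pairing is precisely what identifies the coendoscopic datum attached to the $t$-twist on the $G$-side with the character $t$ evaluated against the natural $s$-label on the $\widehat G$-side, and conversely. This matches the two sums term by term.

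The final step promotes the numerical identity into an identity of motives: the motivic Fourier calculus of Section \ref{sec:ft}, applied to the finite abelian groups $\pi_0(\widetilde{\mcP}^\square_{G,b})$ and $\pi_0(\widetilde{\mcP}^\square_{\widehat G,b})$, shows that the two sides of the desired equality are Fourier transforms of one another with respect to the Tate pairing, so the $s\leftrightarrow t$ symmetry of the pairing yields the theorem. The main obstacle is the second step: one needs to verify that the Tate local duality pairing and the Langlands duality of generic Pryms are not merely existing over $K\dpar{t}$ for each pseudo-finite $K$, but are realized by definable morphisms of definable algebraic groups and by definable cocycles in the sense of Section \ref{sec:galois:coh}. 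Once definability is in place, passage from the finite-field identity \cite[Theorem 6.14]{GWZ18} to the motivic identity follows from Proposition \ref{prop:spec-for} and the construction of $\chi_{\psf}$.
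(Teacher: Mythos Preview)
Your proposal has a genuine gap at the very first step. For $b \in B_a^\flat$ generic, $\widetilde{\mcP}^\square_{G,b}$ is a connected abelian variety over $K\dpar{t}$ (this is precisely how $\square$ was chosen), so the decomposition by $\pi_0(\widetilde{\mcP}^\square_{G,b})$ you propose is trivial. The relevant finite group is not the $\pi_0$ of the Prym over $K\dpar{t}$, but the component group $\phi_{\widetilde{\FP}^\square_{G,b}}$ of the special fiber of its N\'eron model, which your outline never invokes. This is not a cosmetic point: the N\'eron model governs both whether the torsor $\widetilde{\mcM}^{\square,t}_{G,b}$ has any $K\dpar{t}$-points at all, and how $\chi_s\circ e$ behaves on it.

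The paper's proof proceeds by case distinction via the specialization map $i:\pi_0(\widetilde{\FP}^\square_{G,a}) \to \phi_{\widetilde{\FP}^\square_{G,b}}$. If $i(t)\neq 0$ the torsor is non-trivial and the integral vanishes (Lemma~\ref{lem:itzero}); if $i(t)=0$ one trivializes via the Kostant section and shows that $\chi_s\circ e$ is a motivic character factoring through $\phi_{\widetilde{\FP}^\square_{G,b}}$ (Lemma~\ref{lem:chis:neron}), non-trivial exactly when $i(s)\neq 0$ (Lemma~\ref{lem:chis:triv}). Thus both sides vanish unless $i(t)=i(s)=0$, in which case each integral equals $\chi_{\psf,\rel}(\overline{\FN}(\widetilde{\wP}^\square_{G,b}))\eL^{-\ord(\omega_b)}$ (resp.\ for $\widehat G$). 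The final equality then comes from the duality of abelian varieties, non-degeneracy of the Grothendieck pairing on N\'eron component groups in residue characteristic~$0$, and the equality of motives of isogenous connected commutative groups. Your Fourier-matching idea does not produce this explicit common value, and your remark that definability is the ``main obstacle'' misses that the N\'eron model is in fact \emph{not} definable in families; the paper works around this by proving Lemmas~\ref{lem:chis:triv} and~\ref{lem:chis:neron} pointwise via specialization.
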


\subsection{Proof of Theorem \ref{thm:mainintfibers}} Certainly the integral over the definable set $\widetilde{\wM}^{\square,t}_{G,b}$ is zero if for every pseudofinite field $K'$ containing $k(b)$, the set $\widetilde{\wM}^{\square,t}_{G,b}(K')$ is empty. By construction, $\widetilde{\wM}^{\square,t}_{G,b}(K')$ is the set of $K'$-points of the $\widetilde{\FP}^{\square,t}_{G,b}$-torsor $\widetilde{M}^{\square,t}_{G,b}$.

Let $\FN(\widetilde{\FP}^{\square}_{G,b})$ be the N\'eron model of $\widetilde{\FP}^{\square}_{G,b}$ and $\overline{\FN}(\widetilde{\FP}^{\square}_{G,b})$ its special fiber. For fixed $b$, we get a corresponding definable set $\FN(\widetilde{\wP}^{\square}_{G,b})$, but not uniformly in $b$. 

By the N\'eron mapping property we have a morphism $\widetilde{\FP}^{\square}_{G,a} \to \overline{\FN}(\widetilde{\FP}^{\square}_{G,b})$, which induces one on component groups
\[  i: \pi_0(\widetilde{\FP}^{\square}_{G,a}) \longrightarrow \phi_{\widetilde{\FP}^{\square}_{G,b}} = \pi_0(\overline{\FN}(\widetilde{\FP}^{\square}_{G,b})).  \]
By the following lemma, this map detects whether $\widetilde{M}^{\square,t}_{G,b}$ has rational points.

\begin{lemma}
\label{lem:itzero}
The $\widetilde{\FP}^{\square}_{G,b}$-torsor $\widetilde{M}^{\square,t}_{G,b}$ is trivial if and only if $i(t)=0$. 
\end{lemma}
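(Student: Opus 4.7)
The plan is to reinterpret the triviality of $\widetilde{M}^{\square,t}_{G,b}$ as the vanishing of a Galois cohomology class, and then compute that class via the Néron model of $\widetilde{\FP}^{\square}_{G,b}$ together with Lang's theorem over pseudo-finite fields.

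First I would trivialize the untwisted torsor using the Kostant section: it gives a canonical $\widetilde{\FP}^{\square}_{G,b}$-equivariant section of $\widetilde{M}^{\square}_{G,b}$, so the untwisted torsor is trivial. By the construction of Section \ref{sec:twisted}, $\widetilde{M}^{\square,t}_{G,b}$ is the twist of this trivial torsor by a $1$-cocycle representing the class $t \in \pi_0(\widetilde{\FP}^{\square}_{G,a}) \cong H^1(\Gal(K_r/K),\widetilde{\FP}^{\square}_{G,a})$ of Proposition \ref{deflthm}, lifted to a cocycle with values in $\widetilde{\FP}^{\square}_{G,U_a}$ as recalled at the beginning of Section \ref{sec:redint}. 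Hence the torsor is trivial if and only if the image of $[t]$ in $H^1(K\llp t \rrp, \widetilde{\FP}^{\square}_{G,b})$ vanishes.

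Second, I would compute the target cohomology group via the Néron model $\mathcal{N} = \mathcal{N}(\widetilde{\FP}^{\square}_{G,b})$, with special fiber $\overline{\mathcal{N}}$. By the Néron mapping property applied to the smooth commutative group scheme $\widetilde{\FP}^{\square}_{G,U_a}$ there is a canonical morphism $\widetilde{\FP}^{\square}_{G,U_a} \to \mathcal{N}$, which induces on special fibers the map whose effect on components is precisely $i$. Smoothness of $\mathcal{N}$ and Hensel's lemma yield $H^1(K\llb t \rrb, \mathcal{N}) \cong H^1(K, \overline{\mathcal{N}})$, while the Néron mapping property also identifies $H^1(K\llp t \rrp, \widetilde{\FP}^{\square}_{G,b})$ with $H^1(K\llb t \rrb, \mathcal{N})$: an $\mathcal{N}$-torsor has an integral point if and only if it has a generic point, because any generic section extends uniquely to an integral section. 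So the problem reduces to computing the image of $t$ in $H^1(K, \overline{\mathcal{N}})$.

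Third, the short exact sequence $0 \to \overline{\mathcal{N}}^0 \to \overline{\mathcal{N}} \to \phi_{\widetilde{\FP}^{\square}_{G,b}} \to 0$ together with Lang's theorem applied to the connected commutative algebraic group $\overline{\mathcal{N}}^0$ over the pseudo-finite field $K$ gives $H^1(K, \overline{\mathcal{N}}^0) = 0$ (this was already used, and transfers from finite fields by first-order logic, exactly as in Proposition \ref{deflthm}). Consequently $H^1(K, \overline{\mathcal{N}}) \cong H^1(K, \phi_{\widetilde{\FP}^{\square}_{G,b}}) \cong \phi_{\widetilde{\FP}^{\square}_{G,b}}$, the last identification again coming from Proposition \ref{deflthm} applied to the constant finite group $\phi_{\widetilde{\FP}^{\square}_{G,b}}$. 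Tracking $[t]$ through this chain of identifications yields exactly $i(t)$, and the conclusion follows.

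The main obstacle is the careful bookkeeping of the Néron-model identifications over the Henselized local ring at $a$ within the definable/pseudo-finite framework: one needs the compatibility of integral and generic triviality of torsors, the explicit description of the composed map $\pi_0(\widetilde{\FP}^{\square}_{G,a}) \to \phi_{\widetilde{\FP}^{\square}_{G,b}}$ as $i$, and the definable form of Lang's theorem. All of these transfer from the analogous $p$-adic arguments in \cite{GWZ18} via the first-order transfer principles that underlie the entire paper.
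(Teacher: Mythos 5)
Your proof is correct and takes essentially the same approach as the paper: pass to the Néron model $\mathcal{N}(\widetilde{\FP}^{\square}_{G,b})$, use uniqueness of the extension and smoothness to identify triviality of the torsor over $K'\llp t\rrp$ with triviality over the special fiber, and then use Lang's theorem over the pseudo-finite field together with the identification $H^1(K',\overline{\mathcal{N}})\cong\phi_{\widetilde{\FP}^{\square}_{G,b}}$ to conclude. The paper packages the extension step as an invocation of \cite[Cor.~6.5.3]{NeronModels} and is slightly terser on the cohomological bookkeeping, but the underlying argument is the one you give.
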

\begin{proof} By construction $\widetilde{M}^{\square,t}_{G,b}$ is unramified, \emph{i.e.} splits after a finite extension of the residue field $K'$.  By \cite[Cor. 6.5.3]{NeronModels}, this torsor extends uniquely to an $\FN\left(\widetilde{\FP}^{\square}_{G,b}\right)$-torsor $\FT$. 
Since the extension is unique and $\FT / K'\llb t \rrb$ is smooth,  $\widetilde{M}^{\square,t}_{G,b}$ is trivial if and only if its restriction to its special fiber $\FT_{K'}$ is. The latter is the unramified $\overline{\FN}\left(\widetilde{\FP}^{\square}_{G,b}\right)$-torsor corresponding to $i(t)$ and thus trivial if and only if $i(t)=0$.
\end{proof}

Using Lemma \ref{lem:itzero} we will identify $\widetilde{M}^{\square,t}_{G,b}$ with $\widetilde{M}^{\square}_{G,b}$ whenever $i(t) = 0$.

\begin{lemma}
\label{lem:chis:triv}
Assume $i(t)=0$. Then the function $\chi_s\circ e$ is constant on $\widetilde{\wM}^{\square}_{G,b}$ if and only if $i(s)=0$.
\end{lemma}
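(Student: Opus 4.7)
The plan is to transcribe the argument of \cite[Lemma 6.11]{GWZ18} into the motivic/definable setting. Since $i(t)=0$, Lemma \ref{lem:itzero} guarantees that $\widetilde{M}^{\square,t}_{G,b}$ is a trivial $\widetilde{\FP}^{\square}_{G,b}$-torsor, so we have a definable identification $\widetilde{\wM}^{\square,t}_{G,b}\simeq \widetilde{\wM}^{\square}_{G,b}$ on which $\widetilde{\FP}^{\square}_{G,b}$ acts simply transitively. What we must show is that, as a motivic constructible function on this torsor, $\chi_s\circ e$ is constant if and only if $i(s)=0$ in $\phi_{\widetilde{\FP}^{\square}_{\widehat{G},b}}$.

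First I would study how $\chi_s\circ e$ varies under translation by $\widetilde{\FP}^{\square}_{G,b}$. Fix a base point $x_0\in \widetilde{\wM}^{\square,\natural}_{G,b}$ and let $p\in \widetilde{\FP}^{\square}_{G,b}$. By the Néron mapping property applied to the action map $\widetilde{\FP}^{\square}_{G,b}\times \widetilde{\wM}^{\square}_{G,b}\to \widetilde{\wM}^{\square}_{G,b}$, the specialization of $p\cdot x_0$ to the special fiber differs from that of $x_0$ by translation by the image $\bar p\in \phi_{\widetilde{\FP}^{\square}_{G,b}}$ of $p$ in the Néron component group. Using the equivalence \eqref{rigineq} that identifies the stratum $\Imu\widetilde{\wM}^{\square}_{G,a,\FE}$ with $\widetilde{\wM}^{\lozenge}_{H_{\FE},a}$, the coendoscopic datum $\FE(p\cdot x_0)$ is related to $\FE(x_0)$ in a manner controlled entirely by $\bar p$; in particular the assignment $p\mapsto \chi_s(\FE(p\cdot x_0))\cdot \chi_s(\FE(x_0))^{-1}$ factors through the surjection $\widetilde{\FP}^{\square}_{G,b}\twoheadrightarrow \phi_{\widetilde{\FP}^{\square}_{G,b}}$ and defines a character of this finite group.

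Next I would identify this character with the one dual to $i(s)$ under Tate duality. Because the Hitchin systems for $G$ and $\widehat G$ over $\widetilde{\wwA}^{\ani}$ are dual abelian schemes on their smooth loci (see \cite[Section 4.7]{GWZ18}), their Prym varieties $\widetilde{\FP}^{\square}_{G,b}$ and $\widetilde{\FP}^{\square}_{\widehat G,b}$ are dual, and the resulting Tate pairing
\[
\phi_{\widetilde{\FP}^{\square}_{G,b}}\times \phi_{\widetilde{\FP}^{\square}_{\widehat G,b}}\longrightarrow \mathbb{Q}/\mathbb{Z}
\]
is perfect. The compatibility between this pairing, the map $\lambda_a\colon \BX^*(\BT/Z(X,G))\to \pi_0(\widetilde{\FP}^{\square}_{\widehat G,a})$, and the coendoscopic character $\kappa_\FE$ arising from the specialization implies that the character $\bar p \mapsto \chi_s(\FE(\bar p\cdot x_0))\cdot \chi_s(\FE(x_0))^{-1}$ of $\phi_{\widetilde{\FP}^{\square}_{G,b}}$ coincides with the character dual to $i(s)\in \phi_{\widetilde{\FP}^{\square}_{\widehat G,b}}$.

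By perfectness of the Tate pairing, this character is trivial, i.e.\ $\chi_s\circ e$ is constant on $\widetilde{\wM}^{\square}_{G,b}$, if and only if $i(s)=0$, which is the desired equivalence. The main obstacle is the second step: reading off the effect of a Néron-component translation on the coendoscopic datum and matching it with the Tate pairing. This is essentially the content of \cite[Lemma 6.11]{GWZ18}, and transfers to our setting because every ingredient (the action of $\widetilde{\FP}^{\square}_{G,b}$, the Néron component groups, the evaluation map $e$, the stratification by coendoscopic data and the motivic characters $\chi_s$) has been made definable in Section \ref{sec:def:geo:stab} and in Section \ref{sec:orbifold}.
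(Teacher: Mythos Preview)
Your approach differs substantially from the paper's. You attempt to lift the geometric argument of \cite{GWZ18} (translation by the Prym, N\'eron component groups, Tate duality) directly into the motivic framework. The paper instead takes a much shorter transfer route: since $\chi_s\circ e$ is a motivic constructible function, its level sets give a $b$-definable partition of $\widetilde{\wM}^{\square}_{G,b}$ indexed by a finite group, so the assertion ``$\chi_s\circ e$ is constant if and only if $i(s)=0$'' is equivalent to a first-order sentence $\phi(b)$. One then invokes the specialization principle (Propositions \ref{prop:spec-for} and \ref{prop:spec:geostab}) to reduce to local fields of large residue characteristic, where the statement is the key ingredient in the proof of \cite[Lemma 6.14]{GWZ18}. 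No N\'eron models, no Tate duality, no translation characters are needed at the motivic level.

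There is a genuine gap in your argument at the crucial second step. You form the translation cocycle $\bar p \mapsto \chi_s(\FE(\bar p\cdot x_0))\cdot \chi_s(\FE(x_0))^{-1}$ and assert that it ``coincides with the character dual to $i(s)$'' under the Tate pairing. But $\chi_s$ takes values in $\K{\DAC(\rel,\Lambda)}\otimes\BQ$, not in a group of roots of unity: the motivic characters of Section \ref{sec:motchar} are classes of motives attached to twists of a torsor, not invertible scalars, so it is not clear what ``$\chi_s(\cdots)^{-1}$'' or ``the character dual to $i(s)$'' mean at this level, nor how one would compare them. Over a finite residue field these objects specialize to honest characters and the identification with Tate duality is carried out in \cite[Section 6.5]{GWZ18}; but to import that identification to pseudo-finite residue fields one needs precisely a transfer argument. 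Your closing sentence (``transfers to our setting because every ingredient has been made definable'') is exactly such a transfer, stated as an assertion rather than carried out: definability of the objects does not by itself establish any identity between them. If made rigorous, this step collapses into the paper's argument, only applied to a more elaborate first-order sentence than necessary.
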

\begin{proof}
From its definition, $\chi_s$ takes a finite number of values, say indexed by a finite group $\mu$. Since it is a constructible function, there is a $b$-definable partition of $\widetilde{\wM}^{\square}_{G,b}$ into parts $(X_g)_{g\in \mu}$ such that on $X_g$, the function $\chi_s$ is equal to $g$. The statement of the lemma is then equivalent to the truth of a first-order sentence $\phi(b)$ with parameter $b$. We then need to show that $\phi(b)$ holds for generic $b$. By Propositions \ref{prop:spec-for} and \ref{prop:spec:geostab}, the specialization $\phi_L(b)$ of $\phi(b)$ to a local field $L$ of large enough residue characteristic is equivalent to the equivalent lemma for an instance of the 
Geometric Stabilization theorem over a reductive group over $L$. For generic $b$, the fact that $\phi_L(b)$ holds is the key ingredient in the proof of Lemma 6.14 in \cite{GWZ18}. By Proposition \ref{prop:spec-for}, we conclude that $\phi(b)$ holds for generic $b$. 
\end{proof}

\begin{lemma}
\label{lem:chis:neron}
Assume $i(t)=0$. Identify $\widetilde{\wM}^{\square}_{G,b}$ with $\widetilde{\wP}^\square_{G,b}$ by means of the Kostant section. The function $\chi_s\circ e$ is a motivic character on $\widetilde{\wP}^\square_{G,b}$ and factors through the group of connected components of the N\'eron model of $\widetilde{\wP}^\square_{G,b}$.
\end{lemma}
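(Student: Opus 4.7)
The plan is to follow the same model-theoretic transfer strategy that was successful for Lemma \ref{lem:chis:triv}: encode each assertion as a first-order sentence with parameter $b$, apply specialization to local fields with sufficiently large residue characteristic, and invoke the corresponding $p$-adic statement from \cite{GWZ18}.

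First I would make the meaning of ``motivic character'' explicit. By Section \ref{sec:motchar}, the possible values of $\chi_s\circ e$ lie in the finite set of motivic characters attached to the constant finite abelian group $\pi_0(\widetilde{\wP}^{\square}_{G,b})$; fix a bijection of this set with a finite index set $\mu$ via the primitive character chosen there. Since $\chi_s\circ e$ is constructible, there is a $b$-definable partition $\widetilde{\wP}^{\square}_{G,b}=\bigsqcup_{g\in\mu} X_g$ on which $\chi_s\circ e$ takes the constant value indexed by $g$. The character property then becomes the first-order sentence
\[
\varphi_{1}(b):\quad \forall g,h\in\mu,\ \forall x\in X_g,\ \forall y\in X_h,\quad x\cdot y\in X_{gh},
\]
where $\cdot$ is the group law on $\widetilde{\wP}^{\square}_{G,b}$, which is definable with parameter $b$.

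Next I would express the factorization through the N\'eron component group $\phi_{\widetilde{\wP}^{\square}_{G,b}}$. For fixed $b$ the N\'eron model $\FN(\widetilde{\wP}^{\square}_{G,b})$ and its component group are definable (as remarked in the passage preceding Lemma \ref{lem:itzero}), and the specialization map has a definable description. The factorization statement is then the first-order sentence
\[
\varphi_{2}(b):\quad \forall x,y\in \widetilde{\wP}^{\square}_{G,b},\quad \mathrm{sp}(x)=\mathrm{sp}(y)\ \Longrightarrow\ \chi_s(e(x))=\chi_s(e(y)).
\]
By Propositions \ref{prop:spec-for} and \ref{prop:spec:geostab}, the specialization of $\varphi_{1}(b)\wedge\varphi_{2}(b)$ to a local field $L$ with sufficiently large residue characteristic is equivalent to the analogous statement for the $p$-adic torsor $\widetilde{M}^{\square}_{G,b}(L)$, and in this setting the claim is precisely the content of Lemma 6.15 of \cite{GWZ18} (which in turn relies on Tate local duality and the identification of N\'eron component groups of abstract Pryms for Langlands-dual groups). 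Applying Proposition \ref{prop:spec-for} in the reverse direction, we conclude that $\varphi_{1}(b)\wedge\varphi_{2}(b)$ holds for all pseudo-finite $b$ in a non-empty open subset, which is exactly what is needed since we work with a generic $b\in B_a^{\flat}$.

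The main anticipated obstacle is of bookkeeping nature rather than mathematical: carefully checking that, for each fixed $b$, the N\'eron model of $\widetilde{\wP}^{\square}_{G,b}$ together with its component group and specialization map can be produced by a construction uniform enough to yield first-order formulas in the Denef--Pas language with pseudo-finite residue field. Once this is in place, the transfer mechanism already used in Lemma \ref{lem:chis:triv} delivers the conclusion without any further geometric input.
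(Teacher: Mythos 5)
Your handling of the multiplicativity part is fine and agrees with the paper's approach: encoding the character property as a first-order sentence $\varphi_1(b)$ and transferring from the $p$-adic setting is exactly what the paper does, and you correctly route it through specialization and the Tate duality argument from \cite{GWZ18}.

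The factorization part, however, has a genuine gap, and the paper explicitly flags this obstacle before working around it. Your sentence $\varphi_2(b)$ presupposes a \emph{definable} specialization map $\mathrm{sp}:\widetilde{\wP}^{\square}_{G,b}\to\phi_{\widetilde{\FP}^{\square}_{G,b}}$ with parameter $b$, but while the Néron model and its component group are definable for each \emph{fixed} $b$, there is no formula uniform in $b$ producing them. This is precisely the point made in the text preceding Lemma \ref{lem:itzero} (``For fixed $b$, we get a corresponding definable set $\FN(\widetilde{\wP}^{\square}_{G,b})$, but not uniformly in $b$'') and again in the proof of the lemma itself (``the N\'eron model is not in general definable in families, we cannot readily deduce the factorisation property''). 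Without uniformity in $b$ you do not have a single $\mcL_{\DP,\psf,k}$-formula $\varphi_2(b)$ to which Proposition \ref{prop:spec-for} applies, so the transfer mechanism breaks down at exactly the step you dismiss as ``bookkeeping.''

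The paper's workaround is a direct argument that does not require transferring the factorization itself: granting multiplicativity, factorization is equivalent to $\chi_s\circ e$ being identically $1$ on the neutral component of the Néron model, and this is proved algebraically using divisibility of the neutral component (extract an $N$-th root of a putative bad point $x$ after a finite pseudo-finite base change, where $N$ is the order of the image of $\chi_s\circ e$, and derive a contradiction with the order of $\chi_s\circ e(y)$). That step is where the real geometric content of the lemma sits, and it is missing from your plan; you should replace the $\varphi_2$ transfer with an argument of this divisibility type, or otherwise supply a proof that the specialization map can be made uniformly definable (which the authors indicate is not expected).
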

\begin{proof}
Similarly to Lemma \ref{lem:chis:triv}, the multiplicativity of $\chi_s\circ e$  is implied by the multiplicativity of its specialization to local fields, where it follows from a reinterpretation  $\chi_s\circ e$ in terms of Tate duality, see  \cite[Section 6.5]{GWZ18}. Since the N\'eron model is not in general definable in families, we cannot readily deduce the factorisation property. However, using multiplicativity, it suffices to show that $\chi_s\circ e$ is constant to 1 on the neutral component of the N\'eron model. Assume not, let $x$ in the neutral component of the N\'eron model be such that $\chi_s\circ e(x)\neq 1$. Since elements in the neutral component of a N\'eron model are divisible, up to replacing $K'$ by a finite extension $K'_N$, we can assume that there exists $y\in \widetilde{\FP}^\square_{G,b}(K'_N\llp t \rrp) $ such that $y^N=x$, where $N$ is the order of the image of $\chi_s\circ e$, and the extension $K'_N$ is of degree depending only on $N$. Since this extension is pseudo-finite, the function $\chi_s\circ e$ is defined as well on this extension, multiplicative, and compatible with the restriction to $K'$. By compatibility of the N\'eron model with unramified base change, $x$ lies in the neutral component of the N\'eron model of $\widetilde{\FP}^\square_{G,b}(K'_N\llp t \rrp)$. Hence we get a contradiction, since if $\chi_s\circ e(x)\neq 1$, then $\chi_s\circ e(y)$ is of order more than $N$. 
\end{proof}

\begin{proposition}
\label{prop:calcint}
We have
\[ \int^\mot_{\widetilde{\wM}^{\square,t}_{G,b}} \chi_s\circ e \abs{\omega_{b}} = \begin{cases} 0 \ \ \ \text{ if } i(t) \neq 0 \text{ or } i(s) \neq 0  \\ \chi_{\psf,\rel}(\overline{\FN}(\widetilde{\wP}^\square_{G,b})\eL^{-\ord(\omega_b)}) \ \ \ \text{ if } i(t) = 0 \text{ and } i(s) = 0 .\end{cases} \]
\end{proposition}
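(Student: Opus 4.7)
The plan is to split on whether $i(t)$ and $i(s)$ vanish, and in the case both do, to use the Kostant section together with translation invariance to reduce the integral to a finite character sum on $\pi_0(\overline{\FN}(\widetilde{\FP}^\square_{G,b}))$.

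First I would treat the case $i(t) \neq 0$. Following the proof of Lemma \ref{lem:itzero}, the torsor $\widetilde{M}^{\square,t}_{G,b}$ extends uniquely to an $\FN(\widetilde{\FP}^\square_{G,b})$-torsor $\FT$, and smoothness of $\FT$ together with Hensel's lemma identifies $K'\llb t\rrb$-points of the original torsor with $K'$-points of its special fiber $\FT_{K'}$, which is the $\overline{\FN}(\widetilde{\FP}^\square_{G,b})$-torsor classified by $i(t)$. Applying Proposition \ref{deflthm} (the transfer of Lang's theorem to pseudo-finite fields), such a torsor has no $K'$-point whenever $i(t) \neq 0$, so the integration domain is empty over every pseudo-finite $K'$ and the integral vanishes.

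In the case $i(t)=0$, Lemma \ref{lem:itzero} gives a definable $\widetilde{\FP}^\square_{G,b}$-equivariant identification $\widetilde{\wM}^{\square,t}_{G,b} \cong \widetilde{\wP}^\square_{G,b}$ via the Kostant section, under which $\omega_b$ is translation-invariant. By Lemma \ref{lem:chis:neron} the function $\chi_s \circ e$ becomes a motivic character on $\widetilde{\wP}^\square_{G,b}$ factoring through $\pi_0(\overline{\FN}(\widetilde{\FP}^\square_{G,b}))$; let $\alpha$ denote the induced character. Partitioning $\widetilde{\wP}^\square_{G,b}(K'\llb t\rrb) = \FN(\widetilde{\FP}^\square_{G,b})(\mcO_{K'\llp t\rrp})$ along the reduction map to components of the special fiber (surjective by a second application of the pseudo-finite Lang theorem) and pulling out translation invariance of $\omega_b$, the integral factors as
\[ \Bigl(\sum_{g \in \pi_0(\overline{\FN}(\widetilde{\FP}^\square_{G,b}))} \alpha_\mot(g)\Bigr) \cdot \chi_{\psf,\rel}(\overline{\FN^0})\, \eL^{-\ord(\omega_b)}, \]
the second factor being the motivic volume of (the formal neighborhood of) the identity component with respect to $\omega_b$, computed by comparison with a canonical Haar form on $\FN$. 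When $i(s)=0$, Lemma \ref{lem:chis:triv} makes $\alpha$ trivial, the character sum equals $|\pi_0(\overline{\FN})|$, and reassembling components gives the claimed value $\chi_{\psf,\rel}(\overline{\FN}(\widetilde{\wP}^\square_{G,b}))\, \eL^{-\ord(\omega_b)}$. When $i(s)\neq 0$ the character $\alpha$ is non-trivial, and $\sum_g \alpha_\mot(g) = 0$ follows from the motivic orthogonality of characters, a direct consequence of Proposition \ref{lem:FT:mu:n} applied to a point, keeping in mind that we work throughout inside the stratum $A$ of Section \ref{sec:ft} on which the motivic characters $\alpha_\mot$ are defined.

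The main obstacle I expect is justifying the decomposition along Néron components uniformly in a definable way: as observed in the proof of Lemma \ref{lem:chis:neron}, the Néron model is not definable in families, so this step must be carried out at fixed $b$, leveraging multiplicativity of $\chi_s\circ e$ (established there by transfer from finite fields) together with the pseudo-finite Lang surjection onto $\pi_0(\overline{\FN})$, so that the resulting partition of the integration domain remains a definable partition to which the motivic Fubini framework of Section \ref{sec-const-mot} applies.
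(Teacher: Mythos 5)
Your proposal is correct and follows essentially the same route as the paper: Lemma~\ref{lem:itzero} to kill the $i(t)\neq 0$ case, Lemma~\ref{lem:chis:neron} to factor $\chi_s\circ e$ through the component group of the special fiber of the N\'eron model, a decomposition along those components with each piece contributing $\chi_{\psf,\rel}(\overline{\FN}^0(\widetilde{\wP}^\square_{G,b}))\eL^{-\ord(\omega_b)}$, and Lemma~\ref{lem:chis:triv} together with the cancellation of Section~\ref{sec:ft} to split the $i(s)=0$ and $i(s)\neq 0$ subcases. The only difference is cosmetic — you organize both $i(t)=0$ subcases through a single character sum, whereas the paper treats $i(s)=0$ and $i(s)\neq 0$ separately — and the small slip of writing $K'\llb t\rrb$ where the N\'eron mapping property equates this with $\widetilde{\wP}^\square_{G,b}(K'\llp t\rrp)$ is harmless.
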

Note that by Lemma \ref{lem:itzero}, the conditions $i(t) \neq 0$ and $i(s) \neq 0$  translate into definable conditions.

\begin{proof}
Assume first that $i(t) = 0$ and $i(s) = 0$. By Lemma \ref{lem:itzero} we have an isomorphism
\[\widetilde{\wM}^{\square,t}_{G,b}(K'\llp t\rrp) \cong \widetilde{\wP}^\square_{G,b}(K'\llp t\rrp).\]

By Lemma \ref{lem:chis:triv}, $\chi_s\circ e$ is constant on $ \widetilde{\wP}^\square_{G,b}(K'\llp t\rrp)$, hence necessarily constant to 1 since it is a group morphism. Hence we have
\[\int^\mot_{\widetilde{\wM}^{\square,t}_{G,b}} \chi_s\circ e \abs{\omega_{b}} = \int^\mot_{\widetilde{\wM}^{\square,t}_{G,b}} \abs{\omega_{b}} = \chi_{\psf,\rel}(\overline{\FN}(\widetilde{\wP}^\square_{G,b})\eL^{-\ord(\omega_b)}).\]
The last equation follows since the volume of an abelian variety with respect to any volume form equals the class of the special fiber of the N\'eron model up to a shift by the order of vanishing of the volume form along the special fiber, see for example \cite{Loeser-Sebag} or \cite[Remark 4.1.1]{LW19}.

For the other case, if $i(t)\neq 0$, then by Lemma \ref{lem:itzero} the integral is zero. Thus assume finally $i(t)=0$ and $i(s)\neq 0$.

By Lemma \ref{lem:chis:neron}, $\chi_s\circ e$ factors as 
\begin{equation}\label{nicefact} 
\widetilde{\FP}^\square_{G,b}(K'\llp t\rrp) \xrightarrow{r_{\widetilde{\FP}^\square_{G,b}}} \phi_{\widetilde{\FP}^\square_{G,b}}(K') \xrightarrow{\langle \cdot, i(s) \rangle} \BZ/n\BZ.   
\end{equation}

Each fiber of $r_{\widetilde{\FP}^\square_{G,b}}$
is a $\FN(\widetilde{\FP}^\square_{G,b})^0$-torsor over $K'\llp t\rrp$ and thus trivial, since $K'$ is pseudo-finite. Again as in the first case, the integral of $\chi_s\circ e$ over this fiber is equal to the constant value of $\chi_s\circ e$ multiplied by $\chi_{\psf,\rel}(\overline{\FN}^0(\widetilde{\wP}^\square_{G,b})\eL^{-\ord(\omega_b)})$.

We are given the integral of a constant function on a subset of an abelian variety defined by the preimage of a constructible subset of the special fiber of its N\'eron model. By smoothness of the N\'eron model, on each fiber, the integral is equal to the constant value of $\chi_s\circ e$ multiplied by $\chi_{\psf,\rel}(\overline{\FN}^0(\widetilde{\wP}^\square_{G,b})\eL^{-\ord(\omega_b)})$. Since the induced function on $\phi_{\widetilde{\FP}^{\square}_{G,b}}$ is non-trivial by Lemma \ref{lem:chis:triv}, the sum of all values is zero by Lemma~\ref{sumchar0}. This concludes the proof of the proposition.
\end{proof}

We can now finish the proof of Theorem \ref{thm:mainintfibers}. By Proposition \ref{prop:calcint}, if either $i(t)\neq 0$ or $i(s)\neq 0$, then both sides are zero hence the equality holds. In the case $i(t)=i(s)=0$, we need to prove that $\chi_{\psf,\rel}(\overline{\FN}(\widetilde{\wP}^\square_{G,b}))=\chi_{\psf,\rel}(\overline{\FN}(\widetilde{\wP}^\square_{\widehat G,b}))$. By the main result of \cite{Donagi:fk}, $\widetilde{\FP}^\square_{\widehat G,b}$ and ${\widetilde{\FP}^\square}_{G,b}$ are dual abelian varieties. Since the Grothendieck pairing on Néron models is non-degenerate in residue-characteristic $0$ \cite[11.3, Expos\'{e} IX]{GRR67} and $K'$ is pseudo-finite we deduce
\[ |\phi_{\widetilde{\FP}^{\square}_{G,b}}(K')| = |\phi_{{\widetilde{\FP}^{\square}}_{\widehat G,b}}(K')|,\]
as in the proof of \cite[Proposition 4.3]{Lo11}. Finally, the isogeny between $\widetilde{\FP}^\square_{G,b}$ and $\widetilde{\FP}^\square_{\widehat G,b}$ induces an isogeny between their N\'eron models by \cite[Proposition 7.3.6]{NeronModels}. The motives of two isogenous connected commutative groups are equal by \cite[Theorem 3.3 (4)]{AHP} and \cite[Lemma 2.5.1]{LW19}. The motive $\chi_{\psf,\rel}(\overline{\FN}(\widetilde{\wP}^\square_{G,b}))$ is $|\phi_{{\widetilde{\FP}^{\square}}_{G,b}}|$ times the motive of the neutral component, hence the motives $\chi_{\psf,\rel}(\overline{\FN}(\widetilde{\wP}^\square_{G,b}))$ and $\chi_{\psf,\rel}(\overline{\FN}(\widetilde{\wP}^\square_{\widehat G,b}))$  are equal, which concludes the proof of Theorem \ref{thm:mainintfibers}.

\section{A motivic Fundamental lemma}\label{sec:fl}
We show in this section how to deduce from the Geometric Stabilization theorem~\ref{th:geo:stab} a motivic version of the Fundamental Lemma. This part is geometric in the sense that it does not rely on any cohomological methods, and follows closely Ngô's original argument.

\subsection{Statement}\label{statement}
We briefly review \cite{chl}, in which it is shown that the orbital integrals of the Fundamental Lemma can be encoded by motivic integrals. This is similar to the encoding of the Hitchin fibration of Section \ref{sec:def:geo:stab}. 

Let $L$ be a discretely valued field with pseudo-finite residue field $K$. We encode unramified extensions of $L$ of fixed degree $r$ as definable sets in the Denef-Pas language using parameters similarly to what we have done in Section \ref{sec:galois:coh}. We introduce a parameter $b = (b_0, \dots, b_{r-1})$ in $L^r$.
The conditions that the $b_i$ belong to the valuation ring  of $L$ and that the reduction of the polynomial 
$m_b=x^r+ b_{r- 1} x^{r- 1}+ \dots b_0$ modulo the maximal ideal is an irreducible polynomial over the residue field are definable in the Denef-Pas language.
The unramified extension $L_r = L [x] /m_b$ of $L$ defined by this polynomial is then viewed as a definable set with parameter $b$ by identifying it with $L^r$. As in Section \ref{sec:galois:coh}, we also use a parameter $\tau$ for a generator of the Galois group $\Gal(L_r/L)$.

Recall that an unramified reductive group $G$ over a $L$ is a quasi-split reductive group over $L$ that splits over an unramified extension of $L$. 

We fix a split reductive group $\BG$ by fixing its root datum, as well as a faithful representation of $\BG$. We choose a quasi-split form of $\BG$ by fixing a element of $\rmH^1(\Gal(L_r/L),\mathrm{Out}(\BG))$, where $r$ is some fixed integer. The choice of $\tau$ identifies this torsor with an endomorphism $\theta$ of $\BG(L_r)$. Let $\wG$ be the definable set with parameters such that for every discretely valued field $L$ with pseudo-finite residue field, $\wG(L)$ is the set of fixed points of $\BG(L_r)$ under the endomorphism $\theta\circ \tau$. 

We have similarly definable sets $\wT$ for the maximal torus of $\wG$ determined by the root datum, $\wg$ for the  Lie algebra of $\wG$.

We also have the definable set of $\wG$-conjugacy classes in $\wg$, the Chevalley base $\wc$, together with a definable map $\chi\colon \wg \to \wc$. 

Fix some $\wG$-invariant and $\wT$-invariant definable volume forms $\abs{\omega_G}$ and $\abs{\omega_T}$.  We normalize integrals by choosing $\abs{\omega_G}$ such that $\int_{\wG(\mathcal{O})}\abs{\omega_G}=1$.

For $\gamma \in \wg$ regular semi-simple, define the motivic orbital integral as
\[
O_\gamma(\mathbf{1}_{\wg(\mathcal{O})})=\int_{\wT \backslash\wG}^\mot \mathbf{1}_{\wg(\mathcal{O})}(g^{-1}\gamma g)\frac{\abs{\omega_G}}{ \abs{\omega_T}}.
\]
We note that the integral depends on a choice of normalization for $\abs{\omega_T}$, but we do not include it explicitly in the notation. 

Let $I_\gamma$ the centralizer of $\gamma$. Let $\gamma'\in \wg$ which is stably conjugate to $\gamma$, that is, conjugate over $\wG(L_r)$. Using the choice of a generator $\tau$, we get a finite definable set $\mathcal{D}$ of representatives of stable conjugacy classes of $\gamma$. This $\gamma'$ determines an invariant $\mathrm{inv}(\gamma,\gamma')\in \rmH^1(L,I_\gamma)$, and this assignment is definable when we consider $\rmH^1(L,I_\gamma)$ as definable using cocycles as in \ref{sec:galois:coh}. In particular we may identify $\mathcal{D}$ with a subgroup of $\rmH^1(L,I_\gamma)$.

Let $\kappa$ be a character of $\rmH^1(L,I_\gamma)$.  The Tate-Nakayama pairing $\langle\mathrm{inv}(\gamma,\gamma'),\kappa \rangle$ is a definable map by \cite{chl}, in the sense that its image lies in $\ZZ/n\ZZ$ for some $n$, and the level sets (for fixed $\kappa$ and varying $\gamma, \gamma'$) are definable sets. Below we consider its motivic version, as in Section \ref{sec:motchar}.

Define the $\kappa$-orbital integral as
\[
O_\gamma^\kappa(\mathbf{1}_{\wg(\mathcal{O})})=\sum_{\gamma'\in \mathcal{D}}\langle\mathrm{inv}(\gamma,\gamma'),\kappa \rangle O_{\gamma'}(\mathbf{1}_{\wg(\mathcal{O})})
.\]
When $\kappa$ is the trivial character, we write $O_\gamma^{\mathrm{stab}}(\mathbf{1}_{\wg(\mathcal{O})})$ instead of $O_\gamma^\kappa(\mathbf{1}_{\wg(\mathcal{O})})$.

The Kostant section $\varepsilon\colon  \wc \to  \wg$ is definable as well, and we use it to define for $a\in \wc$ regular semi-simple
\[O_a^\kappa(\mathbf{1}_{\wg(\mathcal{O})})=O_{\varepsilon(a)}^\kappa(\mathbf{1}_{\wg(\mathcal{O})}).\]

Let $H$ be an endoscopic group of $G$, $\wH$ the associated definable group and $\wh$ the Lie algebra. We add subscripts $H$ to the various objects associated to $H$, such as $\wc_H$. We have an inclusion $\wc_H \to \wc$. Let $a_H\in \wc_H(L)$ be regular semi-simple, and $a$  its image in $\wc(L)$.  The group $H$ determines a character $\kappa$ of $\rmH^1(L,J_a)$, where $J_a \cong I_{\varepsilon(a)}$ denotes the regular centralizer of $a$, see \cite[Section 1.4]{ngo:fl}. Notice that $J_a(L) = T(L)$ by definition. 

We can now state the motivic version of the Fundamental Lemma. 

\begin{theorem}
\label{th:fl}
Let $G,H,a_H, a,\kappa$ be as above. Then there exists some $A_a$, motive of a commutative algebraic group over $\QQ(a)$ and the parameters, such that
\[
A_a \, O_a^\kappa(\mathbf{1}_{\wg(\mathcal{O})})=A_a\, \eL^{r^G_H(a_H)} \, O_{a_H}^{\mathrm{stab}}(\mathbf{1}_{\wh(\mathcal{O})} )
\]
in $\K{\DAC(\QQ(a),\rel,\Lambda)}\otimes \BQ$.
\end{theorem}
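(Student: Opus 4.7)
The plan is to follow the strategy of Ngô \cite{ngo:fl} in deducing the Fundamental Lemma from Geometric Stabilization, transposed to the motivic setting provided by Section \ref{sec1}. The first step is \emph{globalization}: given the local datum $(a,a_H,\kappa)$ over $L$, I would produce a smooth projective curve $\wX$ over the residue field $k=\QQ(a)$, with a marked point $v$ whose local field is $L$, together with an ample line bundle $D$ of sufficiently large even degree and global sections $\alpha\in \widetilde{\wwA}^{\ani}(k)$ and $\alpha_H\in\widetilde{\wwA}_{H_\FE}^{\ani}(k)$ (for the coendoscopic datum associated with $H$) whose germs at $v$ recover $a$ and $a_H$ respectively. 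That such a globalization exists in the required form is essentially \cite[Proposition 4.15.1]{ngo:fl}; the construction is algebraic and its output is definable in the Denef--Pas language, so it lifts to our setting with only notational changes.

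The second step is Ngô's \emph{product formula}, which expresses the motive of a Hitchin fiber $\widetilde{\wM}_{G,\alpha}$ (up to quotient by $\wP_{G,\alpha}$) as an infinite tensor product, almost all factors being trivial, of local motivic factors attached to the affine Springer fibers at each closed point $w\in \wX$. Each local factor is, up to normalization by the Néron model of the local Prym, exactly the motivic orbital integral $O_{a_w}^{\kappa_w}(\mathbf{1}_{\wg(\mathcal{O}_w)})$ of Section \ref{statement}. This formula holds fiberwise at pseudo-finite points, and because both sides are constructible in $a$, the identity persists at the level of $\cC_\mot$ by Proposition \ref{prop-eval}. Analogous statements hold for $H$ in place of $G$.

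Applying Theorem \ref{th:geo:stab:a} to $\alpha$ at the level of Hitchin fibers and inserting the product formula, I obtain, after dividing by the motive of the global Prym (which is non-zero in the relevant localization), an equality of products over all closed points $w \in \wX$:
\[
\prod_{w}O_{a_w}^{\kappa_w}(\mathbf{1}_{\wg(\mathcal{O}_w)})
\;=\;\eL^{r^G_H(D)}\prod_{w}O_{a_{H,w}}^{\mathrm{stab}}(\mathbf{1}_{\wh(\mathcal{O}_w)}).
\]
Now globalize in a second way: by \cite[Proposition 4.15.1]{ngo:fl} one can arrange that at all places $w \neq v$ the local data are \emph{unramified and regular} so that by Kazhdan--Lusztig theory the corresponding orbital integrals equal $1$ at the stable places and the local factors coincide on both sides, reducing the global identity to an identity at the single place $v$, up to an explicit shift $\eL^{r_G^H(a_H)}$ (Ngô's local constant matching $r_G^H(D)$ minus the contributions of the other places). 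The motive $A_a$ then arises as the product of those unramified local factors (motives of the special fibers of Néron models of local Pryms), which is the class of a commutative algebraic group over $\QQ(a)$ and is nonzero but not necessarily invertible.

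The main obstacle is the transition from the numerical product formula of Ngô to a motivic identity in $\K{\DAC(\QQ(a),\rel,\Lambda)}\otimes \BQ$: one must show that each local factor in the product depends definably on $(a,w)$, that the vanishing of ``most'' factors is controlled uniformly, and that the Geometric Stabilization identity established in Theorem \ref{th:geo:stab:a} can be divided on the motivic side by the contribution of the tame places. This is exactly the role of the multiplier $A_a$: dividing by it is harmless because it is the motive of an explicit smooth commutative group scheme, and the cancellation of remaining factors uses Lemma \ref{lem:square} and the Fubini statement of Proposition \ref{prop-fubini-mot}. Once this bookkeeping is in place, the identity reduces exactly to the motivic Fundamental Lemma as stated.
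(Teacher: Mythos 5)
Your plan follows the paper's strategy essentially step for step: globalize the local datum to a curve, pass from Hitchin fibers to products of affine Springer fibers via Ngô's product formula, normalize the local factors by special fibers of Néron models of local Pryms to recover motivic orbital integrals, apply the fiberwise Geometric Stabilization (Theorem~\ref{th:geo:stab:a}), and choose the globalization so all but one place contribute simple factors. The references you reach for (\cite[Proposition 4.15.1, Section 8.6, Lemme 8.5.7]{ngo:fl}) are exactly the ones the paper invokes.

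Two points, however, are not quite right and would trip you up if pursued literally. First, you cannot ``divide by the motive of the global Prym'': in $\K{\DAC(\QQ(a),\rel,\Lambda)}\otimes\QQ$ such classes are not units. The paper instead \emph{multiplies} both sides by $A_a$ and, as its Remark after Theorem~\ref{th:fl} explicitly notes, $\chi_\psf((\BP'_a)^0)$ — the motive of the neutral component of the global Prym, an abelian variety — is absorbed into $A_a$ rather than divided out. This is precisely why $A_a$ is allowed to be the class of a commutative algebraic group over $\QQ(a)$ with no requirement of invertibility, and why, as the Remark warns, $A_a$ is not expected to vary constructibly in $a$. Second, the unramified local factors at $w\neq v$ do not reduce to $1$: the paper, following \cite[Lemme 8.5.7]{ngo:fl}, shows they are motives of tori, which again go into $A_a$ rather than cancelling. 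With those two corrections your plan coincides with the paper's proof.
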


\begin{rmk} Even though Theorem \ref{th:fl} is a local statement, the proof uses the global geometry of the Hitchin system. This will imply that $A_a$ will in general contain a factor of an abelian variety coming from the factor $\chi_\psf((\BP'_a)^0)$ in the notation of Section \ref{lsec} below. Furthermore since the dimensions of the Hitchin system depends on $a$, we do not expect $A_a$ to behave as a motivic constructible function for varying $a$ in $\wc_H^{\mathrm{rss}}$. 

If one specializes to local fields of large residue characteristic, as in Corollary \ref{corfl} below, one can cancel the factor $A_a$ on both sides. In particular $A_a$ is not a zero-divisor of $O_a^\kappa(\mathbf{1}_{\wg(\mathcal{O})})- \eL^{r^G_H(a_H)} \, O_{a_H}^{\mathrm{stab}}(\mathbf{1}_{\wh(\mathcal{O})} )$.
\end{rmk}

We recover a variant of the result of Cluckers-Hales-Loeser \cite[Section 9.2]{chl}.
\begin{proposition}
\label{prop:spec-fl}
Let $\BG$ be a split reductive group over a normal domain $R$ of finite type over $\BZ$ and $r$ an integer. There is a non-empty open subscheme of $\Spec(R)$ such that for every closed point $x$ of $U$, and local field $L$ with a map $R\llb t\rrb\to L$ sending $t$ to a uniformizer of $L$ and residue field $\BF_x$, the following holds. 

Denote by $\FO$ the valuation ring of $L$. Let $G$ be a quasi-split reductive group over $L$, with split form $\BG$, Lie algebra $\g$ and Chevalley base $\mfc$. Assume that $G$ splits over an unramified extension of degree $r$. Let $H$ be an endoscopic group of $G$ and $\wH$ a definable endoscopic group of $\wG$ that specializes to $H$. For $a\in \mfc_H^{\mathrm{rss}}$ regular semi-simple, let $\kappa$ the character of $\rmH^1(L,J_a)$ associated to it. Then up to some choice of parameters, the motivic constructible function $O_a^\kappa(\mathbf{1}_{\wg(\mathcal{O})})$ considered in $\FC_\mot(\wc_H^{\mathrm{rss}})$, when specialized via $\Tr\Frob_x$ to a function on $\mfc_H^{\mathrm{rss}}$, is the function
\[
a\in \mfc_H^{\mathrm{rss}}\longmapsto O_a^\kappa(\mathbf{1}_{\g(\mathcal{O})}).
\]
\end{proposition}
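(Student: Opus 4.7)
The plan is to deduce this specialization statement by combining the general specialization result for motivic integrals from Proposition~\ref{prop:spec-int} with the fact, established in \cite{chl}, that every ingredient appearing in the definition of the $\kappa$-orbital integral is expressible by a $\dpl$-formula (with parameters) in a uniform way.

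First I would analyse each piece of the formula
\[
O_\gamma^\kappa(\mathbf{1}_{\wg(\mathcal{O})})
= \sum_{\gamma'\in \mathcal{D}} \langle \mathrm{inv}(\gamma,\gamma'),\kappa\rangle \, O_{\gamma'}(\mathbf{1}_{\wg(\mathcal{O})})
\]
and verify that it specialises termwise. The definable sets $\wG,\wT,\wg,\wc$ together with the Kostant section $\varepsilon$, the parameters $(a,\tau)$ encoding the unramified extension of degree $r$, the finite definable set $\mathcal{D}$ of stable conjugacy class representatives, the cocycle encoding of $\mathrm{inv}(\gamma,\gamma')\in \rmH^1(L,I_\gamma)$, and the Tate–Nakayama pairing with $\kappa$ all arise from quantifier-free Denef–Pas formulas: this is exactly the content of \cite{chl}. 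By Proposition~\ref{prop:spec-for} (applied simultaneously to finitely many formulas) there is a non-empty open $U\subset \Spec(R)$ such that for every closed point $x\in U$ and every local field $L$ with residue field $\BF_x$ and uniformiser coming from $t$, the interpretation of these formulas in $L$ recovers the classical objects over $L$.

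Next, for each fixed $\gamma'\in \mathcal{D}$ the motivic orbital integral $O_{\gamma'}(\mathbf{1}_{\wg(\mathcal{O})})$ is by construction the motivic integral of a definable characteristic function against a definable volume form on $\wT\backslash\wG$. Proposition~\ref{prop:spec-int} then gives that, after possibly shrinking $U$, one has
\[
\Tr\Frob_x \bigl(O_{\gamma'}(\mathbf{1}_{\wg(\mathcal{O})})\bigr) = O_{\gamma'}(\mathbf{1}_{\g(\mathcal{O})})
\]
for every $L$ as in the statement, the right-hand side being the usual $p$-adic orbital integral normalised via the specialisations of $\abs{\omega_G}$ and $\abs{\omega_T}$. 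The motivic character $\langle \mathrm{inv}(\gamma,\gamma'),\kappa\rangle_{\mot}\in \FC_\mot(\wc_H^{\mathrm{rss}})$ constructed along the lines of Section~\ref{sec:motchar} specialises, by Proposition~\ref{prop:spec-mot} combined with the definable bijection of Proposition~\ref{deflthm}, to the classical Tate–Nakayama pairing value in $\overline{\BQ}_\ell$; this uses in an essential way that $\mathcal{D}$ and the target of the pairing are finite and that $\chi_{\psf}$ of a character was designed to recover $\kappa$ under $\Tr\Frob_x$.

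Finally, since $\mathcal{D}$ is finite and the sum defining $O_\gamma^\kappa$ runs over it, taking $\Tr\Frob_x$ of each term and summing gives the classical expression for $O_a^\kappa(\mathbf{1}_{\g(\mathcal{O})})$; after intersecting the finitely many open subschemes produced above we obtain a single non-empty open $U$ that works for all $a,L$ as required. The main obstacle is bookkeeping rather than mathematics: one has to make sure that the parameters $(a,\tau,f,\ldots)$ encoding the unramified extension, the outer form, the torus $T$ and the endoscopic datum are chosen so that their interpretation in $L$ recovers the intended $L$-points uniformly as $x$ ranges in $U$, which is the content of the clause \emph{up to some choice of parameters} in the statement and is handled exactly as in Sections~\ref{sec:field:ext}--\ref{sec:qsgroup}.
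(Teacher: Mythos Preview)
Your proposal is correct and follows essentially the same approach as the paper: the paper's proof simply observes that by construction the locus of integration and the integrand specialize to their counterparts on $L$, and then invokes Proposition~\ref{prop:spec-int}. You have spelled out in more detail what ``by construction'' means (the definability of $\wG,\wT,\wg,\mathcal{D}$, the Tate--Nakayama pairing, etc., from \cite{chl}, together with Propositions~\ref{prop:spec-for} and~\ref{prop:spec-mot} for the finite combinatorial pieces), but the core argument is the same.
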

\begin{proof}
By construction of the motivic orbital integral, the locus of integration  and the integrand specialize to their counterpart on $L$. Hence the result follows from Proposition \ref{prop:spec-int}.
\end{proof}

Recall from Ngô \cite[Th\'eor\`eme 1]{ngo:fl} the statement of the Fundamental Lemma of Langlands-Shelstad for Lie algebras.
\begin{corollary}\label{corfl}
The Fundamental Lemma \ref{FUL} holds for quasi-split reductive groups of large enough residue characteristic.
\end{corollary}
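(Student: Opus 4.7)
The plan is to deduce the classical Fundamental Lemma from the motivic version Theorem \ref{th:fl} by applying the specialization principle of Proposition \ref{prop:spec-fl}, in the same spirit as the Cluckers--Hales--Loeser transfer argument. First I would fix a split reductive group $\BG$ together with a choice of faithful representation and an integer $r$ (bounding the degree of unramified splitting) and set up a ring $R$ of finite type over $\BZ$ over which the relevant data (the endoscopic group, the character $\kappa$, the Kostant section, the regular semi-simple element $a_H$, etc.) are defined. Theorem \ref{th:fl} then produces, for generic $a$, the equality
\[
A_a \cdot O_a^{\kappa}(\mathbf{1}_{\wg(\mathcal{O})}) \;=\; A_a \cdot \eL^{r^G_H(a_H)} \cdot O_{a_H}^{\mathrm{stab}}(\mathbf{1}_{\wh(\mathcal{O})})
\]
in the appropriate relative Grothendieck group of motives.

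Next I would apply the Frobenius trace $\Tr\Frob_x$ to this equality, after shrinking $\Spec(R)$ to a suitable non-empty open subset $U$. By Proposition \ref{prop:spec-fl}, the left- and right-hand motivic orbital integrals specialize term-by-term to the classical $\kappa$-orbital integral $O_a^\kappa(\mathbf{1}_{\g(\mathcal{O})})$ and the stable orbital integral $O_{a_H}^{\mathrm{stab}}(\mathbf{1}_{\h(\mathcal{O})})$ on $L$, while $\eL^{r^G_H(a_H)}$ specializes to $q^{r^G_H(a_H)}$, matching the transfer factor $\Delta(\gamma_H,\gamma_G) = q^{r^G_H(a_H)}$ of Fundamental Lemma~\ref{FUL}. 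Thus after applying $\Tr\Frob_x$ we obtain
\[
\Tr\Frob_x(A_a) \cdot O_a^{\kappa}(\mathbf{1}_{\g(\mathcal{O})}) \;=\; \Tr\Frob_x(A_a) \cdot q^{r^G_H(a_H)} \cdot O_{a_H}^{\mathrm{stab}}(\mathbf{1}_{\h(\mathcal{O})})
\]
for every closed point $x$ of $U$ and every local field $L$ with residue field $\Ff_x$ of large enough characteristic, where $R\llb t\rrb \to L$ sends $t$ to a uniformizer.

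The main obstacle is to cancel the auxiliary factor $A_a$. By construction $A_a$ is the virtual motive of a commutative algebraic group over $\QQ(a)$ (essentially coming from the Prym variety $(\BP'_a)^0$ and its components, see the remark following Theorem \ref{th:fl}). Its trace of Frobenius therefore computes a count of $\Ff_x$-rational points of a commutative group scheme. After possibly shrinking $U$ further (depending on $a$), the dimension is preserved under specialization and the Lang--Weil estimates, applied to the identity component together with the $\Ff_x$-rational component group, ensure that $\Tr\Frob_x(A_a)$ is a nonzero integer for all $x$ of sufficiently large residue characteristic. Dividing by this nonzero factor yields the desired identity
\[
O_{\gamma_G}^{\kappa} \;=\; q^{r^G_H(a_H)}\, O_{\gamma_H}^{\mathrm{stab}},
\]
which is exactly Fundamental Lemma~\ref{FUL} for the quasi-split unramified group $G$ over $L$. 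Finally, one observes that any quasi-split unramified reductive group and any pair of stably conjugate regular semi-simple elements can be captured by such a family for some choice of $(R, r, \BG, \ldots)$, so the statement holds uniformly for all quasi-split reductive groups once the residue characteristic is large.
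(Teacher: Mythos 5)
Your proposal is correct and follows essentially the same route as the paper: apply Proposition \ref{prop:spec-fl} to specialize Theorem \ref{th:fl} via $\Tr\Frob_x$, observe that every $a\in\mfc_H^{\mathrm{rss}}$ lifts to some definable $b$, and cancel the factor $\Tr\Frob_x(A_a)$. The only unnecessary detour is invoking Lang--Weil to show $\Tr\Frob_x(A_a)\neq 0$; the paper simply notes that it is the number of $\Ff_x$-points of an algebraic group, which is automatically positive since the identity element is always a rational point.
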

\begin{proof}
We just have to combine Theorem \ref{th:fl} with Proposition \ref{prop:spec-fl}, noting that every $a\in \mfc_H^{\mathrm{rss}}$ is the specialization of some $b\in \wc_H^{\mathrm{rss}}$. Dividing both sides of the specialization to a local field of the equality in Theorem \ref{th:fl} by $\Tr\Frob_x (A_b)$, which is non-zero as the number of points of an algebraic group, yields the equality of the Fundamental Lemma.
\end{proof}

\subsection{From orbital integrals to affine Springer fibers}

The relation between orbital integrals and affine Springer fibers first appeared in \cite{GKM04}. For any discrete and cocompact subgroup $\wLam \subseteq \wT$ defined by a finite set of generators, we deduce by Fubini
that
\begin{multline*}
\vol(\wLam \backslash \wT,\omega_T) \, O_a(\mathbf{1}_{\wg(\mathcal{O})}) =\vol(\wLam \backslash \wT,\omega_T) \int_{\wT \backslash \wG}^\mot \mathbf{1}_{\wg(\mathcal{O})}(g^{-1}\varepsilon(a)g)\frac{\abs{\omega_G}}{ \abs{\omega_T}} \\
=  \int_{\wLam \backslash\wG}^\mot \mathbf{1}_{\wg(\mathcal{O})}(g^{-1}\varepsilon(a)g)\abs{\omega_G}.
\end{multline*}
Furthermore, there is a $\wG(\Oc)$-fibration
\[ \{g \in \wLam \backslash \wG \ |\ g^{-1} \varepsilon(a) g \in  \wg(\mathcal{O})    \}  \longrightarrow  \{g \in \wLam \backslash \wG / \wG(\Oc) \ |\ g^{-1} \varepsilon(a) g \in  \wg(\mathcal{O})    \}.   \]
Here the right hand side can be identified with the $K(a)$-points of $\wLam \backslash \FN_a$, where $\FN_a$ denotes the affine Springer fiber associated with $a$. Since it is not of finite type, $\FN_a$ is not definable. However, by \cite{KL88}, or more precisely \cite[Proposition 3.4.1]{ngo:fl}, the quotient $\wLam \backslash \FN_a$ is a projective finite type scheme whose points can be encoded by a definable set. We thus get 
\begin{equation}\label{untint}\vol(\wLam \backslash \wT,\omega_T) \, O_a(\mathbf{1}_{\wg(\mathcal{O})}) = \chi_\psf(\wLam \backslash \FN_a). \end{equation}

Now let $\gamma'\in  \wg$ be stably conjugate to $\varepsilon(a)$ and let $\delta = \inv(\varepsilon(a),\gamma') \in \rmH^1(L,J_a)$ denote the corresponding class. By \cite[Lemme 8.2.4]{ngo:fl} we have an isomorphism
\[ \rmH^1(L,J_a) \cong \rmH^1(K,\BP(J'_a)), \]
where $J'_a \to J_a$  is a group scheme with connected fibers generically isomorphic to $J_a$ and $\BP(J'_a)$ denotes the Picard-stack of $J'_a$-torsors with a generic trivialization. Since $J'_a$ is connected, the $K$-points of  $\BP(J'_a)$ can be identified with $J_a(L)/J'_a(\FO_L)$, see \cite[Section 3.3]{ngo:fl}. The lattice $\wLam$ thus naturally embeds into $\BP(J'_a)$ and the quotient $\wLam \backslash \BP(J'_a)$ is an affine finite type group scheme \cite[Lemme 3.8.1]{ngo:fl}. We write $\wLam \backslash \BP(\wJ'_a)$ for the associated definable set and $\pi_0(a)$ for its group of connected components. Since $\wLam$ is discrete and torsion free we have by Lang's theorem an inclusion 
\[ \rmH^1(K,\BP(J'_a)) \subseteq \rmH^1(K,\wLam \backslash \BP(J'_a)) = \rmH^1(K,\pi_0(a)).\]

 Similarly to the case of Higgs bundles $\BP(J_a)$ and thus also $\BP(J'_a)$  act naturally on $\FN_a$ and it follows from the discussion in \cite[Section 15.5]{GKM04} that we have an isomorphism
\[ \wLam \backslash \FN_{\gamma'} \cong (\wLam \backslash \FN_a)^\delta,   \]
where $(\wLam \backslash \FN_a)^\delta$ denotes the twisted variety as in Section \ref{sec:twisted}. Since \eqref{untint} also holds with $\varepsilon(a)$ replaced by $\gamma'$ we finally get, for every $\kappa$,
\begin{multline}\label{kapspi} \vol(\wLam \backslash \wT,\omega_T) \, O^\kappa_a(\mathbf{1}_{\wg(\mathcal{O})}) = \sum_{\gamma' \in \mathcal{D}} \langle \mathrm{inv}(\varepsilon(a),\gamma'),\kappa \rangle \chi_\psf((\wLam \backslash \FN_a)^\delta)  \\  = 
\sum_{\delta \in \rmH^1(K,\BP(J^0_a))} \langle\delta,\kappa \rangle \chi_\psf((\wLam \backslash \FN_a)^\delta) 
 = |\rmH^1(K,\pi_0(a))| \  \chi_\psf(\wLam \backslash \FN_a,\kappa),  \end{multline}
where the $\kappa$-isotypical component is defined as in Section \ref{sec:isocomp} and $\mathcal{D}$ is identified with a subgroup of $ \rmH^1(L,J_a) \cong \rmH^1(K,\BP(J^0_a))$ through $\inv(\varepsilon(a),\cdot)$ as in Section \ref{statement}. Here we also used \cite[Lemme 8.2.6]{ngo:fl}, which implies that $(\wLam \backslash \FN_a)^\delta$ has no $K$-rational point for $\delta \notin \mathcal{D}$ and thus  $\chi_\psf((\wLam \backslash \FN_a)^\delta)= 0$ in this case.

We can further simplify \eqref{kapspi} as follows. First, again by Fubini, we have 
\begin{multline*} \vol(\wLam \backslash \wT,\omega_T) = \vol(\wJ'_a(\FO),\omega_T) \chi_\psf( \wLam \backslash \BP(\wJ'_a)) \\ =\vol(\wJ'_a(\FO),\omega_T) |\pi_0(a)|  \chi_\psf(\BP(\wJ'_a)^0).    
\end{multline*}

Secondly since $\Gal(K) = \widehat{\BZ}$, say with topological generator $\sigma$, we have an exact sequence
\[  0 \longrightarrow \pi_0(a)(K) \longrightarrow \pi_0(a)(K^\mathrm{alg}) \xrightarrow{\: 1-\sigma\:} \pi_0(a)(K^\mathrm{alg}) \longrightarrow  \rmH^1(K,\pi_0(a)) \longrightarrow 0,  \]
and thus $|\pi_0(a)(K)| = |\rmH^1(K,\pi_0(a))|$. Plugging this into \eqref{kapspi} and dividing by $|\pi_0(a)(K)| = |\rmH^1(K,\pi_0(a))|$ we get
\begin{equation}\label{sprorb} \vol(\wJ'_a(\FO),\omega_T) \chi_\psf\left( \BP(\wJ'_a)^0\right) O^\kappa_a(\mathbf{1}_{\wg(\mathcal{O})}) =  \chi_\psf\left(\wLam \backslash \FN_a ,\kappa\right), \end{equation}
which is the motivic analogue of \cite[Proposition 8.2.5]{ngo:fl}. Notice the equality $\vol(\wJ'_a(\FO),\omega_T) = \eL^\alpha \chi_\psf((\wJ'_a)_K)$, with $\alpha$ depending on the choice of $\omega_T$, in particular $\vol(\wJ'_a(\FO),\omega_T) $ is the motive of a commutative algebraic group.

\subsection{From affine Springer to Hitchin fibers}\label{lsec}
The connection between Sprin\-ger and Hitchin fibers is given by the product formula \cite[Proposition 4.15.1]{ngo:fl}. Consider the geometric setup of $G$-Higgs bundles on a curve $X$ in the generality of Section \ref{higgsb}. Let $a \in \A^{\ani}$ be a point with pseudo-finite residue field. By definition we get a morphism $a: X \to \mfc_D$ and we denote by $U \subseteq X$ the open dense preimage of $\mfc_D^{rs}$ under $a$. For any closed point $\nu \in X \setminus U$ the completion of the local ring at $\nu$ is a discretely valued field $L_\nu$ with pseudo-finite residue field $K_\nu$. We write a subscript $\nu$ for the restriction of $G$, $T$, $a$ etc. to this formal neighbourhood after having fixed a trivialization of $D$. In particular $a_\nu \in \mfc_\nu^{rs}(L_\nu)$ gives rise to an affine Springer fiber $\FN_{a_\nu}$.

Using the Kostant section one can define a morphism
\[ \prod_{\nu \in X \setminus U} \FN_{a_\nu} \longrightarrow \BM_a,  \]
and similarly for the symmetry groups $\prod_{\nu \in X \setminus U} \BP(J_{a_\nu}) \rightarrow \BP_{a}$. The product formula \cite[Proposition 4.15.1]{ngo:fl}, see also \cite[Proposition 8.4.2]{ngo:fl}, states that over $\overline{K}$ we have a homeomorphism of proper DM-stacks
\begin{equation}\label{cstqu}    \prod_{\nu \in X \setminus U} \FN_{a_\nu}  \times^{\prod_{\nu \in X \setminus U} \BP(J_{a_\nu})}  \BP_{a} \longrightarrow \BM_a, \end{equation}
compatible with the actions of $\Gal(\overline{K}/K)$ and  $\BP_{a}$. Furthermore let $\BP'_a \to  \BP(J_a)$ be as in \cite[Section 8.4]{ngo:fl}, in particular $\BP'_a$ is an algebraic group locally of finite type. It follows essentially by definition that \eqref{cstqu} continues to hold with  $ \BP(J_{a_\nu})$ and $\BP_a$  replaced by $\BP(J'_{a_\nu})$ and $\BP'_a$, or more conveniently for us, we have a fibration
\[\prod_{\nu \in X \setminus U} \FN_{a_\nu}  \times  \BP'_{a} \longrightarrow \BM_a, \]
whose fibers are $\prod_{\nu \in X \setminus U} \BP(J'_{a_\nu})$-torsors.

By an argument similar to the one in the previous section we obtain for each Galois-invariant character $\kappa$ of $\pi_0(\BP'_a)$ the relation
\begin{equation}\label{hispri}  \chi_\psf((\wP'_{a})^0) \prod_{\nu \in X\setminus U} \chi_\psf(\wLam_\nu \backslash \FN_{a_\nu},\kappa)=   \chi_\psf(\wM_{a},\kappa) \prod_{\nu \in X\setminus U} \chi_\psf(\BP(\wJ'_{a_\nu})^0).\end{equation}
 The formula is the motivic analogue of \cite[Corollaire 8.4.4]{ngo:fl}.  

\subsection{Back to orbital integrals}

It is explained in \cite[Section 8.6]{ngo:fl} how to construct from the initial data $G,H,a_H, a,\kappa$ of Theorem \ref{th:fl} a curve $X$ over $K$, a line bundle $D$ on $X$, a distinguished $K$-point $\nu_0$ and all the auxiliary data to construct the Hitchin fibrations for $G$ and $H$. Furthermore, by choosing the degree of $D$ sufficiently large, the affine linear sub-space $\mathrm Z \subseteq \A_H$, consisting of elements $a' \in \A_H$ such that $a'_{\nu_0} \equiv a_H \mod (T^N)$ for $N$ as in \cite[Proposition 3.5.1]{ngo:fl}, is non-empty and its pull-back $\widetilde{\mathrm Z} \subseteq \widetilde{\A}_H$ is geometrically irreducible by the same argument as in \cite[Lemma 5.3.2]{ngo:fl}. For dimension reasons also $\widetilde{\mathrm Z} \cap \widetilde{\A}^{\ani}_H$ is non-empty and geometrically irreducible. Since $K$ is pseudo-finite we deduce $\widetilde{\mathrm Z}\cap  \widetilde{\A}^{\ani}_H(K) \neq \emptyset$. Thus we may find $a'_H \in \A_H^{\ani}(K)$ which admits a lift to  $  \widetilde{\A}^{\ani}_H$ and such that 
\[ \chi_\psf(\wLam_{\nu_0} \backslash \FN_{a'_{\nu_0}},\kappa) \ \ \  \text{ and } \ \ \ \chi_\psf(\wLam_{\nu_0} \backslash \FN_{a'_{H,\nu_0}},\Stab)\]
compute the orbital integrals appearing in \ref{th:fl} via \eqref{sprorb}. Here we wrote $a'$ for the image of $a'_H$ in $\A^{\ani}$.  

Since both $a'_H$ and $a'$ admit lifts to $  \widetilde{\A}^{\ani}_H$ and $\widetilde{\A}^{\ani}$ we may identify the corresponding fibers of $\wM_H$ and $\wM$ with the ones of $\widetilde{\wM}_H$ and $\widetilde{\wM}$. The Geometric Stabilization theorem \ref{th:geo:stab:a} for $G$ and $H$ together with formula \eqref{hispri} then gives

\begin{align}\label{fff} \chi_\psf((\wP'_{a'})^0) \prod_{\nu \in X\setminus U} \chi_\psf(&\wLam_\nu \backslash \FN_{a'_\nu},\kappa)=\\  \nonumber
&\chi_\psf((\wP'_{a'})^0)\BL^{r_H^{G}(D)} \prod_{\nu \in X\setminus U} \chi_\psf(\wLam_\nu \backslash \FN_{a'_{H,\nu}},\Stab).   \end{align}

One can further adjust the choice of $a'_H$ as in \cite[Section 8.6.6]{ngo:fl} in such a way, that for all $\nu \in X \setminus U$ different from $\nu_0$ the Springer fibers are of a very simple form, which is analyzed in \cite[Lemme 8.5.7]{ngo:fl}. In the first two cases in \textit{loc. cit.} both Springer-fibers are $0$-dimensional, $r^G_H(a_{H,\nu})=0$ and
\[ \chi_\psf(\wLam_{\nu} \backslash \FN_{a'_{\nu}},\kappa) = \chi_\psf(\wLam_{\nu} \backslash \FN_{a'_{H,{\nu}}},\Stab) = 1.  \]

In the third case we still have $\chi_\psf(\wLam_{\nu} \backslash \FN_{a'_{H,{\nu}}},\Stab) = 1$ but $r^G_H(a'_{H,\nu})=1$. The geometry of $\FN_{a'_{\nu}}$ is described in \cite[Section 8.3]{ngo:fl} and reduced to the case when $\FN_{a'_{\nu},\overline{K}}$ is an infinite chain of $\BP^1$'s. The Kostant section provides a $K_{\nu}$-rational point for $\FN_{a'_{\nu}}$ and hence $\wLam_{\nu} \backslash \FN_{a'_{\nu}}$ is isomorphic to the Weil restriction along $K_\nu/K$ of either a nodal $\BP^1$ with trivial $\pi_0(a'_{\nu})$-action or two $\BP^1$'s meeting in two points and $\pi_0(a'_{\nu}) \cong \BZ/2\BZ$ exchanging components and intersection points, see \cite[Example 8.1.15]{ngo:fl}. It follows that in both cases $\chi_\psf(\wLam_{\nu} \backslash \FN_{a'_{\nu}},\kappa) = \BL^{\deg(K_{\nu}/K)}$. Since $r_H^{G}(D) = \sum_{\nu \in X \setminus U}  \deg(K_\nu/K)r^G_H(a'_{H,\nu})$, the combination of \eqref{fff} with \eqref{sprorb} finishes the proof of Theorem \ref{th:fl}.

\subsection*{Acknowledgements} 
During the preparation of this work, A.\,F. was supported by the Swiss National Science Foundation Ambizione grant PZ00P2\_193354,  F.\,L. was partially supported by the Institut Universitaire de France and D.\,W. was supported by the Swiss National Science Foundation [No. 196960].
Part of the paper was finalized during a stay of the three authors at the CIRM with the support of the Research in Residence program and we would
like to thank the CIRM for providing such an ideal working environment. 
We gratefully thank  the referees for their careful reading and for their many comments that helped us to improve  the readability of the paper.

\bibliographystyle{abbrv}
\bibliography{master}
\end{document}